\documentclass[letterpaper,12pt]{article}
\usepackage{amsmath,amssymb,amsthm,graphicx,xcolor,mathrsfs}
\usepackage{setspace}
\usepackage{bbm, bm}
\usepackage{url}
\usepackage[colorlinks,linkcolor=black, citecolor=blue,urlcolor=blue]{hyperref}
\usepackage{natbib}
\setlength{\bibsep}{6pt plus 0.3ex}
\usepackage{varwidth}
\usepackage[ruled, linesnumbered, lined, Algorithm]{algorithm}
\usepackage{multirow}
\usepackage{cancel}
\usepackage[normalem]{ulem}
\usepackage{subfig, float}
\usepackage{rotating}
\usepackage{array}
\allowdisplaybreaks
\usepackage{fancyvrb} 
\usepackage{tikz}
\usepackage{verbatim}
\usepackage{subfig}
  
\usepackage{algpseudocode}
 
\newcommand*\circled[1]{\tikz[baseline=(char.base)]{
		\node[shape=circle, draw, inner sep=1pt, 
	 	minimum height=12pt] (char) {#1};}}
\newcommand{\PreserveBackslash}[1]{\let\temp=\\#1\let\\=\temp}
\newcolumntype{C}[1]{>{\PreserveBackslash\centering}p{#1}}
\renewcommand{\d}{\mathrm{d}}

\usepackage[latin1]{inputenc}

\numberwithin{equation}{section}
\theoremstyle{plain}
\newtheorem{Theorem}{Theorem}
\numberwithin{Theorem}{section}

\newtheorem{corollary}{Corollary}
\numberwithin{corollary}{section}
\newtheorem{proposition}{Proposition}
\numberwithin{proposition}{section}

\newtheorem{Lemma}{Lemma}
\numberwithin{Lemma}{section}
{
	\theoremstyle{definition}

	\numberwithin{example}{section}
	
	\numberwithin{fact}{section}
	\newtheorem{remark}{Remark}
}

\newcommand{\Pro}{\mathbb{P}}
\newcommand{\1}{\mathbbm{1}}

\def\bth{{\boldsymbol{\theta}}}

\def\Pr{\mathbb P}

\DeclareMathOperator*{\argmin}{arg\,min}

\def\R{{\mathbb R}}

\def\det{{\rm det}}
\def\Var{{\rm Var}}

\def\E{\mathbb{E}}

\addtolength{\oddsidemargin}{-.7in}%
\addtolength{\evensidemargin}{-.7in}%
\addtolength{\textwidth}{1.5in}%
\addtolength{\textheight}{1.8in}%
\addtolength{\topmargin}{-1in}%

\def\spacingset#1{\renewcommand{\baselinestretch}%
	{#1}\small\normalsize} \spacingset{1}

\pdfminorversion=4

\begin{document}

\title{Optimal Differentially Private Ranking \\from Pairwise Comparisons}
\author{T. Tony Cai, Abhinav Chakraborty, and Yichen Wang \thanks{T. T. Cai and A. Chakraborty are with the Department of Statistics and Data Science, The Wharton School, University of Pennsylvania. The research of T.T. Cai was supported in part by NSF Grant DMS-2015259 and NIH grant R01-GM129781. Y. Wang is an independent researcher.}
}

\date{\today}

\maketitle

\begin{abstract}
Data privacy is a central concern in many applications involving ranking from incomplete and noisy pairwise comparisons, such as recommendation systems, educational assessments, and opinion surveys on sensitive topics. In this work, we propose differentially private algorithms for ranking based on pairwise comparisons. Specifically, we develop and analyze ranking methods under two privacy notions: edge differential privacy, which protects the confidentiality of individual comparison outcomes, and individual differential privacy, which safeguards potentially many comparisons contributed by a single individual. Our algorithms--including a perturbed maximum likelihood estimator and a noisy count-based method--are shown to achieve minimax optimal rates of convergence under the respective privacy constraints. We further demonstrate the practical effectiveness of our methods through experiments on both simulated and real-world data.

\end{abstract}
\noindent{\bf Keywords:\/} differential privacy, ranking, Bradley-Terry-Luce model, minimax optimality

\spacingset{1.75}
\vspace{-5mm}
\section{Introduction} 
As personal data are more extensively collected and analyzed than ever, the importance of privacy protection in data analysis is also increasingly recognized. In this paper, we consider privacy-preserving methods for ranking from pairwise comparisons. In this ranking problem, the data analyst observes random and incomplete pairwise comparisons among items that follow some unknown ranking, with higher ranked items more likely (but not guaranteed) to prevail over lower ranked ones. The analyst then tries to infer the underlying ranking from the noisy comparison results. The extensive applied research on this topic attests to its practical relevance in many settings.
\begin{itemize}
	
	\item \textbf{Pairwise comparison in sensitive survey data:} Pairwise comparisons in surveys provide a structured approach for respondents to express their relative preferences across a wide range of options. For instance, a survey analyzed by \cite{weber2011novel} was conducted to gauge public sentiment toward immigration. The study involved 98 student participants, each of whom responded to at least one pairwise comparison. These comparisons were constructed from a set of four extreme statements regarding immigrants.
		
	\item \textbf{Pairwise comparison in recommendation systems.} Pairwise comparisons are widely used in recommendation systems that capture users' preferences between pairs of items such as movies, books, or other consumer products. For example, \cite{balakrishnan2012two} proposed a method in which customers are presented with a series of paired preference questions (e.g. ``Do you prefer item A over item B?''), allowing the system to infer personalized rankings based on comparative judgments.
	
	\item \textbf{Pairwise comparison in education.} Pairwise comparison can serve as an effective tool for educational assessment. For instance, \cite{heldsinger2010using} describes a study in which teachers employed a pairwise comparison procedure to grade student writing samples and construct a performance scale. The study found that teacher judgments were highly internally consistent and showed strong correlation with scores from a large-scale standardized testing program administered to the same group of students.	
\end{itemize}
Privacy is a critical concern in many applications involving ranking from pairwise comparisons. For example, in the educational assessment study by \cite{heldsinger2010using}, teachers'  preferences between pairs of student assignments should remain confidential. Similarly, in the survey conducted by \cite{weber2011novel}, respondents were asked to express preferences between pairs of political positions--data considered sensitive due to the controversial nature of the statements and the potential ramifications for individuals if their views were made public.

Motivated by the importance of protecting privacy in such settings, we develop \textit{statistically optimal} algorithms for ranking from pairwise comparisons under \textit{differential privacy (DP) constraints}.  Differential privacy \cite{dwork2006calibrating, dwork2006our} is the most widely adopted framework for privacy-preserving data analysis, offering strong formal guarantees that the output of an algorithm reveals provably little about any individual data point. At the same time, it supports the design and implementation of efficient algorithms. In this paper, we propose and analyze DP algorithms for ranking from pairwise comparisons and show that they achieve statistical optimality under the DP constraint: no other differentially private algorithm can attain a faster rate of convergence to the true ranking.

\subsection{Main Results and Our Contribution}

{\bf Edge DP and individual DP}. While the technical definitions are deferred to Sections \ref{sec: edge dp problem formulation} and \ref{sec: individual dp problem formulation}, we briefly provide an intuitive overview of differential privacy (DP). A differentially private algorithm guarantees that its outputs on two adjacent datasets are statistically similar. Two datasets are considered adjacent if they differ by exactly one unit of data. In essence, a DP algorithm ensures that its output does not depend significantly on any single data point, thereby preventing inference about the presence or absence of any individual unit in the input.

However, as discussed in \cite{NBERc15017}, the definition of adjacency--and thus the unit of data--depends on the specific context and data analysis task. For instance, in a census, a unit might be a person or a household, while in business analytics, it could be a single transaction. In the context of ranking from pairwise comparisons, two natural notions of adjacency arise, leading to two corresponding definitions of differential privacy:
\begin{itemize}
	\item Edge DP: Two datasets are adjacent if they differ by a single pairwise comparison outcome. 
	\item Individual DP: Two datasets are adjacent if they differ in all the comparisons contributed by a single individual.
\end{itemize}
 To the best of our knowledge, this paper is the first to simultaneously consider and rigorously analyze both definitions of DP in the context of ranking from pairwise comparisons.
 
{\bf Optimal parametric estimation with differential privacy}. Under the parametric Bradley-Terry-Luce (BTL) model for pairwise comparisons \cite{bradley1952rank, luce1959individual}, we introduce in Section \ref{sec: ranking upper bound} a perturbed maximum likelihood estimator of the following form:
 \begin{align*}
 	\widetilde \bth = \argmin_{\bth \in \R^n} \mathcal L(\bth; y) + \frac{\gamma}{2}\|\bth\|_2^2 + \bm w^\top \bth, \quad \bm w = (w_1, w_2, \cdots, w_n) \stackrel{\text{i.i.d.}}{\sim }\text{Laplace}(\lambda),
 \end{align*}
where $\mathcal L(\bth, y)$ is the likelihood function, $\bth$ is the vector of latent parameters which determine the items' ranks, and $y$ is the data set of pairwise comparisons. 

We then show that the estimator respectively satisfies edge DP and individual DP for two different choices of $(\gamma, \lambda)$, in Sections \ref{sec: ranking upper bound} and \ref{sec: individual parametric ranking}. It is further shown by Theorems \ref{thm: ranking lower bound} and \ref{thm: individual-privacy-lower} that the estimator's respective rates of convergence under edge DP and individual DP are optimal.
 
{\bf Optimal nonparametric ranking with differential privacy}. In the absence of parametric assumptions, we show that ranking items based on noisy counts of wins--with appropriately calibrated noise distributions--satisfies either edge differential privacy or individual differential privacy, depending on the setting. Moreover, this simple approach is statistically optimal: it accurately ranks items across the broadest possible regime of sample sizes and privacy levels.  Specifically, in Theorems \ref{thm: ranking nonparametric upper bound} and \ref{thm: nonparametric ranking lower bound}, we show that ranking by noisy counts succeeds at identifying the top $k$ items whenever the strengths of the $k$-th and $(k+1)$-th items are separated by a certain threshold, and no other edge DP algorithm will succeed below this threshold. Theorems \ref{thm: individual ranking nonparametric upper bound} and \ref{thm: individual nonparametric ranking lower bound} contain similar results for individual DP.

{\bf Entrywise analysis of DP algorithms}. The study of differentially private ranking also yields insights of broader methodological interest. In particular, the perturbed maximum likelihood estimator (MLE) achieves differential privacy by injecting a single dose of noise into the objective function. This design facilitates a leave-one-out analysis \cite{chen2019spectral}, enabling precise control of entrywise errors in optimization problems. While most prior work on differentially private optimization has focused on bounding $\ell_2$ errors, our framework extends the analysis to the entrywise setting. Furthermore, our adaptation of the score attack technique \cite{cai2023score}--originally developed for $\ell_2$  risk--to entrywise error analysis may have broader applicability in establishing entrywise lower bounds for differentially private algorithms more generally.

\vspace{-5mm}
\subsection{Related Work}

Some of the most historically significant contributions to the study of pairwise comparisons and ranking include \cite{thurstone1927law}, which pioneered the use of pairwise comparisons for measuring psychological values, and the seminal works of \cite{bradley1952rank} and \cite{luce1959individual}, which introduced the Bradley-Terry-Luce (BTL) model. Additionally, \cite{ford1957solution} was the first to formulate the ranking problem as a maximum likelihood estimation task.

In recent years, there has been growing interest in understanding the minimax rates of convergence for ranking from pairwise comparisons. Several works--including \cite{wauthier2013efficient, rajkumar2014statistical, negahban2017rank, shah2015estimation, chen2015spectral, chen2019spectral}--adopt parametric assumptions, often relying on the BTL model, to study the minimax $\ell_2$ or $\ell_\infty$  risk in estimating underlying parameters. Parallel to this, a nonparametric line of research focuses on identifying the top-ranked items \cite{shah2017simple, chen2017competitive} or estimating pairwise comparison probabilities under the assumption of stochastic transitivity \cite{shah2016stochastically, shah2019feeling, pananjady2020worst}.

On the other hand, the trade-off between differential privacy and statistical utility has also attracted substantial attention. A wide array of differentially private statistical methods have been proposed and analyzed--ranging from Gaussian mean estimation and linear regression \cite{cai2021cost}, to nonparametric density estimation \cite{wasserman2010statistical}, M-estimators \cite{lei2011differentially}, and PCA \cite{dwork2014analyze}. These methods are often grounded in foundational design paradigms such as the Laplace and Gaussian mechanisms \cite{dwork2006calibrating, dwork2014algorithmic}, and private convex optimization techniques \cite{chaudhuri2009privacy, chaudhuri2011differentially, bassily2014private, kifer2012private, bassily2019private}.

Specifically on differentially private ranking, existing works such as \cite{shang2014application, hay2017differentially, yan2020private, song2022distributed} focus on the related--but distinct--problem of rank aggregation, where the goal is to combine multiple full rankings into a single consensus ranking. \cite{qiao2021oneshot} proposes a one-shot algorithm for ranking from pairwise comparisons as an application of their private top-$k$ selection method; we compare this with our algorithms in Section \ref{sec: experiments}. \cite{cai2023score} studies a special case of our edge DP setting under $\ell_2$  loss, but does not directly address the ranking problem considered here.

Understanding the privacy-utility trade-off requires identifying the minimum possible loss in accuracy among all differentially private methods. To this end, several powerful lower-bounding techniques have been developed, including tracing attacks \cite{bun2014fingerprinting, dwork2015robust, steinke2017between, steinke2017tight, kamath2018privately, cai2021cost} and the score attack \cite{cai2020cost, cai2023score}, as well as differentially private versions of Le Cam's, Fano's, and Assouad's inequalities \cite{barber2014privacy, karwa2017finite, canonne2018structure, acharya2018differentially, acharya2021differentially}.

\vspace{-5mm}
\subsection{Organization}

The rest of the paper is organized as follows. Section \ref{sec: edge dp ranking} focuses on ranking under edge differential privacy (DP). Section \ref{sec: ranking upper bound} presents the privacy and optimality analysis of the parametric estimator, while Section \ref{sec: edge dp nonparametric ranking} addresses the privacy guarantees and optimality of ranking by noisy counts under edge DP. Section \ref{sec: individual-dp-ranking} turns to ranking under individual DP, with Sections \ref{sec: individual parametric ranking} and \ref{sec: individual-nonparametric} dedicated to the parametric and nonparametric analyses, respectively. Section \ref{sec: experiments} provides a numerical evaluation of our algorithms using both simulated and real datasets. Section \ref{sec: discussion} concludes the paper with a discussion of open questions and directions for future work. Additional technical details and omitted proofs are provided in the supplementary material \cite{cai2023optimal}.

\vspace{-5mm}
\subsection{Notation}

 For real-valued sequences $\{a_n\}, \{b_n\}$, we write $a_n \lesssim b_n$ if $a_n \leq cb_n$ for some universal constant $c \in (0, \infty)$, and $a_n \gtrsim b_n$ if $a_n \geq c'b_n$ for some universal constant $c' \in (0, \infty)$.  We say $a_n \asymp b_n$ if $a_n \lesssim b_n$ and $a_n \gtrsim b_n$. $c, C, c_0, c_1, c_2, \cdots, $ and so on refer to universal constants in the paper, with their specific values possibly varying from place to place. For a positive integer $n$, let $[n] = \{1, 2, 3, \cdots, n\}$.

\vspace{-5mm}
\section{Optimal Ranking under Edge Differential Privacy}\label{sec: edge dp ranking}
\vspace{-5mm}
\subsection{Problem Formulation under Edge DP}\label{sec: edge dp problem formulation}
{\bf The ranking problem}. There are $n$ distinct items, represented by indices in $[n] = \{1, 2, 3, \cdots, n\}$. Pairwise comparisons between items are observed randomly and independently, where each pair $(i, j)$, $1 \leq i < j \leq n$, is compared with a known probability $p \in (0, 1]$. This results in an Erd\H{o}s--R\'enyi random graph $\mathcal{G}$ with $n$ nodes and the observed comparisons constituting the edges. Every observed pair $(i, j)$ determines a unique winner, symbolized by the outcome $Y_{ij} \in \{0, 1\}$, satisfying $Y_{ij} + Y_{ji} = 1$. Consequently, for $i<j$, the random variable $Y_{ij}$ follows an independent Bernoulli distribution with parameter $\rho_{ij} \in [0, 1]$, and the requirement $Y_{ij} + Y_{ji} = 1$ implies $\rho_{ij} + \rho_{ji} = 1$. We assume $\rho_{ii} = 1/2$ for clarity.
 
Our objective is to rank the set of $n$ items based on the average true winning probabilities when compared to randomly selected counterparts. This average winning probability for each item $i \in [n]$ is formally denoted by $\tau_i = \frac{1}{n}\sum_{j \in [n]} \rho_{ij}$. We are interested in estimating the index set $\mathcal S_k$, where $\mathcal S_k = \{i \in [n]: \tau_{i} \text{ is among the top-$k$ largest of } \tau_1, \tau_2, \ldots, \tau_n\}$ for a predetermined $k \in [n]$.

{\bf Parametric and nonparametric models}. The ranking problem is studied under two models. The first one is a parametric model where $\rho_{ij} = F(\theta^*_i - \theta^*_j)$. Each item $i \in [n]$ is assigned a latent parameter $\theta^*_i$, and $F: \mathbb R \to [0, 1]$ is a predetermined link function. This model generalizes well-known the Bradley-Terry-Luce (BTL) model \cite{bradley1952rank, luce1959individual} for pairwise comparison, and recovers the BTL model when $F$ is the standard logistic CDF. With this parametric assumption, the ranking of $\tau_i$ is equivalent to the ranking of $\theta_i^*$, which further reduces to estimating real-valued parameters $\{\theta^*_i\}_{i \in [n]}$. 

The second model is nonparametric, in which we do not assume any parametric form for the $\rho_{ij}$ values, and instead aim to estimate the ranks directly. This nonparametric ranking problem is the focus of a more recent line of work \cite{shah2017simple, chen2017competitive, shah2017simple, shah2019feeling}. We define and study these two settings in Sections \ref{sec: ranking upper bound} and \ref{sec: edge dp nonparametric ranking} respectively.

{\bf Edge differential privacy}. Under these models, we study ranking algorithms satisfying $(\varepsilon, \delta)$ differential privacy ($(\varepsilon, \delta)$-DP). The formal definition of $(\varepsilon, \delta)$-DP requires that, for an algorithm $M$ taking values in some domain $\mathcal R$ and every measurable subset $A \subseteq \mathcal R$,
\begin{align*}
	\Pro(M(X) \in A) \leq e^\varepsilon \cdot \Pro(M(X') \in A) + \delta
\end{align*}
for any pair of data sets $X$ and $X'$ which differ by one element. A pair of data sets is called ``adjacent" if they differ by exactly one element. For example, if $X, X'$ are sets of real numbers, $X = \{x_1, x_2, \ldots, x_n\} \in \R^n $ and $X' = \{x'_1, x'_2, \ldots, x'_n\} \in \R^n$, then $X$ and $X'$ are adjacent if $|X \cap (X')^c| = |X^c \cap X'| = 1$.

When specialized to pairwise comparison data, one natural interpretation of``adjacency" is that two sets of comparison outcomes differ by a single comparison. Concretely, two sets of comparison outcomes $\bm Y = \{Y_{ij}\}_{(i, j) \in \mathcal G}$ and $\bm Y' = \{Y'_{ij}\}_{(i, j) \in \mathcal G'}$ are adjacent if they satisfy one of the two (disjoint) scenarios.
\begin{itemize}
	\item The comparison graphs are identical, $\mathcal G = \mathcal G'$, and there exists exactly one edge $(i^*, j^*) \in \mathcal G$ on which the comparison outcomes differ, $Y_{i^*j^*} \neq Y'_{i^*j^*}$. All other comparison outcomes are identical: $Y_{ij} = Y'_{ij}$ for $(i, j) \neq (i^*, j^*)$.
	\item The comparison graphs $\mathcal G$ and $\mathcal G'$ differ by exactly one edge: there exist $a^*, b^*, c^*, d^* \in [n]$ and $(a^*, b^*) \neq (c^*, d^*)$, such that
	\begin{align*}
		\mathcal G = \mathcal G \cap \mathcal G' + \{(a^*, b^*) \}, \mathcal G' = \mathcal G \cap \mathcal G' + \{(c^*, d^*) \}.
	\end{align*}
	The comparison outcomes  $\bm Y = \{Y_{ij}\}_{(i, j) \in \mathcal G}$ and $\bm Y' = \{Y'_{ij}\}_{(i, j) \in \mathcal G'}$ satisfy $Y_{ij} = Y'_{ij}$ for all $(i, j) \in \mathcal G \cap \mathcal G'$.
\end{itemize} 
This notion of adjacency and the corresponding definition of differential privacy is akin to ``edge differential privacy'' for graphs \cite{nissim2007smooth, kasiviswanathan2013analyzing}, and we adopt the same term for our case.

\vspace{-5mm}
\subsection{Parametric Estimation under Edge DP}
\label{sec: ranking upper bound}

We first study ranking from pairwise comparisons under parametric assumptions: each item $i \in [n]$ is associated with a latent parameter $\theta^*_i$, and the pairwise probability $\rho_{ij}$ is related to the latent parameters of items $i, j$ by a known increasing function $F: \R \to [0, 1]$, specifically $\rho_{ij} = F(\theta^*_i - \theta^*_j)$.  These assumptions conveniently reduce the problem of ranking $n$ items by their average winning probability against peers, $\tau_i = n^{-1}\sum_{j \in [n]} \rho_{ij}$, to the problem of estimating $\bth^* = \left(\theta^*_i\right)_{i \in [n]}$.

\subsubsection{The Edge DP Algorithm for Parametric Estimation}\label{sec: edge dp ranking upper bound}
For constructing a differentially private estimator of $\bth^*$, our approach is to minimize a randomly perturbed and $\ell_2$-penalized version of the negative log-likelihood function. For a vector $\bm v \in \R^n$, indices $i, j \in [n]$ and a given link function $F$, let $F_{ij}(\bm v) = F(v_i - v_j)$ and $F'_{ij}(\bm v) = F'(v_i - v_j)$. The negative log-likelihood function is given by
\begin{align}\label{eq: ranking likelihood equation}
	\mathcal L(\bth; y) = \sum_{(i, j) \in \mathcal G} -y_{ij}\log F_{ij}(\bth) - y_{ji}\log(1 - F_{ij}(\bth)).
\end{align}

\vspace{-5.5mm}
The estimator is defined by Algorithm \ref{alg: DP parametric ranking}.
\begin{algorithm}[!htbp]
	\caption{Differentially Private Ranking for parametric models} 
	\label{alg: DP parametric ranking}
	\textbf{Input}: Comparison data $(y_{ij})_{(i,j)\in \mathcal{G}}$, comparison graph $\mathcal{G}$, privacy parameter $\varepsilon$, regularity constants $\kappa_1,\kappa_2$ defined in~\eqref{eq: F assumption 1} and~\eqref{eq: F assumption 2}.
	\begin{algorithmic}[1]
		\State Set $\lambda \geq \frac{8\kappa_1}{\varepsilon}$ and $\gamma \geq \frac{4\kappa_2}{\varepsilon}$ and generate 
		$
		\bm w = (w_1, w_2, \cdots, w_n) \stackrel{\text{i.i.d.}}{\sim }\text{Laplace}(\lambda).
		$
		\State Solve for
		\begin{align}\label{eq: ranking MLE definition}
			\widetilde \bth = \argmin_{\bth \in \R^n} \mathcal L(\bth; y) + \frac{\gamma}{2}\|\bth\|_2^2 + \bm w^\top \bth, \quad \bm w = (w_1, w_2, \cdots, w_n) \stackrel{\text{i.i.d.}}{\sim }\text{Laplace}(\lambda).
		\end{align}
	\end{algorithmic} 
	\textbf{Output}: $\widetilde \bth$.
\end{algorithm}

Some regularity conditions on the function $F$ will be helpful throughout our analysis of $\widetilde \bth$. We collect them here for convenience.
\begin{enumerate}
	\item[(A0)] $F: \R \to [0, 1]$ is strictly increasing and satisfies $F(x) = 1 - F(-x)$ for every $x \in \mathbb \R$.
	\item[(A1)] \label{itm: F assumption 1} There is an absolute constant $\kappa_1 > 0$ such that 
	\begin{align}\label{eq: F assumption 1}
		\sup_{x \in \R} \left|\frac{F'(x)}{F(x)(1 - F(x))}\right| = \sup_{x \in \R} \frac{F'(x)}{F(x)(1 - F(x))} < \kappa_1.
	\end{align}
	\item[(A2)]  \label{itm: F assumption 2} $\frac{\partial^2}{\partial x^2} \left(-\log F(x)\right) > 0$ for every $x \in \R$, and there exists an absolute constant $\kappa_2 > 0$ such that
	\begin{align}\label{eq: F assumption 2}
		\frac{\partial^2}{\partial x^2} \left(-\log F(x)\right) < \kappa_2, \quad \min_{|x| \leq 4} \frac{\partial^2}{\partial x^2} \left(-\log F(x)\right) > \frac{1}{\kappa_2}.
	\end{align}
\end{enumerate}
In particular, choosing $F$ to be the standard logistic CDF satisfies these conditions and recovers the BTL model. 

Returning to the estimator \eqref{eq: ranking MLE definition}, the random perturbation $\bm w^\top \bth$ is an instance of objective perturbation methods in differentially private optimization \cite{chaudhuri2011differentially, kifer2012private}. Let $\mathcal R(\bth; y)$ denote the regularized log-likelihood part, $\mathcal R(\bth; y) = \mathcal L(\bth; y) + \frac{\gamma}{2}\|\bth\|_2^2$, then $\widetilde \bth$ amounts to the solution of a noisy stationary condition $\nabla \mathcal R(\widetilde \bth; y) = -\bm w$. The solution $\widetilde \bth = \widetilde \bth(y)$ is differentially private when
\begin{itemize}
	\item the scale parameter $\lambda$ of noise vector $\bm w$ is sufficiently large to obfuscate the change in $\nabla \mathcal R(\widetilde \bth)$ over adjacent data sets, and 
	\item the regularization coefficient $\gamma$ ensures strong convexity of the objective $\mathcal R(\bth)$, so that perturbation of the gradient is translated to perturbation of the solution $\widetilde \bth$.
\end{itemize}
The privacy guarantee is formalized by Proposition \ref{prop: ranking MLE privacy}.
\begin{proposition}\label{prop: ranking MLE privacy}
	Suppose conditions (A0), (A1) and (A2) hold. If $\lambda \geq 8\kappa_1/\varepsilon$ and $\gamma \geq 4\kappa_2/\varepsilon$, $\widetilde\bth$ as defined in Algorithm \ref{alg: DP parametric ranking} is $(\varepsilon, 0)$ differentially private.
\end{proposition}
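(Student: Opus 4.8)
The plan is to prove privacy via the standard objective‑perturbation / change‑of‑variables argument: since the perturbed objective is strongly convex, $\widetilde\bth$ is its unique stationary point, so we can write down the density of $\widetilde\bth$ explicitly and bound its pointwise ratio across adjacent data sets by $e^\varepsilon$; because $\delta=0$, such a pointwise density ratio bound is exactly what is needed.

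\textbf{Step 1 (density of $\widetilde\bth$).} By the strict convexity in (A2), $\mathcal L(\cdot;y)$ is convex, so $\mathcal R(\bth;y):=\mathcal L(\bth;y)+\tfrac{\gamma}{2}\|\bth\|_2^2$ is $\gamma$‑strongly convex and coercive; hence $\mathcal R(\bth;y)+\bm w^\top\bth$ has a unique minimizer $\widetilde\bth$ characterized by $\nabla\mathcal R(\widetilde\bth;y)=-\bm w$. The map $\Phi_y:\bth\mapsto-\nabla\mathcal R(\bth;y)$ is $C^1$ (as $F$ is $C^2$, implicit in (A1)--(A2)), injective (strict convexity), and onto $\R^n$ (coercivity of $\mathcal R$), with invertible Jacobian $-\nabla^2\mathcal R(\bth;y)$; so $\widetilde\bth=\Phi_y^{-1}(\bm w)$ has density
\[
p_y(\bth)=f_{\bm w}\!\big(-\nabla\mathcal R(\bth;y)\big)\,\big|\det\nabla^2\mathcal R(\bth;y)\big|,
\]
where $f_{\bm w}$ is the $\mathrm{Laplace}(\lambda)^{\otimes n}$ density. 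It then suffices to bound $p_y(\bth)/p_{y'}(\bth)$ by a product of a ``Laplace factor'' and a ``determinant factor,'' each at most $e^{\varepsilon/2}$, for every $\bth$ and every adjacent $y,y'$.

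\textbf{Step 2 (Laplace factor: $\ell_1$-sensitivity of $\nabla\mathcal L$).} Since the penalty is data‑free, $\nabla\mathcal R(\bth;y)-\nabla\mathcal R(\bth;y')=\nabla\mathcal L(\bth;y)-\nabla\mathcal L(\bth;y')$, and $1$‑Lipschitzness of $t\mapsto|t|$ gives
\[
\frac{f_{\bm w}(-\nabla\mathcal R(\bth;y))}{f_{\bm w}(-\nabla\mathcal R(\bth;y'))}\le\exp\!\Big(\tfrac1\lambda\big\|\nabla\mathcal L(\bth;y)-\nabla\mathcal L(\bth;y')\big\|_1\Big).
\]
A direct computation using (A0) shows each observed comparison on $\{i,j\}$ with outcome $y_{ij}$ contributes $\frac{F'(\theta_i-\theta_j)}{F(\theta_i-\theta_j)(1-F(\theta_i-\theta_j))}\big(F(\theta_i-\theta_j)-y_{ij}\big)$ to $\partial_{\theta_i}\mathcal L(\bth;y)$ (and symmetrically to $\partial_{\theta_j}$), of magnitude at most $\kappa_1$ by (A1) and $|F-y_{ij}|\le1$. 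In the first adjacency case (same $\mathcal G$, one flipped outcome on $\{i^*,j^*\}$), only coordinates $i^*,j^*$ of $\nabla\mathcal L$ change, each by $\big|\frac{F'}{F(1-F)}\big|\le\kappa_1$; in the second case (one edge removed, one added), at most four coordinates change, each by at most $\kappa_1$. Hence $\|\nabla\mathcal L(\bth;y)-\nabla\mathcal L(\bth;y')\|_1\le4\kappa_1$, and $\lambda\ge8\kappa_1/\varepsilon$ makes the Laplace factor $\le e^{4\kappa_1/\lambda}\le e^{\varepsilon/2}$.

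\textbf{Step 3 (determinant factor) and conclusion.} The Hessian $\nabla^2\mathcal L(\bth;y)=\sum_{\{i,j\}\in\mathcal G}c_{ij}(\bth)(e_i-e_j)(e_i-e_j)^\top$ is a weighted graph Laplacian with weights $c_{ij}(\bth)=(-\log F)''\big(\pm(\theta_i-\theta_j)\big)\in(0,\kappa_2)$ by (A2). Across adjacent $y,y'$ one may write $\nabla^2\mathcal R(\bth;y)=M+a\,vv^\top$ and $\nabla^2\mathcal R(\bth;y')=M+a'\,v'(v')^\top$, where $M=\gamma I+\sum_{\text{common edges}}(\cdots)\succeq\gamma I$, $a,a'\in(0,\kappa_2)$, and $v,v'$ are (signed) differences of two standard basis vectors with $\|v\|_2^2=\|v'\|_2^2=2$ (with $v=v'$ in the first case). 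The matrix determinant lemma gives
\[
\frac{\det(M+a\,vv^\top)}{\det(M+a'\,v'(v')^\top)}=\frac{1+a\,v^\top M^{-1}v}{1+a'\,(v')^\top M^{-1}v'}\le 1+a\,v^\top M^{-1}v\le 1+\frac{2\kappa_2}{\gamma},
\]
using $M\succeq\gamma I\Rightarrow M^{-1}\preceq\gamma^{-1}I$ and $\|v\|_2^2=2$; with $\gamma\ge4\kappa_2/\varepsilon$ this is $\le1+\varepsilon/2\le e^{\varepsilon/2}$. Multiplying the two factors yields $p_y(\bth)/p_{y'}(\bth)\le e^\varepsilon$ for all $\bth$; integrating over any measurable set and using symmetry of the adjacency relation gives $(\varepsilon,0)$‑DP. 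The main obstacle is Step 1 --- making rigorous that $\Phi_y$ is a global $C^1$‑diffeomorphism of $\R^n$, so the density formula with its extra Jacobian determinant is valid --- together with the observation in Step 3 that in \emph{both} adjacency scenarios the two Hessians are rank‑one updates of a \emph{common} matrix $M\succeq\gamma I$, which is precisely what makes the determinant ratio collapse; the $\ell_1$‑sensitivity bookkeeping in Step 2 (notably the edge‑addition/removal case touching four coordinates) is routine but must be tracked carefully to land on the stated constants $8\kappa_1/\varepsilon$ and $4\kappa_2/\varepsilon$.
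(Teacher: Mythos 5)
Your proposal is correct and follows essentially the same route as the paper's proof: write the density of $\widetilde\bth$ by the change of variables $\bm w = -\nabla\mathcal R(\widetilde\bth;y)$, bound the Laplace factor via the $\ell_1$-sensitivity $\|\nabla\mathcal L(\cdot;y)-\nabla\mathcal L(\cdot;y')\|_1\le 4\kappa_1$ (giving $e^{\varepsilon/2}$ when $\lambda\ge 8\kappa_1/\varepsilon$), and bound the determinant ratio by viewing both Hessians as rank-one updates of a common matrix $M\succeq\gamma I$, yielding $1+2\kappa_2/\gamma\le e^{\varepsilon/2}$. The only difference is cosmetic: you carry the Jacobian determinant with the correct $+1$ power and justify the global diffeomorphism explicitly, while the paper writes the determinant with an inverse power, but since adjacency is symmetric the resulting bound is the same.
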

Proposition \ref{prop: ranking MLE privacy} is proved in the supplement \cite{cai2023optimal}.

We have so far not considered the convergence of $\widetilde \bth$ to the truth $\bth^*$, and in particular choosing large values of $\lambda$ and $\gamma$ for differential privacy compromises the accuracy of the estimator $\widetilde \bth$. 
The optimal choice of $\lambda$ and $\gamma$, which balances privacy and utility, depends on the loss function. 

When $\gamma \asymp \sqrt{np\log n}$ and $F(x) = (1 + e^{-x})^{-1}$, the $\ell_2$-penalized MLE $\hat\bth = \argmin_{\bth \in \R^n} \mathcal L(\bth; y) + \frac{\gamma}{2}\|\bth\|_2^2$ is shown to be a minimax optimal estimator of $\bth^*$ by \cite{chen2019spectral}. By following a similar path as the leave-one-out analysis in \cite{chen2019spectral}, we can then characterize the entry-wise convergence of $\widetilde \bth$ in terms of the noise scale $\lambda$. As the parametric model $\rho_{ij} = F(\theta^*_i - \theta^*_j)$ is invariant to translations of $\bth^*$, we assume without the loss of generality that $\bth^*$ is centered: $\bm 1^\top \bth^* = 0$.

\begin{proposition}\label{prop: ranking MLE accuracy}
	If $\gamma = c_0\sqrt{np\log n}$ for some absolute constant $c_0$, $p \geq c_1\lambda \log n/n$ for some sufficiently large constant $c_1 > 0$, and $c_2 < \lambda < c_2\sqrt{\log n}$ for some sufficiently large constant $c_2 > 0$, it holds with probability at least $1 - O(n^{-5})$ that
	\begin{align} \label{eq: ranking MLE accuracy}
		\|\widetilde \bth - \bth^*\|_\infty \lesssim \sqrt{\frac{\log n}{np}} + \frac{\lambda\log n}{np}.
	\end{align}	
\end{proposition}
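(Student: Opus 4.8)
The plan is to piggyback on the leave-one-out analysis of the (non-private) regularized MLE $\hat\bth = \argmin_\bth \mathcal L(\bth;y) + \frac\gamma2\|\bth\|_2^2$ from \cite{chen2019spectral}, and then control the extra perturbation introduced by the Laplace term $\bm w^\top\bth$. I would first record that under the stated scaling of $\gamma$ and $p$, \cite{chen2019spectral} already gives $\|\hat\bth - \bth^*\|_\infty \lesssim \sqrt{\log n/(np)}$ with probability $1 - O(n^{-5})$, which is exactly the first term on the right of \eqref{eq: ranking MLE accuracy}. It therefore suffices to bound $\|\widetilde\bth - \hat\bth\|_\infty$ by (a constant times) $\lambda\log n/(np)$. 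The natural way to do this: set $\mathcal R(\bth) = \mathcal L(\bth;y) + \frac\gamma2\|\bth\|_2^2$, so that $\hat\bth$ solves $\nabla\mathcal R(\hat\bth)=0$ while $\widetilde\bth$ solves $\nabla\mathcal R(\widetilde\bth) = -\bm w$. By (A2) the Hessian $\nabla^2\mathcal R(\bth) = \nabla^2\mathcal L(\bth) + \gamma I$ is, on the relevant region, comparable to $\frac{1}{\kappa_2} L_{\mathcal G} + \gamma I$, where $L_{\mathcal G}$ is the (weighted) graph Laplacian of the Erd\H os--R\'enyi graph. I would then write $\widetilde\bth - \hat\bth = -\big(\int_0^1 \nabla^2\mathcal R(\hat\bth + t(\widetilde\bth-\hat\bth))\,dt\big)^{-1}\bm w$ and bound the inverse Hessian.

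The key ingredient is a \emph{leave-one-out} / entrywise bound on $\big(\nabla^2\mathcal R\big)^{-1}\bm w$. The $\ell_2$ bound is immediate: since $\nabla^2\mathcal R \succeq \gamma I$, we get $\|\widetilde\bth - \hat\bth\|_2 \le \|\bm w\|_2/\gamma \lesssim \lambda\sqrt n/\gamma$, but this is too crude to yield the $\ell_\infty$ rate. Instead I would exploit that on the complement of the all-ones direction the Hessian is dominated by the graph Laplacian: standard Erd\H os--R\'enyi spectral estimates give $\lambda_2(L_{\mathcal G}) \gtrsim np$ w.h.p. when $p \gtrsim \log n/n$, so $\nabla^2\mathcal R$ restricted to $\bm 1^\perp$ has smallest eigenvalue $\gtrsim np$. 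Decomposing $\bm w = \bar w\bm 1 + \bm w^\perp$ and noting the objective is handled for the $\bm 1$-component by the $\gamma\|\bth\|_2^2$ term (or, after centering, that this component is negligible since $\bar w = O(\lambda\sqrt{\log n}/\sqrt n)$ w.h.p.), the main contribution is $(\nabla^2\mathcal R)^{-1}\bm w^\perp$. To pass from the $\ell_2$ control $\|(\nabla^2\mathcal R)^{-1}\bm w^\perp\|_2 \lesssim \|\bm w\|_2/(np) \lesssim \lambda\sqrt n/(np) = \lambda/(\sqrt n \, p)$ to an $\ell_\infty$ bound of order $\lambda\log n/(np)$, I would run the leave-one-out argument: for each $m\in[n]$, introduce the estimator $\widetilde\bth^{(m)}$ obtained by solving the analogous problem with the $m$-th coordinate of the data/noise decoupled, show $\widetilde\bth^{(m)}$ is close to $\widetilde\bth$ in $\ell_2$, and use the independence of $w_m$ from $\widetilde\bth^{(m)}$ together with a row-wise bound $\|e_m^\top(\nabla^2\mathcal R)^{-1}\|$ on the inverse Hessian to control $|\widetilde\theta_m - \hat\theta_m|$.

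The main obstacle, as usual in these arguments, is the leave-one-out bookkeeping: one must set up auxiliary estimators $\widetilde\bth^{(m)}$ whose construction exactly mirrors that in \cite{chen2019spectral} but now also leaves out $w_m$, verify that adding the smooth strongly convex perturbation $\bm w^\top\bth$ does not break any of their inductive contraction estimates (it does not, since it only improves strong convexity and the perturbation gradient is $O(\lambda\sqrt{\log n})$ in $\ell_\infty$ by a union bound over Laplace tails), and carefully track how the two error sources---statistical noise of size $\sqrt{\log n/(np)}$ and privacy noise of size $\lambda\log n/(np)$---combine additively rather than multiplicatively. A secondary technical point is ensuring that all iterates stay in the region $\{\|\bth - \bth^*\|_\infty \le C\}$ where (A2) gives uniform Hessian control; this follows from the condition $p \gtrsim \lambda\log n/n$, which guarantees the privacy term $\lambda\log n/(np)$ is itself $O(1)$, so the perturbation cannot eject the solution from the good region. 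Once these pieces are in place, combining $\|\widetilde\bth - \hat\bth\|_\infty \lesssim \lambda\log n/(np)$ with $\|\hat\bth-\bth^*\|_\infty \lesssim \sqrt{\log n/(np)}$ via the triangle inequality yields \eqref{eq: ranking MLE accuracy}, and a union bound over the $O(1/n^5)$-probability failure events of the spectral estimate, the Laplace tail bounds, and the \cite{chen2019spectral} analysis gives the stated probability.
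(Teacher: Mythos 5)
Your route differs from the paper's: the paper never introduces the unperturbed MLE $\hat\bth$ at all. Instead it reruns the entire leave-one-out gradient-descent induction of \cite{chen2019spectral} directly on the perturbed objective $\mathcal L(\bth;y)+\frac{\gamma}{2}\|\bth\|_2^2+\bm w^\top\bth$, starting the (purely theoretical) iterates at $\bth^*$, carrying four inductive hypotheses ($\ell_2$ error, leave-one-out coordinate error, leave-one-out proximity, $\ell_\infty$ error), and letting the noise enter each coordinate update only through the deterministic high-probability bounds $\|\bm w\|_\infty\lesssim\lambda\log n$ and $|\bm 1^\top\bm w|\lesssim\lambda\sqrt{n\log n}$, divided by the effective curvature $\asymp np$ on $\bm 1^\perp$ (and by $\gamma$ along $\bm 1$). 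No independence between $\bm w$ and any estimator is ever used; the $\lambda\log n/(np)$ term is exactly $\eta\|\bm w\|_\infty$ accumulated through the contraction.

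The central step of your plan has a genuine gap as sketched. You write $\widetilde\bth-\hat\bth=-\bar H^{-1}\bm w$ with $\bar H=\int_0^1\nabla^2\mathcal R(\hat\bth+t(\widetilde\bth-\hat\bth))\,dt$ and want to combine the row-wise bound $\|P_\perp\bar H^{-1}e_m\|_2\lesssim(np)^{-1}$ with concentration of $\langle \bar H^{-1}e_m,\bm w\rangle$; but that concentration needs the weight vector to be (nearly) independent of $\bm w$, whereas $\bar H$ depends on $\widetilde\bth$ and hence on the \emph{entire} noise vector. Your proposed fix---leave-one-out estimators that additionally drop $w_m$, then use independence of $w_m$ from $\widetilde\bth^{(m)}$---only decouples the single coordinate $w_m$ and does not remove this $n$-dimensional coupling; without independence the available bound is $\|P_\perp\bar H^{-1}e_m\|_2\|\bm w\|_2\asymp\lambda\sqrt n/(np)$, which misses the target by a factor $\sqrt n/\log n$. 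The decoupling you actually need is the one the paper uses: in the coordinatewise leave-one-out recursion only the comparison data $\{y_{mj}\}_j$ must be independent of the auxiliary iterate, while $\bm w$ is handled deterministically via $\|\bm w\|_\infty$, which already produces the $\lambda\log n/(np)$ contribution (so ``independence of $w_m$'' buys nothing). Two further points: your argument that the solution stays in the region where (A2) gives curvature $\gtrsim np$ is circular as stated (you invoke that the privacy term is $O(1)$, which is part of what is being proved; the paper avoids this by initializing the iterates at $\bth^*$ and re-verifying membership in the good region at every inductive step), and the non-private bound $\|\hat\bth-\bth^*\|_\infty\lesssim\sqrt{\log n/(np)}$ is available in \cite{chen2019spectral} only for the logistic $F$, so for general $F$ under (A0)--(A2) you would have to redo that induction anyway---at which point carrying $\bm w$ through it, as the paper does, costs nothing extra.
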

The proof is given in \cite{cai2023optimal}. Combining the privacy guarantee, Proposition \ref{prop: ranking MLE privacy}, with the rate of convergence, Proposition \ref{prop: ranking MLE accuracy} leads to the rate of convergence of our estimator $\widetilde \bth$.

\begin{Theorem}\label{thm: ranking upper bound}
	If $\gamma = c_0\sqrt{np\log n}$ for some absolute constant $c_0 > 0$, $p \geq c_1\log n/n\varepsilon$ for some absolute constant $c_1 > 0$, $\lambda = 8\kappa_1/\varepsilon$, and $c_2(\log n)^{-1/2} < \varepsilon < 1$ for some absolute constant $c_2 > 0$,  then the estimator $\widetilde \bth$ defined in \eqref{eq: ranking MLE definition} is $(\varepsilon, 0)$ edge differentially private, and it holds with probability at least $1 - O(n^{-5})$ that
	\begin{align}\label{eq: ranking upper bound}
		\|\widetilde \bth - \bth^*\|_\infty \lesssim \sqrt{\frac{\log n}{np}} + \frac{\log n}{np\varepsilon}.
	\end{align}
\end{Theorem}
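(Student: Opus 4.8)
The plan is to derive Theorem~\ref{thm: ranking upper bound} as an immediate consequence of Propositions~\ref{prop: ranking MLE privacy} and~\ref{prop: ranking MLE accuracy}: the only thing to do is verify that the hypotheses of the theorem imply the hypotheses of both propositions, which is a matter of tracking absolute constants. For the privacy half, Proposition~\ref{prop: ranking MLE privacy} requires $\lambda \geq 8\kappa_1/\varepsilon$ and $\gamma \geq 4\kappa_2/\varepsilon$. The first is satisfied with equality by the choice $\lambda = 8\kappa_1/\varepsilon$. For the second, I would use $p \geq c_1\log n/(n\varepsilon)$ to write $\gamma = c_0\sqrt{np\log n} \geq c_0\sqrt{c_1}\,(\log n)\,\varepsilon^{-1/2}$, and then observe that $\varepsilon > c_2(\log n)^{-1/2}$ forces $\varepsilon^{-1/2} \leq \log n$ for $n$ large, so that $(\log n)\,\varepsilon^{-1/2} \geq \varepsilon^{-1}$ and hence $\gamma \geq c_0\sqrt{c_1}\,\varepsilon^{-1} \geq 4\kappa_2/\varepsilon$ once $c_1$ is taken large enough in terms of $c_0$ and $\kappa_2$. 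Thus $\widetilde\bth$ is $(\varepsilon,0)$ edge differentially private.

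For the accuracy half, I would check the three conditions of Proposition~\ref{prop: ranking MLE accuracy}. The condition $\gamma = c_0\sqrt{np\log n}$ is imposed verbatim. The condition $p \gtrsim \lambda\log n/n$ becomes, after substituting $\lambda = 8\kappa_1/\varepsilon$, the requirement $p \gtrsim \kappa_1\log n/(n\varepsilon)$, which is implied by $p \geq c_1\log n/(n\varepsilon)$ for $c_1$ large enough. The condition that $\lambda$ lie between an absolute constant and an absolute-constant multiple of $\sqrt{\log n}$ follows because $\varepsilon < 1$ gives $\lambda = 8\kappa_1/\varepsilon > 8\kappa_1$, while $\varepsilon > c_2(\log n)^{-1/2}$ gives $\lambda < (8\kappa_1/c_2)\sqrt{\log n}$; it suffices to calibrate $c_2$ against the constant appearing in Proposition~\ref{prop: ranking MLE accuracy}. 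Invoking that proposition then yields, with probability at least $1 - O(n^{-5})$,
\begin{align*}
	\|\widetilde\bth - \bth^*\|_\infty \;\lesssim\; \sqrt{\frac{\log n}{np}} + \frac{\lambda\log n}{np} \;=\; \sqrt{\frac{\log n}{np}} + \frac{8\kappa_1\log n}{np\varepsilon} \;\lesssim\; \sqrt{\frac{\log n}{np}} + \frac{\log n}{np\varepsilon},
\end{align*}
which is exactly~\eqref{eq: ranking upper bound}.

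There is no genuine analytic obstacle in this theorem — all the substantive work lives in the two propositions (the objective-perturbation privacy argument, and the leave-one-out entrywise analysis adapted from \cite{chen2019spectral}), whose proofs are deferred to the supplement. The one point that deserves attention is that the single noise scale $\lambda = 8\kappa_1/\varepsilon$ must simultaneously be large enough to guarantee privacy (which it is, being of order $1/\varepsilon$) and small enough — of order $\sqrt{\log n}$ — to remain within the regime in which the entrywise bound of Proposition~\ref{prop: ranking MLE accuracy} is valid; this is precisely the role of the hypothesis $\varepsilon > c_2(\log n)^{-1/2}$. One should also confirm that the absolute constants $c_0, c_1, c_2$ named in the theorem can be chosen consistently with $\kappa_1, \kappa_2$ and with the (un-named) constants hidden inside Propositions~\ref{prop: ranking MLE privacy} and~\ref{prop: ranking MLE accuracy}.
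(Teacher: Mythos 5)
Your proposal is correct and follows exactly the paper's route: the theorem is obtained by verifying that the stated conditions make Propositions~\ref{prop: ranking MLE privacy} and~\ref{prop: ranking MLE accuracy} applicable and then substituting $\lambda = 8\kappa_1/\varepsilon \asymp 1/\varepsilon$ into \eqref{eq: ranking MLE accuracy}. Your constant-tracking (in particular checking $\gamma \geq 4\kappa_2/\varepsilon$ from $p \gtrsim \log n/(n\varepsilon)$ and $\varepsilon \gtrsim (\log n)^{-1/2}$, and placing $\lambda$ in the window $(c_2, c_2\sqrt{\log n})$) is exactly the bookkeeping the paper leaves implicit.
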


In Theorem \ref{thm: ranking upper bound}, the assumed conditions ensure Propositions \ref{prop: ranking MLE privacy} and \ref{prop: ranking MLE accuracy} are applicable. The upper bound \eqref{eq: ranking upper bound} follows from  \eqref{eq: ranking MLE accuracy} in Proposition \ref{prop: ranking MLE accuracy} by plugging in $\lambda \asymp 1/\varepsilon$. 
The entry-wise error bound implies that the latent parameters $(\theta^*_i)_{i \in [n]}$ can be ranked correctly as long as the true $k$th and $(k+1)$th ranked items are sufficiently separated in their $\theta$ values for all $k \in [n-1]$, 
\begin{align}\label{eq: parametric ranking separation threshold}
	|\theta^*_{(k)} - \theta^*_{(k+1)}| \gtrsim \sqrt{\frac{\log n}{np}} + \frac{\log n}{np\varepsilon}. 
\end{align}
More formally, if $\widetilde{\mathcal S}_k$ is the index of the top $k$ values of the vector $\tilde \bth$ then we have the following result for the recovery of the true top-$k$ set $\mathcal{S}_k$.
\begin{corollary}\label{thm: parametric-ranking-upper-bound}
	Under conditions of Theorem \ref{thm: ranking upper bound}, if \eqref{eq: parametric ranking separation threshold} holds for a fixed $k$, then
	$$
	\Pr(\widetilde{\mathcal S}_k \neq \mathcal{S}_k) = O(n^{-5}).
	$$
\end{corollary}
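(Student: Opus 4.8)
\textbf{Proof proposal for Corollary \ref{thm: parametric-ranking-upper-bound}.} The plan is to obtain the corollary as an essentially deterministic consequence of the $\ell_\infty$ bound already established in Theorem \ref{thm: ranking upper bound}: once $\widetilde\bth$ is uniformly close to $\bth^*$ and the $k$-th and $(k+1)$-th largest latent parameters are separated by more than twice the $\ell_\infty$ error, the indices of the $k$ largest coordinates of $\widetilde\bth$ must coincide with $\mathcal S_k$. No new probabilistic argument beyond Theorem \ref{thm: ranking upper bound} is needed.

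Concretely, I would first let $\mathcal E$ denote the event from Theorem \ref{thm: ranking upper bound} on which $\|\widetilde\bth - \bth^*\|_\infty \le C_0\big(\sqrt{\log n/np} + \log n/(np\varepsilon)\big)$, where $C_0$ is the absolute constant implicit in \eqref{eq: ranking upper bound}; thus $\Pr(\mathcal E^c) = O(n^{-5})$, and all remaining work is on $\mathcal E$. Write $\theta^*_{(1)} \ge \theta^*_{(2)} \ge \cdots \ge \theta^*_{(n)}$ for the sorted true parameters, so that $\mathcal S_k$ is the index set of the $k$ largest entries; reading the separation hypothesis \eqref{eq: parametric ranking separation threshold} with a sufficiently large implied constant gives $\theta^*_{(k)} - \theta^*_{(k+1)} > 2C_0\big(\sqrt{\log n/np} + \log n/(np\varepsilon)\big)$, which in particular makes $\mathcal S_k$ unambiguous at the boundary. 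Then for any $i \in \mathcal S_k$ and $j \notin \mathcal S_k$, on $\mathcal E$,
\[
\widetilde\theta_i - \widetilde\theta_j \;\ge\; (\theta^*_i - \theta^*_j) - 2\|\widetilde\bth - \bth^*\|_\infty \;\ge\; \big(\theta^*_{(k)} - \theta^*_{(k+1)}\big) - 2C_0\Big(\sqrt{\tfrac{\log n}{np}} + \tfrac{\log n}{np\varepsilon}\Big) \;>\; 0 .
\]
Hence every coordinate of $\widetilde\bth$ indexed by $\mathcal S_k$ strictly exceeds every coordinate indexed outside $\mathcal S_k$, so the $k$ largest coordinates of $\widetilde\bth$ are exactly those in $\mathcal S_k$ (the strict inequality also rules out any boundary tie in $\widetilde\bth$), i.e.\ $\widetilde{\mathcal S}_k = \mathcal S_k$ on $\mathcal E$. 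Taking complements yields $\Pr(\widetilde{\mathcal S}_k \ne \mathcal S_k) \le \Pr(\mathcal E^c) = O(n^{-5})$.

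The only point requiring care — and the "main obstacle," though a mild bookkeeping one rather than a genuine difficulty — is the matching of constants: the implied constant in the separation threshold \eqref{eq: parametric ranking separation threshold} must be taken at least twice the constant $C_0$ from the conclusion of Theorem \ref{thm: ranking upper bound}. Since \eqref{eq: parametric ranking separation threshold} is stated with a free $\gtrsim$, this is consistent with the hypothesis, and the argument above goes through verbatim.
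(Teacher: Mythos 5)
Your proposal is correct and matches the paper's intended argument: the corollary is derived directly from the $\ell_\infty$ bound of Theorem \ref{thm: ranking upper bound} by noting that, on the high-probability event, a separation of more than twice the entrywise error forces every estimated coordinate in $\mathcal S_k$ to exceed every estimated coordinate outside it, so $\widetilde{\mathcal S}_k = \mathcal S_k$. The constant-matching remark (taking the implied constant in \eqref{eq: parametric ranking separation threshold} at least $2C_0$) is exactly the bookkeeping the paper leaves implicit.
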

In the separation condition \eqref{eq: parametric ranking separation threshold}, the $O\left(\frac{\log n}{np\varepsilon}\right)$ due to the differential privacy constraint can dominate the $O(\sqrt{\frac{\log n}{np}})$ term, which is optimal in the non-private case, when for example $\varepsilon \asymp (\log n)^{-1/2}$ and $p \ll \frac{\log^2 n}{n}$. The potentially severe cost of requiring differential privacy motivates the next section which studies the inevitable cost of privacy for entry-wise estimation of $\bth^*$.


\subsubsection{The Edge DP Lower Bound for Parametric Estimation}
\label{sec: ranking estimation lower bound}

For an arbitrary $(\varepsilon, \delta)$-DP estimator $M(\bm Y)$ of $\bth$, we would like to find a lower bound for the maximum $\ell_\infty$ risk $\sup_{\theta \in \Theta} \E\|M(\bm Y) - \bth\|_\infty$ over the parameter space $\Theta = \{\bth \in \R^n: \|\bth\|_\infty \leq 1\}$, which captures the inevitable cost of differential privacy for estimating $\bth$.

To this end, we consider an entry-wise version of the score attack method \cite{cai2023score}:
\begin{align*}
	\mathcal A^{(k)}(M(\bm Y), Y_{ij}) = \begin{cases}
		0 &(i, j) \not\in \mathcal G \text{ or } i, j \neq k, \\
		(M(\bm Y)_k - \theta_k)(y_{kj} - F_{kj}(\bth))\frac{F'_{kj}(\bth)}{F_{kj}(\bth)(1 - F_{kj}(\bth))} &(i, j) \in \mathcal G \text{ and } i = k, \\
		(M(\bm Y)_k - \theta_k)(y_{ik} - F_{ik}(\bth))\frac{F'_{ik}(\bth)}{F_{ik}(\bth)(1 - F_{ik}(\bth))} &(i, j) \in \mathcal G \text{ and } j = k.
	\end{cases}
\end{align*}
It is an entry-wise version of the score attack in the sense that summing $\mathcal A^{(k)}(M(\bm Y), Y_{ij})$ over $k \in [n]$ is exactly equal to the score attack for lower bounding the $\ell_2$ minimax risk. When the reference to $\bm Y$ and $M$ is clear, we denote $	\mathcal A^{(k)}(M(\bm Y), Y_{ij})$ by $A^{(k)}_{ij}$.

Our plan for lower bounding the $\ell_\infty$ risk consists of upper bounding $\sum_{1 \leq i < j \leq n} \E A^{(k)}_{ij}$ by the $\ell_\infty$ risk and lower bounding the same quantity by a non-negative amount. The results of these steps are condensed in Propositions \ref{prop: ranking attack soundness} and \ref{prop: ranking attack completeness} respectively.

\begin{proposition}\label{prop: ranking attack soundness}
	If $M$ is an $(\varepsilon, \delta)$-DP estimator with $0 < \varepsilon < 1$ and $p > 6\log n/n$, then for sufficiently large $n$, every $\bth \in \Theta$ and every $k \in [n]$, it holds that
	\begin{align}\label{eq: ranking attack soundness}
		\sum_{1 \leq i < j \leq n} \E_{\bm Y|\bth} A^{(k)}_{ij} \leq 4\kappa_1 np\varepsilon \cdot \E_{\bm Y|\bth} |M(\bm Y)_k - \theta_k| + 4\kappa_1 (n-1)\delta+ 2\kappa_1 n^{-1} .
	\end{align} 
\end{proposition}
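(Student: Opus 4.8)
\emph{Overview and reductions.} The plan is to run the ``soundness'' half of the score‑attack computation edge by edge, the delicate point being to retain the factor $p$ coming from the sparsity of $\mathcal G$. First I would pass to estimators $M$ valued in $\Theta$: in the lower‑bound argument $M$ may be replaced by its coordinatewise projection onto $[-1,1]^n$, which is a post‑processing (hence still $(\varepsilon,\delta)$‑DP) and, since $\bth\in\Theta$, does not increase any $|M(\bm Y)_k-\theta_k|$; so I may assume $|M(\bm Y)_k-\theta_k|\le 2$, and I write $M_k:=M(\bm Y)_k$. I would also note that $A^{(k)}_{ij}$ vanishes unless $(i,j)\in\mathcal G$ and $k\in\{i,j\}$, so $\sum_{1\le i<j\le n}A^{(k)}_{ij}=\sum_{j\ne k}A^{(k)}_{kj}$ (writing $A^{(k)}_{kj}$ for the attack term attached to the edge $\{k,j\}$), a sum over the at most $n-1$ edges incident to $k$; on such an edge $A^{(k)}_{kj}=c_{kj}\,(M_k-\theta_k)(Y_{kj}-F_{kj}(\bth))$ with $0<c_{kj}=F'_{kj}(\bth)/\big(F_{kj}(\bth)(1-F_{kj}(\bth))\big)<\kappa_1$ by~(A1) and $F_{kj}(\bth)=\rho_{kj}$. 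It then suffices to bound $\E_{\bm Y|\bth}A^{(k)}_{kj}$ for each fixed $j\ne k$ and sum.

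\emph{Per-edge bound.} Fix $j\ne k$, condition on $\mathcal G$, and suppose $(k,j)\in\mathcal G$. Since $\E[Y_{kj}-\rho_{kj}\mid\mathcal G]=0$, one has $\E[A^{(k)}_{kj}\mid\mathcal G]=c_{kj}\,\mathrm{Cov}\big(M_k,Y_{kj}\mid\mathcal G\big)=c_{kj}\,\rho_{kj}(1-\rho_{kj})\,\Delta$, where $\Delta:=\E[M_k\mid Y_{kj}=1,\mathcal G]-\E[M_k\mid Y_{kj}=0,\mathcal G]$ --- this is exactly the assertion that the resampled score attack has conditional mean zero. For each fixed configuration $\bm Y_{-kj}$ of the remaining outcomes, the data sets obtained by setting $Y_{kj}$ to $1$ resp.\ $0$ keep $\mathcal G$ unchanged and differ only on the edge $(k,j)$, hence are adjacent in the sense of Section~\ref{sec: edge dp problem formulation}. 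Applying the $(\varepsilon,\delta)$‑DP inequality to the $[0,2]$‑valued functions $m\mapsto(m_k-\theta_k)^{+}$ and $m\mapsto(m_k-\theta_k)^{-}$, subtracting, and averaging over $\bm Y_{-kj}$ yields $|\Delta|\le(e^{\varepsilon}-1)\big(\E[\,|M_k-\theta_k|\mid Y_{kj}=0,\mathcal G]+\E[\,|M_k-\theta_k|\mid Y_{kj}=1,\mathcal G]\big)+4\delta$. Multiplying by $\rho_{kj}(1-\rho_{kj})$, using $\E[\,\cdot\mid Y_{kj}=b,\mathcal G]\le\E[\,\cdot\mid\mathcal G]/\Pr(Y_{kj}=b\mid\mathcal G)$ for $b\in\{0,1\}$ so that the two prefactors telescope, and $4\rho_{kj}(1-\rho_{kj})\le1$, gives $\E[A^{(k)}_{kj}\mid\mathcal G]\le\kappa_1(e^{\varepsilon}-1)\,\E[\,|M_k-\theta_k|\mid\mathcal G]+\kappa_1\delta$ on $\{(k,j)\in\mathcal G\}$. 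Taking $\E_{\mathcal G}$ and reinstating the indicator (of probability $p$) gives $\E_{\bm Y|\bth}A^{(k)}_{kj}\le\kappa_1(e^{\varepsilon}-1)\,\E\big[\,|M_k-\theta_k|\,\1[(k,j)\in\mathcal G]\big]+\kappa_1 p\delta$.

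\emph{Summation and cleanup.} Writing $d_k:=\sum_{j\ne k}\1[(k,j)\in\mathcal G]\sim\mathrm{Bin}(n-1,p)$ and summing the previous display over $j\ne k$ gives $\sum_{i<j}\E A^{(k)}_{ij}\le\kappa_1(e^{\varepsilon}-1)\,\E\big[\,|M_k-\theta_k|\,d_k\big]+(n-1)\kappa_1 p\delta$. On $\{d_k\le2np\}$ I bound $d_k\le2np$; on its complement I use $|M_k-\theta_k|\le2$ with a multiplicative Chernoff bound for $\mathrm{Bin}(n-1,p)$, which, since $np>6\log n$, gives $\E[\,d_k\,\1[d_k>2np]\,]\lesssim(\log n)/n^{2}$. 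Hence $\E[\,|M_k-\theta_k|\,d_k\,]\le 2np\,\E|M_k-\theta_k|+O\big((\log n)/n^{2}\big)$. Finally $e^{\varepsilon}-1\le2\varepsilon$ on $(0,1)$, $(n-1)p\delta\le4(n-1)\delta$, and $\kappa_1(e^{\varepsilon}-1)\,O((\log n)/n^{2})\le2\kappa_1 n^{-1}$ for $n$ large together recover $\sum_{i<j}\E_{\bm Y|\bth} A^{(k)}_{ij}\le 4\kappa_1 np\varepsilon\,\E_{\bm Y|\bth}|M(\bm Y)_k-\theta_k|+4\kappa_1(n-1)\delta+2\kappa_1 n^{-1}$.

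\emph{Main obstacle.} The crux is getting the leading term to scale with $np$ rather than the trivial $n-1$: one must carry the Erd\H{o}s--R\'enyi indicator $\1[(k,j)\in\mathcal G]$ through the DP step and only at the end convert $\E[\,|M_k-\theta_k|\,d_k\,]$ to $\approx 2np\,\E|M_k-\theta_k|$, which requires \emph{both} the truncation bound $|M_k-\theta_k|\le 2$ (since $M_k$ and $d_k$ are dependent) and the degree concentration guaranteed by $p>6\log n/n$. A related subtlety is that the adjacency relation of Section~\ref{sec: edge dp problem formulation} never adds or removes an edge, so the resampling in the de‑biasing step may only flip an outcome on an already‑present edge --- which is precisely why the indicator cannot be eliminated before the final summation, and why the naive argument would overcount by a factor $p^{-1}$.
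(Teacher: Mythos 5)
Your proposal is correct, and its skeleton is the same as the paper's: a per-comparison DP perturbation bound that turns the dependence of $M(\bm Y)_k$ on a single $Y_{kj}$ into a term of order $\varepsilon\,\E|M_k-\theta_k|+\delta$, followed by degree concentration of $d_{\mathcal G}(k)\sim\mathrm{Bin}(n-1,p)$ under $p>6\log n/n$ to convert the sum over the $n-1$ potential edges at $k$ into the factor $np$. The execution of the per-edge step differs in flavor: the paper forms the adjacent data set $\widetilde{\bm Y}_{ij}$ by \emph{resampling} $Y_{ij}$ with an independent copy, uses $\E\widetilde A^{(k)}_{ij}=0$ by independence, and converts the probability-level DP guarantee to an expectation bound via the tail-integral decomposition truncated at $T=2\kappa_1$; you instead condition on $\mathcal G$ and $\bm Y_{-kj}$, write $\E[A^{(k)}_{kj}\mid\mathcal G]=c_{kj}\,\rho_{kj}(1-\rho_{kj})\,\Delta$ with $\Delta$ the gap of conditional means, and bound $\Delta$ by applying DP directly to the bounded nonnegative functions $(M_k-\theta_k)^{\pm}$ across the two fixed adjacent data sets obtained by flipping $Y_{kj}$, with the $\rho(1-\rho)$ prefactor telescoping against the conditioning probabilities. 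These are equivalent incarnations of the score-attack soundness argument and yield the same bound; your version avoids the truncation-level bookkeeping at the cost of the covariance/conditioning manipulation. Two small points: the WLOG truncation $|M_k-\theta_k|\le 2$ via projection onto $\Theta$ is exactly the reduction the paper also invokes, and your sharper claim $\E\bigl[d_k\1\{d_k>2np\}\bigr]\lesssim(\log n)/n^{2}$ is correct (the map $x\mapsto x e^{-x/3}$ is decreasing for $x\ge 3$, so the worst case is $np\asymp\log n$) and is indeed what lets your constants land within the stated $2\kappa_1 n^{-1}$ slack for large $n$, whereas the paper's cruder $2n\exp(-np/3)\le 2n^{-1}$ relies on the same ``sufficiently large $n$'' allowance.
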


Proposition \ref{prop: ranking attack soundness} is proved by considering $\widetilde{\bm Y}_{ij}$, an adjacent data set of $\bm Y$ obtained by replacing $Y_{ij}$ with an independent copy. By differential privacy of algorithm $M$, $\E A^{(k)}_{ij}$ should be close to $\E A^{(k)}(M(\widetilde{\bm Y}_{ij}), Y_{ij})$, which is seen to be exactly 0 by the statistical independence of $M(\widetilde{\bm Y}_{ij})$ and $Y_{ij}$. The full details can be found in the supplement.

In the opposing direction, instead of a pointwise lower bound of $\sum_{1 \leq i < j \leq n} \E_{\bm Y|\bth} A^{(k)}_{ij}$ at every $\bth \in \Theta$, we lower bound the sum over a particular prior distribution of $\bth$ over $\Theta$.

\begin{proposition}\label{prop: ranking attack completeness}
	Suppose $M$ is an estimator of $\bth$ such that $\sup_{\bth \in \Theta} \E\|M(\bm Y) - \bth\|_\infty < c$ for a sufficiently small constant $c > 0$. If each coordinate of $\bth$ has density $\pi(t) = \1(|t| < 1)(15/16)(1-t^2)^2$, then for every $k \in [n]$ there is some constant $C > 0$ such that\begin{align}\label{eq: ranking attack completeness}
		\sum_{1 \leq i < j \leq n} \E_\bth \E_{Y|\bth} A^{(k)}_{ij} > C.
	\end{align}
\end{proposition}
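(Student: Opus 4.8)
The engine of the argument is the structural fact that, for a fixed comparison graph $\mathcal G$ and a fixed coordinate $k$, the sum $\sum_{1\le i<j\le n}A^{(k)}_{ij}$ is exactly $\bigl(M(\bm Y)_k-\theta_k\bigr)$ times the score of the conditional likelihood of $\bm Y$ in the direction $\theta_k$. Indeed, writing $\ell_{ij}(\bth)$ for the log-likelihood contribution of $Y_{ij}$, a short computation --- using that $Y_{ij}$ has mean $F_{ij}(\bth)$, and that $F(x)=1-F(-x)$ (hence $F'(x)=F'(-x)$) for the comparisons in which $k$ appears as the second index --- shows $A^{(k)}_{ij}=(M(\bm Y)_k-\theta_k)\,\partial_{\theta_k}\ell_{ij}(\bth)$ for every edge incident to $k$, so that $\sum_{i<j}A^{(k)}_{ij}=(M(\bm Y)_k-\theta_k)\,\partial_{\theta_k}\log p(\bm Y\mid\bth,\mathcal G)$. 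We may also assume $\|M(\bm Y)\|_\infty\le1$ almost surely: clipping $M$ coordinatewise to $[-1,1]$ does not increase its $\ell_\infty$-risk over $\Theta$, and it is this truncated estimator to which both the present proposition and Proposition~\ref{prop: ranking attack soundness} are ultimately applied. With these two facts, the plan is a three-step ``score attack'' computation: a score identity, an integration by parts against the prior, and a Cauchy--Schwarz bound on the remaining bias term.

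\emph{Step 1 (score identity).} For fixed $\mathcal G$, the law of $\bm Y\mid(\bth,\mathcal G)$ is a finite product of $\mathrm{Bernoulli}(F_{ij}(\bth))$ distributions that are smooth in $\bth$, and $M$ is bounded, so differentiation under the finite sum over realizations of $\bm Y$ is valid and yields $\E_{\bm Y\mid\bth,\mathcal G}[\partial_{\theta_k}\log p]=0$ and $\E_{\bm Y\mid\bth,\mathcal G}[M(\bm Y)_k\,\partial_{\theta_k}\log p]=\partial_{\theta_k}g_k(\bth;\mathcal G)$, where $g_k(\bth;\mathcal G):=\E[M(\bm Y)_k\mid\bth,\mathcal G]$. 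Combining with Step 0,
\[
\E_{\bm Y\mid\bth,\mathcal G}\Big[\textstyle\sum_{1\le i<j\le n}A^{(k)}_{ij}\Big]=\partial_{\theta_k}g_k(\bth;\mathcal G).
\]
\emph{Step 2 (integrate against the prior).} Average over $\bth$, whose coordinates are i.i.d.\ with density $\pi(t)=\tfrac{15}{16}(1-t^2)^2$ on $(-1,1)$. Integrating by parts in $\theta_k$, the boundary term drops because $\pi(\pm1)=0$ and $g_k$ is bounded, so the left side becomes $-\E_\bth[g_k\,\pi'(\theta_k)/\pi(\theta_k)]$; since $\pi'(t)/\pi(t)=-4t/(1-t^2)$, unfolding $g_k$, splitting $M(\bm Y)_k=(M(\bm Y)_k-\theta_k)+\theta_k$, and averaging also over $\mathcal G$ (independent of $\bth$) gives
\[
\E_\bth\E_{\bm Y\mid\bth}\Big[\textstyle\sum_{1\le i<j\le n}A^{(k)}_{ij}\Big]=\E_\bth\Big[\tfrac{4\theta_k^2}{1-\theta_k^2}\Big]+\E_\bth\E_{\bm Y\mid\bth}\Big[(M(\bm Y)_k-\theta_k)\,\tfrac{4\theta_k}{1-\theta_k^2}\Big],
\]
and a one-line integral gives $\E_\bth[4\theta_k^2/(1-\theta_k^2)]=\tfrac{15}{4}\int_{-1}^1 t^2(1-t^2)\,\mathrm{d}t=1$.

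\emph{Step 3 (bound the bias and conclude).} By Cauchy--Schwarz the second term has absolute value at most $\sqrt{\E_\bth\E_{\bm Y\mid\bth}(M(\bm Y)_k-\theta_k)^2}\cdot\sqrt{\E_\bth[16\theta_k^2/(1-\theta_k^2)^2]}$, and the second factor equals $\sqrt{15\int_{-1}^1 t^2\,\mathrm{d}t}=\sqrt{10}$ because the $(1-t^2)^2$ in $\pi$ cancels the denominator exactly. For the first factor, $\|M\|_\infty\le1$ and $\bth\in\Theta$ give $(M(\bm Y)_k-\theta_k)^2\le2|M(\bm Y)_k-\theta_k|\le2\|M(\bm Y)-\bth\|_\infty$, hence $\E_\bth\E_{\bm Y\mid\bth}(M(\bm Y)_k-\theta_k)^2\le2\sup_{\bth\in\Theta}\E\|M(\bm Y)-\bth\|_\infty<2c$. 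Therefore $\E_\bth\E_{\bm Y\mid\bth}[\sum_{i<j}A^{(k)}_{ij}]\ge1-\sqrt{20c}$, which exceeds a fixed constant $C>0$ (e.g.\ $C=\tfrac12$ once $c\le\tfrac1{80}$), uniformly in $k\in[n]$.

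\emph{Where the work is.} The calculus in Steps 1--3 is routine; the genuine content is the choice of the prior density. It must vanish at $\pm1$ so that the integration-by-parts boundary term in Step 2 disappears, but it must \emph{not} decay too quickly: the score weight $\pi'/\pi$ behaves like $t/(1-t^2)$, and forcing its second moment under $\pi$ to be finite --- precisely what the Cauchy--Schwarz step needs to produce a constant rather than a blow-up near $\pm1$ --- requires $\pi$ to vanish no faster than quadratically, so $\pi(t)\propto(1-t^2)^2$ is the natural minimal-order polynomial choice, and the constants $1$ and $10$ then fall out. The secondary point to be careful about is the reduction to a bounded estimator, which is what makes $\E(M(\bm Y)_k-\theta_k)^2$ finite and controllable by the assumed $\ell_\infty$-risk bound; this is harmless because in the application (combining with Proposition~\ref{prop: ranking attack soundness}) one always works with the clipped estimator.
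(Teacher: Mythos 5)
Your argument is correct and follows essentially the same route as the paper's proof: the same identity expressing $\sum_{i<j}A^{(k)}_{ij}$ as $(M(\bm Y)_k-\theta_k)$ times the score in the $\theta_k$ direction, the same differentiation-under-the-expectation step yielding $\partial_{\theta_k}\E_{\bm Y|\bth}M(\bm Y)_k$, and the same integration by parts against the prior $\pi(t)\propto(1-t^2)^2$ with the main term evaluating to a positive constant. The only difference is the handling of the cross term: you clip $M$ to $[-1,1]$ and use Cauchy--Schwarz with the second moment of $\pi'/\pi$ (obtaining $1-\sqrt{20c}$), whereas the paper bounds it directly by $c\,\E\left|\pi'(\theta_k)/\pi(\theta_k)\right|=\tfrac{15}{8}c$ via the pointwise bound $|\E_{\bm Y|\bth}M(\bm Y)_k-\theta_k|\le c$, which needs no boundedness reduction; both yield the claimed constant for sufficiently small $c$.
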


We defer the proof of Proposition \ref{prop: ranking attack completeness} to the supplement, and combine Propositions \ref{prop: ranking attack soundness} and \ref{prop: ranking attack completeness} to arrive at a minimax risk lower bound in $\ell_\infty$ norm for estimating $\bth$ with differential privacy.
\begin{Theorem}\label{thm: ranking lower bound}
	If $p > 6\log n/n$, $\varepsilon \gtrsim (\log n)^{-1}$, $0 < \varepsilon < 1$ and $\delta \lesssim n^{-1}$, then 
	\begin{align} \label{eq: ranking lower bound}
		\inf_{M \in M_{\varepsilon, \delta}}\sup_{\bth \in \Theta} \E_{\bm Y|\bth} \|M(\bm Y) - \bth\|_\infty \gtrsim \sqrt{\frac{\log n}{np}} + \frac{1}{np\varepsilon}.
	\end{align}
\end{Theorem}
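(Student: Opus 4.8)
The right-hand side of \eqref{eq: ranking lower bound} is, up to a factor of two, the maximum of the two terms $\sqrt{\log n/(np)}$ and $1/(np\varepsilon)$, so I would lower bound the minimax DP risk by each term separately and then use $\max\{a,b\}\ge (a+b)/2$. For the statistical term $\sqrt{\log n/(np)}$, I would simply observe that $M_{\varepsilon,\delta}$ is contained in the class of all randomized estimators, so the private minimax $\ell_\infty$ risk over $\Theta$ is at least the non-private one; the latter is $\gtrsim \sqrt{\log n/(np)}$ by a standard multiple-hypothesis (Fano-type) construction that perturbs each of the $n$ coordinates by $\asymp\sqrt{\log n/(np)}$, recovering the $\sqrt{\log n}$ factor from the logarithm of the number of hypotheses (this is the lower bound matching the minimax-optimal penalized MLE of \cite{chen2019spectral}). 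The real content of the theorem is therefore the privacy term $1/(np\varepsilon)$, which I would obtain by playing Propositions \ref{prop: ranking attack soundness} and \ref{prop: ranking attack completeness} against each other.

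Fix an $(\varepsilon,\delta)$-DP estimator $M$ and write $r(M)=\sup_{\bth\in\Theta}\E_{\bm Y|\bth}\|M(\bm Y)-\bth\|_\infty$. I would split into two cases. If $r(M)\ge c$, where $c$ is the small absolute constant from Proposition \ref{prop: ranking attack completeness}, then the hypotheses $p>6\log n/n$ and $\varepsilon\gtrsim(\log n)^{-1}$ force $np\varepsilon\gtrsim \log n\cdot(\log n)^{-1}\asymp 1$, so $r(M)\ge c\gtrsim 1/(np\varepsilon)$ and we are done. Otherwise $r(M)<c$, so Proposition \ref{prop: ranking attack completeness} applies with the product prior $\bth\sim\pi^{\otimes n}$, $\pi(t)=\tfrac{15}{16}(1-t^2)^2\1(|t|<1)$ (whose support lies inside $\Theta$), giving for every $k\in[n]$
\begin{align*}
\sum_{1\le i<j\le n}\E_\bth\E_{\bm Y|\bth}A^{(k)}_{ij}>C .
\end{align*}
Integrating the pointwise soundness bound \eqref{eq: ranking attack soundness} of Proposition \ref{prop: ranking attack soundness} against the same prior yields, for every $k$,
\begin{align*}
\sum_{1\le i<j\le n}\E_\bth\E_{\bm Y|\bth}A^{(k)}_{ij}\le 4\kappa_1 np\varepsilon\,\E_\bth\E_{\bm Y|\bth}|M(\bm Y)_k-\theta_k|+4\kappa_1(n-1)\delta+2\kappa_1 n^{-1}.
\end{align*}
Combining the two displays, and using $\delta\lesssim n^{-1}$ (with the implied constant small enough relative to $C/\kappa_1$) together with $n$ large so that the remainder $4\kappa_1(n-1)\delta+2\kappa_1 n^{-1}$ is at most $C/2$, we get $\E_\bth\E_{\bm Y|\bth}|M(\bm Y)_k-\theta_k|>C/(8\kappa_1 np\varepsilon)$ for every $k$. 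Since $\|M(\bm Y)-\bth\|_\infty\ge|M(\bm Y)_k-\theta_k|$, the Bayes risk under $\pi^{\otimes n}$—and hence $r(M)$, which dominates it—exceeds $C/(8\kappa_1 np\varepsilon)\gtrsim 1/(np\varepsilon)$. Taking the infimum over $M\in M_{\varepsilon,\delta}$ and combining with the statistical term gives \eqref{eq: ranking lower bound}.

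The genuinely hard work lies inside the two propositions, whose proofs I would defer to the supplement exactly as the paper does, so at the level of assembling the theorem there is no serious obstacle—only the case split (to excise the degenerate constant-risk regime, where $1/(np\varepsilon)\lesssim1$ is trivial) and the bookkeeping that the stated ranges of $\delta$, $p$, $\varepsilon$ make the additive remainders negligible. For orientation: the delicate point in Proposition \ref{prop: ranking attack soundness} is that resampling a single comparison $Y_{ij}$ produces an adjacent dataset, so $(\varepsilon,\delta)$-DP lets one swap $M(\bm Y)$ for $M(\widetilde{\bm Y}_{ij})$ inside $\E A^{(k)}_{ij}$ at cost $O(\varepsilon\,\E|M_k-\theta_k|+\delta)$ per edge, the decoupled term vanishing because the fresh $Y_{ij}$ has mean $F_{ij}(\bth)$ and is independent of $M(\widetilde{\bm Y}_{ij})$; summing over the $\asymp np$ edges incident to $k$ is what produces the $np\varepsilon$ prefactor (with $n\delta$ and $n^{-1}$ union-bound leftovers). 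The main obstacle in Proposition \ref{prop: ranking attack completeness} is the usual score-attack completeness estimate: a score/Stein identity plus integration by parts against the polynomial prior reduces $\sum_{i<j}\E_\bth\E_{\bm Y|\bth}A^{(k)}_{ij}$ to $\E_\bth\!\big[\E[M(\bm Y)_k\mid\bth]\cdot(-\partial_{\theta_k}\log\pi(\theta_k))\big]$, a positive constant once $r(M)$ is below threshold, where one tames the unbounded weight $-\partial_t\log\pi(t)=4t/(1-t^2)$ near $t=\pm1$ by truncating $|\theta_k|$ away from the endpoints and controlling the small-probability complement.
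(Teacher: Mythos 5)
Your proposal is correct and follows essentially the same route as the paper: the statistical term is inherited from the non-private minimax lower bound, and the privacy term $1/(np\varepsilon)$ comes from playing Proposition \ref{prop: ranking attack completeness} (applied under the product prior when the worst-case risk is below the constant threshold $c$) against Proposition \ref{prop: ranking attack soundness}, with the same case split handling estimators whose risk already exceeds $c$ via $np\varepsilon \gtrsim 1$. Your extra remark that the implied constant in $\delta \lesssim n^{-1}$ must be small enough for the additive remainder to stay below $C$ is a point the paper glosses over, but it is bookkeeping rather than a different argument.
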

The first term in the lower bound \eqref{eq: ranking lower bound} is exactly the non-private minimax rate proved in \cite{shah2017simple, chen2019spectral}. The second term is attributable to differential privacy.

The lower bound result given in Theorem \ref{thm: ranking lower bound} suggests that the perturbed MLE $\widetilde\bth$ is essentially optimal except possibly a room of improvement by $O(\log n)$, but there is no implication about differentially private ranking algorithms not based on estimating the latent parameters $\bth$. The next section considers differentially private ranking without relying on the parametric assumptions.

\vspace{-5mm}

\subsection{Nonparametric Estimation under Edge DP}\label{sec: edge dp nonparametric ranking}

If we drop the parametric assumption $\rho_{ij} = F(\theta^*_i - \theta^*_j)$, the estimand of interest shifts from $\bth^*$ to the index set of top-$k$ items $\mathcal S_k$ for $k \in [n-1]$ in terms of the average winning probability, $\tau_i = \frac{1}{n}\sum_{j \in [n]} \rho_{ij}$. In Section \ref{sec: nonparametric ranking upper bound}, we exhibit a differentially private estimator of $\mathcal S_k$ which exactly recovers $\mathcal S_k$ when $\tau_{(k)}$ and $\tau_{(k+1)}$ are sufficiently far apart,
\begin{align*}
	|\tau_{(k)} - \tau_{(k+1)}| \gtrsim \sqrt{\frac{\log n}{np}} + \frac{\log n}{np\varepsilon}. 
\end{align*}
It is not a coincidence that the requisite separation is identical to its parametric counterpart \eqref{eq: parametric ranking separation threshold}. We prove in Section \ref{sec: nonparametric lower bound} that this separation is the exact threshold for differentially private ranking in either parametric or nonparametric case.

Formally, consider the space of pairwise probability matrices
\begin{align*}
	\Theta(k,m,c) = \left\{\bm \rho \in [0,1]^{n \times n}: \bm \rho + \bm \rho^\top = 11^\top, \tau_{(k-m)} - \tau_{(k+m+1)} \geq c\left(\sqrt{\frac{\log n}{np}} + \frac{\log n}{np\varepsilon}\right)\right\},
\end{align*}
for $k \in [n-1]$ and $0 \leq m \leq \min(k-1, n-k-1)$. Let $d_H(\cdot, \cdot)$ denote the Hamming distance between sets, and an estimator $\widehat{\mathcal S_k}$ succeeds at recovering $\mathcal S_k$ within tolerance $m$ if
\begin{align*}
	\sup_{\rho \in \Theta(k, m, c)} \Pro\left(d_H(\widehat{\mathcal S_k}, \mathcal S_k) > 2m \right) =  o(1).
\end{align*}
Exact recovery of $\mathcal S_k$ corresponds to $m = 0$. By adopting a similar framework to that of \cite{shah2017simple}, we can directly compare the requisite threshold for top-$k$ ranking with or without differential privacy.

\subsubsection{The Edge DP Algorithm for Non-Parametric Estimation}
\label{sec: nonparametric ranking upper bound}

\cite{shah2017simple} shows that the Copeland counting algorithm, which simply ranks the $n$ items by their number of wins, exactly recovers the top-$k$ items when the $\tau$ values of the true $k$th and $(k+1)$th items are separated by at least $O\left(\sqrt{\frac{\log n}{np}}\right)$. Algorithm \ref{alg:DP-nonparametric-ranking} considers a differentially private version where the items are ranked by noisy numbers of wins.

\begin{algorithm}
	\caption{Differentially Private Ranking for nonparametric models} 
	\label{alg:DP-nonparametric-ranking}
	\textbf{Input}: Comparison data $(y_{ij})_{(i,j)\in \mathcal{G}}$, comparison graph $\mathcal{G}$, privacy parameter $\varepsilon$.
	\begin{algorithmic}[1]
		\State Set $N_i = \sum_{j \neq i,(i,j)\in \mathcal{G}} \1(Y_{ij} = 1)$ denote the number of comparisons won by item $i$.
		\State Generate 
		$$
		\bm W = (W_1, W_2, \cdots, W_n) \stackrel{\text{i.i.d.}}{\sim }\text{Laplace}\left(\frac{2}{\varepsilon}\right).
		$$
		\State Compute the top-$k$ set 
		\begin{align*}
			\widetilde {\mathcal S}_k = \{i \in [n]: N_i + W_i \text{ is among the top $k$ largest of }\{N_j + W_j\}_{j \in [n]}\}.
		\end{align*}
	\end{algorithmic} 
	\textbf{Output}: $\widetilde {\mathcal S}_k$.
\end{algorithm}

The estimator $\widetilde {\mathcal S}_k$ defined in Algorithm \ref{alg:DP-nonparametric-ranking} is $(\varepsilon, 0)$-DP by the Laplace mechanism \cite{dwork2006calibrating}: the vector $(N_1, N_2, \cdots, N_n)$ has $\ell_1$-sensitivity bounded by 2 over edge adjacent data sets, and the set $\widetilde {\mathcal S}_k$ is differentially private because it post-processes $\{N_j + W_j\}_{j \in [n]}$. 

$\widetilde {\mathcal S}_k$ recovers $\mathcal S_k$ within tolerance $m$ as long as $\tau_{(k-m)}, \tau_{(k+m+1)}$ are sufficiently separated.

\begin{Theorem}\label{thm: ranking nonparametric upper bound}
	For every $k \in [n-1]$ and any sufficiently large constant $C > 0$, 
	\begin{align}\label{eq: ranking nonparametric upper bound}
		\sup_{\bm \rho \in \Theta(k, m, C)} \Pro\left(d_H(\widetilde {\mathcal S}_k, \mathcal S_k) > 2m \right) < O(n^{-5}).
	\end{align}
\end{Theorem}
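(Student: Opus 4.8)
The plan is to condition on a high‑probability ``good event'' on which every noisy count $N_i+W_i$ is close to its mean, so that the ordering of $\{N_i+W_i\}$ reflects the ordering of $\{\tau_i\}$, and then to run a short combinatorial argument comparing $\widetilde{\mathcal S}_k$ with $\mathcal S_k$. Concretely, write $N_i=\sum_{j\ne i}A_{ij}\1(Y_{ij}=1)$ with $A_{ij}=\1((i,j)\in\mathcal G)$; the summands are independent $\{0,1\}$ variables with means $p\rho_{ij}$, so $\mu_i:=\E N_i=p\sum_{j\ne i}\rho_{ij}=p(n\tau_i-\tfrac12)$ and $\Var(N_i)\le np$. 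Bernstein's inequality together with a union bound over $i\in[n]$ gives, with probability $1-O(n^{-5})$, that $|N_i-\mu_i|\le C_1(\sqrt{np\log n}+\log n)$ for all $i$; the Laplace tail bound $\Pr(|W_i|>t)=e^{-t\varepsilon/2}$ and a union bound give $|W_i|\le C_2\log n/\varepsilon$ for all $i$ with probability $1-O(n^{-5})$. On the intersection $\mathcal E$ of these events,
\begin{align*}
	\max_{i\in[n]}\bigl|(N_i+W_i)-\mu_i\bigr|\le \Delta:=C_1\bigl(\sqrt{np\log n}+\log n\bigr)+C_2\frac{\log n}{\varepsilon},\qquad \Pr(\mathcal E^c)=O(n^{-5}).
\end{align*}

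\textbf{Reduction to a separation inequality.} We may assume $\Theta(k,m,C)$ is nonempty, which forces $C\bigl(\sqrt{\log n/np}+\log n/(np\varepsilon)\bigr)\le \tau_{(k-m)}-\tau_{(k+m+1)}\le 1$ and hence $np\gtrsim\log n$; then $\Delta\le C_3(\sqrt{np\log n}+\log n/\varepsilon)$. Since $\mu_i-\mu_j=np(\tau_i-\tau_j)$, any $\bm\rho\in\Theta(k,m,C)$ satisfies $\mu_{(k-m)}-\mu_{(k+m+1)}=np\bigl(\tau_{(k-m)}-\tau_{(k+m+1)}\bigr)\ge C(\sqrt{np\log n}+\log n/\varepsilon)>2\Delta$ once $C>2C_3$. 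Therefore, on $\mathcal E$, every item $i$ with $\tau_i\ge\tau_{(k-m)}$ has $N_i+W_i>N_j+W_j$ for every item $j$ with $\tau_j\le\tau_{(k+m+1)}$: each of the top $k-m$ items beats, in noisy count, each of the bottom $n-k-m$ items.

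\textbf{Combinatorial bound.} Let $T$ be the $k-m$ items with the largest $\tau_i$, let $B$ be the $n-k-m$ items with the smallest, and $M=[n]\setminus(T\cup B)$, so $|M|=2m$, $T\subseteq\mathcal S_k$ and $B\cap\mathcal S_k=\emptyset$; since the $W_i$ are continuous the noisy counts are a.s.\ distinct and $\widetilde{\mathcal S}_k$ is well defined. Put $a=|T\setminus\widetilde{\mathcal S}_k|$ and $b=|B\cap\widetilde{\mathcal S}_k|$; comparing sizes gives $|M\cap\widetilde{\mathcal S}_k|=m+a-b\in[0,2m]$, hence $|a-b|\le m$. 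If $a>0$, take $t\in T\setminus\widetilde{\mathcal S}_k$: then all $k$ elements of $\widetilde{\mathcal S}_k$ have strictly larger noisy count than $t$, but on $\mathcal E$ no element of $B$ does, so $\widetilde{\mathcal S}_k\cap B=\emptyset$ and $b=0$. Thus $\min(a,b)=0$ and $\max(a,b)\le m$. Since $T\subseteq\mathcal S_k$, we have $\widetilde{\mathcal S}_k\setminus\mathcal S_k=(B\cap\widetilde{\mathcal S}_k)\cup\bigl((M\cap\widetilde{\mathcal S}_k)\setminus\mathcal S_k\bigr)$ with $|(M\cap\widetilde{\mathcal S}_k)\setminus\mathcal S_k|\le\min\bigl(|M\setminus\mathcal S_k|,|M\cap\widetilde{\mathcal S}_k|\bigr)=\min(m,\,m+a-b)$, so $|\widetilde{\mathcal S}_k\setminus\mathcal S_k|\le b+\min(m,m+a-b)\le m$ in both of the cases $b=0$ and $a=0$. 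Hence $d_H(\widetilde{\mathcal S}_k,\mathcal S_k)=2|\widetilde{\mathcal S}_k\setminus\mathcal S_k|\le 2m$ on $\mathcal E$, which yields \eqref{eq: ranking nonparametric upper bound}.

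\textbf{Main obstacle.} The concentration and Laplace tail bounds are routine; the delicate part is the combinatorial accounting in the last step — in particular the dichotomy $\min(a,b)=0$ combined with $|a-b|\le m$ — and ensuring that ties among the $\tau_i$ cause no difficulty, which is precisely why the separation is imposed on the order statistics $\tau_{(k-m)},\tau_{(k+m+1)}$ rather than on $\tau_{(k)},\tau_{(k+1)}$. One should also check that all item‑independent terms (such as the $-p/2$ shift in $\mu_i$) cancel in the pairwise comparisons that drive the ranking.
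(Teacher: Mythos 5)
Your proof is correct, and it follows the same overall strategy as the paper: show that with probability $1-O(n^{-5})$ every item in the top $k-m$ (by $\tau$) has a larger noisy count than every item outside the top $k+m$, and then conclude $d_H(\widetilde{\mathcal S}_k,\mathcal S_k)\le 2m$. The two places where you organize things differently are worth noting. First, the paper establishes the pairwise dominance by bounding each $\Pro(N_a+W_a\le N_b+W_b)$ directly, applying Bernstein's inequality to the difference $N_a-N_b$ (with a variance bound in terms of $\tau_a+\tau_b$) together with the Laplace tail, and then union-bounding over the $O(n^2)$ pairs; you instead condition on a single good event of per-item concentration, $\max_i|(N_i+W_i)-\mu_i|\le\Delta$, and derive all pairwise dominations from the mean separation $\mu_{(k-m)}-\mu_{(k+m+1)}=np(\tau_{(k-m)}-\tau_{(k+m+1)})>2\Delta$. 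Both give the same threshold $\sqrt{np\log n}+\log n/\varepsilon$ and the same failure probability, so this is a matter of bookkeeping rather than substance (your route union-bounds over $n$ items instead of $n^2$ pairs, and your observation that nonemptiness of $\Theta(k,m,C)$ forces $np\gtrsim\log n$ correctly absorbs the extra $\log n$ term in Bernstein). Second, the paper simply asserts the set containment $\bigcap_{a\in\mathcal S_{k-m},\,b\in(\mathcal S_{k+m})^c}\{N_a+W_a>N_b+W_b\}\subseteq\{d_H(\widetilde{\mathcal S}_k,\mathcal S_k)\le 2m\}$, whereas you prove it via the $a,b$ accounting with $|a-b|\le m$ and the dichotomy $\min(a,b)=0$; this argument is correct (modulo the usual convention of a consistent tie-breaking rule in defining $T$, $B$, and $\mathcal S_k$, an issue equally implicit in the paper) and is a useful explicit justification of a step the paper leaves to the reader.
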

Specializing the theorem to $m = 0$ leads to the threshold for exact recovery.
\begin{corollary}
	For every $k \in [n-1]$, if the matrix of pairwise probabilities $\bm \rho$ is such that
	\begin{align*}
		|\tau_{(k)} - \tau_{(k+1)}| \geq C \left(\sqrt{\frac{\log n}{np}} + \frac{\log n}{np\varepsilon}\right)
	\end{align*}
	for a sufficiently large constant $C > 0$, we have $\Pro_{\bm \rho}\left(\widetilde {\mathcal S}_k \neq \mathcal S_k \right) < O(n^{-5})$.
\end{corollary}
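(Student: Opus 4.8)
The plan is to obtain the corollary as the $m = 0$ instance of Theorem \ref{thm: ranking nonparametric upper bound}. Two trivial observations bridge the two statements. First, with the order-statistic convention $\tau_{(1)} \ge \tau_{(2)} \ge \cdots \ge \tau_{(n)}$, the hypothesis $|\tau_{(k)} - \tau_{(k+1)}| \ge C(\sqrt{\log n/np} + \log n/np\varepsilon)$ is the same as $\tau_{(k)} - \tau_{(k+1)} \ge C(\sqrt{\log n/np} + \log n/np\varepsilon)$, which is exactly the defining inequality of $\Theta(k,0,C)$; hence $\bm\rho \in \Theta(k,0,C)$. Second, $d_H(\widetilde{\mathcal S}_k,\mathcal S_k)$ is a nonnegative integer, so $d_H(\widetilde{\mathcal S}_k,\mathcal S_k) > 2m = 0$ holds precisely when $\widetilde{\mathcal S}_k \ne \mathcal S_k$. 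Combining, $\Pro_{\bm\rho}(\widetilde{\mathcal S}_k \ne \mathcal S_k) = \Pro_{\bm\rho}(d_H(\widetilde{\mathcal S}_k,\mathcal S_k) > 2m) \le \sup_{\bm\rho' \in \Theta(k,0,C)} \Pro_{\bm\rho'}(d_H(\widetilde{\mathcal S}_k,\mathcal S_k) > 2m) = O(n^{-5})$ by Theorem \ref{thm: ranking nonparametric upper bound}, which is the claim.

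Since all the content is inherited from Theorem \ref{thm: ranking nonparametric upper bound}, I record how I would prove that theorem for $m = 0$, which already contains the ideas. Conditionally on the Erd\H{o}s--R\'enyi graph $\mathcal G$, each $N_i = \sum_{j \ne i,\,(i,j)\in\mathcal G}\1(Y_{ij}=1)$ is a sum of independent Bernoullis, and $\E N_i = p\sum_{j\ne i}\rho_{ij} = np\,\tau_i - p/2$. A Bernstein/Chernoff bound for $N_i$ given $\mathcal G$, combined with a standard degree-concentration bound for $\mathcal G$ and a union bound over $i\in[n]$, gives $\max_i |N_i - np\,\tau_i| \lesssim \sqrt{np\log n}$ with probability $1 - O(n^{-5})$; this is precisely the Copeland-counting analysis of \cite{shah2017simple}, and it is here that one implicitly uses $p \gtrsim \log n/n$ so the bound is informative. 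Independently, Laplace tails and a union bound give $\max_i |W_i| \lesssim \log n/\varepsilon$ with probability $1 - O(n^{-5})$, so on the intersection of the two events $\max_i |(N_i + W_i) - np\,\tau_i| \lesssim \sqrt{np\log n} + \log n/\varepsilon$.

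On this event, for any $i \in \mathcal S_k$ and $j \notin \mathcal S_k$ we have $np(\tau_i - \tau_j) \ge np(\tau_{(k)} - \tau_{(k+1)}) \ge C(\sqrt{np\log n} + \log n/\varepsilon)$, so taking $C$ larger than twice the implicit constant in the error bound forces $N_i + W_i > N_j + W_j$ simultaneously over all such pairs; hence $\widetilde{\mathcal S}_k = \mathcal S_k$, with failure probability $O(n^{-5})$. (For general $m$ one restricts the comparison to pairs $(i,j)$ with $i$ ranked $\le k-m$ and $j$ ranked $\ge k+m+1$, concluding that at most $2m$ items can be mis-sorted.) The privacy claim is immediate from the $\ell_1$-sensitivity bound of $2$ for $(N_1,\dots,N_n)$ noted before the theorem, together with the Laplace mechanism and post-processing. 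The only genuinely delicate point --- the ``hard part'' --- is carrying out the joint concentration of the $N_i$ over the randomness of both the graph and the comparison outcomes while keeping the total union-bound budget at $O(n^{-5})$; this is exactly the estimate borrowed from \cite{shah2017simple}, and nothing beyond it is needed for the corollary.
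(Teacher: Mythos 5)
Your proof is correct and takes essentially the same route as the paper: the corollary is obtained by specializing Theorem \ref{thm: ranking nonparametric upper bound} to $m=0$, noting that the separation hypothesis places $\bm\rho$ in $\Theta(k,0,C)$ and that $d_H(\widetilde{\mathcal S}_k,\mathcal S_k)>0$ is equivalent to $\widetilde{\mathcal S}_k\neq\mathcal S_k$. Your supplementary sketch of the theorem itself (uniform concentration of $N_i+W_i$ around $np\,\tau_i$) differs only cosmetically from the paper's argument, which applies Bernstein's inequality to the pairwise differences $N_a-N_b$ rather than to each $N_i$ separately, and this has no bearing on the corollary's validity.
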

As a further consequence, if $|\tau_{(k)} - \tau_{(k+1)}| 
\geq C\left( \sqrt{\frac{\log n}{np}} + \frac{\log n}{np\varepsilon}\right)$ for every $k$, then the union bound implies all $n$ items can be correctly ranked with probability at least $1 - O(n^{-4})$. The next section shows this threshold is optimal in the sense that no differentially private algorithm can succeed at recovering $\mathcal S_k$ when $|\tau_{(k)} - \tau_{(k+1)}| 
< c\left( \sqrt{\frac{\log n}{np}} + \frac{\log n}{np\varepsilon}\right)$ for a sufficiently small $c$.

\vspace{-2mm}

\subsubsection{The Edge DP Lower Bound for Non-parametric Estimation}\label{sec: nonparametric lower bound}
To establish the tightness of the threshold $\sqrt{\frac{\log n}{np}} + \frac{\log n}{np\varepsilon}$ for differentially private ranking, we shall prove that the supremum of $\Pro_{\bm \rho}\left(d_H(\widetilde {\mathcal S}_k, \mathcal S_k) > 2m \right)$ over the set of matrices
\begin{align*}
	\Theta(k,m,c) = \left\{\bm \rho \in [0,1]^{n \times n}: \bm \rho + \bm \rho^\top = 11^\top, \tau_{(k-m)} - \tau_{(k+m+1)} \geq c\left(\sqrt{\frac{\log n}{np}} + \frac{\log n}{np\varepsilon}\right)\right\},
\end{align*}
is bounded away from $0$ for sufficiently small $c$. In view of the lower bound, Theorem 2(b), in \cite{shah2017simple} where the supremum is taken over 
\begin{align*}
	\Theta^0(k,m, c) := \left\{\bm \rho \in [0,1]^{n \times n}: \bm \rho + \bm \rho^\top = 11^\top, \tau_{(k-m)} - \tau_{(k+m+1)} \geq c\sqrt{\frac{\log n}{np}}\right\},
\end{align*}
it suffices to show that the supremum of $\Pro_{\bm \rho}\left(d_H(\widetilde {\mathcal S}_k, \mathcal S_k) > 2m \right)$ over the set
\begin{align*}
	\widetilde\Theta(k, m, 2c) = \left\{\bm \rho \in [0,1]^{n \times n}: \bm \rho + \bm \rho^\top = 11^\top, \tau_{(k-m)} - \tau_{(k+m+1)} \geq 2c\frac{\log n}{np\varepsilon}\right\}
\end{align*}
is bounded away from 0, because 
$\Theta(k,m,c) \subseteq \Theta^0(k,m, 2c) \cup \widetilde\Theta(k, m, 2c)$ for $c > 0$.

For proving the lower bound over $\widetilde\Theta(k, m, 2c)$, the differentially private Fano's inequality \cite{barber2014privacy, acharya2021differentially} reduces the argument to choosing a number of different $\bm \rho$'s in $\widetilde\Theta(k, m, 2c)$ such that the distance among the distributions induced by the chosen $\bm \rho$'s is sufficiently small. We defer the details to the supplement \cite{cai2023optimal} and state the lower bound result below.

\begin{Theorem}\label{thm: nonparametric ranking lower bound}
	Suppose the tolerance $m$ is bounded by $2m \leq (1 + \nu_2)^{-1}\min\{n^{1-\nu_1}, k, n-k\}$, $\frac{\log n}{np\varepsilon} < c_0$, and $\delta < c_0\left(m\log n \cdot n^{10m}/\varepsilon\right)^{-1}$ for a sufficiently small constant $c_0$. There is a small constant $c(\nu_1, \nu_2)$ such that every $(\varepsilon, \delta)$-DP estimator $\widehat {\mathcal S}_k$ satisfies
	\begin{align}\label{eq: nonparametric ranking lower bound}
		\sup_{\bm \rho \in \widetilde\Theta(k, m, c)}\Pro_{\bm \rho}\left(d_H(\widehat {\mathcal S}_k, \mathcal S_k) > 2m \right) \geq \frac{1}{10}
	\end{align}
	whenever $c < c(\nu_1, \nu_2)$ and $n$ is sufficiently large. The inequality remains true if $\bm \rho = \left(\rho_{ij}\right)_{i, j \in [n]}$ is additionally restricted to the parametric model $\rho_{ij} = F(\theta^*_i - \theta^*_j)$, as long as $F$ satisfies regularity condition (A0) in Section \ref{sec: ranking upper bound}.
\end{Theorem}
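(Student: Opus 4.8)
\smallskip
\noindent\textbf{A plan of proof.}\quad
The plan is to combine the reduction already indicated with a packing construction and the $(\varepsilon,\delta)$-differentially private Fano inequality of \cite{barber2014privacy, acharya2021differentially}. Since $\Theta(k,m,c)\subseteq\Theta^0(k,m,2c)\cup\widetilde\Theta(k,m,2c)$ and $\Theta^0$ is covered by Theorem~2(b) of \cite{shah2017simple}, it suffices to lower bound $\sup_{\bm\rho\in\widetilde\Theta(k,m,2c)}\Pro_{\bm\rho}\big(d_H(\widehat{\mathcal S}_k,\mathcal S_k)>2m\big)$ by a constant once $c$ is small. Flipping every comparison outcome reverses the $\tau$-ordering and sends $\mathcal S_k$ to its complement (a bijective post-processing that preserves $d_H$ and membership in $\widetilde\Theta$), so I may assume $k\le n/2$; set $a\asymp m$, chosen slightly larger than $2m$ with the slack controlled by $\nu_2$ so that $a\le k$.

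For the packing, fix any $(k-a)$-set $I\subseteq[n]$ and let $T=[n]\setminus I$, so $|T|=n-k+a\ge n/2$; set $\Delta=c\,\log n/(np\varepsilon)$. The reference configuration $\bm\rho^{(0)}$ places the items of $I$ at average winning probability $\approx 3/4$ and every item of $T$ at $\approx 1/2-\Delta$, realized by meeting each target $\tau_i$ with $O(\Delta)$-size per-entry perturbations consistent with $\rho_{ij}+\rho_{ji}=1$. For each $a$-set $A\subseteq T$, the hypothesis $\bm\rho^{(A)}$ is obtained from $\bm\rho^{(0)}$ by raising only the rows and columns indexed by $A$, so that each $i\in A$ now has $\tau_i\approx 1/2+\Delta$ while $T\setminus A$ stays at $1/2-\Delta$. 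An order-statistic computation (using $a>m$) shows $\mathcal S_k^{(A)}=I\cup A$ and $\tau_{(k-m)}-\tau_{(k+m+1)}=2\Delta$, hence $\bm\rho^{(A)}\in\widetilde\Theta(k,m,2c)$; the same profile is realizable in the parametric model $\rho_{ij}=F(\theta_i^\ast-\theta_j^\ast)$ by assigning a large latent level on $I$ and levels $\theta_0\pm\delta_\theta$ on $A$ and $T\setminus A$, the only delicate point being to choose $\theta_0,\delta_\theta$ in a region where the monotone $F$ has local oscillation small relative to $\Delta$, which is possible since $F$ is continuous outside a countable set. I then take $V$ to be a Gilbert--Varshamov/greedy packing of $a$-subsets of $T$ with pairwise symmetric difference $>4m$; the standard counting bound gives $\log|V|\gtrsim m\log(|T|/m)\gtrsim \nu_1\nu_2\,m\log n$, where $m\le n^{1-\nu_1}$ makes $|T|/a\gtrsim n^{\nu_1}$ and $\nu_2$ governs the intersection slack. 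The separation $>4m$ makes the $2m$-neighbourhoods of the distinct $\mathcal S_k^{(A)}$ disjoint, so a recovery guarantee $\Pro_{\bm\rho^{(A)}}(d_H(\widehat{\mathcal S}_k,\mathcal S_k^{(A)})\le 2m)>9/10$ for every $A\in V$ would induce a test $\widehat A$ with $\max_A\Pro_{\bm\rho^{(A)}}(\widehat A\ne A)<1/10$.

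To rule this out I would bound the information term in the private Fano inequality by coupling: couple the Erd\H{o}s--R\'enyi comparison graphs to be identical and then the Bernoulli outcomes entrywise, so that the data under $\bm\rho^{(A)}$ and under $\bm\rho^{(0)}$ differ---in expectation, and, by a Chernoff bound, with high probability---in only $D\asymp p\cdot a\cdot n\Delta\asymp c\,m\log n/\varepsilon$ positions, because the two configurations disagree only on the $a\asymp m$ rows indexed by $A$, each carrying $\ell_1$-mass $\asymp n\Delta$. Group privacy over these $D$ coordinates then bounds the contribution of each hypothesis to $I(A;M(\bm Y))$ by $\asymp\varepsilon D+(\delta/\varepsilon)e^{\varepsilon D}$. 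Since $\varepsilon D\asymp c\,m\log n$, choosing $c$ small relative to the constant in $\log|V|\gtrsim\nu_1\nu_2\,m\log n$ forces $\varepsilon D\le\tfrac12\log|V|$; and $e^{\varepsilon D}\le n^{O(m)}$, so the hypothesis $\delta\lesssim\big(m\log n\cdot n^{10m}/\varepsilon\big)^{-1}$ makes the $\delta$-term $o(\log|V|)$. The private Fano bound then yields $\max_A\Pro_{\bm\rho^{(A)}}(\widehat A\ne A)\ge\tfrac1{10}$ for large $n$, contradicting the assumed recovery; renaming $2c$ as $c$ proves the claim over $\widetilde\Theta(k,m,c)$, hence over $\Theta(k,m,c)$, and the parametric statement follows because the construction already lived inside the model with link $F$.

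The hard part is engineering the packing to be simultaneously \emph{large} ($\log|V|\gtrsim m\log n$) and \emph{close to a single reference in the group-privacy sense} ($\varepsilon D\lesssim\log|V|$): these conflict because the separation scale forced by membership in $\widetilde\Theta$ is $\Delta\asymp\log n/(np\varepsilon)$, so each promoted item costs $\asymp\log n$ in $\varepsilon D$ while contributing only $\asymp\log(|T|/m)$ to $\log|V|$. Reconciling them needs $|T|/m\gtrsim n^{\Omega(1)}$, which is exactly why the argument orients the problem so that the candidate pool has size $\asymp\max(k,n-k)\ge n/2$ and why the conditions $m\le n^{1-\nu_1}$ and $2m\le(1+\nu_2)^{-1}\min\{k,n-k\}$ appear. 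By comparison, checking the order-statistic and realizability computations, the concentration of $D$, and the precise $\delta$-bookkeeping in the private Fano inequality should be routine.
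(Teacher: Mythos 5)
Your plan is correct and follows essentially the same route as the paper: reduce to $\widetilde\Theta(k,m,2c)$ via the Shah--Wainwright non-private bound, build a $4m$-separated packing of roughly $e^{\Theta(\nu_1\nu_2\, m\log n)}$ pairwise-probability matrices that differ only on $O(m)$ promoted items with gap $\Delta\asymp \log n/(np\varepsilon)$, bound the coupling Hamming distance by $D\asymp mnp\Delta\asymp m\log n/\varepsilon$, invoke the $(\varepsilon,\delta)$-DP Fano/group-privacy inequality with the stated $\delta$-bookkeeping, and realize the construction parametrically using (A0). The only differences (your reference-coupling, mutual-information form of private Fano versus the paper's pairwise-coupling Fano, and your three-level configuration versus the paper's two-level construction with within-group probabilities exactly $1/2$) are cosmetic.
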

In conjunction with Theorem \ref{thm: ranking nonparametric upper bound}, Theorem \ref{thm: nonparametric ranking lower bound} yields that $\widetilde{\mathcal S}_k$ is an optimal $(\varepsilon, \delta)$-DP estimator. Setting $m = 0$ in Theorem \ref{thm: nonparametric ranking lower bound} gives the lower bound for exactly recovering the top $k$ items $\mathcal S_k$. In the exact recovery case, the threshold for full ranking of $n$ items is when $|\tau_{(k)} - \tau_{(k+1)}| \geq C\left( \sqrt{\frac{\log n}{np}} + \frac{\log n}{np\varepsilon}\right)$ for every $k$. 
\begin{remark}\label{rem:top-k-to-l-infty-LB}
	Because the lower bound continues to hold when restricted to the parametric model, it in fact settles the $O(\log n)$ gap between the parametric upper bound Theorem \ref{thm: ranking upper bound} and the parametric lower bound Theorem \ref{thm: ranking lower bound}. If $\omega_{ij} = F(\theta^*_i - \theta^*_j)$ for some $F$ satisfying regularity conditions (A0) and (A1) in Section \ref{sec: ranking upper bound} and $\bth^* \in \Theta$, we have $|\tau_{(k)} - \tau_{(k+1)}| \asymp |\theta^*_{(k)} - \theta^*_{(k+1)}|$. The existence of an $(\varepsilon, \delta)$-DP estimator with a faster rate of convergence than $\widetilde{\bth}$ would contradict the lower bound above for recovering $\mathcal S_k$. Under the parametric assumptions, the perturbed MLE $\widetilde \bth$ is minimax optimal for estimating the parameters $\bth^*$. 
\end{remark}


\vspace{-8mm}
\section{Optimal Ranking under Individual Differential Privacy}
\label{sec: individual-dp-ranking}

\subsection{Problem Formulation under Individual DP} \label{sec: individual dp problem formulation}

In the previous section, we studied \emph{edge} differential privacy, where each observed comparison between a fixed pair of items was protected.  Here, we extend that framework to a more realistic setting in which each of \(m\) users performs multiple item comparisons.  Specifically, each user \(k\in[m]\) selects \(L\) unordered pairs \(\{i,j\}\subset[n]\) (with \(L\) a fixed constant) and reports the outcome of each comparison.  We continue to assume that all users share the same underlying preference structure: there is a matrix \(\rho=(\rho_{ij})_{1\le i<j\le n}\) with
$
\rho_{ij}+\rho_{ji}=1,\,\rho_{ii}=\tfrac12,
$
so that whenever any user compares items \(i\) and \(j\), item \(i\) is chosen (i.e., ``preferred'') with probability \(\rho_{ij}\) and item \(j\) with probability \(\rho_{ji}\).  By making \(\rho\) depend only on the item indices \(i,j\), we assume all users draw comparisons from the same pairwise preference distribution.
 
Concretely, for each user \(k=1,\dots,m\) and each draw \(l=1,\dots,L\):
\begin{enumerate}
	\item The user \(k\) selects an unordered pair \(\{i,j\}\subset[n]\) uniformly at random.
	\item They record \(Y_{ij}^{(k,l)}=+1\) if \(i\) is preferred, or \(Y_{ij}^{(k,l)}=-1\) if \(j\) is perferred.
\end{enumerate}
All comparisons are independent across users and draws.  The entire dataset is therefore
\begin{equation}\label{eq:full-data-set-idp}
\mathcal{D} \;=\; \bigl\{\,Y^{(k,l)}_{ij}: 1\le k\le m,\ 1\le l\le L,\ 1\le i<j\le n\bigr\},
\end{equation}
where each nonzero \(Y_{ij}^{(k,l)}\) follows
$
\Pr\bigl(Y_{ij}^{(k,l)}=+1\bigr)
\;=\;
\rho_{ij},$ and $
\Pr\bigl(Y_{ij}^{(k,l)}=-1\bigr)
\;=\;
\rho_{ji}
$.

Our objective remains to rank the \(n\) items by their ``average preference score'' $\tau_i$ as defined in Section \ref{sec: edge dp ranking} and, for a given \(k\in[n]\), recover the top $k$ set.

In contrast to edge DP where protecting a single pairwise outcome sufficed, we now consider \emph{individual} differential privacy: protecting all \(L\) comparisons contributed by any one user.

\medskip
\noindent
\textbf{Individual Differential Privacy}.  Let $\mathcal{D}$
be the full dataset of \(mL\) comparison results as defined in~\eqref{eq:full-data-set-idp}.  For any fixed user index \(k_0\), define a neighboring dataset \(\mathcal{D}'\) by replacing
$
\bigl\{Y^{(k_0,1)},\dots,Y^{(k_0,L)}\bigr\}
\;\longmapsto\;
\bigl\{\widetilde Y^{(k_0,1)},\dots,\widetilde Y^{(k_0,L)}\bigr\},
$
where each \(\widetilde Y^{(k_0,l)}\) is drawn independently using the same \(\rho\).  An algorithm \(M\) satisfied \((\varepsilon,0)\) individual DP if for every measurable event \(A\),
\[
\Pr\bigl[M(\mathcal{D})\in A\bigr]
\;\le\;
e^{\varepsilon}\,
\Pr\bigl[M(\mathcal{D}')\in A\bigr] + \delta.
\]
This ensures that changing all \(L\) comparisons of any single user has a limited effect (controlled by \(\varepsilon\)) on the output.

\medskip
\noindent
Throughout this section, we build on the edge DP constructions of Section \ref{sec: edge dp ranking}, adapting them so that the entire bundle of \(L\) comparisons from each user is protected.  Because each user's \(L\) draws can collectively influence counts or likelihoods, our noise scales must be adjusted accordingly (roughly multiplying by \(L\) compared to the edge DP case).  In the next subsection, we show how to privatize the maximum likelihood estimator for parametric scores, and then we turn to a counting-based approach for nonparametric top-\(k\) recovery under this stronger, individual-level privacy constraint.

\vspace{-5mm}
\subsection{Parametric Estimation under Individual DP}
\label{sec: individual parametric ranking}

We assume the same parametric model as in Section \ref{sec: edge dp ranking}: each item \(i\in[n]\) carries a latent score \(\theta^*_i\in\mathbb{R}\), and for any pair \((i,j)\), the probability that \(i\) is preferred over \(j\) is
$
\rho_{ij} \;=\; F\bigl(\theta^*_i - \theta^*_j\bigr),
$
where \(F:\mathbb{R}\to(0,1)\) is a known, strictly increasing function satisfying \(F(x)=1-F(-x)\).  Across all \(m\) users, each user \(k\) draws \(L\) item-pairs (with replacement), and records the outcome \(Y_{ij}^{(k,l)}\in\{+1,-1\}\) with probability $\rho_{ij}$ and $\rho_{ji}$ respectively, whenever they compare \((i,j)\). 
All comparisons are independent across users and draws.

Define $M_{ij} \;=\; \left|\bigl\{\,k\in[m],\,l\in[L]:\text{user }k\text{ compares }(i,j)\bigr\}\right|$ and
\[
\bar Y_{ij} \;=\; \frac{1}{M_{ij}}
\sum_{\substack{k,l:\\(i,j)\text{ compared}}}
\frac{1+Y_{ij}^{(k,l)}}{2}
\;\in[0,1].
\]
 In words, \(M_{ij}\) counts how many times \((i,j)\) was compared over all users, and \(\bar Y_{ij}\) is the empirical fraction of times \(i\) was chosen over \(j\).
Then, up to additive constants independent of \(\theta\), the negative log-likelihood over all \(mL\) observations is
\[
\mathcal{L}(\theta;Y)
\;=\;
\sum_{1\le i<j\le n}\,
M_{ij}\Bigl[
-\bar Y_{ij}\,\log F_{ij}(\theta)\;-\;(1-\bar Y_{ij})\,\log\bigl(1-F_{ij}(\theta)\bigr)
\Bigr],
\]
where \(F_{ij}(\theta)=F(\theta_i-\theta_j)\). 

\vspace{0.3em}
\noindent
\textbf{Individual DP estimator}.  To ensure individual DP, we apply \emph{objective perturbation} to the regularized likelihood.
\begin{equation}
\widetilde \bth = \argmin_{\bth \in \R^n} \mathcal L(\bth; y) + \frac{\gamma}{2}\|\bth\|_2^2 + \bm w^\top \bth, \quad \bm w = (w_1, w_2, \cdots, w_n) \stackrel{\text{i.i.d.}}{\sim }\text{Laplace}(\lambda).
\end{equation}
 Since each user contributes at most \(L\) comparisons per pair, our noise scales must scale with $L$. We have the following individual DP guarantee.
\begin{proposition}\label{prop: individual ranking MLE privacy}
	Suppose conditions (A0), (A1) and (A2) hold. If $\lambda \geq 8L \cdot \kappa_1/\varepsilon$ and $\gamma \geq 8L\kappa_2/\varepsilon$, $\widetilde\bth$ as defined in Algorithm \ref{alg: DP parametric ranking} is $(\varepsilon, 0)$ differentially private.
\end{proposition}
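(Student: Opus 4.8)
The plan is to reuse the objective-perturbation argument behind Proposition~\ref{prop: ranking MLE privacy} essentially verbatim, changing only the two sensitivity computations so that they absorb the (up to) $L$ comparisons a single user contributes. The starting point is that $\mathcal{L}(\cdot;y)$ here decomposes as a sum over the $mL$ individual comparisons of a per-comparison negative-log-likelihood term $-\1(y=1)\log F_{ij}(\bth)-\1(y=0)\log(1-F_{ij}(\bth))$, so replacing user $k_0$'s bundle deletes exactly $L$ such terms and inserts $L$ fresh ones. Next I would record the structure of the estimator: since (A2) makes $\mathcal{L}(\cdot;y)$ convex and $\gamma>0$, the objective in~\eqref{eq: ranking MLE definition} is $\gamma$-strongly convex, $\widetilde\bth$ is its unique minimizer and is characterized by $\nabla\mathcal{L}(\widetilde\bth;y)+\gamma\widetilde\bth+\bm w=\bm 0$, so the map $\bm w\mapsto\widetilde\bth$ is a smooth bijection with inverse $\bth\mapsto-(\nabla\mathcal{L}(\bth;y)+\gamma\bth)$ and Jacobian $\nabla^2\mathcal{L}(\bth;y)+\gamma I\succeq\gamma I\succ0$. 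Writing $f$ for the density of $\bm w$ (so $f\propto e^{-\|\cdot\|_1/\lambda}$), the density of $\widetilde\bth$ at a point $\bth$ equals $f\bigl(-\nabla\mathcal{L}(\bth;y)-\gamma\bth\bigr)\det(\nabla^2\mathcal{L}(\bth;y)+\gamma I)$, and for neighboring $y,y'$ (differing in one user's $L$ comparisons) the density ratio at $\bth$ factors as
\[
\frac{f\bigl(-\nabla\mathcal{L}(\bth;y)-\gamma\bth\bigr)}{f\bigl(-\nabla\mathcal{L}(\bth;y')-\gamma\bth\bigr)}\cdot\frac{\det(\nabla^2\mathcal{L}(\bth;y)+\gamma I)}{\det(\nabla^2\mathcal{L}(\bth;y')+\gamma I)};
\]
it then suffices to bound each factor by $e^{\varepsilon/2}$, uniformly in $\bth$.

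For the Laplace factor I would use that it is at most $\exp\bigl(\lambda^{-1}\|\nabla\mathcal{L}(\bth;y)-\nabla\mathcal{L}(\bth;y')\|_1\bigr)$ and control the gradient difference through the decomposition $\mathcal{L}(\cdot;y)=\mathcal{L}_0+\mathcal{L}_{k_0}$, $\mathcal{L}(\cdot;y')=\mathcal{L}_0+\mathcal{L}'_{k_0}$, where $\mathcal{L}_0$ collects the $m-1$ unchanged users and $\mathcal{L}_{k_0},\mathcal{L}'_{k_0}$ are the sums of the $L$ per-comparison terms of user $k_0$ before and after. The gradient of one per-comparison term for a pair $(i,j)$ is supported on coordinates $i,j$ with entries $\pm(y_{ij}-F_{ij}(\bth))F'_{ij}(\bth)/[F_{ij}(\bth)(1-F_{ij}(\bth))]$, of absolute value $\le\kappa_1$ by~\eqref{eq: F assumption 1}, hence $\ell_1$-norm $\le 2\kappa_1$ per term; summing over the $L$ deleted and $L$ inserted terms gives $\|\nabla\mathcal{L}(\bth;y)-\nabla\mathcal{L}(\bth;y')\|_1\le 4L\kappa_1$, so with $\lambda\ge 8L\kappa_1/\varepsilon$ this factor is $\le e^{\varepsilon/2}$.

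For the Jacobian factor, the key observation is that each per-comparison term for a pair $(i,j)$ contributes to the Hessian a positive semidefinite matrix $h\,(e_i-e_j)(e_i-e_j)^\top$, where $h=\1(y_{ij}=1)\,\partial_x^2(-\log F)+\1(y_{ij}=0)\,\partial_x^2(-\log(1-F))$ at $x=\theta_i-\theta_j$; by (A2) together with $F(x)=1-F(-x)$ — which gives $\partial_x^2(-\log(1-F))(x)=\partial_x^2(-\log F)(-x)$ and hence keeps it in $(0,\kappa_2)$ — the scalar $h$ lies in $(0,\kappa_2)$, so such a term has trace $2h<2\kappa_2$. Consequently $\nabla^2\mathcal{L}_{k_0}(\bth)\succeq0$ with $\tr\nabla^2\mathcal{L}_{k_0}(\bth)<2L\kappa_2$, and likewise for $\mathcal{L}'_{k_0}$. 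Applying $\det(A+B)\le\det(A)\exp(\tr(A^{-1}B))$ with $A=\nabla^2\mathcal{L}_0(\bth)+\gamma I\succeq\gamma I$ and $B=\nabla^2\mathcal{L}_{k_0}(\bth)$ yields $\det(\nabla^2\mathcal{L}(\bth;y)+\gamma I)\le\det(\nabla^2\mathcal{L}_0(\bth)+\gamma I)e^{2L\kappa_2/\gamma}$, while $\nabla^2\mathcal{L}'_{k_0}(\bth)\succeq0$ and monotonicity of $\det$ under PSD perturbations give $\det(\nabla^2\mathcal{L}(\bth;y')+\gamma I)\ge\det(\nabla^2\mathcal{L}_0(\bth)+\gamma I)$. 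Hence this factor is $\le e^{2L\kappa_2/\gamma}\le e^{\varepsilon/2}$ whenever $\gamma\ge 4L\kappa_2/\varepsilon$, and a fortiori under the stated hypothesis $\gamma\ge 8L\kappa_2/\varepsilon$. Multiplying the two bounds, the density ratio is $\le e^{\varepsilon}$, i.e.\ $\widetilde\bth$ is $(\varepsilon,0)$ individual DP.

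The only genuinely new work relative to Proposition~\ref{prop: ranking MLE privacy} is this bookkeeping: passing from ``one changed comparison'' to ``one user's $L$ comparisons'' multiplies both sensitivity quantities by $L$. The step I would be most careful with — the closest thing to an obstacle — is that modifying a user changes \emph{which} pairs are compared as well as the \emph{outcomes}, so the change must be written as deleting $L$ per-comparison terms and inserting $L$ fresh ones rather than perturbing a fixed set of terms in place; once phrased this way the sensitivity bounds above go through, and the remaining ingredients (well-posedness of the change of variables, positivity of the Jacobian determinant, and the determinant/trace inequality) are identical to the edge DP case and can simply be cited from the proof of Proposition~\ref{prop: ranking MLE privacy}.
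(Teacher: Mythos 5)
Your proof is correct and follows essentially the same route as the paper's: objective perturbation plus the change-of-variables density formula, with the gradient $\ell_1$-sensitivity and the Hessian perturbation of a single user's bundle each scaled by $L$ (giving $4L\kappa_1$ and the stated noise/regularization levels). The only, harmless, difference is in the determinant ratio, where you use $\det(A+B)\le\det(A)\exp\bigl(\tr(A^{-1}B)\bigr)$ with the trace bound $2L\kappa_2$, whereas the paper iterates Cauchy's rank-one perturbation formula to get a bound of the form $1+4L\kappa_2/\gamma$; both give the required $e^{\varepsilon/2}$ factor under $\gamma\ge 8L\kappa_2/\varepsilon$.
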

 With the noise scales set as above and under the usual sampling regime (\(m\) large enough so that \(n\log n/(m\,\varepsilon)=O(1)\)), a similar leave-one-out analysis as the edge DP case leads to the following accuracy result.

\begin{Theorem}
	\label{thm:idp-linf}
	Assume (A0)--(A2) and fix 
	\(\varepsilon\in\bigl(c_{2}(\log n)^{-1/2},\,1\bigr)\),
	with \(L\) a constant.  Choose 
	\(\gamma = 8\,L\,\kappa_{2}/\varepsilon\) 
	and 
	\(\lambda = 8\,L\,\kappa_{1}/\varepsilon\).  
	If \(m\) and \(n\) satisfy \(n\log n/(m\,\varepsilon)=O(1)\), then \(\widetilde{\bth}\) is \((\varepsilon,0)\)-DP, and it holds with probability at least $1-O(n^{-5})$ that
	\[
	\|\widetilde{\bth}-\bth^{*}\|_{\infty}
	\;\;\lesssim\;\;
	\sqrt{\frac{n\,\log n}{m}}
	\;+\;
	\frac{n\,\log n}{m\,\varepsilon}.
	\]
\end{Theorem}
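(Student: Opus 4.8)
The $(\varepsilon,0)$-DP claim is exactly Proposition~\ref{prop: individual ranking MLE privacy} with the stated choices of $(\gamma,\lambda)$, so all the work is in the $\ell_\infty$ error bound, and the plan is to reproduce the leave-one-out analysis behind Proposition~\ref{prop: ranking MLE accuracy} after recasting the individual-DP sampling model as an equivalent weighted-likelihood problem. First I would condition on the realized design --- which unordered pairs were drawn and with what multiplicities $M_{ij}$ --- and show, by Bernstein and Chernoff bounds, that with probability $1-O(n^{-6})$ every item participates in $\Theta(mL/n)$ comparisons and the weighted comparison graph has the usual Erd\H{o}s--R\'enyi-type regularity: connectivity, $\max_i\sum_j M_{ij}\lesssim mL/n$, and a spectral gap of order $mL/n$ for the curvature-weighted Hessian $\nabla^2\mathcal L$. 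This step needs $mL/n\gtrsim\log n$, which is exactly what the hypothesis $n\log n/(m\varepsilon)=O(1)$ (together with $\varepsilon<1$ and $L$ constant) provides. After conditioning, $\mathcal L(\bth;y)$ is a re-weighted Bradley--Terry log-likelihood whose ``effective per-item sample size'' $d\asymp mL/n$ plays the role that $np$ plays in the edge-DP analysis; substituting $d$ for $np$ and $\lambda=8L\kappa_1/\varepsilon$ for $8\kappa_1/\varepsilon$ turns the edge-DP bound $\sqrt{\log n/(np)}+\lambda\log n/(np)$ into $\sqrt{n\log n/m}+n\log n/(m\varepsilon)$.

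I would then split the error into a noiseless part and a privacy-noise part. Let $\hat\bth=\argmin_\bth\mathcal R(\bth;y)$, $\mathcal R(\bth;y)=\mathcal L(\bth;y)+\tfrac{\gamma}{2}\|\bth\|_2^2$, be the noiseless regularized MLE, so $\widetilde\bth$ and $\hat\bth$ obey $\nabla\mathcal R(\widetilde\bth)=-\bm w$ and $\nabla\mathcal R(\hat\bth)=0$; note $\bm 1^\top\hat\bth=0$ since $\nabla\mathcal L$ is orthogonal to $\bm 1$ by translation invariance. For $\|\hat\bth-\bth^*\|_\infty$ I would run the leave-one-out argument of \cite{chen2019spectral}: introduce $\hat\bth^{(u)}$, the noiseless minimizer when all comparisons touching item $u$ are replaced by fresh independent draws; bound $\|\hat\bth-\hat\bth^{(u)}\|_2$ by a stability/contraction estimate; and use that $\hat\bth^{(u)}_u$ is independent of item $u$'s data to control $|\hat\bth^{(u)}_u-\theta^*_u|$. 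This yields $\|\hat\bth-\bth^*\|_\infty\lesssim\sqrt{\log n/d}\asymp\sqrt{n\log n/m}$. One departure from the cited setting: the regularization level $\gamma=8L\kappa_2/\varepsilon$ is, in the parameter regime considered, of smaller order than the ``optimal'' $\sqrt{d\log n}$, so it only introduces a shrinkage bias of order $\gamma\|\hat\bth\|_\infty/d\asymp n/(m\varepsilon)$, which is absorbed into the stated bound, while still keeping $\mathcal R$ strongly convex so that $\hat\bth$ and $\widetilde\bth$ are well defined and unique.

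For the privacy-noise part, subtracting the two stationarity conditions and applying the integral mean-value theorem gives $\widetilde\bth-\hat\bth=-H^{-1}\bm w$ with $H=\int_0^1\nabla^2\mathcal R\bigl(\hat\bth+t(\widetilde\bth-\hat\bth)\bigr)\,dt=\nabla^2\mathcal L(\xi)+\gamma I$. The component of $\widetilde\bth-\hat\bth$ along $\bm 1$ is the pure translation $-(\bm 1^\top\bm w)/(\gamma n)\,\bm 1$, irrelevant for ranking and controlled exactly as in the edge-DP proof; on $\bm 1^\perp$, $\nabla^2\mathcal L(\xi)$ has eigenvalues $\asymp d$, so the rows of $H^{-1}$ restricted there have $\ell_2$ norm $\asymp 1/d$. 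To convert this into an \emph{entrywise} bound I would again use a leave-one-out device, now re-drawing both item $u$'s comparisons and the noise coordinate $w_u$: then $(\widetilde\bth^{(u)}-\hat\bth^{(u)})_u$ is a linear combination of the i.i.d.\ $\mathrm{Laplace}(\lambda)$ coordinates with coefficients equal to a row of $(H^{(u)})^{-1}$, which is independent of $w_u$, so a sub-exponential tail bound gives $|(\widetilde\bth-\hat\bth)_u|\lesssim\lambda\,\|(H^{-1})_{u\cdot}\|_2\sqrt{\log n}+(\text{leave-one-out slack})\lesssim\lambda\log n/d\asymp n\log n/(m\varepsilon)$, using $\lambda\asymp L/\varepsilon$ and $d\asymp mL/n$. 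A triangle inequality with the noiseless bound then yields $\|\widetilde\bth-\bth^*\|_\infty\lesssim\sqrt{n\log n/m}+n\log n/(m\varepsilon)$.

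The main obstacle is precisely the entrywise control of $H^{-1}\bm w$. The crude operator-norm estimate $\|H^{-1}\|_{\mathrm{op}}\|\bm w\|_2$ is of order $\sqrt n$, far above the target, so the argument has to combine (i) the fine structure of $H^{-1}$ --- that its rows, modulo the translation mode, have $\ell_2$ mass of order $1/d$ with only lower-order off-support leakage --- with (ii) the independence and sub-exponential concentration of $\bm w$; and because $H$ itself depends on the data, making the weights independent of $w_u$ forces the leave-one-out decoupling to be run jointly over item $u$'s comparisons and $w_u$, with a check that the two perturbations of $H$ and of $\bm w$ do not reinforce each other. The remaining, more mechanical hurdle is re-deriving the spectral and degree concentration of $\nabla^2\mathcal L$ for the random Binomial multiplicities $M_{ij}$ rather than the Bernoulli edge indicators of the edge-DP model; this is routine but is exactly where the sample-size hypothesis $m\gtrsim n\log n/\varepsilon$ is consumed.
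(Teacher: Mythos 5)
Your overall plan is in the same family as the paper's: both arguments reduce the with-replacement sampling model to an effective per-item sample size $S\asymp mL/n$ playing the role of $np$, re-derive degree, gradient and Hessian-spectrum concentration for the Binomial multiplicities $M_{ij}$ (the paper's Lemmas on degree concentration, gradient concentration, Laplace tails and matrix-Chernoff for $\nabla^2\mathcal L$), and then run a leave-one-out entrywise analysis. The organization differs: the paper folds the Laplace perturbation into a single induction on gradient-descent iterates of the perturbed objective (a verbatim transplant of its edge-DP appendix with $np\to S$, $\lambda\to L\lambda$), whereas you split the error into the noiseless ridge-penalized MLE plus the noise response $-H^{-1}\bm w$ and then decouple the data-dependent $H$ from $w_u$ by a second leave-one-out. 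That second step is exactly the delicate point the paper's induction sidesteps; your sketch of it is plausible but is the part that would need the most care.

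There is, however, one concrete soft spot. The paper's own proof does \emph{not} use the $\gamma=8L\kappa_2/\varepsilon$ appearing in the theorem statement: it takes $\gamma=c_0\sqrt{2mL\log n/n}$ and merely checks that this exceeds $8L\kappa_2/\varepsilon$ (this is precisely where $\varepsilon\gtrsim(\log n)^{-1/2}$ and $n\log n/(m\varepsilon)=O(1)$ are consumed), so that Proposition \ref{prop: individual ranking MLE privacy} still applies. This large ridge term is what keeps the $\bm 1$-direction of the injected noise under control: since $\nabla^2\mathcal L\,\bm 1=0$, the translation component of $-H^{-1}\bm w$ has per-entry size $|\bm 1^\top\bm w|/(n\gamma)\lesssim\lambda\sqrt{\log n/n}\,/\gamma$, which with the paper's $\gamma$ is dominated by $\sqrt{n\log n/m}$ for every admissible $m$, but with your literal choice $\gamma\asymp L\kappa_2/\varepsilon$ (so $\lambda/\gamma\asymp1$) is of order $\sqrt{\log n/n}$ and is \emph{not} bounded by $\sqrt{n\log n/m}+n\log n/(m\varepsilon)$ once $m\gg n^2$ (the hypotheses only bound $m$ from below). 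Your dismissal of this component as ``irrelevant for ranking, controlled exactly as in the edge-DP proof'' does not close this: the theorem asserts an $\ell_\infty$ bound on $\widetilde\bth-\bth^*$ itself, and the edge-DP control of $\bm 1^\top\widetilde\bth$ relies on $\gamma\asymp\sqrt{np\log n}$, which you have given up. To make your route deliver the stated bound you must either enlarge $\gamma$ to $\asymp\sqrt{(mL/n)\log n}$ as the paper's proof implicitly does (and then verify $\gamma\ge 8L\kappa_2/\varepsilon$ for privacy), or output a centered estimator $\widetilde\bth-(\bm 1^\top\widetilde\bth/n)\bm 1$; as written, the step bounding the translation mode would fail in part of the allowed parameter regime.
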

In particular the first term matches the non-private minimax rate \(\sqrt{n\log n/m}\), and the second term quantifies the cost of individual DP.  

\noindent
\textbf{Minimax lower bound}.  
We can further show that no \((\varepsilon,\delta)\)-DP estimator can achieve a smaller worst-case \(\ell_{\infty}\) error.  By the same reduction argument from parametric estimation lower bound to top $k$ ranking lower bound made in Remark 1 in Section \ref{sec: edge dp ranking}, we shall prove a ranking lower bound in the nonparametric case, which then implies the following estimation lower bound.

\begin{Theorem}
	\label{thm: individual-privacy-lower}
	If \(\sqrt{(n\log n)/m} + (n\log n)/(m\,\varepsilon)<c_{0}\) and \(\delta\lesssim n^{-1-\omega}\) for some \(\omega>0\), then any \((\varepsilon,\delta)\)-DP estimator \(\widehat{\theta}\) must satisfy
$$
	\sup_{\|\bth\|_{\infty}\le1}
	\mathbb{E}\bigl\|\widehat{\bth}-\bth\bigr\|_{\infty}
	\;\gtrsim\;
	\sqrt{\frac{n\,\log n}{m}}
	\;+\;
	\frac{n\,\log n}{m\,\varepsilon}.
$$
\end{Theorem}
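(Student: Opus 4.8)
The plan is to prove the two terms $\sqrt{n\log n/m}$ and $\frac{n\log n}{m\varepsilon}$ separately, since their sum is comparable to their maximum. The first term is inherited for free: any $(\varepsilon,\delta)$-DP estimator is in particular an estimator, so its worst-case $\ell_\infty$ risk is bounded below by the unconstrained minimax $\ell_\infty$ risk, which is $\gtrsim\sqrt{n\log n/m}$ by the classical lower bounds of \cite{chen2019spectral, shah2017simple} transported to the individual-DP sampling model. The only change is that the Erd\H{o}s--R\'enyi comparison graph is replaced by the model in which each of the $m$ users draws $L$ pairs uniformly at random, so that the effective number of comparisons incident to each item is $\asymp mL/n$ rather than $\asymp np$; this substitution $np\mapsto mL/n$ turns $\sqrt{\log n/(np)}$ into $\sqrt{n\log n/m}$.

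The substance of the proof is the private term, which I would obtain through the route indicated just before the statement: first establish a nonparametric top-$k$ ranking lower bound under individual DP --- the analogue of Theorem~\ref{thm: nonparametric ranking lower bound} --- restricted to the parametric model, and then convert it to an $\ell_\infty$ estimation lower bound via Remark~\ref{rem:top-k-to-l-infty-LB}. For the ranking lower bound I would apply the differentially private Fano inequality \cite{barber2014privacy, acharya2021differentially} to a family of instances indexed by which pair of items straddling the rank boundary is swapped: starting from a parametric base configuration $\bth^*$ with $\|\bth^*\|_\infty\le 1$ and $\theta^*_{(k)}-\theta^*_{(k+1)}\asymp c\,\frac{n\log n}{m\varepsilon}$, each instance perturbs only the $\theta$-coordinates of two boundary items so that their $\tau$-values cross, which localizes the change in the $\rho$-matrix to the $O(n)$ entries incident to those two items. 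The crucial estimate is a coupling bound on the number of user-records that must change to move between any two instances: the pair selections are shared across instances, a given user's $L$ draws involve a perturbed item only $\asymp L/n$ times in expectation, and for each such draw the two outcome distributions can be coupled to agree with probability $1-O(\Delta)$, where $\Delta$ is the $\tau$-gap being created; hence the expected number of differing records is $\asymp mL\Delta/n$, which is $\asymp \log n/\varepsilon$ once $\Delta\asymp \frac{n\log n}{m\varepsilon}$. The DP Fano bound then certifies that no $(\varepsilon,\delta)$-DP procedure outputs the correct top-$k$ set with probability exceeding $9/10$, provided the group-privacy inflation of $\delta$ --- of order $e^{O(\log n)}\delta=n^{O(1)}\delta$, with the exponent controlled by the small separation constant $c$ --- is negligible; this is precisely what the hypothesis $\delta\lesssim n^{-1-\omega}$ provides once $c$ is chosen small enough. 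Since we perturb $\bth^*$ directly, the construction lies in the parametric model, and because $\|\bth^*\|_\infty\le 1$ confines all score differences to a compact interval on which $F'$ is bounded above and below, we get $|\tau_{(k)}-\tau_{(k+1)}|\asymp|\theta^*_{(k)}-\theta^*_{(k+1)}|$ as in Remark~\ref{rem:top-k-to-l-infty-LB}.

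To convert this into the estimation bound I would argue by contraposition: if some $(\varepsilon,\delta)$-DP estimator $\widehat{\bth}$ satisfied $\sup_{\|\bth\|_\infty\le 1}\E\|\widehat{\bth}-\bth\|_\infty = o\!\bigl(\tfrac{n\log n}{m\varepsilon}\bigr)$, then on each instance of the hard family the event $\{\|\widehat{\bth}-\bth^*\|_\infty<\tfrac12|\theta^*_{(k)}-\theta^*_{(k+1)}|\}$ would have probability $1-o(1)$ by Markov's inequality, and on that event the top-$k$ coordinates of $\widehat{\bth}$ coincide with $\mathcal S_k$. This contradicts the ranking lower bound just established, so $\sup\E\|\widehat{\bth}-\bth\|_\infty\gtrsim \frac{n\log n}{m\varepsilon}$; combining with the non-private term finishes the proof.

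I expect the main obstacle to be the DP Fano step. One must (i) redo the KL and total-variation computations for the ``$L$ uniform pairs per user'' sampling model instead of the fixed Erd\H{o}s--R\'enyi graph; (ii) upgrade the expected-record-count bound to a high-probability statement, controlling the rare event that many users happen to draw a perturbed item so that group privacy applies on a good event; and (iii) track constants finely enough that the resulting separation threshold is exactly $\frac{n\log n}{m\varepsilon}$ and that the requirement $\delta\lesssim n^{-1-\omega}$ --- rather than the weaker $\delta\lesssim n^{-1}$ that sufficed in the edge-DP case --- is what is needed to absorb the group-privacy blow-up.
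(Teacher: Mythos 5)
Your proposal is correct and follows essentially the same route as the paper: the paper proves the individual-DP top-$k$ ranking lower bound (Theorem \ref{thm: individual nonparametric ranking lower bound}) via the DP Fano inequality (Theorem \ref{thm: private fano}) with a Shah--Wainwright-style packing, a per-user coupling/TV bound $D\lesssim mLu\Delta/n$, and a parametric realization of the hard instances, and then derives Theorem \ref{thm: individual-privacy-lower} by exactly the Markov/contraposition reduction you describe (accurate $\ell_\infty$ estimation would yield a rank selector contradicting the ranking bound). The only cosmetic difference is your two-item-swap packing of size $\Theta(n)$ in place of the paper's tolerance-$u$ combinatorial packing; it plays the same role and the arithmetic ($\varepsilon D\lesssim\log M$, $\delta$ absorbed by group privacy) goes through identically.
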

Hence $\widetilde{\bth}$ of Theorem \ref{thm:idp-linf} is minimax optimal up to constants.

\subsection{Nonparametric Ranking under Individual DP}
\label{sec: individual-nonparametric}

Next, we drop any parametric assumption on \(\rho_{ij}\) and directly recover the top-\(k\) set based on
$\tau_i = \frac{1}{n}\!\sum_{j=1}^n \rho_{ij}$.

\textbf{Count-based estimator}.  Let $N_i$ be the total wins of item \(i\).  Replacing one user's \(L\) comparisons can change each \(N_i\) by at most \(L\), and we therefore add i.i.d.\ \(\mathrm{Laplace}(L/\varepsilon)\) noise to each coordinate.

By the Laplace mechanism \cite{dwork2006calibrating}, the resulting noisy counts
\(\widetilde{N}_i\) in Algorithm \ref{alg:DP-nonparametric-ranking-indiv} are \((\varepsilon,0)\)-DP, and ranking items by \(\{\widetilde{N}_i\}\) preserves individual-level differential privacy.
\begin{algorithm}[!htbp]
	\caption{Individual-DP Top-\(k\) via Noisy Counts}
	\label{alg:DP-nonparametric-ranking-indiv}
	
	\textbf{Input:}  All comparisons \(\{Y^{(k,l)}\}\) and privacy parameter \(\varepsilon>0\).

	\begin{algorithmic}[1]
		\State For each \(i\in[n]\), compute
	$
		N_i \;=\; \sum_{j\neq i}\sum_{k=1}^m\sum_{l=1}^L \1\bigl(Y_{ij}^{(k,l)}=+1\bigr).
$
		\State Draw \(W_1,\dots,W_n\overset{\mathrm{i.i.d.}}{\sim}\mathrm{Laplace}\bigl(L/\varepsilon\bigr).\)
		\State Form \(\widetilde{N}_i = N_i + W_i\) for \(i=1,\dots,n\).
		\State Output \(\widetilde{\mathcal{S}}_k =\) the indices of the top \(k\) values among \(\{\widetilde{N}_i\}\).
	\end{algorithmic}
	
	\textbf{Output:}  \(\widetilde{\mathcal{S}}_k\).
\end{algorithm}

 Fix a tolerance \(u\in\{0,1,\dots,\min(k-1,n-k-1)\}\).  Define
\[
\Theta(k,u,c)
=\Bigl\{
\rho:\,[0,1]^{n\times n},\;\rho_{ij}+\rho_{ji}=1,\;
\tau_{(k-u)}-\tau_{(k+u+1)}
\;\ge\;c\Bigl(\sqrt{\tfrac{n\log n}{m}}+\tfrac{n\log n}{m\,\varepsilon}\Bigr)
\Bigr\},
\]
where \(\tau_{(1)}\ge\tau_{(2)}\ge\cdots\ge\tau_{(n)}\).  We show that the noisy counts succeed at recovering the top items with overwhelming probability.

\begin{Theorem}
	\label{thm: individual ranking nonparametric upper bound}
	There exists \(C>0\) such that, for any \(k\) and \(u\), if \(\rho\in\Theta(k,u,C)\), we have
	\[
	\Pr\bigl(d_H(\widetilde{\mathcal{S}}_k,\mathcal{S}_k)>2u\bigr)
	\;=\;O(n^{-5}).
	\]
	In particular, when \(u=0\), exact recovery holds with probability \(1-O(n^{-5})\) provided that
	\[
	\tau_{(k)}-\tau_{(k+1)}
	\;\ge\;
	C\Bigl(\sqrt{\tfrac{n\log n}{m}}+\tfrac{n\log n}{m\,\varepsilon}\Bigr).
	\]
\end{Theorem}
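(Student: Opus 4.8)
The plan is to mirror the edge-DP argument behind Theorem~\ref{thm: ranking nonparametric upper bound}, replacing the win-count statistic by its analogue in the multi-user model and showing that the added Laplace noise cannot reorder items whose $\tau$-values are well separated. First I would pin down $\E N_i$. Each of the $mL$ draws picks an unordered pair uniformly and independently, so the probability that a given draw is a win for item $i$ equals $\sum_{j\ne i}\tfrac{1}{\binom n2}\rho_{ij}=\tfrac{2}{n(n-1)}\bigl(n\tau_i-\tfrac12\bigr)$, and hence $N_i$ is a sum of $mL$ i.i.d.\ Bernoulli variables with
\[
\mu_i:=\E N_i=\frac{2mL}{n(n-1)}\Bigl(n\tau_i-\tfrac12\Bigr),\qquad \mu_i-\mu_j=\frac{2mL}{n-1}\,(\tau_i-\tau_j).
\]
Since $i\mapsto\mu_i$ is a strictly increasing affine function of $\tau_i$, the ordering of the $\mu_i$ coincides with that of the $\tau_i$, so $\mathcal S_k$ is the top-$k$ set for both. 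Writing $\mu_{(1)}\ge\cdots\ge\mu_{(n)}$ for the ordered means, the hypothesis $\bm\rho\in\Theta(k,u,C)$ yields
\[
\mu_{(k-u)}-\mu_{(k+u+1)}=\frac{2mL}{n-1}\bigl(\tau_{(k-u)}-\tau_{(k+u+1)}\bigr)\;\gtrsim\;C\left(\sqrt{\frac{m\log n}{n}}+\frac{\log n}{\varepsilon}\right),
\]
with $L$ treated as the fixed constant it is.

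Next I would set up a deterministic reduction. Let $B:=\max_{i\in[n]}\bigl|(N_i-\mu_i)+W_i\bigr|$ and $v:=\tfrac12\bigl(\mu_{(k-u)}+\mu_{(k+u+1)}\bigr)$. On the event $\{B<\tfrac12(\mu_{(k-u)}-\mu_{(k+u+1)})\}$ every item of $\tau$-rank $\le k-u$ has noisy count $N_i+W_i>v$ and every item of $\tau$-rank $\ge k+u+1$ has noisy count $<v$; only the $2u$ items of the middle band (ranks $k-u+1,\dots,k+u$) are uncontrolled. Let $t$ be the number of those middle items whose noisy count exceeds $v$. A short case split on $t$ versus $u$ shows: if $t\le u-1$ then the items with count $>v$ number only $k-u+t<k$, so every top-band item belongs to $\widetilde{\mathcal S}_k$ and $\mathcal S_k\setminus\widetilde{\mathcal S}_k$ is contained in the $u$ items of ranks $k-u+1,\dots,k$; symmetrically, if $t\ge u+1$ then no bottom-band item can enter $\widetilde{\mathcal S}_k$, so $\widetilde{\mathcal S}_k\setminus\mathcal S_k$ is contained in the $u$ items of ranks $k+1,\dots,k+u$; and $t=u$ gives both. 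Since $|\widetilde{\mathcal S}_k|=|\mathcal S_k|=k$, in every case $d_H(\widetilde{\mathcal S}_k,\mathcal S_k)=|\widetilde{\mathcal S}_k\,\triangle\,\mathcal S_k|\le 2u$. Therefore $\Pr\bigl(d_H(\widetilde{\mathcal S}_k,\mathcal S_k)>2u\bigr)\le\Pr\bigl(B\ge\tfrac12(\mu_{(k-u)}-\mu_{(k+u+1)})\bigr)$, and when $u=0$ the middle band is empty and the same event forces $\widetilde{\mathcal S}_k=\mathcal S_k$.

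It remains to bound $B\le\max_i|N_i-\mu_i|+\max_i|W_i|$. Each $N_i$ is a sum of $mL$ i.i.d.\ indicators with mean $\mu_i\le 2mL/(n-1)\lesssim m/n$, so a Bernstein/Chernoff tail bound gives $\Pr\bigl(|N_i-\mu_i|\ge c_1(\sqrt{\mu_i\log n}+\log n)\bigr)\le n^{-6}$, and a union bound over $i\in[n]$ yields $\max_i|N_i-\mu_i|\lesssim\sqrt{m\log n/n}+\log n$ with probability $1-O(n^{-5})$; similarly $\Pr\bigl(|W_i|\ge 6(L/\varepsilon)\log n\bigr)=n^{-6}$ gives $\max_i|W_i|\lesssim\log n/\varepsilon$ with probability $1-O(n^{-5})$. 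Hence $B\lesssim\sqrt{m\log n/n}+\log n+\log n/\varepsilon$ with probability $1-O(n^{-5})$. Nonemptiness of $\Theta(k,u,C)$ forces $C\sqrt{n\log n/m}\le\tau_{(k-u)}-\tau_{(k+u+1)}\le 1$, i.e.\ $m\gtrsim n\log n$, so $\log n\lesssim\sqrt{m\log n/n}$ and $B\lesssim\sqrt{m\log n/n}+\log n/\varepsilon$. Comparing with the first-step lower bound $\mu_{(k-u)}-\mu_{(k+u+1)}\gtrsim C(\sqrt{m\log n/n}+\log n/\varepsilon)$, a sufficiently large absolute constant $C$ makes $B<\tfrac12(\mu_{(k-u)}-\mu_{(k+u+1)})$ hold on that event, which gives the claimed $O(n^{-5})$ bound; taking $u=0$ yields the exact-recovery statement.

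The step I expect to need the most care is the combinatorial reduction in the second paragraph. The naive assertion ``$\widetilde{\mathcal S}_k\supseteq$ top band'' is actually false --- a middle-band item can have an atypically small noisy count, slip below a bottom-band item, and thereby push a top-band item out of $\widetilde{\mathcal S}_k$ --- so one genuinely needs the case split on $t$ together with the counting of how many items may lie above or below the threshold $v$. Everything else is the now-standard Laplace-mechanism-plus-concentration template already used for Theorem~\ref{thm: ranking nonparametric upper bound}; the only quantitative point to check is that, after scaling by the signal size $\asymp mL/n$, the Binomial fluctuation $\sqrt{m\log n/n}$ and the noise fluctuation $\log n/\varepsilon$ line up exactly with the two terms $\sqrt{n\log n/m}$ and $n\log n/(m\varepsilon)$ in the definition of $\Theta(k,u,C)$.
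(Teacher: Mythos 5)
Your proposal is correct, and it follows the same basic template as the paper's proof (separation of expected win counts, binomial concentration, Laplace tails), but it organizes the probabilistic argument differently. The paper proves the bound by a union bound over all $\sim n^{2}$ pairs $(a,b)$ with $a\in\mathcal S_{k-u}$, $b\in(\mathcal S_{k+u})^{c}$, applying Bernstein's inequality directly to the pairwise difference $N_a-N_b$ and a Laplace tail bound to $W_b-W_a$, after asserting (without proof) the inclusion $\bigcap_{a,b}\{N_a+W_a>N_b+W_b\}\subseteq\{d_H(\widetilde{\mathcal S}_k,\mathcal S_k)\le 2u\}$. You instead control a single uniform deviation $B=\max_i|(N_i-\mu_i)+W_i|$ over the $n$ items and run a counting argument at the midpoint threshold $v$; your case split on $t$ is in effect a proof of the combinatorial inclusion that the paper takes for granted, which is a genuine plus of your write-up (and your worry in the last paragraph is well placed: the naive ``top band $\subseteq\widetilde{\mathcal S}_k$'' claim does need the cardinality counting you supply). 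The two routes give the same rate: your per-item Chernoff bound costs a union over $n$ rather than $n^{2}$ events but requires the extra observation that nonemptiness of $\Theta(k,u,C)$ forces $m\gtrsim n\log n$ so that the additive $\log n$ term is absorbed, which plays the same role as the paper's use of $n\log n/(m\varepsilon)=O(1)$; the paper's pairwise Bernstein bound avoids the threshold construction at the price of a slightly more delicate variance computation for $N_a-N_b$. All the quantitative ingredients in your sketch (the expression for $\mu_i-\mu_j$, the scaling by $mL/n$, the Laplace$(L/\varepsilon)$ tail, the choice of $C$) check out, so the argument is sound.
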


As a further consequence, if $|\tau_{(k)} - \tau_{(k+1)}| 
\geq C\left(\sqrt{\frac{n\log n}{m}} + \frac{n\log n}{m\varepsilon}\right)$ for every $k$, then the union bound implies all $n$ items can be correctly ranked with probability at least $1 - O(n^{-4})$. The next theorem shows this threshold is optimal in the sense that no differentially private algorithm can succeed at recovering $\mathcal S_k$ when $|\tau_{(k)} - \tau_{(k+1)}| 
< c\left(\sqrt{\frac{n\log n}{m}} + \frac{n\log n}{m\varepsilon}\right)$ for a sufficiently small constant $c$.

\medskip
\noindent
\textbf{Nonparametric ranking lower bound}.  By a DP Fano argument analogous to \cite{barber2014privacy,acharya2021differentially}, no \((\varepsilon,\delta)\)-DP procedure can reliably recover \(\mathcal{S}_k\) when the gap \(\tau_{(k)}-\tau_{(k+1)}\) is smaller.

\begin{Theorem} \label{thm: individual nonparametric ranking lower bound}
	Assume \(\sqrt{(n\log n)/m}+(n\log n)/(m\,\varepsilon)<c_0\) and 
	\(\delta< c_0\bigl(u\log n\,n^{10u}/\varepsilon\bigr)^{-1}\).  Then for sufficiently small \(c\) and large \(n\),
	\[
	\inf_{M\,\in\,\mathcal{M}_{\varepsilon,\delta}}\,
	\sup_{\rho\in\Theta(k,u,c)}
	\Pr\bigl(d_H(M(Y),\mathcal{S}_k)>2u\bigr)
	\;\ge\;\tfrac{1}{10}.
	\]
	In particular, exact recovery (\(u=0\)) fails if for some $k$, 
	\(\tau_{(k)}-\tau_{(k+1)}<c\bigl(\sqrt{(n\log n)/m}+(n\log n)/(m\,\varepsilon)\bigr)\).
\end{Theorem}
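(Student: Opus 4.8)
The plan is to follow the edge-DP argument behind Theorem \ref{thm: nonparametric ranking lower bound}, using the observation that the $m$-user/$L$-draw model is, for lower-bound purposes, the edge-DP model with effective pair-observation probability $p_{\mathrm{eff}}\asymp mL/n^2$; under the dictionary $np\leftrightarrow mL/n$ the two target rates coincide, so the substance is the change of adjacency from ``one comparison'' to ``one user's block of $L$ comparisons.'' As in the edge case I would first reduce to the purely-private regime via
$$\Theta(k,u,c)\subseteq\Theta^0(k,u,2c)\cup\widetilde\Theta(k,u,2c),$$
where $\Theta^0$ carries only the non-private separation $\tau_{(k-u)}-\tau_{(k+u+1)}\gtrsim\sqrt{n\log n/m}$ and $\widetilde\Theta$ the privacy-driven separation $\tau_{(k-u)}-\tau_{(k+u+1)}\gtrsim n\log n/(m\varepsilon)$. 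Over $\Theta^0$ the bound is the non-private minimax lower bound for top-$k$ recovery, obtained by porting the Fano construction of \cite{shah2017simple} (their Theorem 2(b)) to the $m$-user model; the new work is the lower bound over $\widetilde\Theta(k,u,2c)$.

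For the private piece I would apply the differentially private Fano inequality \cite{barber2014privacy, acharya2021differentially} to a packing $\{\bm\rho^{(1)},\dots,\bm\rho^{(N)}\}\subseteq\widetilde\Theta(k,u,c)$ built to meet three requirements. First, realizability: each $\bm\rho^{(t)}$ fixes a ``safely top'' and a ``safely bottom'' block with $\tau$-gap of exact order $c\cdot n\log n/(m\varepsilon)$ across rank $k$, so $\bm\rho^{(t)}\in\widetilde\Theta(k,u,c)$ and every perturbed entry is $\tfrac12\pm\Theta(c\cdot n\log n/(m\varepsilon))$. Second, separation: the instances encode which $\Theta(u)$-subset of a confusable set $\mathcal C$ of size $\asymp\min(k,n-k)$ is promoted past rank $k$, chosen so the true top-$k$ sets are pairwise at Hamming distance $>4u$, giving $N\asymp\binom{|\mathcal C|}{u}\gtrsim n^{\Omega(u)}$ with slack relative to the $n^{10u}$ appearing in the hypotheses. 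Third, a coupling bound: between any two instances there is a coupling of the datasets under which, with high probability, at most $D\asymp uL\log n/\varepsilon$ \emph{users} have a differing $L$-tuple -- each promoted item has $\tau$ shifted by $\Theta(c\cdot n\log n/(m\varepsilon))$, hence contributes on the order of $(mL/n)\cdot c\cdot n\log n/(m\varepsilon)=\Theta(cL\log n/\varepsilon)$ differing comparisons, and $\Theta(u)$ items are promoted. Feeding these into DP Fano, the failure probability stays above a constant once $\log N\gtrsim\varepsilon D$ and $\delta$ is small enough that the group-privacy correction $N\cdot D\,\delta\,e^{\varepsilon D}=o(1)$. The first inequality reads $u\log n\gtrsim u\,cL\log n$, i.e.\ $c$ below a constant depending on $L$ and, through the room available in $|\mathcal C|$, on $\nu_1,\nu_2$; the second is exactly the hypothesis $\delta\lesssim(u\log n\cdot n^{10u}/\varepsilon)^{-1}$.

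The step I expect to be the main obstacle is the coupling/counting estimate under user-level adjacency: in the edge-DP proof one simply counts comparisons on which two instances disagree, but here I must bound the number of \emph{users} whose $L$ draws collectively change. Since each user draws $L$ uniform pairs, ``user $k$ changes'' is a Bernoulli event of probability $\lesssim L\cdot(|\mathcal C|/n)\cdot\Theta(c\cdot n\log n/(m\varepsilon))$ under the optimal coupling, and I would need a high-probability (not merely in-expectation) bound $D\lesssim uL\log n/\varepsilon$ together with a check that this is compatible with $\log N\asymp u\log n$ throughout the stated range of $(m,n,\varepsilon)$; this is precisely what forces $c$ to be a sufficiently small constant and $\min(k,n-k)$ to be large in the sense of $2u\le(1+\nu_2)^{-1}\min\{n^{1-\nu_1},k,n-k\}$. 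Setting $u=0$ collapses $\mathcal C$ to the single item sitting just below rank $k$, but the same $N\gtrsim n^{\Omega(1)}$ packing over ``which item that is'' is still needed to defeat the $\Theta(\log n/\varepsilon)$ information available per item, yielding the stated exact-recovery conclusion; and, as in Remark \ref{rem:top-k-to-l-infty-LB}, the whole construction can be realized inside the parametric family $\rho_{ij}=F(\theta_i^*-\theta_j^*)$, which is what delivers the estimation lower bound of Theorem \ref{thm: individual-privacy-lower}.
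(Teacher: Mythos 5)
Your construction for the private part is essentially the paper's: the same Shah--Wainwright-style packing of $M\asymp e^{\Theta(u\log n)}$ matrices with blocks at $\tfrac12\pm\Delta$, pairwise top-$k$ Hamming separation $>4u$, $\Delta\asymp c\,n\log n/(m\varepsilon)$, and the key estimate that two instances induce per-user total variation $\lesssim L(u/n)\Delta$, hence an expected coupling Hamming distance $D\lesssim mL(u/n)\Delta\asymp cLu\log n/\varepsilon$, fed into the $(\varepsilon,\delta)$-DP Fano inequality with the $\delta$-condition matching the hypothesis; the realization inside the parametric family via $F(\alpha-\beta)=\tfrac12+\Delta$ is also the same. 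Where you diverge is the non-private term: you reduce via $\Theta(k,u,c)\subseteq\Theta^0(k,u,2c)\cup\widetilde\Theta(k,u,2c)$ and port the non-private lower bound of \cite{shah2017simple} to the $m$-user model (mirroring the paper's \emph{edge}-DP proof of Theorem \ref{thm: nonparametric ranking lower bound}), whereas the paper's individual-DP proof handles both regimes in a single application of Theorem \ref{thm: private fano}, bounding the pairwise KL divergence by $\lesssim \frac{mLu}{n}\Delta^2$ so that the classical-Fano branch covers the $\sqrt{n\log n/m}$ separation; your route outsources that branch to an external result that must be re-verified in the with-replacement user model, while the paper's keeps everything self-contained on one packing. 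Two small corrections: the obstacle you flag is lighter than you fear, because condition (c) of the DP Fano inequality only requires a coupling with $\E\,d_{\mathrm{Ham}}(X,Y)\le D$ (Markov's inequality is applied inside its proof), so no high-probability bound on the number of changed users is needed; and your per-user change probability should involve the size of the symmetric difference of the two promoted sets, which is $\Theta(u)$, not $|\mathcal C|\asymp\min(k,n-k)$ -- your earlier count of $\Theta(u)$ promoted items times $\Theta(cL\log n/\varepsilon)$ differing comparisons each is the correct one and gives the same $D$ as the paper.
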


Together, Theorems \ref{thm: individual ranking nonparametric upper bound} and \ref{thm: individual nonparametric ranking lower bound} show that the requirement 
\(\sqrt{(n\log n)/m}+(n\log n)/(m\,\varepsilon)\)
is both necessary and sufficient for privately recovering the top-\(k\) set under individual DP. 
\vspace{-5mm}
\section{Numerical Experiments}
\label{sec: experiments}

We conduct three sets of experiments to evaluate the numerical performance of our algorithms and to guide their practical application.

In Sections \ref{sec: edge dp simulated data} and \ref{sec: individual dp simulated data}, we use simulated data to study the accuracy of edge DP and individual DP algorithms respectively. The numerical results qualitatively confirm the dependence of estimation and ranking errors on all parameters: the number of items $n$, the number of individuals $m$,  sampling probability $p$. Notably, the results support the most striking difference between the rates of convergence under edge DP and their counterparts in individual DP, namely that the former is a decreasing function of the number of items $n$ but the latter is increasing in $n$.

To complement these theoretical and simulation studies, Section \ref{sec: real data} applies the individual DP ranking algorithms to two real-world datasets: a student preference dataset from \cite{dittrich1998modelling} and an immigration attitude survey from \cite{weber2011novel}. Both datasets naturally align with the individual DP framework, as each involves individuals comparing multiple pairs of items. In such contexts, participants' opinions--on topics such as university preferences or immigration policy--are sensitive and merit privacy protection.

For practical applications of our ranking algorithms, we highlight three findings from the experiment results.

\begin{itemize}
	\item For recovering ranks, the non-parametric, counting based algorithm generally outperform the parametric estimation algorithm, in both edge and individual DP and across all settings of $n$, $p$, $m$, and $\varepsilon$. Both the parametric and nonparametric algorithms significantly outperform the one-shot algorithm in \cite{qiao2021oneshot} in our numerical experiments.
	
	\item The individual DP algorithms are accurate when the number of items is small but the number of comparisons is large. The edge DP algorithms, in contrast, perform well when both the number of items and the number of comparisons are large. With the caveat that the privacy protections offered by edge DP  and individual DP are quite different, this observation may nevertheless be relevant when there is flexibility in choosing which DP framework to apply to the data analysis task.
	
	\item In our experiments the estimation errors at $\varepsilon = 2.5$ or greater tend to be much closer to the non-private estimation errors than to the estimation errors at, say $\varepsilon = 1$, possibly suggesting that choosing $\varepsilon = 2.5$ is a promising choice for balancing the privacy and utility of our algorithms. While the $\varepsilon$ values in different contexts and applications are not inherently comparable \cite{NBERc15017}, a privacy parameter of $\varepsilon = 2.5$ is smaller than or comparable with the $\varepsilon$ values adopted by prominent adopters of DP, such as Apple, LinkedIn, or the US Census Bureau \cite{desfontainesblog20211001}.

\end{itemize}

\vspace{-5mm}
\subsection{Simulated Data Experiments under Edge DP}
\label{sec: edge dp simulated data}

\subsubsection{Setup}

\paragraph {Data Generation}
The pairwise comparison outcomes are sampled from the BTL model, with various values of the number of items $n$ and sampling probability $p$ as specified in the experiments below. We fix the value of $k = n/4$ and generate our $\theta_i$, up to centering, by
$$
e^{\theta_i} \sim 
\begin{cases}
	1 \quad &\mathrm{if} \quad i < k, \\
	\mathrm{Unif}(0.2,0.7)  \quad &\mathrm{if} \quad i \geq k.
\end{cases}	
$$

\paragraph{Evaluation Metrics for Parameter Estimation:}   
For evaluating the parametric estimation algorithms, we consider the $\ell_\infty$ and $\ell_2$ relative errors on logarithmic scale, $\log \left(\frac{\|\hat \theta-\theta^*\|_\infty}{\|\theta^*\|_\infty}\right)$ and $\log \left(\frac{\|\hat \theta-\theta^*\|_2}{\|\theta^*\|_2}\right)$, where $\hat\theta$ is the estimator and $\theta^*$ is the true parameter.

\paragraph{Evaluation Metric for Top-$k$ set recovery:} Under both the parametric and nonparametric models, we evaluate the performance of top-$k$ recovery by one minus the size of overlap between the estimator and the truth, 
$
1 - \frac{|\widehat{\mathcal{S}}_k \cap \mathcal{S}_k|}{k},
$
where $\mathcal{S}_k$ is the true top-$k$ set and $\widehat{\mathcal{S}}_k$ is an estimator.

\subsubsection{Experiments}

{\bf Experiment 1}.  We study the number of items $n$'s effect on the accuracy of our estimator (Figure \ref{fig:experiment-1}). The sampling probability $p$ is fixed at $1$, and we consider four  privacy levels $\varepsilon \in \{0.5, 1, 2.5, \infty\}$. All loss functions decrease as $n$ increases, demonstrating the consistency of our suggested approaches. It is noteworthy that, for top-$k$ set recovery,  the nonparametric Copeland algorithm outperforms the penalized-MLE  in both the private and non-private regimes. 

\begin{center}
	\begin{figure}[!htbp]
		\centering
		\subfloat{{\includegraphics[height=0.15\textheight, width=0.35\textwidth]{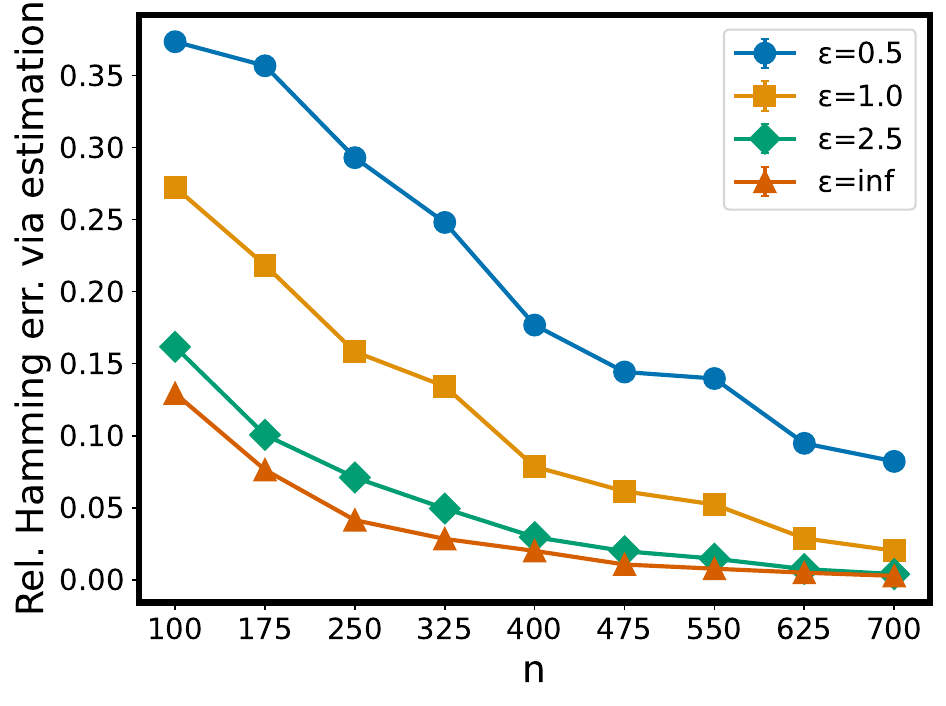}}}%
		\quad
		\subfloat{{\includegraphics[height=0.15\textheight, width=0.35\textwidth]{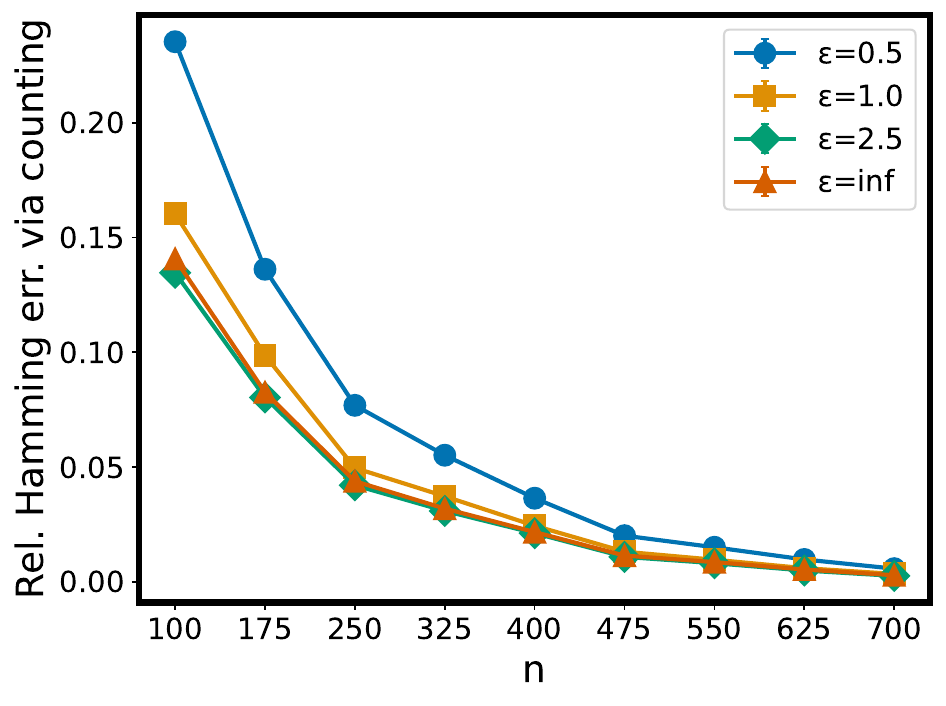}}}%
		
		\subfloat{{\includegraphics[height=0.15\textheight, width=0.35\textwidth]{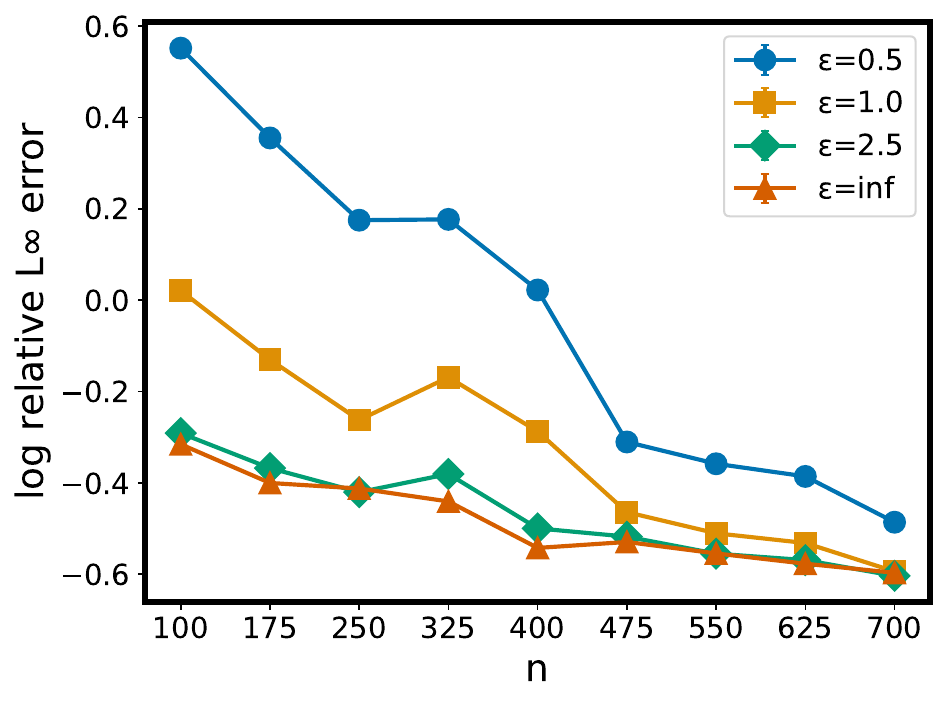}}}%
		\quad
		\subfloat{{\includegraphics[height=0.15\textheight, width=0.35\textwidth]{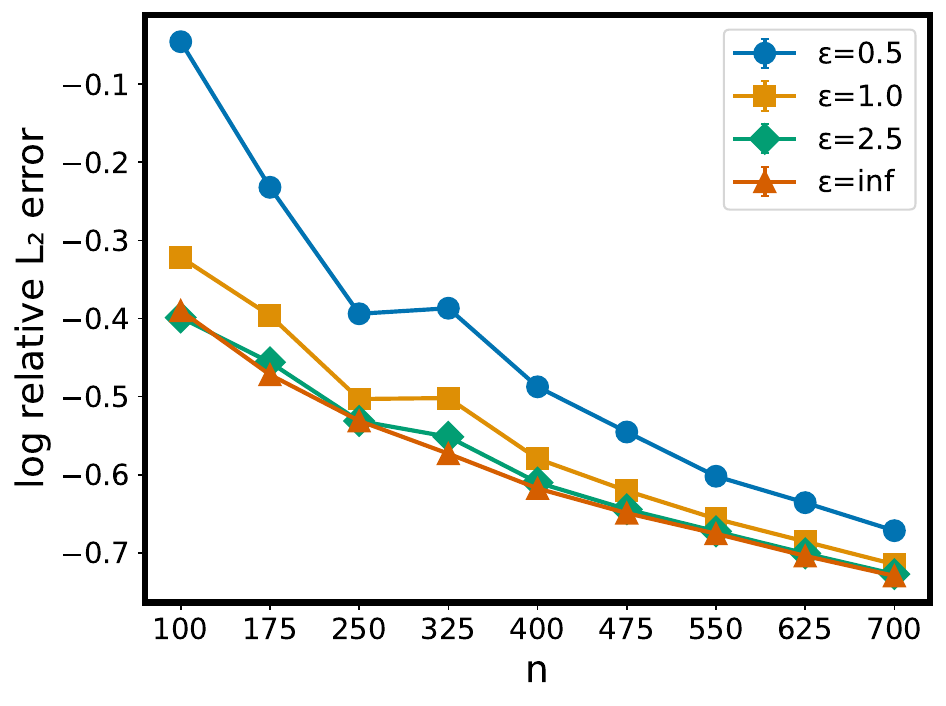}}}%
		\caption{Edge DP estimation errors versus the number of items $n$ at various privacy levels.}%
		\label{fig:experiment-1}%
	\end{figure}
\end{center}
\vspace{-1cm}

{\bf Experiment 2}. We investigate the effect of edge probability $p$ on the accuracy of the  proposed methods (Figure \ref{fig:experiment-2}). The sample size is fixed at $n=300$, and $\varepsilon$ varies across four different levels $\{0.5, 1, 2.5, \infty\}$. As $p$ increases,  we observe more pairwise comparisons, effectively increasing the sample size and leading to better performance.

	\begin{figure}[!htbp]
		\centering
		\subfloat{{\includegraphics[height=0.15\textheight, width=0.34\textwidth]{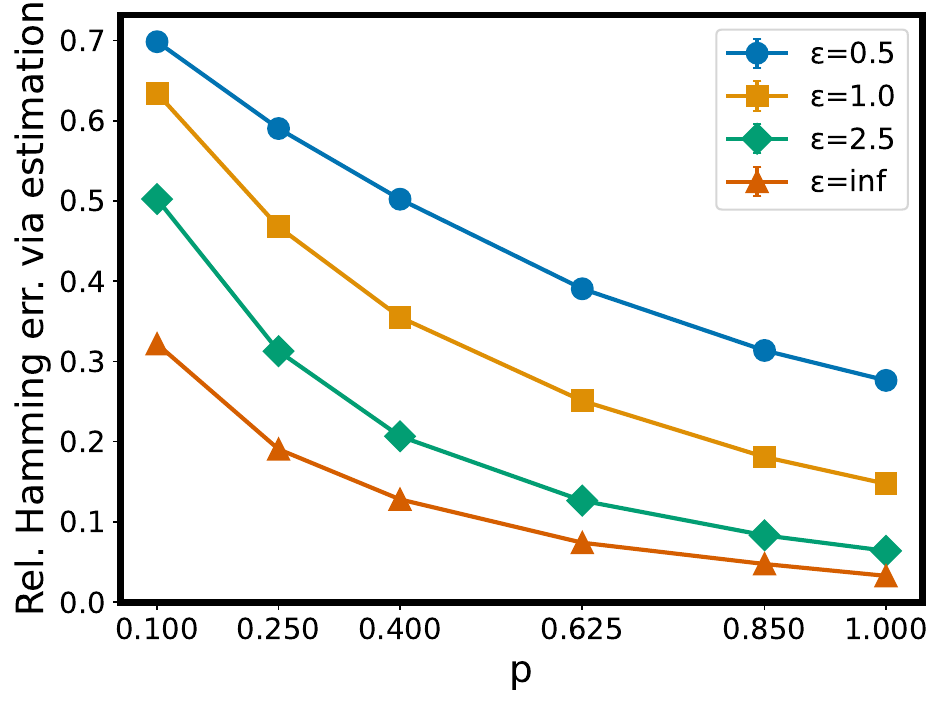}}}%
		\quad
		\subfloat{{\includegraphics[height=0.15\textheight, width=0.34\textwidth]{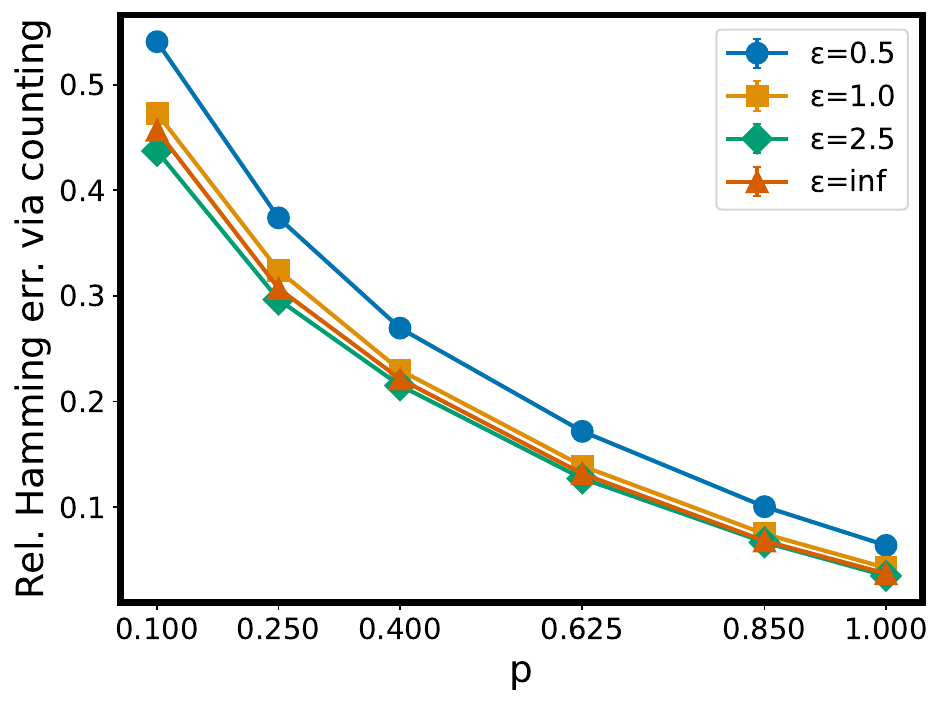}}}%
		
		\subfloat{{\includegraphics[height=0.15\textheight, width=0.34\textwidth]{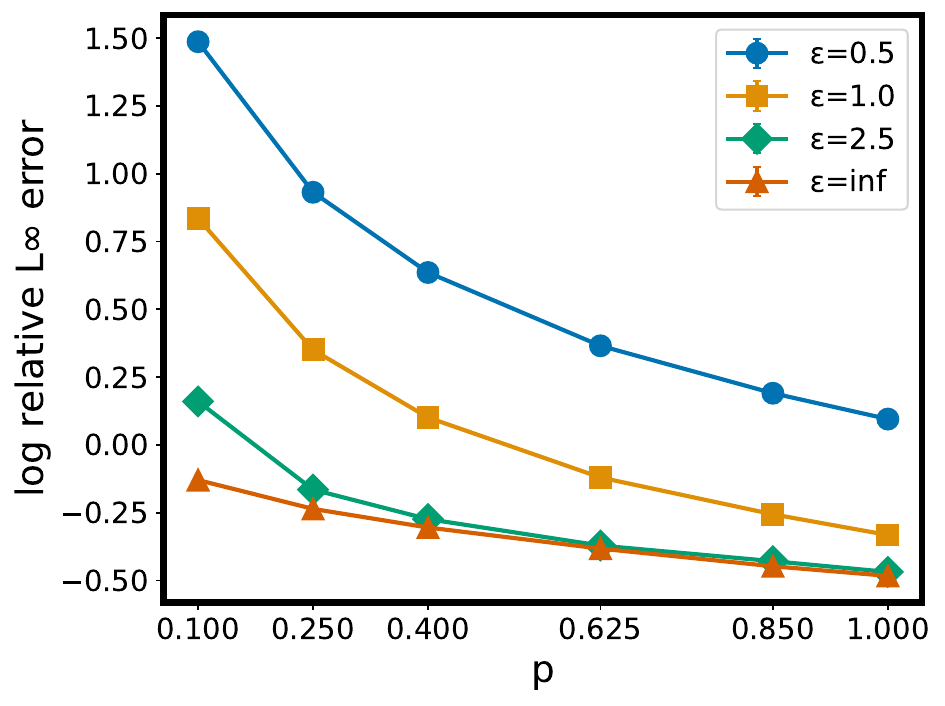}}}%
		\quad
		\subfloat{{\includegraphics[height=0.15\textheight, width=0.34\textwidth]{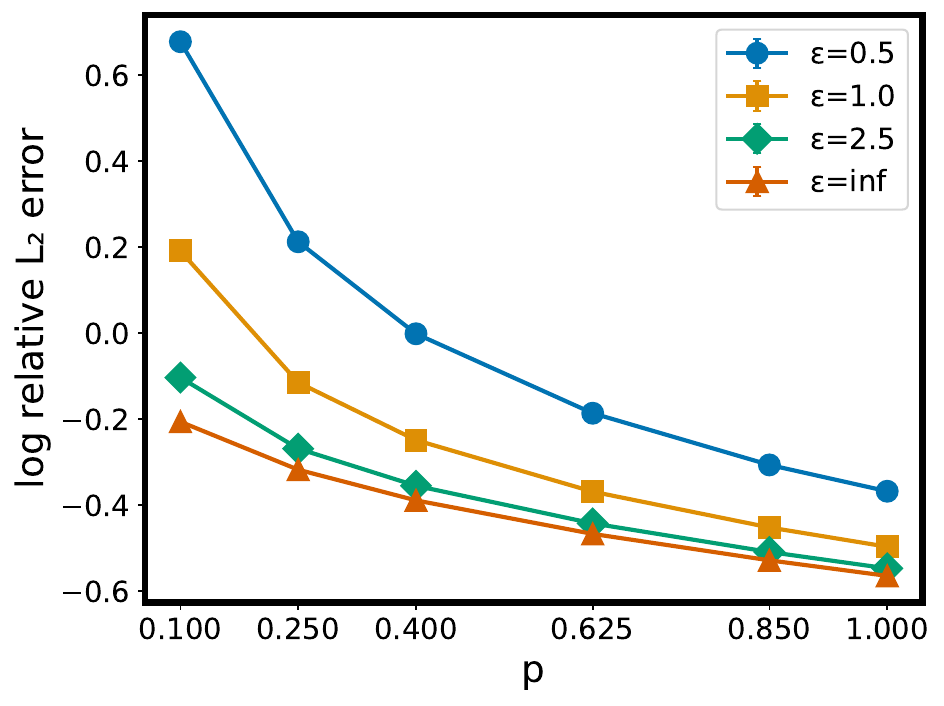}}}%
		\caption{Edge DP estimation errors versus the edge probability $p$ at various privacy levels.}%
		\label{fig:experiment-2}%
	\end{figure}

{\bf Experiment 3}. Here we investigate the effect of privacy parameter $\varepsilon$ on the accuracy of our methods (Figure \ref{fig:experiment-3}). The sample size is fixed at $n=300$, and the sampling probability $p$ varies across four levels $\{0.25,0.5,0.75,1\}$. Increasing $\varepsilon$ reduces the errors.

	\begin{figure}[!htbp]
		\centering
		\subfloat{{\includegraphics[height=0.15\textheight, width=0.34\textwidth]{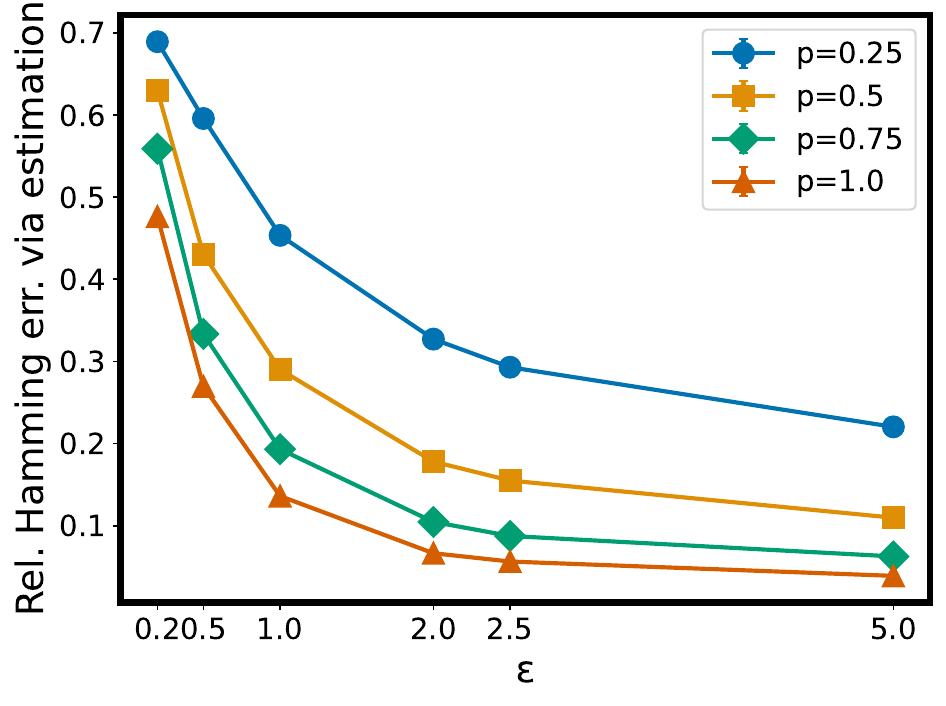}}}%
		\quad
		\subfloat{{\includegraphics[height=0.15\textheight, width=0.34\textwidth]{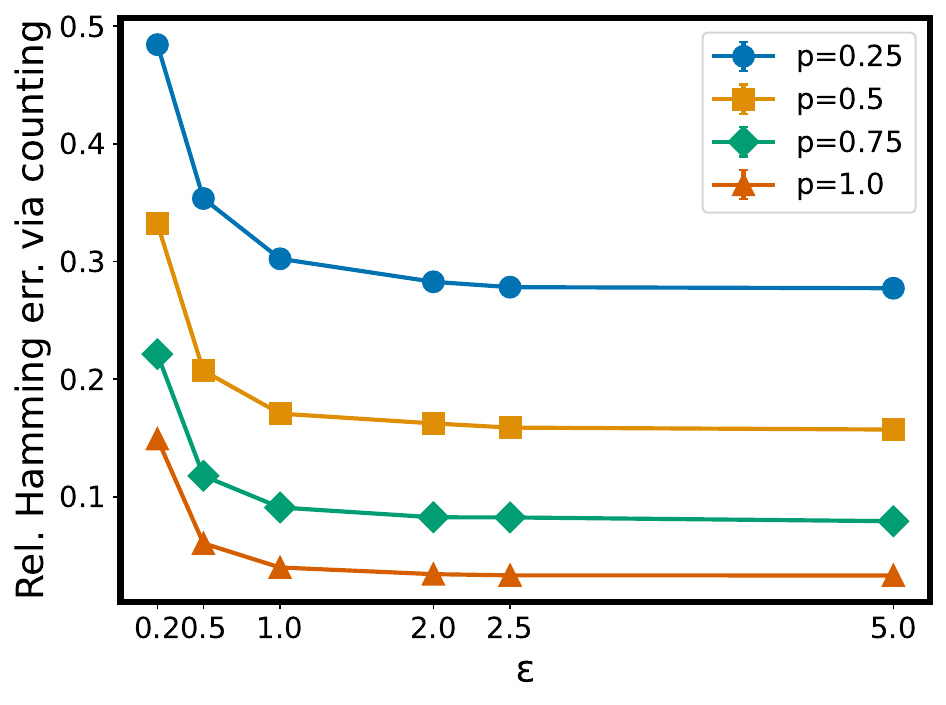}}}%
		
		\subfloat{{\includegraphics[height=0.15\textheight, width=0.34\textwidth]{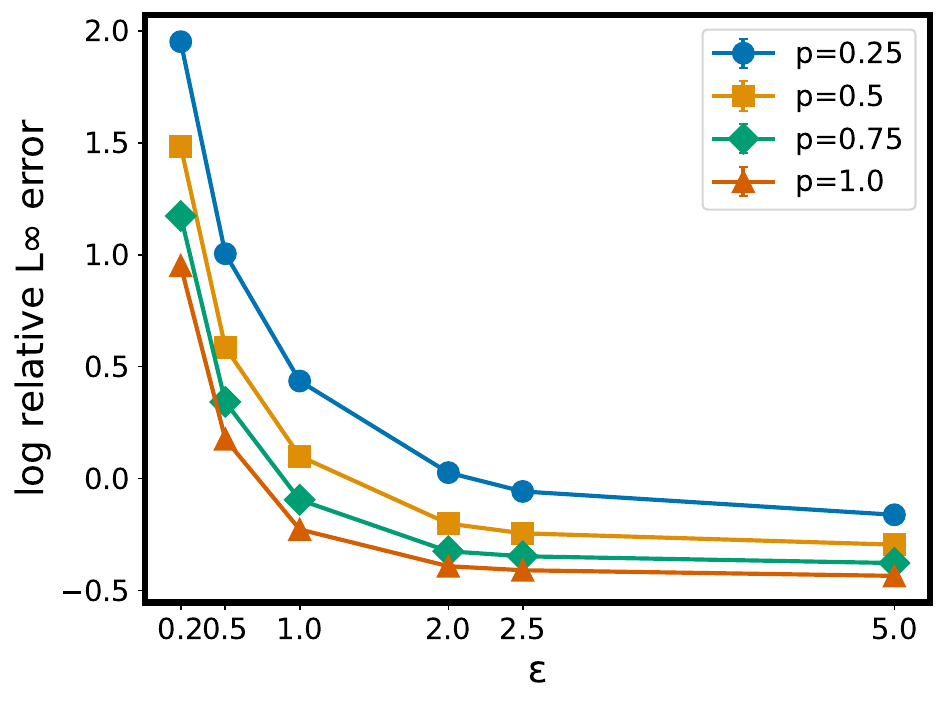}}}%
		\quad
		\subfloat{{\includegraphics[height=0.15\textheight, width=0.34\textwidth]{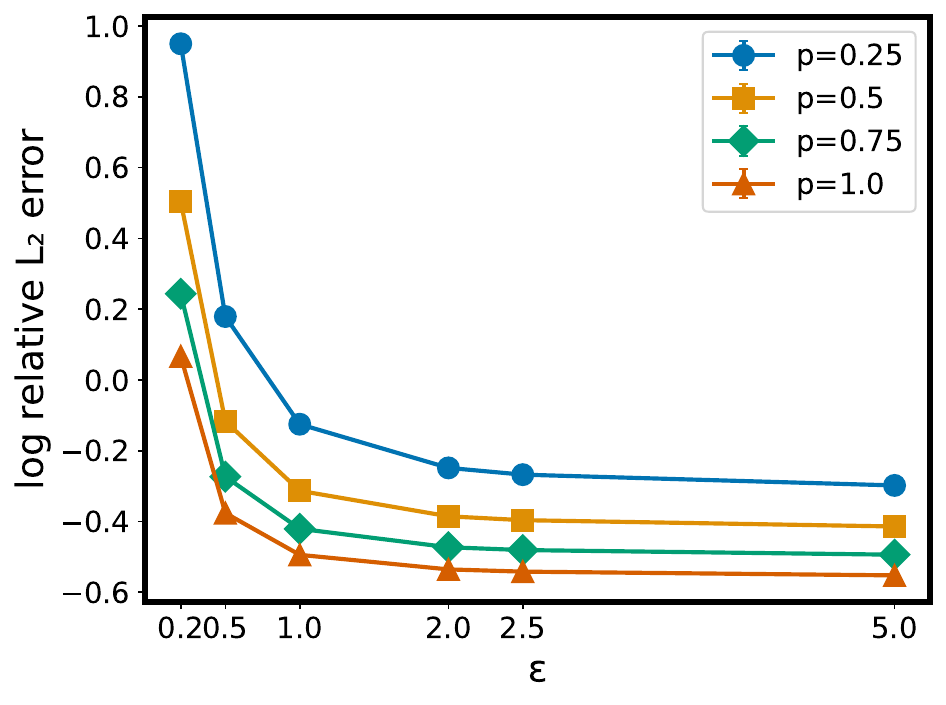}}}%
		\caption{Edge DP errors versus the privacy parameter $\varepsilon$ at various edge probability values.}%
		\label{fig:experiment-3}%
	\end{figure}

{\bf Experiment 4}. We compare the non-parametric and parametric algorithms with the one-shot algorithm proposed by \cite{qiao2021oneshot} at fixed $p=1$ and various $n$ and $\varepsilon$ values (Figure \ref{fig:experiment-4}). In terms of relative Hamming errors, our algorithms significantly outperform the one-shot algorithm.

\begin{figure}[!htbp]
	\centering
	\subfloat{{\includegraphics[height=0.15\textheight, width=0.34\textwidth]{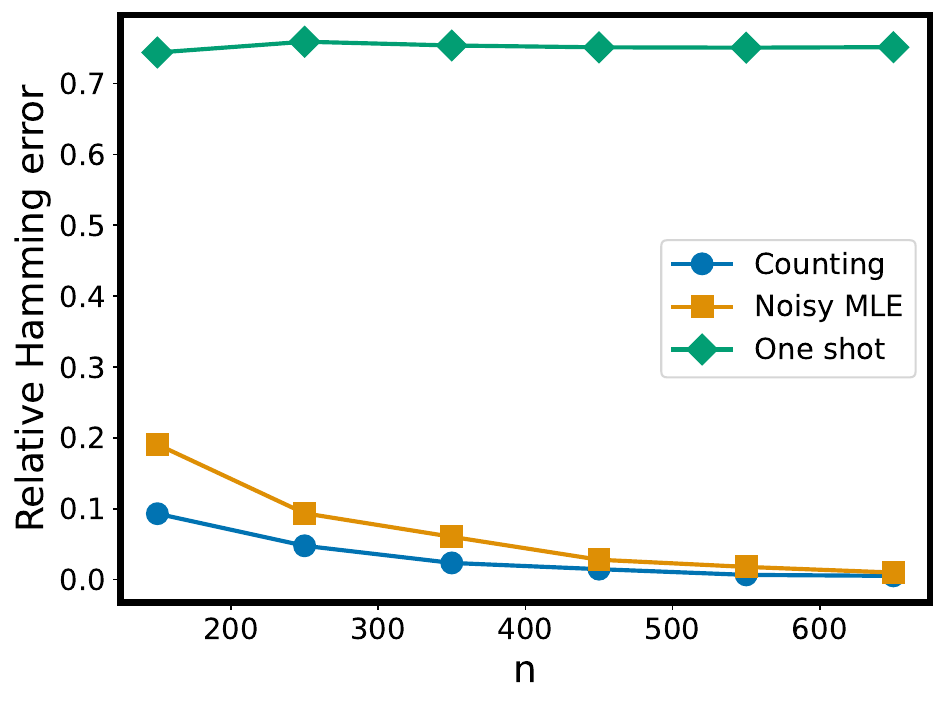}}}%
	\quad
	\subfloat{{\includegraphics[height=0.15\textheight, width=0.34\textwidth]{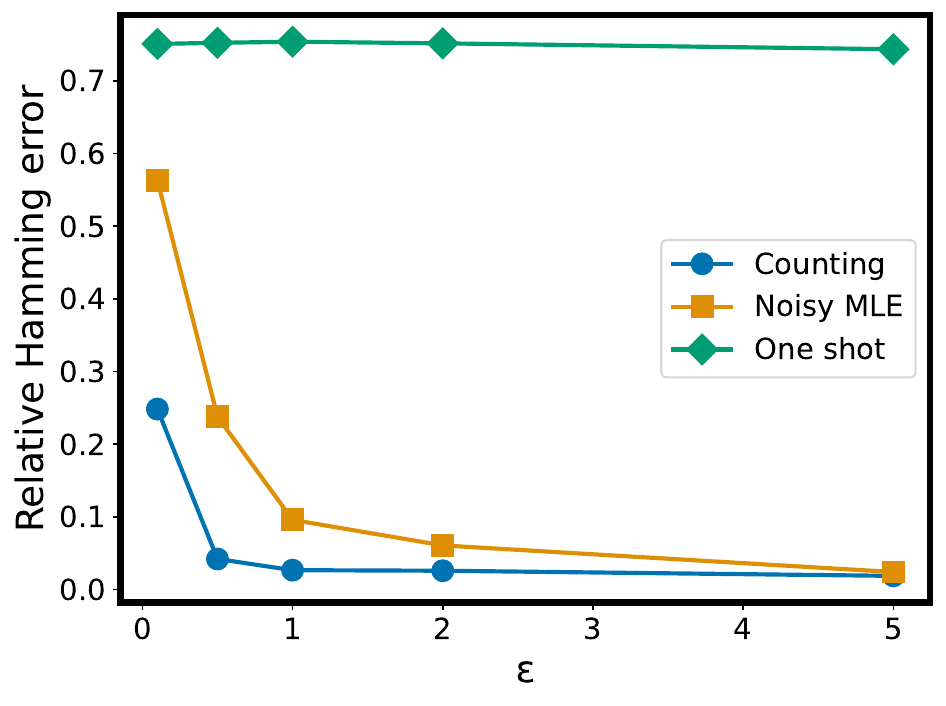}}}%
	
	\caption{Relative Hamming errors of our algorithms and the one-shot algorithm in \cite{qiao2021oneshot} at $p=1$ and various levels of $n$ and $\varepsilon$.}%
	\label{fig:experiment-4}%
\end{figure}
	
 \vspace{-5mm}
 \subsection{Simulated Data Experiments under Individual DP}   \label{sec: individual dp simulated data}

For individual DP simulations, we keep the generation of $\theta$ and evaluation metrics the same as in Section \ref{sec: edge dp simulated data}, and study the accuracy impact of varying the number of items $n$, the number of individuals $m$, as well as the privacy level $\varepsilon$. The number of comparisons contributed by each individual, consistent with the assumption of our theoretical analysis, is chosen to be an absolute constant $L = 5$.

Similar to the edge DP case, the theoretical dependence of estimation errors in $n$, $m$ and $\varepsilon$ is corroborated by the numerical results. In contrast to Experiment 1 in Section \ref{sec: edge dp simulated data}, the estimation errors under individual DP increases with the number of items $n$. The role of $m$ in individual DP is similar to that of edge probability $p$ in edge DP; the behavior of estimation errors versus $\varepsilon$ is also similar to the edge DP case.

{\bf Experiment 5}.  We study the number of items $n$'s effect on the estimation errors (Figure \ref{fig:experiment-5}). The number of individuals is fixed at $m = 1000$,  and we consider four  privacy levels $\varepsilon \in \{0.5, 1, 2.5, \infty\}$. 

\begin{center}
	\begin{figure}[!htbp]
		\centering
		\subfloat{{\includegraphics[height=0.15\textheight, width=0.34\textwidth]{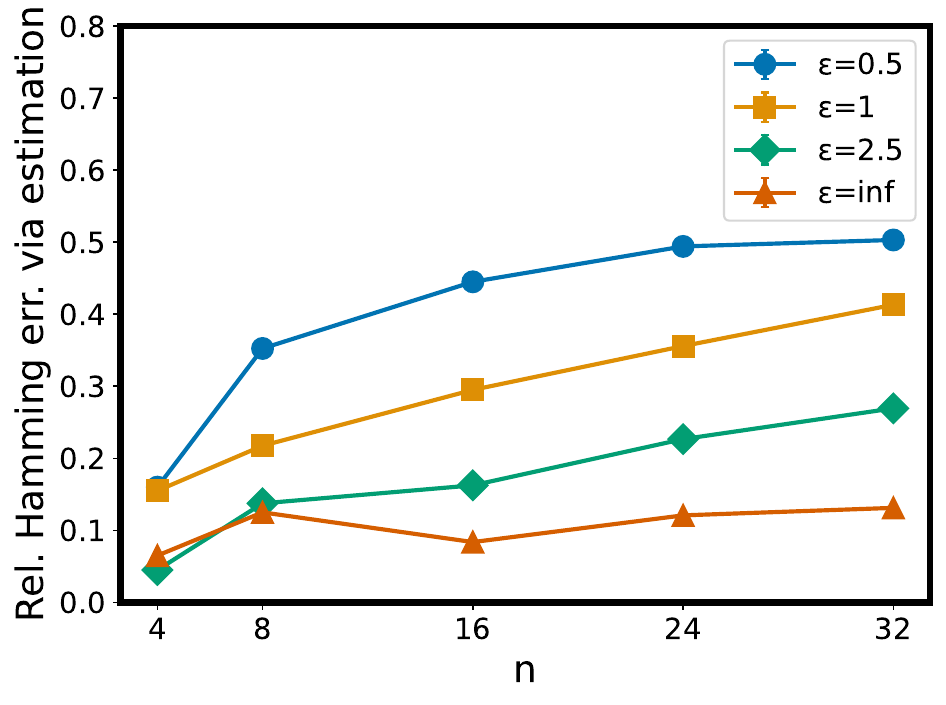}}}%
		\quad
		\subfloat{{\includegraphics[height=0.15\textheight, width=0.34\textwidth]{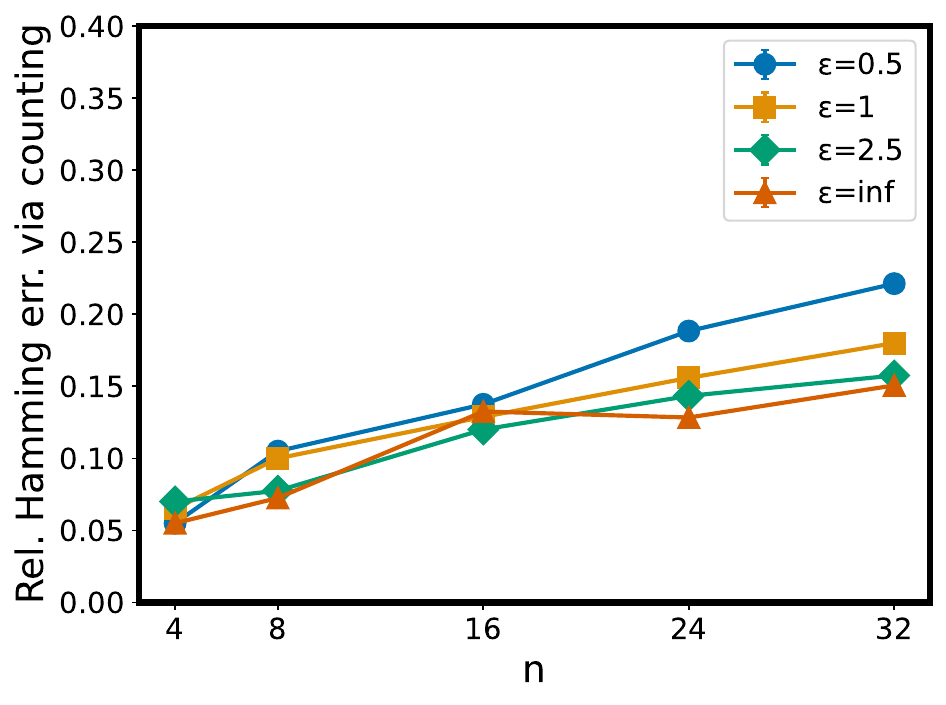}}}%
		
	\subfloat{{\includegraphics[height=0.15\textheight, width=0.34\textwidth]{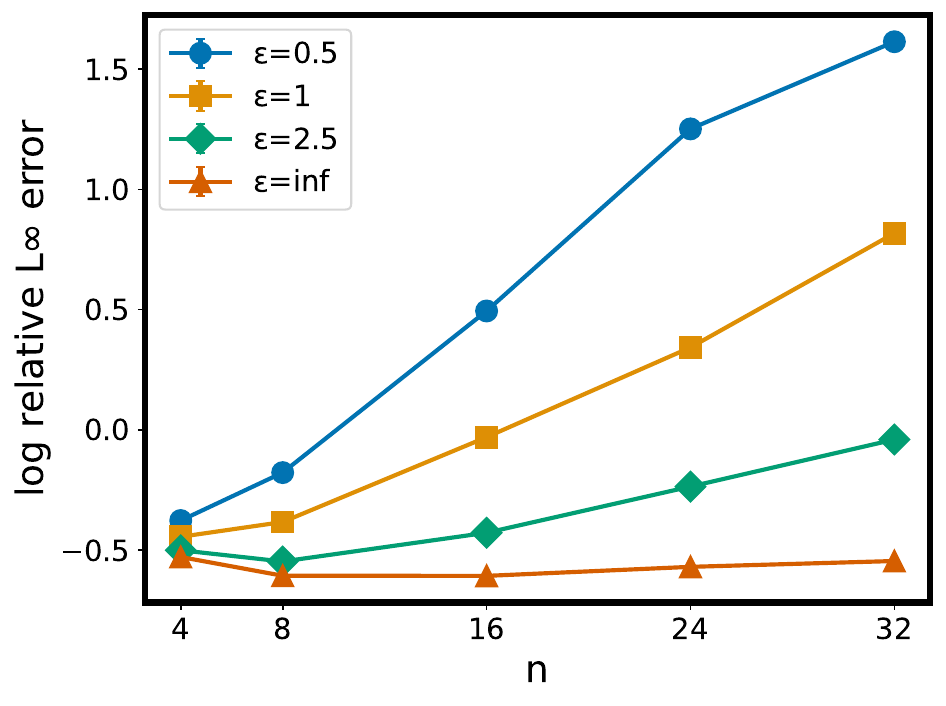}}}%
		\quad
		\subfloat{{\includegraphics[height=0.15\textheight, width=0.34\textwidth]{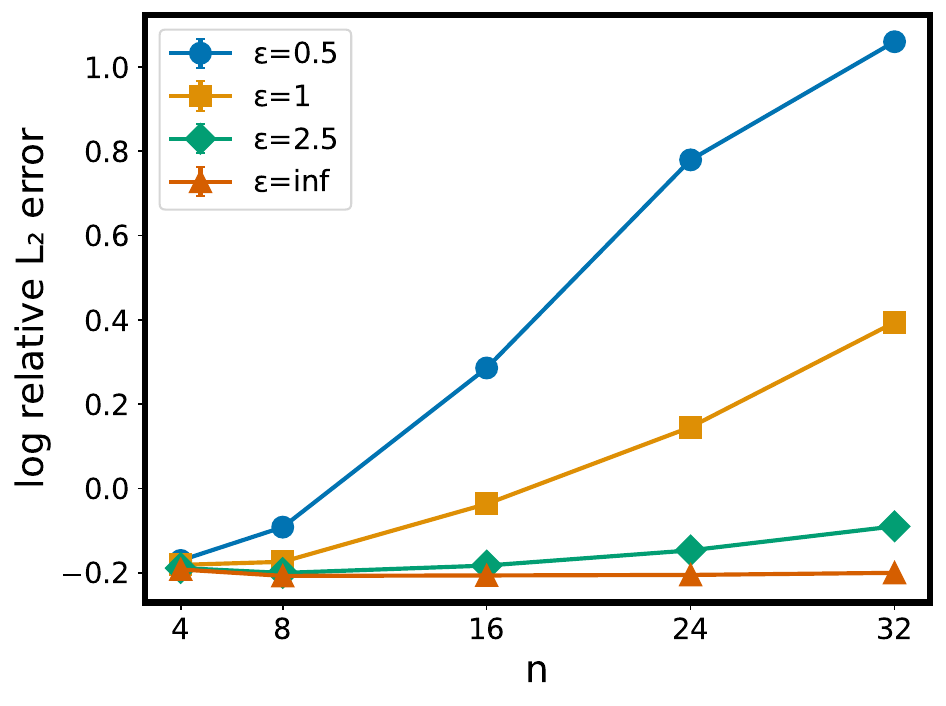}}}%
		\caption{Individual DP errors versus the number of items $n$ at various privacy levels.}%
		\label{fig:experiment-5}%
	\end{figure}
\end{center}

{\bf Experiment 6}. We study the effect of varying the number of individuals $m$ (Figure \ref{fig:experiment-6}). The sample size is fixed at $n=16$, and $\varepsilon$ varies across four different levels $\{0.5, 1, 2.5, \infty\}$. 

\begin{center}
	\begin{figure}[!htbp]
		\centering
		\subfloat{{\includegraphics[height=0.15\textheight, width=0.34\textwidth]{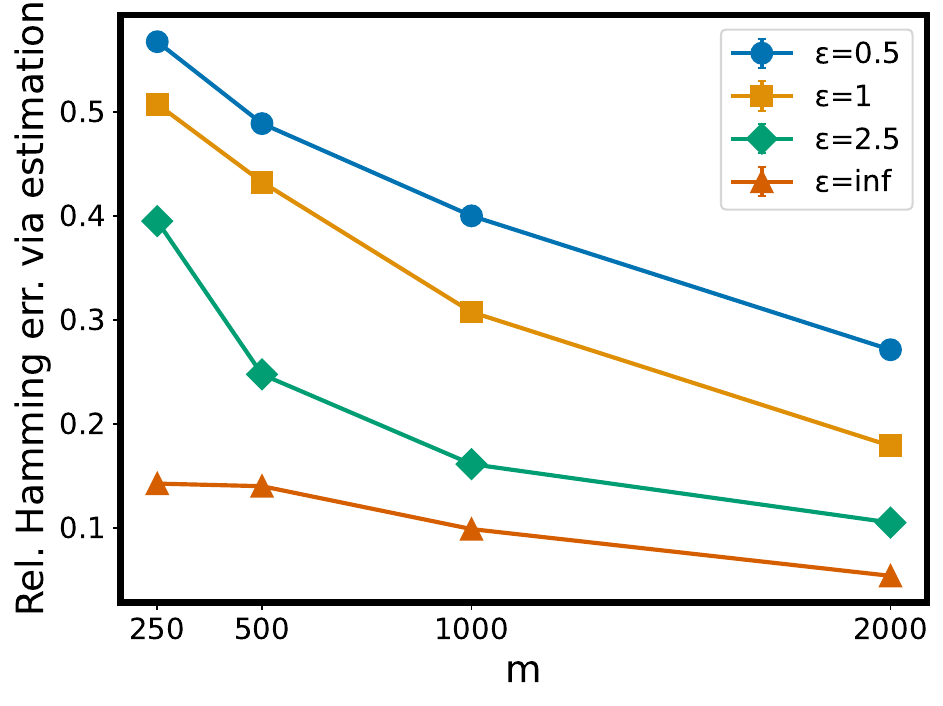}}}%
		\quad
		\subfloat{{\includegraphics[height=0.15\textheight, width=0.34\textwidth]{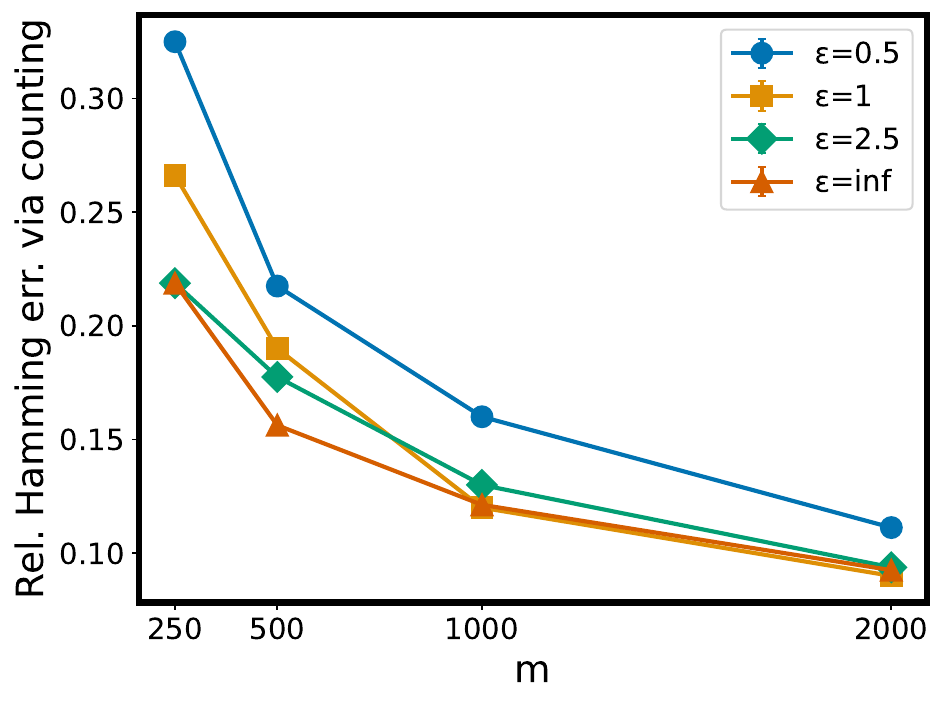}}}%
		
		\subfloat{{\includegraphics[height=0.15\textheight, width=0.34\textwidth]{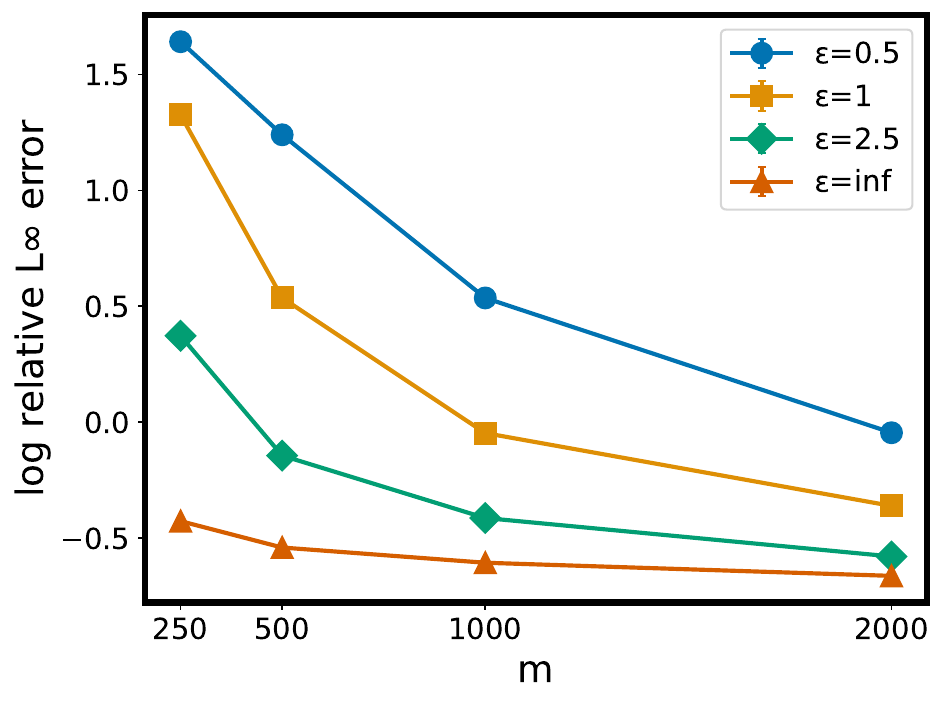}}}%
		\quad
		\subfloat{{\includegraphics[height=0.15\textheight, width=0.34\textwidth]{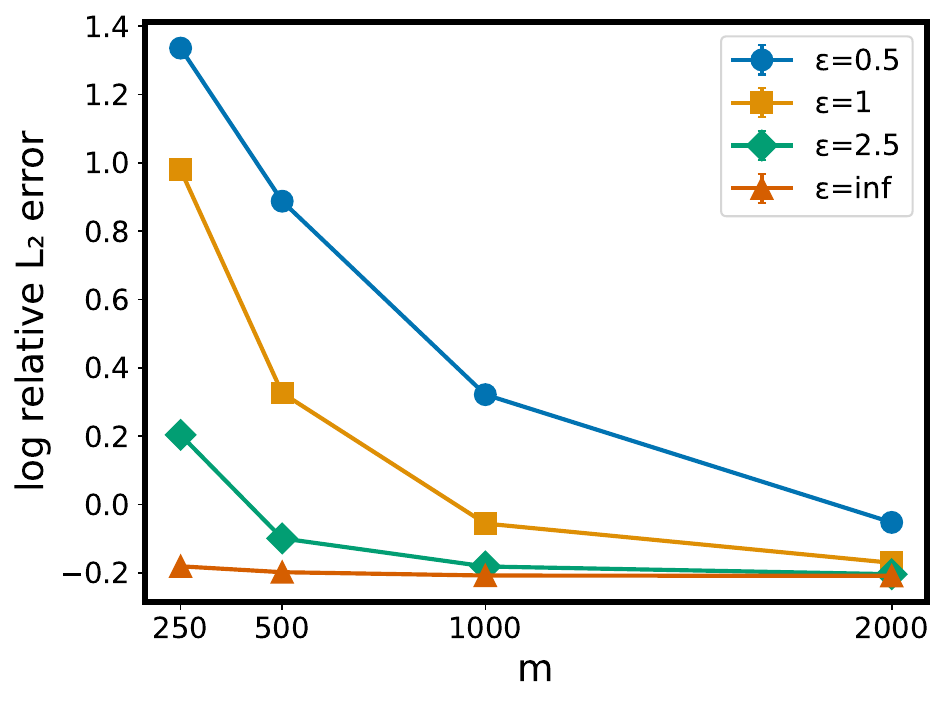}}}%
		\caption{Individual DP errors versus the number of individuals $m$ at various privacy levels.}%
		\label{fig:experiment-6}%
	\end{figure}
\end{center}

{\bf Experiment 7}. We investigate the effect of privacy parameter $\varepsilon$ on the accuracy (Figure \ref{fig:experiment-7}). The sample size is fixed at $n=16$, and the sampling probability $m$ varies across four levels $\{250,500,1000,2000\}$.

\begin{center}
	\begin{figure}[!htbp]
		\centering
		\subfloat{{\includegraphics[height=0.15\textheight, width=0.34\textwidth]{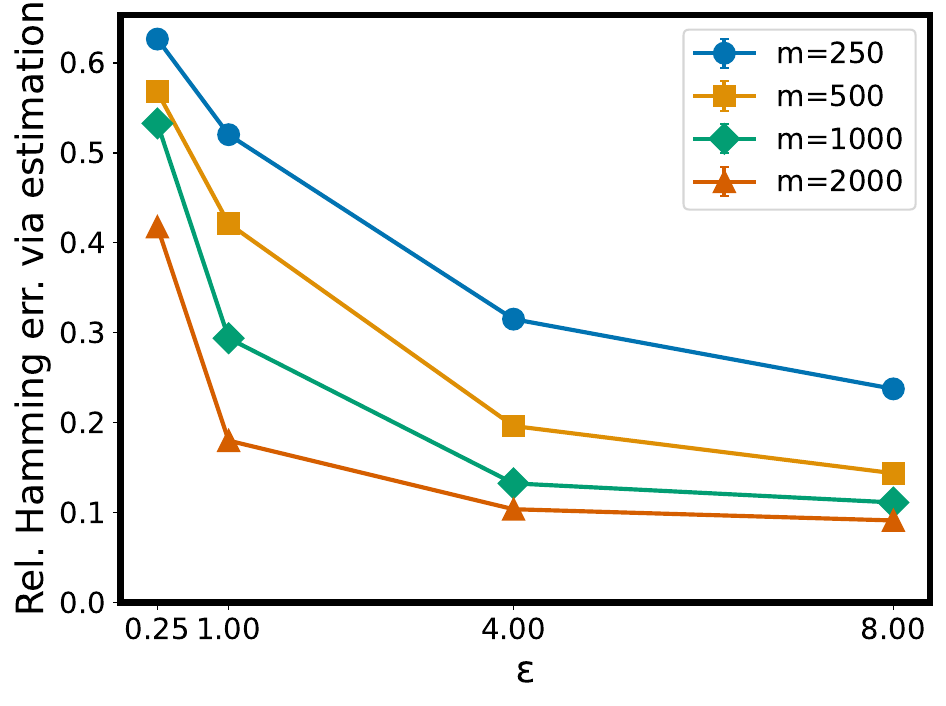}}}%
		\quad
		\subfloat{{\includegraphics[height=0.15\textheight, width=0.34\textwidth]{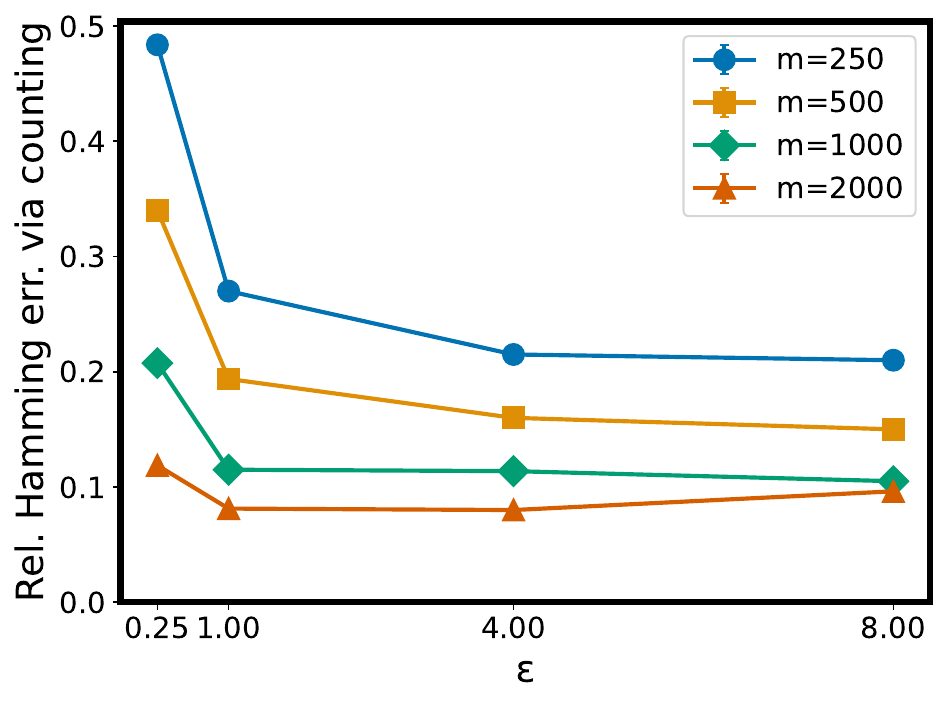}}}%
		
		\subfloat{{\includegraphics[height=0.15\textheight, width=0.34\textwidth]{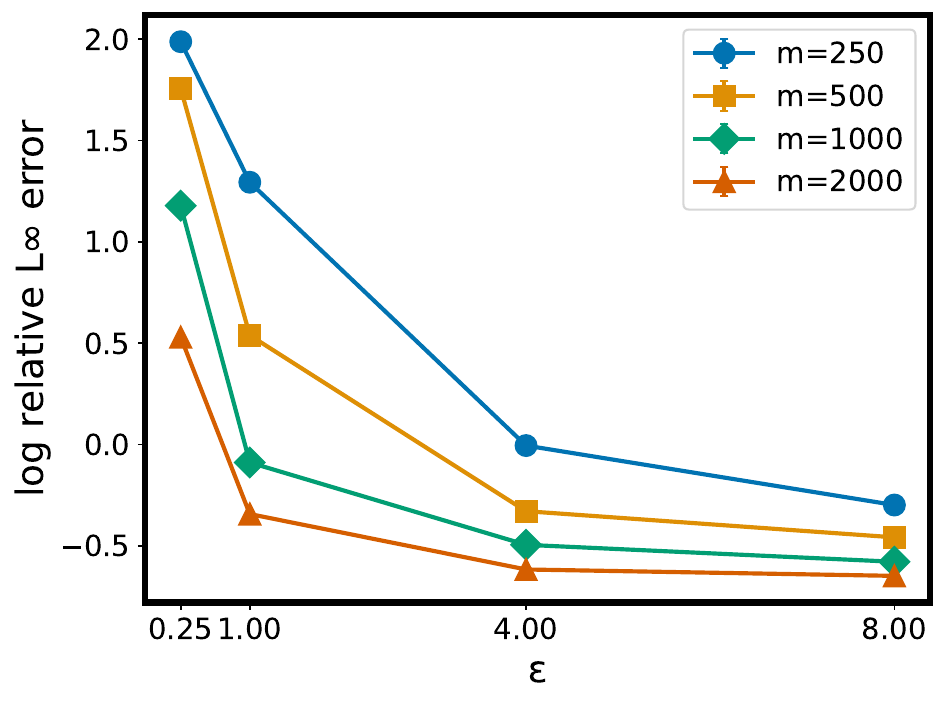}}}%
		\quad
		\subfloat{{\includegraphics[height=0.15\textheight, width=0.34\textwidth]{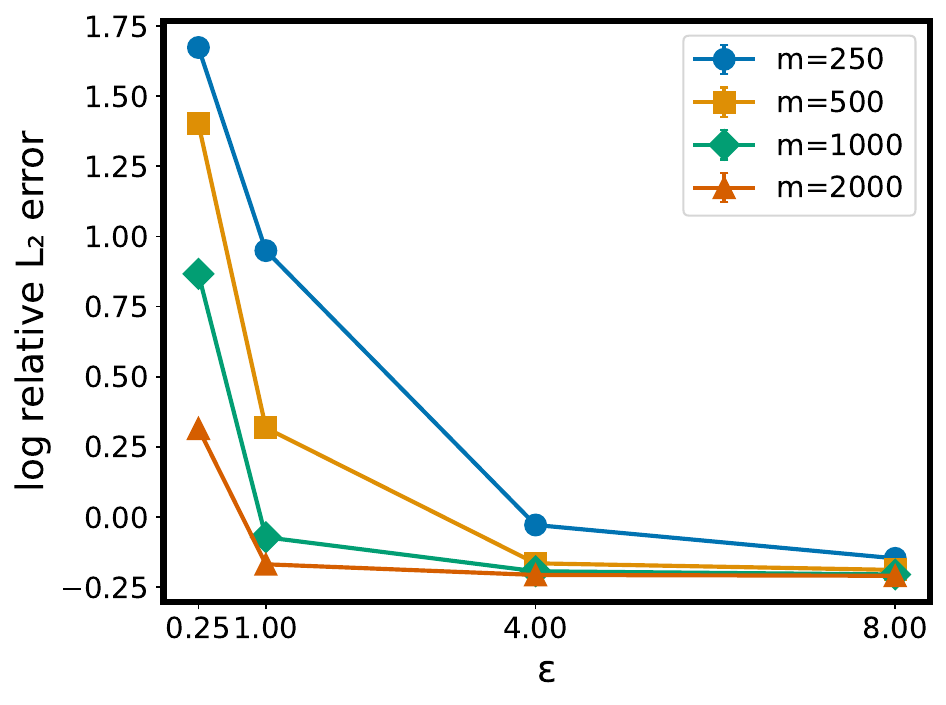}}}%
		\caption{Individual DP errors versus the privacy level $\varepsilon$ at various levels of $m$.}%
		\label{fig:experiment-7}%
	\end{figure}
\end{center}


\vspace{-17mm}
\subsection{Real Data Analysis under Individual DP} 
\label{sec: real data}
 
Now we move beyond simulations and study the effectiveness of our algorithms on real data sets. As real data sets do not have ``true'' rankings, we measure the accuracy of our algorithms by the average difference in ranks produced by the DP algorithm versus those by its non-DP counterpart, which quantifies the loss of accuracy attributable to differential privacy constraints in the practical task of ranking items.

\subsubsection{Data Sets}

\paragraph{University Preferences}

The university preference data set \cite{dittrich1998modelling} is collected in a survey conducted among students in the ``Community of European Management Schools" (CEMS) program by the Vienna University of Economics. The data set consists of observations from 303 students ($m=303$) and records their preference between pairs of European universities for their semester abroad. For each student, a total of 15 ($L=15$) pairwise comparisons between 6 universities ($n=6$) were asked for, and then an overall ranking of all universities was derived using the comparison outcomes. 

\paragraph{Student Attitudes on Immigration}

This dataset is collected in a survey conducted by \cite{weber2011novel} to understand public opinions on immigration. The survey collected responses from 98 students ($m=98$), each agreed to answer at least one paired comparison drawn from a pool of four extreme statements about immigrants ($n=4, L = \binom{4}{2} = 6$). 

\subsubsection{Results}

In both examples, the individual DP algorithms can produce ranks close to the non-DP estimated ranks. As the non-DP, noiseless algorithms are known to be optimal without differential privacy \cite{shah2017simple, chen2019spectral}, it is further implied that the DP ranks are also of high quality. 

Although our theoretical results do not encompass the mean absolute difference in ranks, this metric's behavior versus the privacy parameter $\varepsilon$ is as expected: lower $\varepsilon$, namely stronger privacy level, results in noisier ranks.

\begin{center}
	\begin{figure}[H]
		\centering
		\subfloat{{\includegraphics[height=0.2\textheight, width=0.34\textwidth]{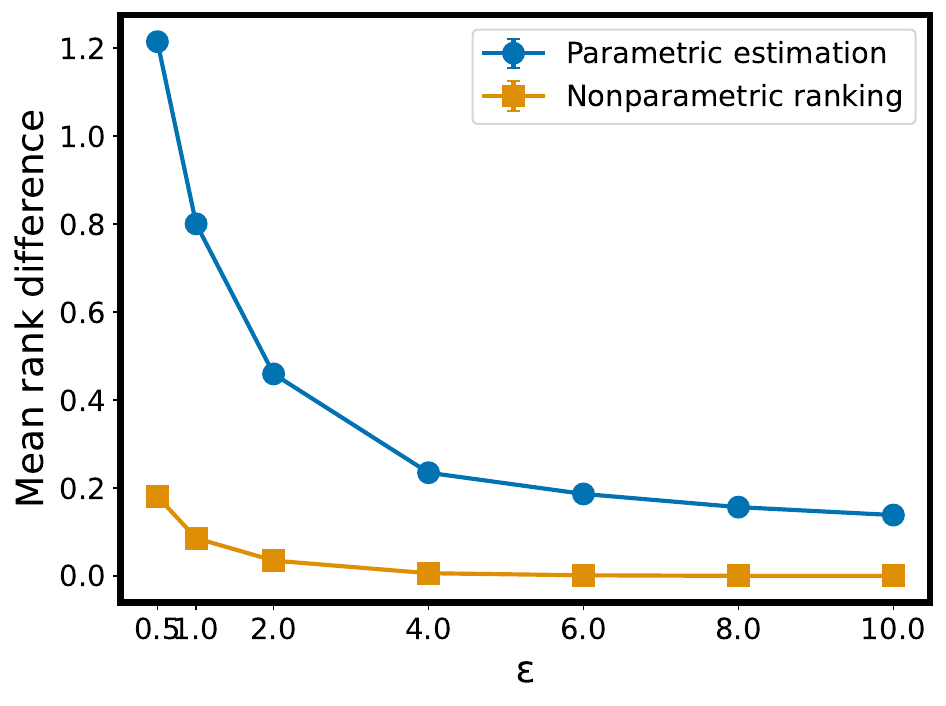}}}%
		\quad
		\subfloat{{\includegraphics[height=0.2\textheight, width=0.34\textwidth]{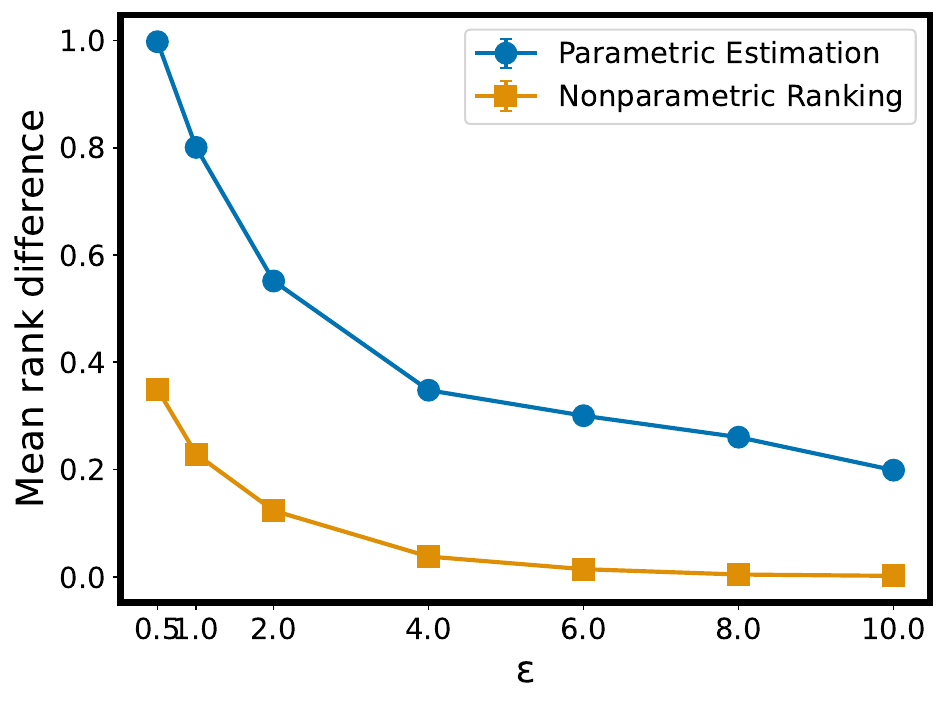}}}%
		
		\caption{Mean rank difference versus $\varepsilon$. Left: CEMS data. Right: immigration data.}%
		\label{fig:experiment-8}%
	\end{figure}
\end{center}
\vspace{-20mm}
\section{Discussion}\label{sec: discussion}

In this paper, we proposed differentially private algorithms for ranking based on pairwise comparisons, analyzed their rates of convergence, and established their optimality among all differentially private ranking procedures. Our theoretical results were supported by numerical experiments on both real and simulated datasets. We conclude by outlining several directions for future research.

{\bf Further theoretical analysis of individual DP}. A comparison between edge DP and individual DP reveals intriguing differences in how privacy constraints impact statistical accuracy. For example, under edge DP, the rate of convergence improves as the number of items $n$ increases, whereas under individual DP, it worsens. This divergence highlights fundamentally different statistical costs associated with the two privacy models. However, our theoretical treatment of individual DP remains less comprehensive than that of edge DP, largely due to the simplifying assumption that the number of comparisons $L$ contributed by each individual is fixed and known. Removing or relaxing this assumption may lead to a deeper understanding of individual DP in more realistic settings.

{\bf The cost of approximate versus pure differential privacy}.
Interestingly, the optimal $(\varepsilon, \delta)$-DP ranking algorithms proposed in this paper also satisfy the stronger $(\varepsilon, 0)$-DP, possibly implying that, for our ranking problem, the cost of ``pure'' differential privacy is not higher than that of 	``approximate'' differential privacy, when measured by the minimax rate of convergence and ignoring the exact constants in the minimax risk. This phenomenon stands in contrast with differentially private (Gaussian) mean estimation in high dimensions \cite{barber2014privacy, steinke2017between, cai2021cost}, where the optimal rate of convergence with $(\varepsilon, \delta)$-DP explicitly depends on $\delta$. It is an interesting theoretical question to understand the conditions under which approximate $(\varepsilon, \delta)$-DP is strictly less costly to statistical accuracy than pure $(\varepsilon, 0)$-DP, in terms of either the minimax rate or better constants at the same minimax rate.

{\bf Lower bound technique for entry-wise parametric estimation}. Under the parametric model, a single dose of noise added to the objective function simplifies the privacy analysis, and avoids potentially higher privacy cost incurred by iterative noise addition required by methods such as noisy gradient descent. The entry-wise error analysis of the perturbed MLE is potentially applicable in other statistical problems where entry-wise or $\ell_\infty$ errors are of primary interest. On the lower bound side, the entry-wise version of score attack in Section \ref{sec: ranking estimation lower bound} results in a $O(\log n)$ gap from the optimal lower bound in Section \ref{sec: nonparametric lower bound}. One would wonder if this method can be strengthened to eliminate such gaps.

\appendix

\section{Omitted Proofs in Section \ref{sec: edge dp ranking}}
\label{sec: omitted proofs in sec: edge dp ranking}

\subsection{Proof of Proposition \ref{prop: ranking MLE privacy}}
\label{sec: proof of prop: ranking MLE privacy}

	Throughout the proof it is useful to reference to the gradient of $\mathcal L(\bth; y)$,
	\begin{align}
		\nabla\mathcal L(\bth; y) &= \sum_{(i, j) \in \mathcal G} \left(-y_{ij}\frac{F'_{ij}(\bth)}{F_{ij}(\bth)} -y_{ji}\frac{-F'_{ij}(\bth)}{1 - F_{ij}(\bth)}\right) (\bm e_i - \bm e_j) \label{eq: likelihood gradient eq 1}\\
		&= \sum_{(i, j) \in \mathcal G} (F_{ij}(\bth) - y_{ij})\frac{F'_{ij}(\bth)}{F_{ij}(\bth)(1 - F_{ij}(\bth))}(\bm e_i - \bm e_j). \label{eq: likelihood gradient eq 2}
	\end{align} 
	For fixed $y$, the distribution of $\widetilde\bth = \widetilde\bth(y)$ is defined by the equation $\nabla \mathcal R(\widetilde\bth; y) + \bm w = 0$. The solution is guaranteed to exist as the objective function $\mathcal R(\bth; y) + \bm w^\top \bth$ is strongly convex in $\bth$ for each $\bm w$.  Since $\bm w$ is a Laplace random vector, the density of $\widetilde\bth$ is given by
	\begin{align*}
		f_{\widetilde\bth}(\bm t) = (2\lambda)^{-n}\exp\left(-\frac{\|\nabla \mathcal R(\bm t; y)\|_1}{\lambda}\right) \left|\det \left(\frac{\partial \nabla \mathcal R(\bm t; y)}{\partial\bm t}\right)\right|^{-1}. 
	\end{align*}
	Consider a data set $y'$ adjacent to $y$, where the only differing elements are $y'_{kl} \in y'$ and $y_{ij} \in y$. It follows that
	\begin{align*}
		\frac{f_{\widetilde\bth(y)}(\bm t)}{f_{\widetilde\bth(y')}(\bm t)} = \exp\left(\frac{\|\nabla \mathcal R(\bm t; y')\|_1 - \|\nabla \mathcal R(\bm t; y)\|_1 }{\lambda}\right) \left|\frac{\det \left(\frac{\partial \nabla \mathcal R(\bm t; y')}{\partial\bm t}\right)}{\det \left(\frac{\partial \nabla \mathcal R(\bm t; y)}{\partial\bm t}\right)}\right|.
	\end{align*}
	By \eqref{eq: likelihood gradient eq 2} and condition (A1), we have
	\begin{align*}
		&\|\nabla \mathcal R(\bm t; y')\|_1 - \|\nabla \mathcal R(\bm t; y)\|_1 \\
		&\leq \begin{cases}
			\kappa_1 \|(y'_{ij} - y_{ij})(\bm e_i - \bm e_j)\|_1  & \text{if } (k, l) = (i, j), \\
			\kappa_1\|(F_{ij}(\bm t) - y_{ij})(\bm e_i - \bm e_j)\|_1 + \kappa_1\|(F_{kl}(\bm t) - y'_{kl})(\bm e_k - \bm e_l)\|_1& \text{if } (k, l) \neq (i, j).
		\end{cases}
	\end{align*}
	In either case, $\|\nabla \mathcal R(\bm t; y')\|_1 - \|\nabla \mathcal R(\bm t; y)\|_1 \leq 4\kappa_1$, and $\lambda \geq 8\kappa_1/\varepsilon$ ensures 
	\begin{align*}
		\exp\left(\frac{\|\nabla \mathcal R(\bm t; y')\|_1 - \|\nabla \mathcal R(\bm t; y)\|_1 }{\lambda}\right) \leq \exp(\varepsilon/2).
	\end{align*}
	For the determinants, by differentiating \eqref{eq: ranking likelihood equation} twice with respect to $\bth$ we obtain
	\begin{align*}
		&\frac{\partial \nabla \mathcal R(\bm t; y)}{\partial\bm t} \\
		&= \gamma \bm I + \sum_{(a, b) \in \mathcal G} \left(y_{ab}\frac{\partial^2}{\partial x^2}(-\log F(x))\Big|_{x = t_a  - t_b} + y_{ba}\frac{\partial^2}{\partial x^2}(-\log (1-F(x)))\Big|_{x = t_a  - t_b}\right)(\bm e_a - \bm e_b) (\bm e_a - \bm e_b)^\top.
	\end{align*}
	That is, $\frac{\partial \nabla \mathcal R(\bm t; y')}{\partial\bm t}$ is a rank-two perturbation of $\frac{\partial \nabla \mathcal R(\bm t; y)}{\partial\bm t}$: let $c_{ij}$ denote the coefficient of $(\bm e_i - \bm e_j)(\bm e_i - \bm e_j)^\top$ in the Hessian, and $\mathcal G'$ denote the edge set of $y'$, we have
	\begin{align*}
		\frac{\partial \nabla \mathcal R(\bm t; y')}{\partial\bm t} &= \frac{\partial \nabla \mathcal R(\bm t; y)}{\partial\bm t} - c_{ij}(\bm e_i - \bm e_j)(\bm e_i - \bm e_j)^\top + c_{kl}(\bm e_k - \bm e_l)(\bm e_k - \bm e_l)^\top \\
		&= \gamma \bm I + \sum_{(a, b) \in \mathcal G \cap \mathcal G'} c_{ab}(\bm e_a - \bm e_b) (\bm e_a - \bm e_b)^\top + c_{kl}(\bm e_k - \bm e_l)(\bm e_k - \bm e_l)^\top.
	\end{align*}
	Applying twice Cauchy's formula for rank-one perturbation $\det(\bm A + \bm v \bm w^\top) = \det(\bm A) (1 + \bm w^\top \bm A^{-1} \bm v)$ yields
	\begin{align*}
		&\frac{\det \left(\frac{\partial \nabla \mathcal R(\bm t; y')}{\partial\bm t}\right)}{\det \left(\frac{\partial \nabla \mathcal R(\bm t; y)}{\partial\bm t}\right)} \\
		&= (1 - c_{ij}(\bm e_i - \bm e_j)^\top \bm \left(\frac{\partial \nabla \mathcal R(\bm t; y)}{\partial\bm t}\right)^{-1}(\bm e_i - \bm e_j))(1 + c_{kl}(\bm e_k - \bm e_l)^\top(\gamma\bm I + \bm 
		\Sigma_{\mathcal G \cap \mathcal G'})^{-1}(\bm e_k - \bm e_l)),
	\end{align*}
	where $\bm 
	\Sigma_{\mathcal G \cap \mathcal G'}$ denotes $\sum_{(a, b) \in \mathcal G \cap \mathcal G'} c_{ab}(\bm e_a - \bm e_b) (\bm e_a - \bm e_b)^\top$. With $0 \leq c_{ij}, c_{kl} \leq \kappa_2$ by condition (A2), $\lambda_{\min}(\gamma\bm I + \bm 
	\Sigma_{\mathcal G \cap \mathcal G'}) \geq \gamma$ and $\gamma > 4\kappa_2/\varepsilon$ by assumption, the ratio of determinants can be upper bounded by
	\begin{align*}
		\left|\frac{\det \left(\frac{\partial \nabla \mathcal R(\bm t; y')}{\partial\bm t}\right)}{\det \left(\frac{\partial \nabla \mathcal R(\bm t; y)}{\partial\bm t}\right)}\right| \leq 1 +  \frac{2\kappa_2}{\lambda_{\min}(\gamma\bm I + \bm 
			\Sigma_{\mathcal G \cap \mathcal G'})} \leq 1 + \frac{2\kappa_2}{\gamma} \leq e^{\varepsilon/2}.
	\end{align*}
	To conclude, we have shown that for any adjacent data sets $y, y'$, it holds that
	\begin{align*}
		\frac{f_{\widetilde\bth(y)}(\bm t)}{f_{\widetilde\bth(y')}(\bm t)} = \exp\left(\frac{\|\nabla \mathcal R(\bm t; y')\|_1 - \|\nabla \mathcal R(\bm t; y)\|_1 }{\lambda}\right) \left|\frac{\det \left(\frac{\partial \nabla \mathcal R(\bm t; y')}{\partial\bm t}\right)}{\det \left(\frac{\partial \nabla \mathcal R(\bm t; y)}{\partial\bm t}\right)}\right| \leq e^{\varepsilon}.
	\end{align*}

\subsection{Proof of Proposition \ref{prop: ranking MLE accuracy}} 
\label{sec: proof of ranking MLE upper bound}

\subsubsection{The main proof}\label{sec: main proof of ranking MLE upper bound}
Consider
\begin{align}\label{eq: perturbed objective}
	\widetilde{\mathcal R}(\bth; y) = \mathcal L(\bth; y) + \frac{\gamma}{2}\|\bth\|_2^2 + \bm w^\top\bth = \mathcal R(\bth; y) + \bm w^\top\bth, 
\end{align}
and we abbreviate $\widetilde{\mathcal R}(\bth; y)$, $\mathcal R(\bth; y)$ as $\widetilde{\mathcal R}(\bth)$, ${\mathcal R}(\bth)$ respectively when the reference to $y$ is clear. We shall analyze $\widetilde\bth = \argmin_\bth \widetilde {\mathcal R}(\bth)$ by treating it as the limit of a gradient descent algorithm: for $t = 0, 1, 2,\cdots$, consider
\begin{align}
	\widetilde\bth^{t+1} = \widetilde\bth^t - \eta \nabla\widetilde{\mathcal R}(\widetilde\bth^t) = \widetilde\bth^t - \eta \nabla{\mathcal R}(\widetilde\bth^t) - \eta \bm w. \label{eq: perturbed gradient descent}
\end{align}

Following the lead of \cite{chen2019spectral}, to facilitate the analysis of $\|\hat\bth - \bth\|_\infty$ we consider leave-one-out versions of $\widetilde {\mathcal R}(\bth, y)$: for each $m \in [n]$, define
\begin{align}
	&\mathcal L^{(m)}(\bm v; y)   \notag\\
	&= p\sum_{j \in [n], j \neq m} -F_{mj}(\bth^*) \log F_{mj}(\bm v) - (1 - F_{mj}(\bth^*))\log(1 -  F_{mj}(\bm v))\notag\\
	&\quad + \sum_{(i, j) \in \mathcal G, i, j \neq m} -y_{ij}\log F_{ij}(\bm v) - (1-y_{ij})\log(1 - F_{ij}(\bm v)). \label{eq: leave-one-out likelihood}
\end{align}
That is, $\mathcal L^{(m)}(\bm v; y)$ is obtained from the usual negative log-likelihood function $\mathcal L(\bm v; y)$ by replacing $y_{mj}$'s, all comparisons involving the $m$th item, with their expectations $p F_{mj}(\bth^*)$. As usual, we abbreviate $\mathcal L^{(m)}(\bm v; y)$ as $\mathcal L^{(m)}(\bm v)$ when the reference to data set $y$ is clear.

Next, for each $m$ we define the leave-one-out objective functions $\widetilde{\mathcal R}^{(m)}$ and ${\mathcal R}^{(m)}$:
\begin{align}\label{eq: leave-one-out objective}
	\widetilde{\mathcal R}^{(m)}(\bm v) = \mathcal L^{(m)}(\bm v) + \frac{\gamma}{2}\|\bm v\|_2^2 + \bm w^\top \bm v = {\mathcal R}^{(m)}(\bm v) +\bm  w^\top \bm v.
\end{align}
For each $\widetilde{\mathcal R}^{(m)}(\bm v)$ and $t \in \mathbb N$, let $\bth^{(m), t}$ denote the $t$-th gradient descent step,
\begin{align}\label{eq: leave-one-out gradient descent}
	\bth^{(m), t+1} = \bth^{(m), t} - \eta \nabla\widetilde{\mathcal R}^{(m)}(\bth^{(m), t-1}).
\end{align}

For both gradient descent algorithms \eqref{eq: perturbed gradient descent} and \eqref{eq: leave-one-out gradient descent}, we adopt an idealized initialization, $\widetilde \bth^0 = \bth^{(m), 0} = \bth^*$ for every $m \in [n]$. These gradient descent steps are only tools for theoretical analysis and not intended for practical implementation.

As the gradient and Hessian of the likelihood function \eqref{eq: ranking likelihood equation} will be referenced frequently, we collect them here before starting the analysis.
\begin{align}
	&\nabla \mathcal L(\bm v) = \sum_{(i, j) \in \mathcal G} (F_{ij}(\bm v) - y_{ij})\frac{F'_{ij}(\bm v)}{F_{ij}(\bm v)(1 - F_{ij}(\bm v))}(\bm e_i - \bm e_j). \label{eq: likelihood gradient}\\
	&\nabla^2 \mathcal L(\bm v) =  \sum_{(i, j) \in \mathcal G} \left(-y_{ij}\frac{\partial^2}{\partial x^2}\log F(x)\Big|_{x = v_i  - v_j} - y_{ji}\frac{\partial^2}{\partial x^2}\log (1-F(x))\Big|_{x = v_i  - v_j}\right)(\bm e_i - \bm e_j) (\bm e_i - \bm e_j)^\top. \label{eq: likelihood hessian}
\end{align}
The analysis of $\widetilde \bth$ follows a similar induction as Section 6 of \cite{chen2019spectral}. The inductive hypotheses are
\begin{align}
	&\|\widetilde\bth^t - \bth^*\|_2 < C_1\left(\sqrt{\frac{\log n}{p}} + \frac{\lambda\log n}{\sqrt{n}p}\right). \label{eq: inductive hypothesis 1}\\
	& \max_{1 \leq m \leq n} |\theta^{(m), t}_m - \theta^*_m| < C_2 \left(\sqrt{\frac{\log n}{np}} + \frac{\lambda\log n}{np}\right). \label{eq: inductive hypothesis 3}\\
	&\max_{1 \leq m \leq n}\|\bth^{(m), t} - \widetilde\bth^t\|_2 < C_3\left(\sqrt{\frac{\log n}{np}} + \frac{\lambda\log n}{np}\right). \label{eq: inductive hypothesis 4}\\
	&\|\widetilde\bth^t - \bth^*\|_\infty < C_4\left(\sqrt{\frac{\log n}{np}} + \frac{\lambda\log n}{np}\right). \label{eq: inductive hypothesis 2}
\end{align}
These hypotheses are obviously true if $t = 0$, by the initialization $\widetilde \bth^0 = \bth^{(m), 0} = \bth^*$ for every $m \in [n]$. For any fixed $t$, we shall show that these hypotheses are true with probability at least $1 - O(n^{-8})$ for the $(t+1)$-th step as well. For now assume the induction is true and defer the proof to Section \ref{sec: proof of induction in ranking MLE upper bound}. Now that \eqref{eq: inductive hypothesis 2} holds for every $t$, it suffices to show there exists some $t \in \mathbb N$ such that $\widetilde\bth^t$ and the optimum $\widetilde\bth = \argmin_\bth \widetilde {\mathcal R}(\bth)$ are sufficiently close.

Lemma 1 in \cite{chen2019spectral} shows that the event 
\begin{align*}
	\mathcal A_0 = \left\{\frac{np}{2} \leq d_{\min}(\mathcal G) \leq  d_{\max}(\mathcal G) \leq \frac{3np}{2}\right\}
\end{align*}
occurs with probability at least $1-O(n^{-10})$, as long as $p \gtrsim \log n/m$ as we assumed in Proposition \ref{eq: ranking MLE accuracy}. The objective function \eqref{eq: perturbed objective} is $\gamma$-strongly convex by construction. Under event $\mathcal A_0$ and by Lemma \ref{lm: spectrum of likelihood Hessian} in Section \ref{sec: proof of induction in ranking MLE upper bound}, we further know that the objective function is $(\gamma + 3\kappa_2 np)$-smooth. The step size $\eta = (\gamma + 3\kappa_2 np)^{-1}$ then guarantees that the gradient descent steps defined by \eqref{eq: perturbed gradient descent} obey
\begin{align*}
	\|\widetilde\bth^{t+1} - \widetilde\bth\|_2 \leq \left(1 - \frac{\gamma}{\gamma + 3\kappa_2 np}\right)\|\widetilde\bth^{t} - \widetilde\bth\|_2, \quad t \in \mathbb N,
\end{align*}
implying that 
\begin{align}
	\|\widetilde\bth^{t+1} - \widetilde\bth\|_2 \leq \exp\left(\frac{-t\gamma}{\gamma + 3\kappa_2 np}\right)\|\widetilde\bth^{0} - \hat\bth\|_2 \leq \exp\left(\frac{-c_0t}{(c_0+3\kappa_2)n}\right)\|\widetilde\bth^{0} - \widetilde\bth\|_2.\label{eq: optimization error bound 1}
\end{align}
The second step is true by $p \leq 1$ and $\gamma = c_0\sqrt{np\log n} \leq c_0 n$ and $\gamma \geq c_0$. 

As $\widetilde\bth^0 = \bth^*$, we can bound $\|\widetilde\bth^{0} - \widetilde\bth\|_2$ by observing that
\begin{align*}
	&\gamma\|\bth^*- \widetilde\bth\|^2_2 \leq \widetilde{\mathcal R}(\widetilde\bth) - \widetilde{\mathcal R}(\bth^*) - \nabla\widetilde{\mathcal R}(\bth^*)^\top(\bth^* - \widetilde\bth) \leq \|\nabla\widetilde{\mathcal R}(\bth^*)\|_2\|\bth^*- \widetilde\bth\|_2.\\
	& \|\bth^*- \widetilde\bth\|_2 \leq \gamma^{-1}\|\nabla\widetilde{\mathcal R}(\bth^*)\|_2 \leq \gamma^{-1}\left(\|\nabla\mathcal L(\bth^*)\|_2 + \gamma\|\bth^*\|_2 + \|\bm w\|_2 \right).
\end{align*}
Now assume events 
\begin{align*}
	\mathcal A_1 = \left\{\|\nabla \mathcal L(\bth^*)\|_2 \leq 3\kappa_1\sqrt{n^2p\log n}\right\}, \mathcal A_2 = \left\{|\bm1^\top \bm w| \leq 8\lambda\sqrt{n\log n}, \|\bm w\|_\infty \leq 9\lambda\log n \right\}
\end{align*}
occur, the probability of which is at least $1 - O(n^{-8})$ by Lemmas \ref{lm: gradient norm tail bound} and \ref{lm: Laplace tail bound}. Because $\gamma \asymp \sqrt{np\log n}$ and $\gamma \gtrsim \lambda$, the bound of $ \|\bth- \widetilde\bth\|_2 $ above reduces to
\begin{align*}
	\|\bth^*- \widetilde\bth\|_2 \leq \gamma^{-1}\|\nabla\widetilde{\mathcal R}(\bth^*)\|_2 \lesssim \frac{\sqrt{n^2 p \log n}}{\gamma} + \sqrt{n}  + \frac{\lambda\log n\sqrt{n}}{\gamma} \lesssim \sqrt{n}\log n.
\end{align*}
Plugging the bound into \eqref{eq: optimization error bound 1} and setting $t = \frac{c_0+3\kappa_2}{c_0}n^3$ yields
\begin{align*}
	\|\widetilde\bth^{t+1} - \widetilde\bth\|_2 \lesssim e^{-n^2}\sqrt{n}\log n \lesssim \frac{\log n}{n}.
\end{align*}

By the inductive step \eqref{eq: inductive hypothesis 2} and the union bound over $O(n^3)$ iterations, each of which hold with probability at least $1 - O(n^{-8})$, we conclude that
\begin{align*}
	\|\widetilde\bth-\bth^*\|_\infty \leq \|\widetilde\bth^{t+1} - \widetilde\bth\|_2 + \|\widetilde\bth^{t+1} - \bth^*\|_\infty \lesssim \left(\sqrt{\frac{\log n}{np}} + \frac{\lambda\log n}{np}\right)
\end{align*} 
with probability at least $1 - O(n^{-5})$.

\subsubsection{Proof of the inductive step} \label{sec: proof of induction in ranking MLE upper bound}
In this section we prove the inductive step in Section \ref{sec: main proof of ranking MLE upper bound}. Throughout the proof we assume the following high-probability events simultaneously occur.
\begin{align*}
	&\mathcal A_0 = \left\{\frac{np}{2} \leq d_{\min}(\mathcal G) \leq  d_{\max}(\mathcal G) \leq \frac{3np}{2}\right\},\\
	&\mathcal A_1 = \left\{\|\nabla \mathcal L(\bth^*)\|_2 \leq 3\kappa_1\sqrt{n^2p\log n}\right\}, \\ 
	&\mathcal A_2 = \left\{|\bm1^\top \bm w| \leq 8\lambda\sqrt{n\log n}, \|\bm w\|_\infty \leq 9\lambda\log n \right\}.
\end{align*}
Lemma 1 in \cite{chen2019spectral} shows that events $\mathcal A_0$ occurs with probability at least $1-O(n^{-10})$, as long as $p \gtrsim \log n/m$ as we assumed in Proposition \ref{eq: ranking MLE accuracy}. We bound the probabilities of $\mathcal A_1$ and $\mathcal A_2$ in the lemmas below and defer their proofs to Section \ref{sec: proof of ranking upper bound lemmas}.

\begin{Lemma}\label{lm: gradient norm tail bound}
	If $p \geq c\log n/n$ for a sufficiently large constant $c > 0$, $\Pro(\mathcal A_1) \geq 1 - O(n^{-10})$.
\end{Lemma}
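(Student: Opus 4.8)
The plan is to exploit that $\nabla\mathcal L(\bth^*)$, the gradient \eqref{eq: likelihood gradient} evaluated at the true parameter, is a sum of independent, mean-zero, bounded random vectors, so that a Bernstein-type tail bound together with a coordinatewise union bound yields the claimed $\ell_2$ bound. First I would repackage the randomness: for each unordered pair $i<j$, set
\[
\xi_{ij} \;=\; \1\bigl((i,j)\in\mathcal G\bigr)\,\bigl(F_{ij}(\bth^*) - y_{ij}\bigr)\,\frac{F'_{ij}(\bth^*)}{F_{ij}(\bth^*)\bigl(1-F_{ij}(\bth^*)\bigr)},
\]
so that $\nabla\mathcal L(\bth^*) = \sum_{i<j}\xi_{ij}(\bm e_i - \bm e_j)$. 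Folding the edge indicator into $\xi_{ij}$ makes the $\xi_{ij}$ independent across pairs with respect to the joint law of the graph $\mathcal G$ and the comparison outcomes. Since $\E[y_{ij}\mid (i,j)\in\mathcal G] = F_{ij}(\bth^*)$, each $\xi_{ij}$ is mean zero; by condition (A1), $|\xi_{ij}|\le\kappa_1$; and $\E[\xi_{ij}^2] = p\,F_{ij}(\bth^*)(1-F_{ij}(\bth^*))\bigl(F'_{ij}(\bth^*)/(F_{ij}(\bth^*)(1-F_{ij}(\bth^*)))\bigr)^2 \le p\kappa_1^2/4$.

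Next, fix a coordinate $k\in[n]$ and note that $(\nabla\mathcal L(\bth^*))_k = \sum_{j>k}\xi_{kj} - \sum_{j<k}\xi_{jk}$ is a sum of $n-1$ independent, mean-zero summands, each bounded by $\kappa_1$ and with total variance at most $(n-1)p\kappa_1^2/4 \le np\kappa_1^2/4$. Bernstein's inequality then gives $\Pro\bigl(|(\nabla\mathcal L(\bth^*))_k| \ge \tfrac{5}{2}\kappa_1\sqrt{np\log n}\bigr) = O(n^{-11})$: because $p\ge c\log n/n$ with $c$ large, $np$ dominates $\sqrt{np\log n}$, so the variance term of Bernstein controls the bound and the linear term is lower order. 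A union bound over $k\in[n]$ gives $\max_{k}|(\nabla\mathcal L(\bth^*))_k| \le \tfrac{5}{2}\kappa_1\sqrt{np\log n}$ with probability $1-O(n^{-10})$, whence $\|\nabla\mathcal L(\bth^*)\|_2 \le \sqrt n\,\max_k|(\nabla\mathcal L(\bth^*))_k| \le \tfrac{5}{2}\kappa_1\sqrt{n^2 p\log n} \le 3\kappa_1\sqrt{n^2 p\log n}$, i.e.\ the event $\mathcal A_1$ holds.

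The only point requiring genuine care is bookkeeping the constant $3\kappa_1$: one must verify that at the radius $\tfrac52\kappa_1\sqrt{np\log n}$ the Bernstein exponent is at least (say) $11\log n$, so that it survives the union bound over the $n$ coordinates, and that the hypothesis $p\gtrsim\log n/n$ places us in the sub-Gaussian, variance-dominated regime of Bernstein's inequality rather than its heavy-tailed regime. A cleaner variant that avoids the lossy bound $\|\cdot\|_2\le\sqrt n\max_k|\cdot|$ is to apply a vector-valued Bernstein inequality directly to $\sum_{i<j}\xi_{ij}(\bm e_i-\bm e_j)$, using $\sum_{i<j}\E\|\xi_{ij}(\bm e_i-\bm e_j)\|_2^2 \le n^2 p\kappa_1^2/4$ as the variance proxy and $\E\|\nabla\mathcal L(\bth^*)\|_2\le \tfrac12\kappa_1\sqrt{n^2p}\le\tfrac12\kappa_1\sqrt{n^2p\log n}$ for centering; this gives the same rate $\sqrt{n^2p\log n}$ with even more room below $3\kappa_1\sqrt{n^2p\log n}$. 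Either route is routine; the substance is simply the observation that evaluating the gradient at $\bth^*$ kills the mean and that (A1) uniformly bounds the per-edge contribution.
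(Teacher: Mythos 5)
Your proof is correct and follows essentially the same route as the paper: a coordinatewise tail bound, a union bound over the $n$ coordinates, and the comparison $\|\cdot\|_2\le\sqrt{n}\max_k|\cdot|$ to reach the $3\kappa_1\sqrt{n^2p\log n}$ threshold. The only (immaterial) difference is that you fold the edge indicators into the summands and apply Bernstein unconditionally with variance proxy $np\kappa_1^2/4$, whereas the paper works on the degree event $\mathcal A_0$ and applies Hoeffding conditionally on the graph; both yield the same rate under $p\gtrsim \log n/n$.
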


\begin{Lemma}\label{lm: Laplace tail bound}
	If the coordinates $W_1, W_2, \cdots, W_n$ of $\bm w \in \R^n$ are drawn i.i.d. from the (zero-mean) Laplace distribution with scale parameter $\lambda$, then $\Pro(\mathcal A_2) \geq 1 - O(n^{-8})$.
\end{Lemma}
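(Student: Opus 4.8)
The plan is to bound the complement $\mathcal A_2^c$ by a union bound over its two defining events, $\{|\bm 1^\top \bm w| > 8\lambda\sqrt{n\log n}\}$ and $\{\|\bm w\|_\infty > 9\lambda\log n\}$, showing each has probability $O(n^{-8})$. For the $\ell_\infty$ event I would invoke the exact Laplace tail: if $W \sim \mathrm{Laplace}(\lambda)$ then $\Pro(|W| > t) = e^{-t/\lambda}$ for every $t \ge 0$, so $\Pro(|W_i| > 9\lambda\log n) = e^{-9\log n} = n^{-9}$ for each $i$, and a union bound over the $n$ coordinates gives $\Pro(\|\bm w\|_\infty > 9\lambda\log n) \le n^{-8}$.

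For the sum $\bm 1^\top \bm w = \sum_{i=1}^n W_i$, I would apply a Chernoff bound using the Laplace moment generating function $\E e^{sW_i} = (1-\lambda^2 s^2)^{-1}$, valid for $|s| < 1/\lambda$. Writing $s = u/\lambda$ with $u \in (0,1)$ and using independence,
\[
\Pro\!\left(\sum_{i=1}^n W_i > 8\lambda\sqrt{n\log n}\right) \le \exp\!\left(-8u\sqrt{n\log n} - n\log(1-u^2)\right) \le \exp\!\left(-8u\sqrt{n\log n} + 2nu^2\right),
\]
where the last step uses $-\log(1-u^2) \le 2u^2$, valid for the small value of $u$ we will take. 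Optimizing the exponent over $u$ yields $u = 2\sqrt{(\log n)/n}$, which is admissible (i.e.\ $u < 1$ and the above inequality for $-\log(1-u^2)$ holds) once $n$ is large, and makes the exponent equal to $-8\log n$; by the symmetry of the Laplace distribution the lower tail satisfies the same bound, so $\Pro(|\bm 1^\top \bm w| > 8\lambda\sqrt{n\log n}) \le 2n^{-8}$. Alternatively one could cite a Bernstein-type sub-exponential inequality, noting that $\sum_i W_i$ has variance $2n\lambda^2$ and that $t = 8\lambda\sqrt{n\log n}$ lies in the sub-Gaussian regime since $\log n \ll n$; but an explicit Chernoff computation is cleaner because the numerical constant $8$ in the threshold needs to be tracked.

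Combining the two estimates via the union bound gives $\Pro(\mathcal A_2^c) \le 2n^{-8} + n^{-8} = O(n^{-8})$, which is the claimed bound. There is no genuine obstacle here; the only point requiring a little care is the optimization over $u$ in the Chernoff step, where one must verify that the prescribed constant $8$ in $8\lambda\sqrt{n\log n}$ is exactly large enough to push the exponent down to $-8\log n$ at the optimal choice $u \asymp \sqrt{(\log n)/n}$.
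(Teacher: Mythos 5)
Your proof is correct and follows essentially the same route as the paper: a union bound combining the exact Laplace tail $\Pro(|W_i|>t)=e^{-t/\lambda}$ for the $\ell_\infty$ part with an exponential-moment bound for the sum $\bm 1^\top\bm w$. The only difference is in the sum: the paper quotes a Bernstein-type sub-exponential inequality (stated with a $\exp(-t^2/8n\lambda)$ bound whose missing factor of $\lambda$ is absorbed using the standing assumption $\lambda \gtrsim 1$), whereas you optimize the Chernoff bound from the exact MGF $(1-\lambda^2 s^2)^{-1}$, which tracks the constant $8$ explicitly and works for every $\lambda>0$ without that side condition — a slightly cleaner and more self-contained version of the same argument.
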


Now, the inductive step will be proved in four parts, from Proposition \ref{prop: inductive step 1} to Proposition \ref{prop: inductive step 2}.

\begin{proposition}\label{prop: inductive step 1}
	Suppose \eqref{eq: inductive hypothesis 1}, \eqref{eq: inductive hypothesis 2} are true for a sufficiently large $C_1 > 0$ and $\eta = (\gamma + 3\kappa_2 np)^{-1}$, then it holds with probability at least $1 - O(n^{-8})$ that
	\begin{align}
		\|\widetilde\bth^{t+1} - \bth^*\|_2 < C_1\left(\sqrt{\frac{\log n}{p}} + \frac{\lambda\log n}{\sqrt{n}p}\right). \label{eq: inductive step 1}
	\end{align}
\end{proposition}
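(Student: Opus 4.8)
The plan is to treat the gradient step \eqref{eq: perturbed gradient descent} as a linear recursion around $\bth^{*}$ and to track separately its component along $\bm 1$ and its component on $\bm 1^{\perp}$. This separation is essential because the $\ell_{2}$-penalty coefficient $\gamma\asymp\sqrt{np\log n}$, forced by the privacy requirement of Proposition \ref{prop: ranking MLE privacy}, is of smaller order than the intrinsic curvature $\asymp np$ of $\mathcal L$ on $\bm 1^{\perp}$, so the two subspaces contract at very different rates and a single contraction factor is far too lossy.

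\emph{Linearization and Hessian spectrum.} Subtracting $\bth^{*}$ from \eqref{eq: perturbed gradient descent} and writing the gradient increment in integral form, $\nabla\widetilde{\mathcal R}(\widetilde\bth^{t})-\nabla\widetilde{\mathcal R}(\bth^{*})=\bm H^{t}(\widetilde\bth^{t}-\bth^{*})$ with $\bm H^{t}=\gamma\bm I+\int_{0}^{1}\nabla^{2}\mathcal L\bigl(\bth^{*}+s(\widetilde\bth^{t}-\bth^{*})\bigr)\,ds$, gives $\widetilde\bth^{t+1}-\bth^{*}=(\bm I-\eta\bm H^{t})(\widetilde\bth^{t}-\bth^{*})-\eta\,\nabla\widetilde{\mathcal R}(\bth^{*})$, where $\nabla\widetilde{\mathcal R}(\bth^{*})=\nabla\mathcal L(\bth^{*})+\gamma\bth^{*}+\bm w$. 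The $\ell_{\infty}$ hypothesis \eqref{eq: inductive hypothesis 2} keeps every difference $\theta^{*}_{i}-\theta^{*}_{j}$, perturbed along the segment, inside $(-4,4)$, so the lower curvature bound of (A2) is in force; with event $\mathcal A_{0}$ and Lemma \ref{lm: spectrum of likelihood Hessian} this shows $\nabla^{2}\mathcal L$ annihilates $\bm 1$ and has all remaining eigenvalues of order $np$ (and at most $3\kappa_{2}np$). Hence $\bm H^{t}$ leaves $\R\bm 1$ and $\bm 1^{\perp}$ invariant, acts as the scalar $\gamma$ on $\R\bm 1$, and has eigenvalues in $[c\,np,\ \gamma+3\kappa_{2}np]$ on $\bm 1^{\perp}$ for a constant $c>0$; with $\eta=(\gamma+3\kappa_{2}np)^{-1}$ and $\gamma\lesssim np$ this yields $\|(\bm I-\eta\bm H^{t})\mathcal P\|\le\beta$ for a constant $\beta\in(0,1)$, where $\mathcal P=\bm I-n^{-1}\bm 1\bm 1^{\top}$, while on $\R\bm 1$ the operator is $1-\eta\gamma\in(0,1)$.

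\emph{The two components.} Projecting the recursion onto $\bm 1$ gives the exact scalar recursion $q^{t+1}=(1-\eta\gamma)q^{t}-\eta\,n^{-1}\bm 1^{\top}\bm w$ (using $\bm 1^{\top}\nabla\mathcal L(\bth^{*})=\bm 1^{\top}\bth^{*}=0$), which, unrolled from $q^{0}=0$ via the initialization $\widetilde\bth^{0}=\bth^{*}$, gives $\|(\widetilde\bth^{t+1}-\bth^{*})-\mathcal P(\widetilde\bth^{t+1}-\bth^{*})\|_{2}\le|\bm 1^{\top}\bm w|/(\gamma\sqrt n)$ for every $t$ with no induction needed. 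For the $\bm 1^{\perp}$ part I use the one-step bound $\|\mathcal P(\widetilde\bth^{t+1}-\bth^{*})\|_{2}\le\beta\|\mathcal P(\widetilde\bth^{t}-\bth^{*})\|_{2}+\eta\|\mathcal P\nabla\widetilde{\mathcal R}(\bth^{*})\|_{2}$, controlling the first term by $\beta\|\widetilde\bth^{t}-\bth^{*}\|_{2}$ via \eqref{eq: inductive hypothesis 1} and the second by $(c\,np)^{-1}\bigl(\|\nabla\mathcal L(\bth^{*})\|_{2}+\gamma\|\bth^{*}\|_{2}+\|\bm w\|_{2}\bigr)$. On $\mathcal A_{1}$, $\|\nabla\mathcal L(\bth^{*})\|_{2}\lesssim\sqrt{n^{2}p\log n}$; on $\mathcal A_{2}$, $|\bm 1^{\top}\bm w|\lesssim\lambda\sqrt{n\log n}$ and $\|\bm w\|_{2}\le\sqrt n\|\bm w\|_{\infty}\lesssim\lambda\sqrt n\log n$; together with $\|\bth^{*}\|_{2}\le\sqrt n$ and $\gamma=c_{0}\sqrt{np\log n}$, the driving term is $O\bigl(\sqrt{\log n/p}+\lambda\log n/(\sqrt n p)\bigr)$ and the $\R\bm 1$ part is $O(\lambda/\sqrt{np})$, which is dominated by $\sqrt{\log n/p}$ because $\lambda\lesssim\sqrt{\log n}$. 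Combining through $\|\widetilde\bth^{t+1}-\bth^{*}\|_{2}^{2}=\|\mathcal P(\cdot)\|_{2}^{2}+\|(\cdot)-\mathcal P(\cdot)\|_{2}^{2}$ bounds $\|\widetilde\bth^{t+1}-\bth^{*}\|_{2}$ by $(\beta C_{1}+D)\bigl(\sqrt{\log n/p}+\lambda\log n/(\sqrt n p)\bigr)$ for an absolute constant $D$, which is below $C_{1}\bigl(\sqrt{\log n/p}+\lambda\log n/(\sqrt n p)\bigr)$ once $C_{1}\ge D/(1-\beta)$; the governing event is $\mathcal A_{0}\cap\mathcal A_{1}\cap\mathcal A_{2}$, of probability at least $1-O(n^{-8})$ by Lemmas \ref{lm: gradient norm tail bound} and \ref{lm: Laplace tail bound}.

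The main obstacle is precisely the separation of scales just exploited: a single-rate contraction would have to use $\rho=1-\eta\gamma=1-\Theta\!\bigl(\sqrt{\log n/(np)}\bigr)$, making $\mathrm{drift}/(1-\rho)$ far too large, so one must notice that the only part of $\nabla\widetilde{\mathcal R}(\bth^{*})$ not orthogonal to $\bm 1$ is the tiny sample mean $n^{-1}\bm 1^{\top}\bm w$ of the Laplace noise, which means the slowly contracting direction is never excited beyond a lower-order term while $\bm 1^{\perp}$ behaves as in the non-private leave-one-out analysis of \cite{chen2019spectral}. A secondary technical point, handled by \eqref{eq: inductive hypothesis 2}, is keeping all perturbed pairwise differences inside $[-4,4]$ so that the curvature lower bound of (A2) holds along the interpolating segment.
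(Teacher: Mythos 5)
Your argument is correct and follows essentially the same route as the paper's proof: linearize the gradient step via the integral mean value theorem, apply the Hessian spectrum bound (Lemma \ref{lm: spectrum of likelihood Hessian}, enabled by \eqref{eq: inductive hypothesis 2}) on $\bm 1^{\perp}$, control the $\bm 1$-direction through the exact scalar recursion driven only by $n^{-1}\bm 1^\top\bm w$ (which is precisely the content of Lemma \ref{lm: elementwise sum of perturbed gradient descent}), bound the drift $\eta\|\nabla\widetilde{\mathcal R}(\bth^*)\|_2$ on $\mathcal A_1\cap\mathcal A_2$, and absorb everything by taking $C_1$ large. The only differences are cosmetic (folding $\gamma\bm I$ into $\bm H^t$ and combining the two components by Pythagoras rather than the triangle inequality), so no further comment is needed.
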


[Proof of Proposition \ref{prop: inductive step 1}]
	Throughout the proof, we assume that events $\mathcal A_0, \mathcal A_1, \mathcal A_2$ simultaneously occur, the probability of which is at least $1 - O(n^{-8})$.
	
	By \eqref{eq: perturbed gradient descent} and the definition of $\widetilde {\mathcal R}(\bth)$, we have
	\begin{align}
		\widetilde\bth^{t+1} - \bth^* &= \widetilde\bth^t - \bth^* - \eta \left(\nabla\widetilde{\mathcal R}(\widetilde\bth^t) - \nabla\widetilde{\mathcal R}(\bth^*) \right) -  \eta\nabla\widetilde{\mathcal R}(\bth^*) \notag \\
		&= (1 - \eta\gamma)(\widetilde\bth^t - \bth^*) - \eta \left(\nabla{\mathcal L}(\widetilde\bth^t) - \nabla{\mathcal L}(\bth^*) \right) -  \eta\nabla\widetilde{\mathcal R}(\bth^*). \label{eq: inductive step 1 expansion 1}
	\end{align}
	By the mean value theorem for vector-valued functions (\cite{lang2012real}, Chapter XIII, Theorem 4.2), we have
	\begin{align*}
		\nabla{\mathcal L}(\widetilde\bth^t) - \nabla{\mathcal L}(\bth^*) = \int_0^1 \nabla^2 \mathcal L(\bth(\tau)) \d\tau \cdot (\widetilde\bth^t - \bth^*),
	\end{align*}
	where $\bth(\tau) = \bth^* + \tau(\widetilde\bth^t - \bth^*)$. Denoting $\bm H = \int_0^1 \nabla^2 \mathcal L(\bth(\tau)) d\tau$, it follows from \eqref{eq: inductive step 1 expansion 1} that
	\begin{align}
		\|\widetilde\bth^{t+1} - \bth^*\|_2 \leq \|((1-\eta\gamma)\bm I - \eta\bm H) (\widetilde\bth^t - \bth^*)\|_2 + \eta\|\nabla\widetilde{\mathcal R}(\bth^*)\|_2. \label{eq: inductive step 1 expansion 2}
	\end{align}
	The second term on the right side can be bounded as follows, under high-probability events $\mathcal A_1$ and $\mathcal A_2$, 
	\begin{align}
		\|\nabla \widetilde{\mathcal R}(\bth^*)\|_2 \leq \|\nabla\mathcal L(\bth^*)\|_2 + \gamma\|\bth^*\|_2 + \|\bm w\|_2 \leq \left(3\kappa_1\sqrt{n^2p\log n} + c_0\sqrt{n^2p\log n} + 9\lambda\sqrt{n}\log n\right). \label{eq: inductive step 1 expansion 2.1}
	\end{align} 
	In bounding $\|\bth^*\|_2$, we used the fact that $\|\bth^*\|_\infty < 1$.
	
	It remains to bound the first term on the right side of \eqref{eq: inductive step 1 expansion 2}. Observe from the Hessian expression \eqref{eq: likelihood hessian} that, for every $\tau$, $\nabla^2 \mathcal L(\bth(\tau)) \bm 1 = 0$ and therefore $\bm H \bm 1  = 0$. It follows that
	\begin{align}
		&\|((1-\eta\gamma)\bm I - \eta\bm H) (\widetilde\bth^t - \bth^*)\|_2 \notag \\
		&\leq \|((1-\eta\gamma)\bm I - \eta\bm H) (\widetilde\bth^t - \bth^* - (\bm 1^\top \widetilde\bth^t/n)\bm 1)\|_2 + (1-\eta\gamma)\|(\bm 1^\top \widetilde\bth^t/n)\bm 1\|_2  \notag \\
		&\leq \max\left(|1 - \eta\gamma - \eta\lambda_2(\bm H)|, |1 - \eta\gamma- \eta\lambda_{\max}(\bm H)| \right)\|\widetilde\bth^t - \bth^* - (\bm 1^\top \widetilde\bth^t/n)\bm 1\|_2 + |\bm 1^\top \widetilde\bth^t|/\sqrt{n}  \notag \\
		& \leq \max\left(|1 - \eta\gamma-\eta\lambda_2(\bm H)|, |1- \eta\gamma - \eta\lambda_{\max}(\bm H)| \right)\|\widetilde\bth^t - \bth^*\|_2 + |\bm 1^\top \widetilde\bth^t|/\sqrt{n}. \label{eq: inductive step 1 expansion 2.2}
	\end{align}
	To see why the second inequality holds, observe that $(\widetilde\bth^t - \bth^* - (\bm 1^\top \widetilde\bth^t/n)\bm 1)$ is orthogonal to $\bm 1$ by the assumption of $1^\top \bth^* = 0$, and that $\bm H$ is symmetric and positive semi-definite. The third inequality is true because $(\widetilde\bth^t - \bth^* - (\bm 1^\top \widetilde\bth^t/n)\bm 1)$ is an orthogonal projection of $\widetilde\bth^t - \bth^*$, and $\|\widetilde\bth^t - \bth^* - (\bm 1^\top \widetilde\bth^t/n)\bm 1\|_2 \leq \|\widetilde\bth^t - \bth^*\|_2$ by the non-expansiveness of projection.
	
	To further simplify \eqref{eq: inductive step 1 expansion 2.2}, we apply the following lemmas. Their proofs are deferred to Section \ref{sec: proof of ranking upper bound lemmas}.
	
	\begin{Lemma}
		\label{lm: spectrum of likelihood Hessian}
		Under the event $\mathcal A_0$, it holds with probability at least $1 - O(n^{-10})$ that 
		\begin{align*}
			\frac{np}{2\kappa_2} \leq \lambda_{2} (\nabla^2 \mathcal L(\bm v)) \leq \lambda_{\max} (\nabla^2 \mathcal L(\bm v)) \leq 3\kappa_2 np.
		\end{align*}
		simultaneously for all $\bm v$ satisfying $\|\bm v - \bth^*\|_\infty \leq 1$.
	\end{Lemma}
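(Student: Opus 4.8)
\textbf{Proof plan for Lemma \ref{lm: spectrum of likelihood Hessian}.}
The plan is to work directly from the Hessian expression \eqref{eq: likelihood hessian}, which shows that $\nabla^2 \mathcal L(\bm v)$ is a weighted graph Laplacian: writing $x_{ij} = v_i - v_j$, the coefficient of $(\bm e_i - \bm e_j)(\bm e_i - \bm e_j)^\top$ is $c_{ij}(\bm v) = y_{ij}\frac{\partial^2}{\partial x^2}(-\log F)(x_{ij}) + y_{ji}\frac{\partial^2}{\partial x^2}(-\log(1-F))(x_{ij})$. Since $Y_{ij} + Y_{ji} = 1$, this coefficient is a convex combination of $\frac{\partial^2}{\partial x^2}(-\log F)(x_{ij})$ and $\frac{\partial^2}{\partial x^2}(-\log(1-F))(x_{ij})$; by condition (A2) and the symmetry $F(x) = 1-F(-x)$, whenever $|x_{ij}| \le |v_i| + |v_j| \le \|\bm v - \bth^*\|_\infty + \|\bth^*\|_\infty + \|\bm v - \bth^*\|_\infty + \|\bth^*\|_\infty \le 4$ (using $\|\bth^*\|_\infty < 1$ and $\|\bm v - \bth^*\|_\infty \le 1$) we get $\frac{1}{\kappa_2} \le c_{ij}(\bm v) \le \kappa_2$, and in general $0 < c_{ij}(\bm v) \le \kappa_2$.

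With the coefficient bounds in hand, I would sandwich $\nabla^2 \mathcal L(\bm v)$ between scaled Laplacians of the comparison graph $\mathcal G$. Let $L_{\mathcal G} = \sum_{(i,j)\in\mathcal G}(\bm e_i - \bm e_j)(\bm e_i - \bm e_j)^\top$ be the (unweighted) Laplacian. The upper coefficient bound gives $\nabla^2 \mathcal L(\bm v) \preceq \kappa_2 L_{\mathcal G}$, and the lower bound gives $\nabla^2 \mathcal L(\bm v) \succeq \frac{1}{\kappa_2} L_{\mathcal G}$ (the latter valid for all the relevant $\bm v$ since every relevant edge difference has magnitude at most $4$). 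By the variational characterization of eigenvalues, it then suffices to control $\lambda_2(L_{\mathcal G})$ and $\lambda_{\max}(L_{\mathcal G})$. For the top eigenvalue, $\lambda_{\max}(L_{\mathcal G}) \le 2 d_{\max}(\mathcal G) \le 3np$ on the event $\mathcal A_0$. For the spectral gap $\lambda_2(L_{\mathcal G})$, I would invoke a standard Erd\H{o}s--R\'enyi Laplacian concentration bound (as used in \cite{chen2019spectral}, e.g. via matrix Bernstein applied to $L_{\mathcal G} - \mathbb E L_{\mathcal G}$): since $\mathbb E L_{\mathcal G} = p(n\bm I - \bm 1\bm 1^\top)$ has second-smallest eigenvalue $np$, and the deviation is $O(\sqrt{np\log n}) = o(np)$ under $p \gtrsim \log n/n$, we obtain $\lambda_2(L_{\mathcal G}) \ge np/2$ with probability $1 - O(n^{-10})$. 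Combining, $\lambda_2(\nabla^2\mathcal L(\bm v)) \ge \frac{1}{\kappa_2}\lambda_2(L_{\mathcal G}) \ge \frac{np}{2\kappa_2}$ and $\lambda_{\max}(\nabla^2\mathcal L(\bm v)) \le \kappa_2 \lambda_{\max}(L_{\mathcal G}) \le 3\kappa_2 np$, uniformly in $\bm v$.

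The point that needs a little care, and which I expect to be the main obstacle, is the phrase ``simultaneously for all $\bm v$ satisfying $\|\bm v - \bth^*\|_\infty \le 1$'': the coefficient bounds $\frac{1}{\kappa_2} \le c_{ij}(\bm v) \le \kappa_2$ hold \emph{pointwise} in $\bm v$ from (A2) once the argument $v_i - v_j$ is bounded, so the only randomness is in the graph $\mathcal G$ itself, not in $\bm v$. Hence the high-probability event (namely $\mathcal A_0$ together with the $\lambda_2(L_{\mathcal G})$ concentration event) can be fixed first, and then the eigenvalue bounds hold deterministically for \emph{every} admissible $\bm v$ on that event; no union bound over $\bm v$ is needed. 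I would make this logical ordering explicit to avoid any apparent circularity with the leave-one-out induction that invokes this lemma.
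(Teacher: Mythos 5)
Your proposal is correct and follows essentially the same route as the paper: bound the weighted-Laplacian coefficients by $[1/\kappa_2,\kappa_2]$ via (A2) (using $|v_i-v_j|\le 4$ from $\|\bm v-\bth^*\|_\infty\le 1$, $\|\bth^*\|_\infty\le 1$), sandwich $\nabla^2\mathcal L(\bm v)$ between $\kappa_2^{-1}L_{\mathcal G}$ and $\kappa_2 L_{\mathcal G}$, use $\mathcal A_0$ for $\lambda_{\max}(L_{\mathcal G})\le 3np$, and use the Erd\H{o}s--R\'enyi spectral-gap bound (Lemma 10 of \cite{chen2019spectral}) for $\lambda_2(L_{\mathcal G})\ge np/2$. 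Your explicit remark that the coefficient bounds are deterministic in $\bm v$, so only the graph events are random and no union bound over $\bm v$ is needed, is a point the paper leaves implicit but is the right way to read "simultaneously for all $\bm v$."
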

	
	\begin{Lemma}\label{lm: elementwise sum of perturbed gradient descent}
		Let $\widetilde\bth^t$ be defined by \eqref{eq: perturbed gradient descent} and $\eta = (\gamma + 3\kappa_2 np)^{-1}$. Then under the event $\mathcal A_2$, it holds for every $t \in \mathbb N$ that $|\bm 1^\top \widetilde \bth^t| \leq 8\lambda\sqrt{n\log n}/\gamma$.
	\end{Lemma}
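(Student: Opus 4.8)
The plan is to observe that $\bm 1$ lies in the kernel of $\nabla\mathcal L(\cdot)$, so that the projection of $\widetilde\bth^t$ onto $\bm 1$ obeys a scalar linear recursion driven only by the noise $\bm w$, which is then solved in closed form and bounded using $\mathcal A_2$.

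First I would recall from the gradient formula \eqref{eq: likelihood gradient} that each summand of $\nabla\mathcal L(\bm v)$ is proportional to some $\bm e_i - \bm e_j$, hence $\bm 1^\top\nabla\mathcal L(\bm v) = 0$ for every $\bm v \in \R^n$. Writing $\nabla\mathcal R(\bth) = \nabla\mathcal L(\bth) + \gamma\bth$ and applying $\bm 1^\top$ to the update rule \eqref{eq: perturbed gradient descent}, the cross term drops out and one is left with the one-dimensional recursion
\begin{align*}
	\bm 1^\top\widetilde\bth^{t+1} = (1 - \eta\gamma)\,\bm 1^\top\widetilde\bth^t - \eta\,\bm 1^\top\bm w .
\end{align*}
Since $\eta = (\gamma + 3\kappa_2 np)^{-1}$, the contraction factor satisfies $1 - \eta\gamma = 3\kappa_2 np/(\gamma + 3\kappa_2 np) \in (0,1)$, and the initialization is $\bm 1^\top\widetilde\bth^0 = \bm 1^\top\bth^* = 0$ by the centering convention. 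Unrolling the recursion then gives a finite geometric series,
\begin{align*}
	\bm 1^\top\widetilde\bth^t = -\eta\,\bm 1^\top\bm w\sum_{k=0}^{t-1}(1-\eta\gamma)^k = -\frac{\bm 1^\top\bm w}{\gamma}\bigl(1 - (1-\eta\gamma)^t\bigr).
\end{align*}

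Finally, because $0 \le (1-\eta\gamma)^t \le 1$ for all $t \in \mathbb N$, we conclude $|\bm 1^\top\widetilde\bth^t| \le |\bm 1^\top\bm w|/\gamma$, and on the event $\mathcal A_2$ we have $|\bm 1^\top\bm w| \le 8\lambda\sqrt{n\log n}$, which yields the stated bound $|\bm 1^\top\widetilde\bth^t| \le 8\lambda\sqrt{n\log n}/\gamma$. There is no real obstacle here; the only points requiring care are verifying that $1-\eta\gamma$ genuinely lies in $(0,1)$ so the partial sums are dominated by the geometric limit, and invoking the centering assumption $\bm 1^\top\bth^* = 0$ for the base case. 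Note the $\|\bm w\|_\infty$ half of $\mathcal A_2$ is not needed for this particular lemma.
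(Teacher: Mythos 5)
Your proof is correct and follows essentially the same route as the paper: apply $\bm 1^\top$ to the gradient-descent update, use $\bm 1^\top\nabla\mathcal L(\bm v)=0$ and the centering $\bm 1^\top\bth^*=0$, and bound the resulting geometric series by $|\bm 1^\top\bm w|/\gamma$ before invoking $\mathcal A_2$. Solving the scalar recursion in closed form rather than iterating the inequality is a cosmetic difference only.
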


	Returning to \eqref{eq: inductive step 1 expansion 2.2}, by the inductive hypothesis \eqref{eq: inductive hypothesis 2} and the assumption $p \geq c_1\lambda \log n/n$ for a sufficiently large constant $c_1 > 0$, both parts of Lemma \ref{lm: spectrum of likelihood Hessian} are applicable, implying that
	\begin{align*}
		&\|((1-\eta\gamma)\bm I - \eta\bm H) (\widetilde\bth^t - \bth^*)\|_2 \leq (1- (\eta/2\kappa_2) \cdot np) \|\widetilde\bth^t - \bth^*\|_2 + 8\lambda\sqrt{\log n}/\gamma
	\end{align*}
	Finally, we combine this inequality with \eqref{eq: inductive step 1 expansion 2}, \eqref{eq: inductive step 1 expansion 2.1} and the inductive hypothesis \eqref{eq: inductive hypothesis 1} to obtain
	\begin{align*}
		&\|\widetilde\bth^{t+1} - \bth^*\|_2 \\
		&\leq (1- (\eta/2\kappa_2) \cdot np)C_1\left(\sqrt{\frac{\log n}{p}} + \frac{\lambda\log n}{\sqrt{n}p}\right) + (3\kappa_1+c_0)\sqrt{\frac{\log n}{p}} + \frac{9\lambda\log n}{\sqrt{n}p} + \frac{8\lambda}{(np)^{1/2}}.
	\end{align*}
	In simplifying the right side, recall that $\eta = \frac{1}{\gamma + 3\kappa_2 np} \lesssim (np)^{-1}$ and $\gamma = c_0\sqrt{np\log n}$. As long as $C_1 > 0$ is sufficiently large, the desired conclusion \eqref{eq: inductive step 1} follows.

\begin{proposition}\label{prop: inductive step 3}
	Suppose \eqref{eq: inductive hypothesis 3}, \eqref{eq: inductive hypothesis 4}, \eqref{eq: inductive hypothesis 2} are true for sufficiently large $C_2, C_3, C_4 > 0$ and $\eta = (\gamma + 3\kappa_2 np)^{-1}$, then it holds with probability at least $1 - O(n^{-8})$ that
	\begin{align}
		\max_{1 \leq m \leq n} |\theta^{(m), t+1}_m - \theta^*_m| < C_2 \left(\sqrt{\frac{\log n}{np}} + \frac{\lambda\log n}{np}\right). \label{eq: inductive step 3}
	\end{align}
\end{proposition}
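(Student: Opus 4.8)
The plan is to repeat the per-iteration contraction of Proposition~\ref{prop: inductive step 1}, but now only for the single coordinate $m$ of the leave-one-out iterate, exploiting the fact that in $\mathcal L^{(m)}$ of~\eqref{eq: leave-one-out likelihood} the only terms depending on $v_m$ sit in the population block $p\sum_{j\neq m}\bigl[-F_{mj}(\bth^*)\log F_{mj}(\bm v)-(1-F_{mj}(\bth^*))\log(1-F_{mj}(\bm v))\bigr]$. Differentiating that block in $v_m$ and invoking (A0),
\[
\bm e_m^\top\nabla\mathcal L^{(m)}(\bm v)=p\sum_{j\neq m}\bigl(F_{mj}(\bm v)-F_{mj}(\bth^*)\bigr)\frac{F'_{mj}(\bm v)}{F_{mj}(\bm v)(1-F_{mj}(\bm v))},
\]
which vanishes at $\bm v=\bth^*$; likewise the $m$-th row of $\nabla^2\mathcal L^{(m)}(\bm v)$ receives nothing from the data block (whose edges all avoid $m$), so $\bm e_m^\top\nabla^2\mathcal L^{(m)}(\bm v)=p\sum_{j\neq m}c_{mj}(\bm v)(\bm e_m-\bm e_j)^\top$ with $c_{mj}$ a second derivative of $-\log F$. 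Write $b:=\sqrt{\log n/(np)}+\lambda\log n/(np)$. Because~\eqref{eq: inductive hypothesis 4} and~\eqref{eq: inductive hypothesis 2} give $\|\bth^{(m),t}-\bth^*\|_\infty\le(C_3+C_4)b=o(1)$, the segment from $\bth^*$ to $\bth^{(m),t}$ stays in $\{\|\bm v-\bth^*\|_\infty\le1\}$, a mean-value expansion of $\bm e_m^\top\nabla\mathcal L^{(m)}$ along it is legitimate, and every argument of $-\log F$ it encounters lies in $[-4,4]$, so (A2) yields averaged coefficients $\bar c_{mj}\in(1/\kappa_2,\kappa_2)$. With $D_m:=p\sum_{j\neq m}\bar c_{mj}$, the $m$-th coordinate of~\eqref{eq: leave-one-out gradient descent} reads
\[
\theta^{(m),t+1}_m-\theta^*_m=(1-\eta\gamma-\eta D_m)(\theta^{(m),t}_m-\theta^*_m)+\eta p\sum_{j\neq m}\bar c_{mj}(\theta^{(m),t}_j-\theta^*_j)-\eta\gamma\,\theta^*_m-\eta\, w_m.
\]

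Next I would bound the four terms on $\mathcal A_0\cap\mathcal A_1\cap\mathcal A_2$. Under $p\gtrsim\log n/n$ (so $\gamma=c_0\sqrt{np\log n}\le\kappa_2 np$ for large enough constants) one has $\eta pn\asymp\kappa_2^{-1}$ and $D_m\ge p(n-1)/\kappa_2$, hence $1-\eta\gamma-\eta D_m\in(0,1-c)$ for an absolute constant $c=c(\kappa_2)\in(0,1)$, a genuine contraction. The cross term is treated by Cauchy--Schwarz and splitting through $\widetilde\bth^t$:
\[
\Bigl|\eta p\sum_{j\neq m}\bar c_{mj}(\theta^{(m),t}_j-\theta^*_j)\Bigr|\le\eta p\kappa_2\sqrt n\bigl(\|\bth^{(m),t}-\widetilde\bth^t\|_2+\|\widetilde\bth^t-\bth^*\|_2\bigr)\lesssim \frac{C_3}{\sqrt n}\,b+C_1\, b,
\]
using~\eqref{eq: inductive hypothesis 4} on the first summand and the $\ell_2$ hypothesis~\eqref{eq: inductive hypothesis 1} — which is exactly $\|\widetilde\bth^t-\bth^*\|_2< C_1\sqrt n\, b$ — on the second, together with $\eta pn\kappa_2\le 1/3$. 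Finally $|\eta\gamma\theta^*_m|\le\eta\gamma\lesssim\sqrt{\log n/(np)}\le b$ and, on $\mathcal A_2$, $\eta|w_m|\le 9\eta\lambda\log n\lesssim\lambda\log n/(np)\le b$. Taking the maximum over $m$ gives $\max_m|\theta^{(m),t+1}_m-\theta^*_m|\le(1-c)\max_m|\theta^{(m),t}_m-\theta^*_m|+C_5\, b$ with $C_5$ depending only on $C_1,\kappa_1,\kappa_2,c_0$; feeding in~\eqref{eq: inductive hypothesis 3} the right side is at most $((1-c)C_2+C_5)b$, which is $\le C_2 b$ once $C_2\ge C_5/c$ — consistent with fixing the constants in the order $C_1,C_3,C_2,C_4$. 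Since $\Pr(\mathcal A_0\cap\mathcal A_1\cap\mathcal A_2)\ge 1-O(n^{-8})$ by Lemmas~\ref{lm: gradient norm tail bound} and~\ref{lm: Laplace tail bound} and Lemma~1 in~\cite{chen2019spectral}, this proves~\eqref{eq: inductive step 3}.

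The delicate point is to keep the contraction while controlling the cross term without spoiling the constant bookkeeping. A coordinatewise bound $|\theta^{(m),t}_j-\theta^*_j|\le(C_3+C_4)b$ would leave a coefficient $\eta p\kappa_2(n-1)\asymp 1/3$ multiplying $C_4 b$; since $C_4\asymp C_2+C_3$ in the $\ell_\infty$ step, closing the induction would then demand a contraction gap $c>1/3$, which fails badly for, e.g., the logistic link (there $\kappa_2$ is of order tens, so $c\asymp\kappa_2^{-2}$ is tiny). Routing the cross term through the already-established $\ell_2$ bound~\eqref{eq: inductive hypothesis 1}, whose constant $C_1$ is pinned down before $C_2$, is what makes the recursion close; and the $\ell_\infty$ hypotheses~\eqref{eq: inductive hypothesis 4} and~\eqref{eq: inductive hypothesis 2} are needed precisely to confine every argument of $-\log F$ to $[-4,4]$ so that the lower bound in (A2) applies — which is why the $\ell_2$, leave-one-out and $\ell_\infty$ inductions have to run in tandem.
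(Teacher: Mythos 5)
Your proof is correct and follows essentially the same route as the paper: a coordinatewise analysis of the leave-one-out gradient step, a mean-value expansion giving a contraction factor $1-c$ (you obtain the coefficients from (A2) via the Hessian, where the paper expands $F_{mj}(\bm v)-F_{mj}(\bth^*)$ and uses (A1) plus positivity of $F'$ — a cosmetic difference), the cross term via Cauchy--Schwarz together with \eqref{eq: inductive hypothesis 1} and \eqref{eq: inductive hypothesis 4}, and the noise/regularization terms on $\mathcal A_2$. Your use of \eqref{eq: inductive hypothesis 1}, though not listed in the proposition's hypotheses, matches what the paper's own proof does, and your constant bookkeeping closes the recursion the same way.
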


[Proof of Proposition \ref{prop: inductive step 3}]
	Throughout the proof, we fix $m \in [n]$ and let $\bm v$ denote $\bth^{(m), t}$ for the brevity of notation. 
	
	By the gradient descent step \eqref{eq: leave-one-out gradient descent} and the leave-one-out objective functions \eqref{eq: leave-one-out likelihood} and \eqref{eq: leave-one-out objective},
	\begin{align}
		&\theta^{(m), t+1}_m - \theta_m = v_m - \eta\left(\nabla \widetilde{\mathcal R}^{(m)}\left(\bm v \right)\right)_m - \theta_m \notag \\
		&= v_m - \eta p \sum_{1 \leq j \leq n, j \neq m} \left(F_{mj}(\bm v) - F_{mj}(\bth^*)\right)\frac{F'_{mj}(\bm v)}{F_{mj}(\bm v)(1 - F_{mj}(\bm v))} - \eta\gamma v_m - \eta W_m - \theta_m \notag \\
		& = (1 - \eta\gamma)(v_m - \theta_m) - \eta p \sum_{1 \leq j \leq n, j \neq m} \left(F_{mj}(\bm v) - F_{mj}(\bth^*)\right)\frac{F'_{mj}(\bm v)}{F_{mj}(\bm v)(1 - F_{mj}(\bm v))} - \eta W_m - \eta\gamma\theta_m. \label{eq: inductive step 3 expansion 1}
	\end{align}
	To bound the sum term, applying the mean value theorem to $F_{mj}(\bm v) \equiv F(v_m - v_j)$ yields
	\begin{align*}
		F_{mj}(\bm v) - F_{mj}(\bth^*) = \left((\theta^*_j-v_j) + (v_m - \theta^*_m)\right)F'(b_j)
	\end{align*}
	for some $b_j$ between $v_j - v_m$ and $\theta^*_j - \theta^*_m$. Plugging back into \eqref{eq: inductive step 3 expansion 1} leads to
	\begin{align}
		&\left|\theta^{(m), t+1}_m - \theta_m\right| \notag \\
		&\leq \underbrace{\left|1 - \eta\gamma - \eta p \sum_{1 \leq j \leq n, j \neq m} F'(b_j)\frac{F'_{mj}(\bm v)}{F_{mj}(\bm v)(1 - F_{mj}(\bm v))}\right||v_m - \theta^*_m|}_{\circled{1}} \notag\\
		&\quad \quad + \underbrace{\eta p \left|\sum_{1 \leq j \leq n, j \neq m} F'(b_j)\frac{F'_{mj}(\bm v)}{F_{mj}(\bm v)(1 - F_{mj}(\bm v))}(v_j - \theta^*_j)\right|}_{\circled{2}} + \underbrace{\eta |W_m| + \eta\gamma|\theta^*_m|}_{\circled{3}}. 
		\label{eq: inductive step 3 expansion 2}
	\end{align}
	It remains to bound $\circled{1}, \circled{2}$ and $\circled{3}$.
	
	For $\circled{1}$ in \eqref{eq: inductive step 3 expansion 2}, by assumptions (A1) and (A2), as long as the constants satisfy $\kappa_2 > \kappa_1^2/12$, 
	\begin{align*}
		&1 - \eta\gamma - \eta p\sum_{1 \leq j \leq n, j \neq m} F'(b_j)\frac{F'_{mj}(\bm v)}{F_{mj}(\bm v)(1 - F_{mj}(\bm v))} \geq 1 - \eta(\gamma + \kappa_1^2 np/4) = \frac{(3\kappa_2 - \kappa_1^2/4)np}{\gamma + np} > 0.\\
		& 1 - \eta\gamma - \eta p\sum_{1 \leq j \leq n, j \neq m} F'(b_j)\frac{F'_{mj}(\bm v)}{F_{mj}(\bm v)(1 - F_{mj}(\bm v))} \leq 1 - \eta p \left(\min_{j \in [n]} F'(b_j)\right)\sum_{1 \leq j \leq n, j \neq m}4F'_{mj}(\bm v).
	\end{align*}
	
	Recall that each $b_j$ is between $v_j-v_m$ and $\theta^*_j - \theta^*_m$. Since $\|\bth^*\|_\infty \leq 1$ and $\bm v \equiv \bth^{(m), t}$, by inductive hypotheses \eqref{eq: inductive hypothesis 4} and \eqref{eq: inductive hypothesis 2} we have
	\begin{align*}
		\max_j |b_j| &\leq \max_j \max(|v_m - v_j|, |\theta^*_m - \theta^*_j|) \leq 2\|\bth^*\|_\infty + 2\|\bm v  - \bth^*\|_\infty \leq 2 + 2\|\bm v  - \bth^*\|_\infty \\
		&\leq 2 + 2\max_{1 \leq m \leq n}\|\bth^{(m), t} - \widetilde\bth^t\|_2 + 2\|\widetilde\bth^t - \bth^*\|_\infty \leq 2 + 2(C_3 + C_4)\left(\sqrt{\frac{\log n}{np}} + \frac{\lambda\log n}{np}\right) \lesssim 1,
	\end{align*}
	where the last inequality holds by the assumption that $p \geq c_1 \lambda \log n/n$ and $\lambda \geq c_2$ for sufficiently large constants $c_1, c_2 \geq 0$. By assumption (A0) $F'(x) > 0$ for every $x$; as the $b_j$'s are bounded, $\min_{j \in [n]} F'(b_j)$ is must be bounded away from $0$. Since $F'_{mj}(\bm v) \equiv F'(v_m - v_j)$, a similar argument using the inductive hypotheses shows that $\sum_{1 \leq j \leq n, j \neq m}4F'_{mj}(\bm v) \gtrsim n$. 
	
	Therefore, by the derivations so far, we know there are some $c_5 > c_6 > 0$ such that
	\begin{align}
		\left|1 - \eta\gamma - \eta p\sum_{1 \leq j \leq n, j \neq m} F'(b_j)\frac{F'_{mj}(\bm v)}{F_{mj}(\bm v)(1 - F_{mj}(\bm v))}\right| \leq (1 - c_5 \eta p n) \leq (1-c_6).\label{eq: inductive step 3 expansion 2.1}
	\end{align}
	The last inequality is true by $\gamma \asymp \sqrt{np \log n} \lesssim np$ and $\eta = (\gamma + 3\kappa_2 np)^{-1} \asymp (np)^{-1}$.
	
	For \circled{2} in \eqref{eq: inductive step 1 expansion 2}, the Cauchy-Schwarz inequality and inductive hypotheses \eqref{eq: inductive hypothesis 1}, \eqref{eq: inductive hypothesis 4} imply
	\begin{align}
		&\left|\sum_{1 \leq j \leq n, j \neq m} F'(b_j)\frac{F'_{mj}(\bm v)}{F_{mj}(\bm v)(1 - F_{mj}(\bm v))}(v_j - \theta^*_j)\right| \notag \\
		&\quad \quad \leq \frac{\kappa_1^2 \sqrt{n}}{4}\|\bm v - \bth^*\|_2 \leq \frac{(C_1 + C_3)\kappa_1^2\sqrt{n}}{4} \left(\sqrt{\frac{\log n}{p}} + \frac{\lambda\log n}{\sqrt{n}p}\right). \label{eq: inductive step 3 expansion 2.2}
	\end{align}
	
	For \circled{3} in \eqref{eq: inductive step 1 expansion 2}, under event $\mathcal A_2$ which occurs with probability at least $1 - O(n^{-8})$ by Lemma \ref{lm: Laplace tail bound}, we have
	\begin{align}
		\eta |W_m| + \eta\gamma|\theta^*_m| \leq \eta\|\bm w\|_\infty + \eta\gamma\|\bth^*\|_\infty \leq \frac{9\lambda\log n}{3\kappa_2np} + \frac{c_0\sqrt{np\log n}}{3\kappa_2np}. \label{eq: inductive step 3 expansion 2.3}
	\end{align}
	Finally, combining the individual terms \eqref{eq: inductive step 3 expansion 2.1}, \eqref{eq: inductive step 3 expansion 2.2} and \eqref{eq: inductive step 3 expansion 2.3} with \eqref{eq: inductive step 3 expansion 2} leads to
	\begin{align*}
		&\left|\theta^{(m), t+1}_m - \theta^*_m\right| \\
		&\leq (1 - c_6)|v_m - \theta^*_m| + \eta p \frac{(C_1 + C_3)\kappa^2_1\sqrt{n}}{4} \left(\sqrt{\frac{\log n}{p}} + \frac{\lambda\log n}{\sqrt{n}p}\right) + \frac{3\lambda\log n}{\kappa_2 np} + \frac{c_0\sqrt{np\log n}}{3\kappa_2 np} \\
		& \leq (1 - c_6)C_2\left(\sqrt{\frac{\log n}{np}} + \frac{\lambda\log n}{np}\right) + \frac{(C_1 + C_3)\kappa_1^2}{4}\left(\sqrt{\frac{\log n}{np}} + \frac{\lambda\log n}{np}\right) + \frac{3\lambda\log n}{\kappa_2 np} + \frac{c_0}{3\kappa_2}\sqrt{\frac{\log n}{np}} \\
		& \leq C_2\left(\sqrt{\frac{\log n}{np}} + \frac{\lambda\log n}{np}\right).
	\end{align*}
	The last inequality is true as long as $C_2$ is much larger than $\max(c_0/3\kappa_2, 3/\kappa_2, (C_1 + C_3)\kappa_1^2)$.

\begin{proposition}\label{prop: inductive step 4}
	Suppose \eqref{eq: inductive hypothesis 4} and \eqref{eq: inductive hypothesis 2} are true for sufficiently large $C_3, C_4 > 0$ and $\eta = \frac{1}{\gamma + 3\kappa_2 np}$, then it holds with probability at least $1 - O(n^{-8})$ that
	\begin{align}
		\max_{1 \leq m \leq n}\|\bth^{(m), t+1} - \widetilde\bth^{t+1}\|_2 < C_3\left(\sqrt{\frac{\log n}{np}} + \frac{\lambda\log n}{np}\right). \label{eq: inductive step 4}
	\end{align}
\end{proposition}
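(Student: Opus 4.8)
The plan is to repeat the ``difference of two gradient-descent trajectories'' analysis of Proposition \ref{prop: inductive step 1}, this time comparing the main iterate $\widetilde\bth^{t}$ from \eqref{eq: perturbed gradient descent} with the leave-one-out iterate $\bth^{(m),t}$ from \eqref{eq: leave-one-out gradient descent}, working throughout on $\mathcal A_0\cap\mathcal A_1\cap\mathcal A_2$. Subtracting the two updates, the Laplace term $\bm w$ and the linear part of the ridge gradient are common to both trajectories and cancel, so
\begin{align*}
	\bth^{(m),t+1}-\widetilde\bth^{t+1}=(1-\eta\gamma)\bigl(\bth^{(m),t}-\widetilde\bth^{t}\bigr)-\eta\bigl(\nabla\mathcal L^{(m)}(\bth^{(m),t})-\nabla\mathcal L(\widetilde\bth^{t})\bigr).
\end{align*}
Writing $\nabla\mathcal L^{(m)}(\bth^{(m),t})-\nabla\mathcal L(\widetilde\bth^{t})=\bigl[\nabla\mathcal L^{(m)}(\bth^{(m),t})-\nabla\mathcal L^{(m)}(\widetilde\bth^{t})\bigr]+\bm\xi^{(m),t}$ with $\bm\xi^{(m),t}:=\nabla\mathcal L^{(m)}(\widetilde\bth^{t})-\nabla\mathcal L(\widetilde\bth^{t})$, and applying the mean value theorem to the first bracket as in the proof of Proposition \ref{prop: inductive step 1}, the recursion becomes
\begin{align*}
	\bth^{(m),t+1}-\widetilde\bth^{t+1}=\bigl((1-\eta\gamma)\bm I-\eta\bm H^{(m)}\bigr)\bigl(\bth^{(m),t}-\widetilde\bth^{t}\bigr)-\eta\,\bm\xi^{(m),t},
\end{align*}
where $\bm H^{(m)}=\int_0^1\nabla^2\mathcal L^{(m)}\bigl(\widetilde\bth^{t}+\tau(\bth^{(m),t}-\widetilde\bth^{t})\bigr)\,\d\tau$ is symmetric, positive semidefinite and annihilates $\bm 1$, and $\bm\xi^{(m),t}$ collects all comparisons incident to item $m$.

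Next I would observe that $\bth^{(m),t}-\widetilde\bth^{t}\perp\bm 1$ for every $t$: both sequences start at $\bth^{*}$ with $\bm 1^\top\bth^{*}=0$, and since every term of $\nabla\mathcal L$ and $\nabla\mathcal L^{(m)}$ is a multiple of some $\bm e_a-\bm e_b$ (see \eqref{eq: likelihood gradient} and \eqref{eq: leave-one-out likelihood}), the components of the two iterates along $\bm 1$ obey the same scalar recursion and hence coincide. On $\bm 1^{\perp}$, a leave-one-out analogue of Lemma \ref{lm: spectrum of likelihood Hessian} holds --- the $p\sum_{j\ne m}$ star term in \eqref{eq: leave-one-out likelihood} controls the $\bm e_m$ direction while the likelihood Hessian restricted to the subgraph on $[n]\setminus\{m\}$ controls the rest, so that $np/(2\kappa_2)\le\lambda_2(\nabla^2\mathcal L^{(m)}(\bm v))\le\lambda_{\max}(\nabla^2\mathcal L^{(m)}(\bm v))\le 3\kappa_2 np$ on $\mathcal A_0$ for all $\bm v$ near $\bth^{*}$ --- and with $\eta=(\gamma+3\kappa_2 np)^{-1}$ this makes $(1-\eta\gamma)\bm I-\eta\bm H^{(m)}$ contract $\bm 1^{\perp}$ by a factor $1-c_{*}\eta np$ for an absolute $c_{*}>0$. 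Iterating from the zero initial discrepancy and summing the geometric series reduces everything to a bound on the source term:
\begin{align*}
	\max_{1\le m\le n}\bigl\|\bth^{(m),t+1}-\widetilde\bth^{t+1}\bigr\|_2\le\frac{1}{c_{*}np}\max_{1\le m\le n}\;\max_{0\le s\le t}\bigl\|\bm\xi^{(m),s}\bigr\|_2,
\end{align*}
so it suffices to show $\|\bm\xi^{(m),t}\|_2\lesssim\sqrt{np\log n}+\lambda\log n$ with probability $1-O(n^{-8})$ after a union bound over $m$.

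The vector $\bm\xi^{(m),t}$ is supported on coordinate $m$ and the neighbours of $m$, with entries assembled from $g_j:=\bigl[(p-\1\{(m,j)\in\mathcal G\})(F_{mj}(\widetilde\bth^{t})-F_{mj}(\bth^{*}))+\1\{(m,j)\in\mathcal G\}(y_{mj}-F_{mj}(\bth^{*}))\bigr]\frac{F'_{mj}(\widetilde\bth^{t})}{F_{mj}(\widetilde\bth^{t})(1-F_{mj}(\widetilde\bth^{t}))}$, so $\|\bm\xi^{(m),t}\|_2\le\bigl|\sum_j g_j\bigr|+(\sum_j g_j^2)^{1/2}$. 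Writing $u:=\widetilde\bth^{t}-\bth^{*}$, I would split $g_j$ into a ``deterministic'' part carrying $(p-\1\{(m,j)\in\mathcal G\})(F_{mj}(\widetilde\bth^{t})-F_{mj}(\bth^{*}))$ and a ``mean-zero'' part carrying $\1\{(m,j)\in\mathcal G\}(y_{mj}-F_{mj}(\bth^{*}))$. For the deterministic part I would bound $|F_{mj}(\widetilde\bth^{t})-F_{mj}(\bth^{*})|\lesssim|u_m|+|u_j|$ by the mean value theorem and use \eqref{eq: inductive hypothesis 1}, \eqref{eq: inductive hypothesis 2}, together with Lemma \ref{lm: elementwise sum of perturbed gradient descent} to control the cross term $\bm 1^\top(\widetilde\bth^{t}-\bth^{*})$ arising in $\sum_{j}(u_m-u_j)^2$; this yields a contribution $\lesssim np\,\|\widetilde\bth^{t}-\bth^{*}\|_\infty\lesssim\sqrt{np\log n}+\lambda\log n$. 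For the mean-zero part I would decouple by replacing the ratio factor $\tfrac{F'_{mj}(\widetilde\bth^{t})}{F_{mj}(\widetilde\bth^{t})(1-F_{mj}(\widetilde\bth^{t}))}$ with its value at $\bth^{(m),t}$, which --- crucially --- is independent of both the outcomes and the graph edges incident to $m$ (see \eqref{eq: leave-one-out likelihood}); Bernstein's inequality then controls $\sum_j\1\{(m,j)\in\mathcal G\}(y_{mj}-F_{mj}(\bth^{*}))\cdot(\text{ratio at }\bth^{(m),t})$ by $\lesssim\kappa_1\sqrt{np\log n}$ with probability $1-O(n^{-10})$, while the decoupling remainder is of the form $\sum_j|y_{mj}-F_{mj}(\bth^{*})|\cdot\bigl|(\text{ratio at }\widetilde\bth^{t})-(\text{ratio at }\bth^{(m),t})\bigr|\lesssim d_{\max}(\mathcal G)\,|\widetilde\bth^{t}_m-\bth^{(m),t}_m|+\sqrt{np}\,\|\widetilde\bth^{t}-\bth^{(m),t}\|_2$, which \eqref{eq: inductive hypothesis 2}, \eqref{eq: inductive hypothesis 3} and \eqref{eq: inductive hypothesis 4} render $\lesssim\sqrt{np\log n}+\lambda\log n$. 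Collecting the pieces gives $\|\bm\xi^{(m),t}\|_2\lesssim\sqrt{np\log n}+\lambda\log n$, hence $\max_m\|\bth^{(m),t+1}-\widetilde\bth^{t+1}\|_2\lesssim\sqrt{\log n/(np)}+\lambda\log n/(np)$; choosing $C_3$ large relative to $C_1,C_2,C_4,c_{*}$ and the absolute constants, and taking a union bound over $m\in[n]$ and the $O(n^3)$ iterations, yields \eqref{eq: inductive step 4}.

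The main obstacle is the bound on $\bm\xi^{(m),t}$, and within it the decoupling of the mean-zero part: each time $\widetilde\bth^{t}$ is replaced by the comparison-$m$-independent iterate $\bth^{(m),t}$, the star-type structure incident to $m$ --- whose operator norm is of order $np$ --- multiplies a discrepancy, and one must verify that every such discrepancy carries the extra factor $\sqrt{\log n/(np)}$ supplied by \eqref{eq: inductive hypothesis 3}--\eqref{eq: inductive hypothesis 4} (rather than being merely $O(1)$), so that all cross terms stay below the target order $\sqrt{np\log n}+\lambda\log n$ uniformly over the admissible range of $p$ and $\lambda$. The leave-one-out spectral estimate for $\nabla^2\mathcal L^{(m)}$ is a minor auxiliary point, proved exactly as Lemma \ref{lm: spectrum of likelihood Hessian} once the $p\sum_{j\ne m}$ star term is accounted for.
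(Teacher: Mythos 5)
Your setup has some correct and even elegant elements (in particular, the observation that both trajectories are driven by the same noise $\bm w$ and that $\bm 1^\top\nabla\mathcal L=\bm 1^\top\nabla\mathcal L^{(m)}=0$, so $\bth^{(m),t}-\widetilde\bth^{t}\perp\bm 1$ exactly, which removes the $\bm 1$-direction bookkeeping the paper does via Lemma \ref{lm: elementwise sum of perturbed gradient descent}). But the core of your argument has a genuine gap, and it sits exactly at the point you flag as ``the main obstacle.'' You split the gradient difference as $\bigl[\nabla\mathcal L^{(m)}(\bth^{(m),t})-\nabla\mathcal L^{(m)}(\widetilde\bth^{t})\bigr]+\bm\xi^{(m),t}$ with the source term $\bm\xi^{(m),t}=\nabla\mathcal L^{(m)}(\widetilde\bth^{t})-\nabla\mathcal L(\widetilde\bth^{t})$ evaluated at $\widetilde\bth^{t}$, which is \emph{not} independent of the comparisons incident to item $m$; you then must decouple by moving the ratio factor to $\bth^{(m),t}$, and the resulting remainder is of size $d_{\max}(\mathcal G)\,|\widetilde\theta^{t}_m-\theta^{(m),t}_m|\asymp np\cdot C\bigl(\sqrt{\tfrac{\log n}{np}}+\tfrac{\lambda\log n}{np}\bigr)$ with $C\in\{C_3,\;C_2+C_4\}$ coming from \eqref{eq: inductive hypothesis 3}--\eqref{eq: inductive hypothesis 4} (there is no per-coordinate bound on $|\widetilde\theta^t_m-\theta^{(m),t}_m|$ better than these). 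So the remainder is \emph{not} of lower order than the target $\sqrt{np\log n}+\lambda\log n$; it is of exactly the target order with a prefactor proportional to the induction constants and to $\kappa_2$ (the Lipschitz constant of the ratio $F'/(F(1-F))$). Carrying ``the extra factor $\sqrt{\log n/(np)}$'' is not enough once it is multiplied by $d_{\max}\asymp np$.

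This kills the bootstrap. In your one-step recursion the remainder contributes $\eta\cdot c'\kappa_2\,np\,\|\bth^{(m),t}-\widetilde\bth^{t}\|_2$, while the contraction margin supplied by the leave-one-out Hessian is only $\eta\,np/(2\kappa_2)$; since $\kappa_2$ is a constant exceeding $1$ (for the BTL model $\kappa_2\ge4$), the effective coefficient of $\|\bth^{(m),t}-\widetilde\bth^{t}\|_2$ exceeds one, so neither the single-step induction nor your geometric-series reduction (which tacitly assumes $\|\bm\xi^{(m),s}\|_2$ is bounded independently of the discrepancy) is valid. Equivalently, closing your version of the induction would require $C_3\gtrsim\kappa_2^2(C_2+C_4)$, while Propositions \ref{prop: inductive step 3} and \ref{prop: inductive step 2} require $C_2\gtrsim\kappa_1^2(C_1+C_3)$ and $C_4\ge C_2+C_3$; these constraints are circular and admit no choice of constants, so ``choosing $C_3$ large'' cannot rescue the argument. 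The paper avoids this entirely by a different decomposition: it writes $\bth^{(m),t+1}-\widetilde\bth^{t+1}$ as the \emph{same} full-objective gradient map applied to both iterates (contracted via the full Hessian, Lemma \ref{lm: spectrum of likelihood Hessian}) plus $\eta\bigl(\nabla\mathcal L(\bth^{(m),t})-\nabla\mathcal L^{(m)}(\bth^{(m),t})\bigr)$, i.e.\ the full-vs-leave-one-out difference evaluated at the leave-one-out iterate, which is independent of all data incident to $m$; Hoeffding/Bernstein then give \eqref{eq: inductive step 4 expansion 2.2.1} with absolute constants and no decoupling remainder. If you reorganize your proof around that source term (your orthogonality observation and your leave-one-out spectral lemma are then unnecessary), the rest of your outline goes through.
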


[Proof of Proposition \ref{prop: inductive step 4}]
	By \eqref{eq: perturbed gradient descent} and \eqref{eq: leave-one-out gradient descent} we have
	\begin{align}
		\bth^{(m), t+1} - \widetilde\bth^{t+1} &= \left(\bth^{(m), t} - \eta\nabla\widetilde{\mathcal R}^{(m)}\left(\bth^{(m), t}\right)\right) - \left(\widetilde\bth^{t} - \eta\nabla\widetilde{\mathcal R}(\widetilde\bth^{t})\right) \notag \\
		&=  \underbrace{\left(\bth^{(m), t} - \eta\nabla\widetilde{\mathcal R}\left(\bth^{(m), t}\right)\right) - \left(\widetilde\bth^{t} - \eta\nabla\widetilde{\mathcal R}(\widetilde\bth^{t})\right)}_{\circled{1}} + \underbrace{\eta\left(\nabla\widetilde{\mathcal R}\left(\bth^{(m), t}\right) - \nabla\widetilde{\mathcal R}^{(m)}\left(\bth^{(m), t}\right)\right)}_{\circled{2}}. \label{eq: inductive step 4 expansion 1}
	\end{align}
	Next we analyze the terms \circled{1} and \circled{2} separately. 
	
	For \circled{1} in \eqref{eq: inductive step 4 expansion 1}, the analysis is similar to the proof of Proposition \ref{prop: inductive step 1}. We have
	\begin{align*}
		\circled{1} &= (1-\eta\gamma)(\bth^{(m), t} - \widetilde\bth^{t}) - \eta(\nabla\mathcal L(\bth^{(m), t}) - \nabla\mathcal L(\widetilde\bth^{t})) \\
		&= (1-\eta\gamma)(\bth^{(m), t} - \widetilde\bth^{t}) - \eta\left(\int_0^1 \nabla^2 \mathcal L(\bth(\tau)) \d \tau\right)(\bth^{(m), t} - \widetilde\bth^{t}),
	\end{align*}
	where the second equality is true by the mean value theorem for vector-valued functions (\cite{lang2012real}, Chapter XIII, Theorem 4.2), and $\bth(\tau) = \widetilde\bth^{t} + \tau(\bth^{(m), t} - \widetilde\bth^{t})$. With the notation $\bm H = \int_0^1 \nabla^2 \mathcal L(\bth(\tau)) \d \tau$, the same reasoning for \eqref{eq: inductive step 1 expansion 2.2} yields
	\begin{align}
		\left\|\circled{1}\right\|_2 &\leq \|((1-\eta\gamma)\bm I - \eta\bm H) (\bth^{(m), t} - \widetilde\bth^t)\|_2 \notag \\
		&\leq \|((1-\eta\gamma)\bm I - \eta\bm H) (\bth^{(m), t} - \widetilde\bth^t - (\bm 1^\top(\bth^{(m), t} -  \widetilde\bth^t)/n)\bm 1)\|_2 \notag\\
		&\quad + (1 - \eta\gamma)(|\bm 1^\top\bth^{(m), t}| + |\bm 1^\top \widetilde\bth^t|)/\sqrt{n} \notag\\
		&\leq  \max\left(|1 - \eta\gamma-\eta\lambda_2(\bm H)|, |1- \eta\gamma - \eta\lambda_{\max}(\bm H)| \right)\|\bth^{(m), t} - \widetilde\bth^t\|_2  \notag\\
		&\quad + (1 - \eta\gamma)(|\bm 1^\top\bth^{(m), t}| + |\bm 1^\top \widetilde\bth^t|)/\sqrt{n}. \label{eq: inductive step 4 expansion 2.1}
	\end{align}
	For the $\ell_2$ norm part, it follows from the inductive hypotheses \eqref{eq: inductive hypothesis 4} and \eqref{eq: inductive hypothesis 2}  as well as the assumptions $p > c_1\lambda \log n/ n, \lambda > c_2$ that $\|\bth^{(m), t} - \bth^*\|_\infty, \|\widetilde\bth^t - \bth^*\|_\infty < 1$ for sufficiently large $n$, making both parts of Lemma \ref{lm: spectrum of likelihood Hessian} applicable. We then have
	\begin{align}
		\max\left(|1 - \eta\gamma-\eta\lambda_2(\bm H)|, |1- \eta\gamma - \eta\lambda_{\max}(\bm H)| \right)\|\bth^{(m), t} - \widetilde\bth^t\|_2 \leq (1 - (\eta/2\kappa_2) \cdot np) \|\bth^{(m), t} - \widetilde\bth^t\|_2. \label{eq: inductive step 4 expansion 2.1.1}
	\end{align}
	For the second term of \eqref{eq: inductive step 4 expansion 2.1}, we have a bound for $|\bm 1^\top \widetilde\bth^t|$ by Lemma \ref{lm: elementwise sum of perturbed gradient descent}. Since $\bm 1^\top \nabla \mathcal L^{(m)}(\bm v) = 0 $ for every $\bm v$ and $m$ by \eqref{eq: leave-one-out likelihood}, an identical argument as the proof of Lemma \ref{lm: elementwise sum of perturbed gradient descent} shows that, under the event $\mathcal A_2$,
	\begin{align}
		|\bm 1^\top\bth^{(m), t}| \leq 8\lambda\sqrt{n\log n}/\gamma, \forall t \in \mathbb N, 1 \leq m \leq n. \label{eq: inductive step 4 expansion 2.1.2}
	\end{align}
	Finally, we combine this inequality with \eqref{eq: inductive step 4 expansion 2.1}, \eqref{eq: inductive step 4 expansion 2.1.1} and the inductive hypothesis \eqref{eq: inductive hypothesis 4} to obtain
	\begin{align}
		\|\circled{1}\|_2 \leq (1- c_7\eta \cdot np)C_3\left(\sqrt{\frac{\log n}{np}} + \frac{\lambda\log n}{np}\right) + \frac{16\lambda}{(np)^{1/2}}. \label{eq: inductive step 4 expansion 2}
	\end{align}
	
	Next, for \circled{2} in \eqref{eq: inductive step 4 expansion 1}, by the definitions \eqref{eq: perturbed objective} and \eqref{eq: leave-one-out objective} we have
	\begin{align}\label{eq: inductive step 4 expansion 2.2}
		\circled{2} = \eta\left(\nabla\widetilde{\mathcal R}\left(\bth^{(m), t}\right) - \nabla\widetilde{\mathcal R}^{(m)}\left(\bth^{(m), t}\right)\right) = \eta\left(\nabla{\mathcal L}\left(\bth^{(m), t}\right) - \nabla{\mathcal L}^{(m)}\left(\bth^{(m), t}\right)\right),
	\end{align}
	and we prove in Section \ref{sec: proof of eq: inductive step 4 expansion 2.2.1} that, with probability at least $1 - O(n^{-8})$,
	\begin{align}\label{eq: inductive step 4 expansion 2.2.1}
		\left\|\eta\left(\nabla{\mathcal L}\left(\bth^{(m), t}\right) - \nabla{\mathcal L}^{(m)}\left(\bth^{(m), t}\right)\right)\right\|_2 \lesssim \eta\kappa_1^2\left(\sqrt{np\log n} + \sqrt{np\log n}\|\bth^{(m), t} - \bth^*\|_\infty\right).
	\end{align}
	Plugging \eqref{eq: inductive step 4 expansion 2.2.1}, \eqref{eq: inductive step 4 expansion 2.2} and \eqref{eq: inductive step 4 expansion 2} back into \eqref{eq: inductive step 4 expansion 1} yields
	\begin{align*}
		&\|\bth^{(m), t+1} - \widetilde\bth^{t+1}\|_2 \\
		&\leq (1- (\eta/2\kappa_2) \cdot np)C_3\left(\sqrt{\frac{\log n}{np}} + \frac{\lambda\log n}{np}\right) + \frac{16\lambda}{(np)^{1/2}} + C\eta\left(\sqrt{np\log n} + \sqrt{np\log n}\|\bth^{(m), t} - \bth^*\|_\infty\right) \\
		& \leq C_3\left(\sqrt{\frac{\log n}{np}} + \frac{\lambda\log n}{np}\right),
	\end{align*}
	as long as $C_3$ is chosen to be a sufficiently large constant. To see the last inequality, recall that $np \gtrsim \log n$, $\eta \leq (np)^{-1}$, $\lambda \lesssim \sqrt{\log n}$, and $\|\bth^{(m), t} - \bth^*\|_\infty \leq (C_3 + C_4)\left(\sqrt{\frac{\log n}{np}} + \frac{\lambda\log n}{np}\right) \lesssim 1$ by inductive hypotheses \eqref{eq: inductive hypothesis 4} and \eqref{eq: inductive hypothesis 2}.

\begin{proposition}\label{prop: inductive step 2}
	Suppose \eqref{eq: inductive hypothesis 1}, \eqref{eq: inductive hypothesis 3}, \eqref{eq: inductive hypothesis 4} and \eqref{eq: inductive hypothesis 2} are true for sufficiently large constants $C_1, C_2, C_3, C_4 > 0$ and $\eta = (\gamma + 3\kappa_2 np)^{-1}$, then it holds with probability at least $1 - O(n^{-8})$ that
	\begin{align}\label{eq: inductive step 2}
		\|\widetilde\bth^{t+1} - \bth^*\|_\infty < C_4\left(\sqrt{\frac{\log n}{np}} + \frac{\lambda\log n}{np}\right).
	\end{align}
\end{proposition}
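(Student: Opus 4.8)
The plan is to reduce Proposition~\ref{prop: inductive step 2} to the two preceding inductive steps, Propositions~\ref{prop: inductive step 3} and~\ref{prop: inductive step 4}, via the triangle inequality across the leave-one-out iterates $\bth^{(m), t+1}$. Since $\|\widetilde\bth^{t+1} - \bth^*\|_\infty = \max_{1 \le m \le n} |\widetilde\theta^{t+1}_m - \theta^*_m|$, it suffices to control each coordinate $m \in [n]$ separately. For a fixed $m$ I would write
\[
|\widetilde\theta^{t+1}_m - \theta^*_m| \le |\widetilde\theta^{t+1}_m - \theta^{(m), t+1}_m| + |\theta^{(m), t+1}_m - \theta^*_m|,
\]
and then bound the first term by $|\widetilde\theta^{t+1}_m - \theta^{(m), t+1}_m| \le \|\widetilde\bth^{t+1} - \bth^{(m), t+1}\|_2$, since any single coordinate is dominated by the $\ell_2$ norm of the whole vector. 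This is precisely the device that makes the leave-one-out construction pay off here: $\theta^{(m), t+1}_m$ inherits only the smaller per-coordinate fluctuation because the $m$-th item's comparisons have been replaced by their expectations in $\mathcal L^{(m)}$, so it is close to $\theta^*_m$ at the sharper rate, while the full iterate $\widetilde\bth^{t+1}$ stays close to $\bth^{(m), t+1}$ in $\ell_2$.

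Taking the maximum over $m$ on both sides and invoking the step-$(t+1)$ conclusions \eqref{eq: inductive step 4} and \eqref{eq: inductive step 3} — which, given the four inductive hypotheses \eqref{eq: inductive hypothesis 1}--\eqref{eq: inductive hypothesis 2} at step $t$, each hold with probability at least $1 - O(n^{-8})$ — I obtain
\[
\|\widetilde\bth^{t+1} - \bth^*\|_\infty \le \max_{1 \le m \le n}\|\widetilde\bth^{t+1} - \bth^{(m), t+1}\|_2 + \max_{1 \le m \le n}|\theta^{(m), t+1}_m - \theta^*_m|,
\]
and the right-hand side is at most $(C_2 + C_3)\bigl(\sqrt{\log n/(np)} + \lambda\log n/(np)\bigr)$. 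Choosing the constant $C_4 > C_2 + C_3$ then yields the claimed bound \eqref{eq: inductive step 2}, with the failure probability absorbed into a union bound over the events already invoked in Propositions~\ref{prop: inductive step 3} and~\ref{prop: inductive step 4}.

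The argument at this stage is essentially bookkeeping once Propositions~\ref{prop: inductive step 3} and~\ref{prop: inductive step 4} are in hand, so there is no genuine analytic obstacle here; the one point requiring care is the ordering within a single pass of the induction. I would make explicit that Propositions~\ref{prop: inductive step 1}, \ref{prop: inductive step 3}, and~\ref{prop: inductive step 4} are proved first, each relying only on the step-$t$ hypotheses, so that their step-$(t+1)$ conclusions are legitimately available as inputs to Proposition~\ref{prop: inductive step 2}. The substantive work lives upstream — in the leave-one-out decoupling estimate \eqref{eq: inductive step 4 expansion 2.2.1} used in Proposition~\ref{prop: inductive step 4}, and in the contraction-plus-noise bound for the single coordinate $\theta^{(m),t+1}_m$ in Proposition~\ref{prop: inductive step 3} — not in the present step.
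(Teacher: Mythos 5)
Your proposal is correct and matches the paper's own argument: the paper proves this step by exactly the same triangle inequality through the leave-one-out iterate $\bth^{(m),t+1}$, bounding the coordinate difference by the $\ell_2$ norm and invoking Propositions~\ref{prop: inductive step 3} and~\ref{prop: inductive step 4} to conclude with $C_4 \geq C_2 + C_3$.
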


[Proof of Proposition \ref{prop: inductive step 2}]
	When the inductive hypotheses are true, by Propositions \ref{prop: inductive step 3} and \ref{prop: inductive step 4}, it holds with probability at least $1 - O(n^{-8})$ that
	\begin{align*}
		|\widetilde\theta^{t+1}_m - \theta^*_m| &\leq |\widetilde\theta^{t+1}_m - \theta^{(m), t+1}_m| + |\theta^{(m), t+1}_m - \theta^*_m|\\
		&\leq \|\widetilde\bth^{t+1} - \bth^{(m), t+1}\|_2 + |\theta^{(m), t+1}_m - \theta^*_m| \leq (C_2 + C_3) \left(\sqrt{\frac{\log n}{np}} + \frac{\lambda\log n}{np}\right). 
	\end{align*}
	The proof is complete by choosing $C_4 \geq C_2 + C_3$.

\subsubsection{Proof of auxiliary lemmas in Section \ref{sec: proof of induction in ranking MLE upper bound}} \label{sec: proof of ranking upper bound lemmas}
[Proof of Lemma \ref{lm: gradient norm tail bound}]
	It suffices to upper bound $\Pro(\mathcal A_1^c \cap \mathcal A_0)$. By \eqref{eq: likelihood gradient} and assumption (A1),
	\begin{align*}
		\|\nabla \mathcal L(\bth^*)\|_2 \leq \kappa_1\sqrt{n} \max_{i \in [n]} \left|\sum_{(i, j) \in \mathcal G} F_{ij}(\bth^*) - y_{ij}\right|.
	\end{align*}
	Since $\E y_{ij} = F_{ij}(\bth^*)$ and $|F_{ij}(\bth^*) - y_{ij}| \leq 1$, by Hoeffding's inequality we have, for every $t > 0$,
	\begin{align*}
		\Pro\left(\left\{\max_{i \in [n]} \left|\sum_{(i, j) \in \mathcal G} F_{ij}(\bth^*) - y_{ij}\right| > t \right\} \cap \mathcal A_0\right) \leq n\exp\left(-\frac{4t^2}{3np}\right).
	\end{align*}
	Setting $t = 3\sqrt{np\log n}$ completes the proof.

[Proof of Lemma \ref{lm: Laplace tail bound}]
	The Laplace distribution with scale $\lambda$ is sub-exponential with parameters $(2\lambda, 4)$. Bernstein's inequality implies that, for $0 < t < n\lambda^2$, 
	\begin{align*}
		\Pro(\bm 1^\top \bm w > t) \leq \exp(-t^2/8n\lambda).
	\end{align*}
	Since $\lambda \gtrsim 1$, we choose $t = 8\lambda\sqrt{n\log n}$ and obtain that $\Pro(\bm 1^\top \bm w > 8\lambda\sqrt{n\log n}) \leq n^{-8}$.
	
	For the maximum, by the union bound we have
	\begin{align*}
		\Pro(\|\bm w\|_\infty > t) \leq n\Pro(|W_1| > t) \leq n\exp(-t/\lambda).
	\end{align*}
	Setting $t = 9\lambda\log n$ completes the proof.

[Proof of Lemma \ref{lm: spectrum of likelihood Hessian}]
	Let $L_{\mathcal{G}}$ denote the Laplacian matrix of comparison graph $\mathcal G$. By the Hessian \eqref{eq: likelihood hessian} and assumptions (A0) and (A2), when $\mathcal A_0$ occurs we have 
	\begin{align*}
		\lambda_{\max} (\nabla^2 \mathcal L(\bm v)) \leq \kappa_2\lambda_{\max}(L_{\mathcal G}) \leq 3\kappa_2 np.
	\end{align*}
	For the lower bound, $\|\bm v - \bth^*\|_\infty \leq 1$ and $\|\bth^*\|_\infty \leq 1$ together imply $\max_{i,j}|v_i - v_j| \leq 4$. By Lemma 10 in \cite{chen2019spectral}, with probability at least $1 - O(n^{-10})$ we have $\lambda_2(L_{\mathcal G}) \geq np/2$ and therefore $\lambda_{2} (\nabla^2 \mathcal L(\bm v)) \geq np/2\kappa_2$.

[Proof of Lemma \ref{lm: elementwise sum of perturbed gradient descent}]
	By \eqref{eq: perturbed gradient descent} we have
	\begin{align*}
		\bm 1^\top\widetilde\bth^t = \bm 1^\top\widetilde\bth^{t-1} - \eta\bm 1^\top\nabla\mathcal L(\widetilde\bth^{t-1}) - \eta\gamma\bm 1^\top\widetilde\bth^{t-1} - \eta\bm1^\top\bm w
	\end{align*}
	Observe from the likelihood gradient \eqref{eq: likelihood gradient} that $\bm 1^\top \nabla\mathcal L(\bm v) = 0$ for every $\bm v$, and therefore
	\begin{align*}
		|\bm 1^\top\widetilde\bth^t| \leq (1-\eta\gamma)|\bm 1^\top\widetilde\bth^{t-1}| + \eta|\bm 1^\top\bm w|.
	\end{align*}
	Iterating over $t$ and using the initial condition $\bm 1^\top \widetilde\bth^0 = \bm 1^\top\bth^* = 0$ yield
	\begin{align*}
		|\bm 1^\top\widetilde\bth^t| \leq \left(\sum_{k=0}^t (1-\eta\gamma)^k \right) \eta |\bm 1^\top\bm w| \leq |\bm 1^\top\bm w|/\gamma \leq 8\lambda\sqrt{n\log n}/\gamma.
	\end{align*}
	The last inequality is true under the event $\mathcal A_2$.

\subsubsection{Proof of Equation \eqref{eq: inductive step 4 expansion 2.2.1} in Section \ref{sec: proof of induction in ranking MLE upper bound}} \label{sec: proof of eq: inductive step 4 expansion 2.2.1}
Throughout the proof we abbreviate $\bth^{(m), t}$ as $\bm v$ and assume that the event $\mathcal A_0$ occurs. Let
\begin{align*}
	&A_j = \1_{(j, m) \in \mathcal G}(y_{mj} - F_{mj}(\bth^*))\frac{F'_{mj}(\bm v)}{F_{mj}(\bm v)(1 - F_{mj}(\bm v))}, \\
	&B_j = (p - \1_{(j, m) \in \mathcal G}) (F_{mj}(\bm v) - F_{mj}(\bth^*))\frac{F'_{mj}(\bm v)}{F_{mj}(\bm v)(1 - F_{mj}(\bm v))}.
\end{align*}
Then from \eqref{eq: likelihood gradient} and \eqref{eq: leave-one-out likelihood} we have
\begin{align*}
	\left(\nabla{\mathcal L}\left(\bth^{(m), t}\right) - \nabla{\mathcal L}^{(m)}\left(\bth^{(m), t}\right)\right)_j = \left(\nabla{\mathcal L}(\bm v ) - \nabla{\mathcal L}^{(m)}(\bm v)\right)_j = 
	\begin{cases}
		A_j + B_j &\text{ if } j \neq m, \\
		\sum_{j \neq m} A_j + B_j &\text{ if } j = m.  
	\end{cases}
\end{align*}
It follows that
\begin{align}\label{eq: expansion 1 in proof of equation eq: inductive step 4 expansion 2.2.1}
	\|\nabla{\mathcal L}(\bm v ) - \nabla{\mathcal L}^{(m)}(\bm v)\|_2^2 \leq 2\underbrace{\sum_{j\neq m} A_j^2}_{\circled{1}} + 2\underbrace{\sum_{j \neq m} B_j^2}_{\circled{2}} + \underbrace{\left(	\sum_{j \neq m} A_j + B_j\right)^2}_{\circled{3}}. 
\end{align}
To bound the terms in \eqref{eq: expansion 1 in proof of equation eq: inductive step 4 expansion 2.2.1}, we first observe that, by assumption (A1) and the mean-value theorem, 
\begin{align*}
	|F_{mj}(\bm v) - F_{mj}(\bth^*)| = |F(v_m - v_j) - F(\theta^*_m - \theta^*_j)| \leq 2\|\bm v - \bth^*\|_\infty \frac{\kappa_1}{4}.
\end{align*}
Then, under the event $\mathcal A_0$ which upper bounds the maximum degree of $\mathcal G$, we have
\begin{align*}
	&\circled{1} \leq \frac{3np}{2}\kappa_1^2, \\
	&\circled{2} \leq \sum_{(j,m) \in \mathcal G} B_j^2 + \sum_{(j,m) \not\in \mathcal G} B_j^2 \leq \frac{3np}{2}\cdot (1-p)^2 \frac{\kappa_1^2\|\bm v - \bth^*\|_\infty^2}{4} \cdot \kappa_1^2 + n \cdot p^2 \frac{\kappa_1^2\|\bm v - \bth^*\|_\infty^2}{4} \cdot \kappa_1^2 \\
	& \quad  \leq np \kappa_1^4 \|\bm v - \bth^*\|_\infty^2.
\end{align*}
To bound $\circled{3}$, by Hoeffding's and Bernstein's inequalities we have, for every $t > 0$, 
\begin{align*}
	&\Pro\left(\left|\sum_{j \neq m} A_j \right| > t\right) \leq 2\exp\left(-\frac{t^2}{np\kappa_1^2}\right),\\
	&\Pro\left(\left|\sum_{j \neq m} B_j \right| > t\right) \leq 2\exp\left(-\frac{-t^2}{2\sum_{j \neq m} \E B_j^2 + \frac{2}{3}\frac{\kappa^2_1\|\bm v - \bth^*\|_\infty}{2}t}\right).
\end{align*}
As $\E B_j^2 \leq p\frac{\kappa_1^4}{4}\|\bm v - \bth^*\|_\infty^2$ and $\sqrt{np\log n} \lesssim np$ by the assumption $p \gtrsim \log n/n$, choosing $t = C\kappa_1^2\max(1, \|\bm v  - \bth^*\|_\infty)\sqrt{np \log n}$ for a sufficiently large constant $C$ leads to
\begin{align*}
	\Pro\left( \circled{3} > 4C^2 \kappa_1^4\max(1, \|\bm v  - \bth^*\|^2_\infty)np \log n\right) \leq O(n^{-8}).
\end{align*}
The proof is complete by combining with the bounds for $\circled{1}$ and $\circled{2}$ in \eqref{eq: expansion 1 in proof of equation eq: inductive step 4 expansion 2.2.1}.

\subsection{Proof of Proposition \ref{prop: ranking attack soundness}}
\label{sec: proof of prop: ranking attack soundness}

	For upper bounding $\sum_{1 \leq i < j \leq n} \E A^{(k)}_{ij}$, suppose $\widetilde{\bm Y}_{ij}$ is an adjacent data set of $\bm Y$ obtained by replacing $Y_{ij}$ with an independent copy; we denote $\mathcal A^{(k)}(M(\widetilde{\bm Y}_{ij}), Y_{ij})$ by $\widetilde A^{(k)}_{ij}$. Since $M$ is $(\varepsilon, \delta)$-DP, for any $i, j, k$ and any $T > 0$ we have
	\begin{align*}
		\E A^{(k)}_{ij} &= \int_0^\infty \Pro((A^{(k)}_{ij})^+ > t) \d t - \int_0^\infty \Pro((A^{(k)}_{ij})^- > t) \d t \\
		&\leq \int_0^T \Pro((A^{(k)}_{ij})^+ > t) \d t - \int_0^T \Pro((A^{(k)}_{ij})^- > t) \d t+ \int_T^\infty \Pro(|A^{(k)}_{ij}| > t) \d t \\
		& \leq \int_0^T \left(e^\varepsilon \Pro((\tilde A^{(k)}_{ij})^+ > t) + \delta\right)\d t - \int_0^T \left(e^{-\varepsilon}\Pro((\tilde A^{(k)}_{ij})^- > t) - \delta\right)\d t + \int_T^\infty \Pro(| A^{(k)}_{ij}| > t) \d t \\
		& \leq (1 + 2\varepsilon)\int_0^T \Pro((\tilde A^{(k)}_{ij})^+ > t) \d t - (1-2\varepsilon)\int_0^T \Pro((\tilde A^{(k)}_{ij})^- > t)\d t + 2 \delta T + \int_T^\infty \Pro(| A^{(k)}_{ij}| > t) \d t. \\
		& \leq \E \tilde A^{(k)}_{ij} + 2\varepsilon\E|\tilde A^{(k)}_{ij}| + 2 \delta T + \int_T^\infty \Pro(|A^{(k)}_{ij}| > t) \d t + \int_T^\infty \Pro(|\tilde A^{(k)}_{ij}| > t) \d t.
	\end{align*}
	By the definition of $\Theta$, we may assume without the loss of generality that any $M$ and $\bm Y$ satisfies $\|M(\bm Y) - \bth\|_\infty \leq 2$, and therefore we have the deterministic bound $|A^{(k)}_{ij}| \leq 2$ (and similarly $|\tilde A^{(k)}_{ij}| \leq 2$). By choosing $T = 2\kappa_1$, we obtain
	\begin{align*}
		\E A^{(k)}_{ij} \leq \E \tilde A^{(k)}_{ij} + 2\varepsilon\E|\tilde A^{(k)}_{ij}| + 4 \delta\kappa_1.
	\end{align*}
	Further observe that, since $M(\widetilde{\bm Y}_{ij})$ and $Y_{ij}$ are independent by construction, 
	\begin{align*}
		\E \widetilde A^{(k)}_{ij} = \E \mathcal A^{(k)}(M(\widetilde{\bm Y}_{ij}), Y_{ij}) = 0. 
	\end{align*}
	It follows that 
	\begin{align*}
		\sum_{1 \leq i < j \leq n} \E A^{(k)}_{ij} &= \sum_{1 \leq i < k} \E A^{(k)}_{ik} + \sum_{k < j \leq n } \E  A^{(k)}_{kj} \\
		&\leq 2\varepsilon \E\left(\sum_{1 \leq i < k} |\widetilde A^{(k)}_{ik}| + \sum_{k < j \leq n } |\widetilde A^{(k)}_{kj}|\right) + 4(n-1)\delta\kappa_1.
	\end{align*}
	Since $\left|(y_{ij} - F_{ij}(\bth))\frac{F'_{ij}(\bth)}{F_{ij}(\bth)(1 - F_{ij}(\bth))}\right| \leq \kappa_1$ for any $i, j$ by assumption (A1), we have
	\begin{align*}
		&\E\left(\sum_{1 \leq i < k} |\widetilde A^{(k)}_{ik}| + \sum_{k < j \leq n } |\widetilde A^{(k)}_{kj}|\right) \\
		&\leq \kappa_1 \E\left(|M(\widetilde{\bm Y}_{ij})_k - \theta_k|\left(\sum_{1 \leq i < k} \1((i, k) \in \mathcal G) + \sum_{k < j \leq n } \1((k, j) \in \mathcal G)\right)\right) \leq \kappa_1 \E\left(|M(\widetilde{\bm Y}_{ij})_k - \theta_k| \cdot d_{\mathcal G}(k)\right),
	\end{align*}
	where $d_{\mathcal G}(k)$ refers to the degree of $k$ in the comparison graph $\mathcal G$. Since $d_{\mathcal G}(k)$ is a binomial random variable with parameters $(n-1, p)$, we have
	\begin{align*}
		\E\left(|M(\widetilde{\bm Y}_{ij})_k - \theta_k| \cdot d_{\mathcal G}(k)\right) &\leq 2np \E|M(\bm Y)_k - \theta_k| + 2n\Pro\left(d_{\mathcal G}(k) > 2np\right) \\
		&\leq 2np \E|M(\bm Y)_k - \theta_k| + 2n\exp\left(-np/3\right) \leq 2np \E|M(\bm Y)_k - \theta_k| + 2n^{-1}.
	\end{align*}
	The last two inequalities are true by the Chernoff bound and the assumption $p > 6\log n/n$.

	To summarize, we have found that
	\begin{align*}
		\sum_{1 \leq i < j \leq n} \E A^{(k)}_{ij} &\leq 4\kappa_1 np\varepsilon \E|M(\bm Y)_k - \theta_k| + 2\kappa_1 n^{-1}\varepsilon + 4(n-1)\delta\kappa_1 \\
		&\leq 4\kappa_1 np\varepsilon \E|M(\bm Y)_k - \theta_k| + 2\kappa_1 n^{-1} + 4(n-1)\delta\kappa_1.
	\end{align*}

\subsection{Proof of Proposition \ref{prop: ranking attack completeness}} 
\label{sec: proof of prop: ranking attack completeness}

	First we show that for each $\bth$, it holds that
	\begin{align}\label{eq: score attack completeness reduction}
		\sum_{1 \leq i < j \leq n}  A^{(k)}_{ij} = (M(\bm Y)_k - \theta_k) \sum_{1 \leq i < j \leq n} \frac{\partial}{\partial\theta_k} \log f_{\bth}(y_{ij}).
	\end{align}
	By the definition of $A^{(k)}_{ij}$ and the symmetries $y_{ij} = 1 - y_{ji}, F(x) = 1 - F(x)$, we have 
	\begin{align*}
		&\sum_{1 \leq i < j \leq n} A^{(k)}_{ij} = (M(\bm Y)_k - \theta_k) \sum_{(j, k) \in \mathcal G}(y_{jk} - F_{jk}(\bth))\frac{F'_{jk}(\bth)}{F_{jk}(\bth)(1 - F_{jk}(\bth))}.
	\end{align*}
	The desired equality \eqref{eq: score attack completeness reduction} is proved by observing that 
	\begin{align*}
		\frac{\partial}{\partial\bth}\log f_{\bth}(\bm Y) = \sum_{1 \leq i < j \leq n} \frac{\partial}{\partial\bth} \log f_{\bth}(y_{ij}) = \sum_{(i, j) \in \mathcal G} (F_{ij}(\bth) - y_{ij})\frac{F'_{ij}(\bth)}{F_{ij}(\bth)(1 - F_{ij}(\bth))}(\bm e_i - \bm e_j).
	\end{align*}
	
	Because $\sum_{1 \leq i < j \leq n} \frac{\partial}{\partial\theta_k} \log f_{\bth}(y_{ij}) = \frac{\partial}{\partial \theta_k} \log f_\bth (\bm Y)$ is the score statistic of $\bm Y$ with respect to parameter $\theta_k$, we have $\E_{\bm Y|\bth} \frac{\partial}{\partial \theta_k} \log f_\bth (\bm Y) = 0$. Further, by exchanging differentiation and integration we obtain
	\begin{align*}
		\E_{\bm Y|\bth} \sum_{1 \leq i < j \leq n}  A^{(k)}_{ij} = \E_{\bm Y|\bth} (M(\bm Y)_k - \theta_k) \frac{\partial}{\partial \theta_k} \log f_\bth (\bm Y) = \frac{\partial}{\partial\theta_k} \E_{\bm Y|\bth} M(\bm Y)_k.
	\end{align*}
	Let $g(\bth)$ denote $\E_{\bm Y|\bth} M(\bm Y)$, $\pi_k$ denote the marginal density of $\theta_k$ and $\bm \pi (\bth)= \prod_{k=1}^n \pi_k(\theta_k)$, we have
	\begin{align*}
		\E_\bth\left(\frac{\partial}{\partial\theta_k}g_k(\bm \theta)\right) &= \E\left(\E\left(\frac{\partial}{\partial\theta_k}g_k(\bm \theta)\Big|\theta_k\right)\right) = \E\left(-\E\left(g_k(\bm \theta)|\theta_k\right)\frac{\pi'_k(\theta_k)}{\pi_k(\theta_k)}\right) \\
		&= \E\left(-\theta_k\frac{\pi'_k(\theta_k)}{\pi_k(\theta_k)}\right) + \E\left(\left(\theta_k-\E\left(g_k(\bm \theta)|\theta_k\right)\right)\frac{\pi'_k(\theta_k)}{\pi_k(\theta_k)}\right)\\
		&\geq \E\left(-\theta_k\frac{\pi'_k(\theta_k)}{\pi_k(\theta_k)}\right) - c \E\left|\frac{\pi'_k(\theta_k)}{\pi_k(\theta_k)}\right|
	\end{align*}
	The last inequality is obtained by our assumption that $|g_k(\bth) - \theta_k| \leq \E_{\bm Y|\bth} \|M(\bm Y) - \bth\|_\infty < c$ for every $\bth \in \Theta$. Plugging in $\pi_k(\theta_k) = \1(|\theta_k| < 1)(15/16)(1-\theta_k^2)^2$ and setting, say, $c = 1/4$, we have $\sum_{1 \leq i < j \leq n} \E_\bth \E_{Y|\bth} A^{(k)}_{ij} \geq 1/2$, as desired.

\subsection{Proof of Theorem \ref{thm: ranking lower bound}}
\label{sec: proof of thm: ranking lower bound}

To prove \eqref{eq: ranking lower bound}, suppose $\bth$ follows the prior distribution specified in Proposition \ref{prop: ranking attack completeness}. For every $(\varepsilon, \delta)$-DP $M$ satisfying $\sup_{\bth \in \Theta} \E\|M(\bm Y) - \bth\|_\infty \leq c $, by Proposition \ref{prop: ranking attack completeness}, we have, for each $k \in [n]$,  $\E_\bth \E_{\bm Y|\bth} \sum_{1 \leq i < j \leq n} A^{(k)}_{ij} \geq C$.
The assumptions in Theorem \ref{thm: ranking lower bound} ensure that the regularity conditions in Proposition \ref{prop: ranking attack soundness} are satisfied, and therefore 
\begin{align*}
	\E_\bth \E_{\bm Y|\bth} \sum_{1 \leq i < j \leq n} A^{(k)}_{ij} &\leq 4\kappa_1 np\varepsilon \cdot \sup_{\bth \in \Theta} \E_{\bm Y|\bth} |M(\bm Y)_k - \theta_k| + 4 (n-1)\delta + 2\kappa_1 n^{-1}.
\end{align*}
If $\delta \lesssim n^{-1}$, we have $4(n-1)\delta + 2\kappa_1 n^{-1} \lesssim 1$, and combining the two inequalities yields
\begin{align*}
	\sup_{\bth \in \Theta}\E_{\bm Y|\bth}\|M(\bm Y) - \bth\|_\infty
	\gtrsim \max_{k \in [n]} \E_\bth\E_{\bm Y|\bth}\|M(\bm Y)_k - \bth_k\| \gtrsim \frac{1}{np\varepsilon}.
\end{align*}
We have so far focused on those estimators $M$ satisfying $\sup_{\bth \in \Theta} \E\|M(\bm Y) - \bth\|_\infty \leq c$. For those $M$ which violate this condition, the assumption of $np\varepsilon \gtrsim 1$ implies $1/np\varepsilon \lesssim 1$, and therefore the minimax risk is lower bounded as $\inf_{M \in \mathcal M_{\varepsilon, \delta}} \sup_{\theta \in \Theta} \E_{\bm Y|\bth} \|M(\bm Y) - \bth\|_\infty \gtrsim \frac{1}{np\varepsilon}$.

\subsection{Proof of Theorem \ref{thm: ranking nonparametric upper bound}}
\label{sec: proof of thm: ranking nonparametric upper bound}

Observe that 
\begin{align*}
	\bigcap_{a \in \mathcal S_{k-m}, b \in (\mathcal S_{k+m})^c }\{N_a + W_a > N_b + W_b\} \subseteq \{d_H(\widetilde {\mathcal S}_k, \mathcal S_k) \leq  2m\}.
\end{align*}
The union bound then implies
\begin{align}
	\Pro(d_H(\widetilde {\mathcal S}_k, \mathcal S_k) >  2m) \leq \sum_{a \in \mathcal S_{k-m}, b \in (\mathcal S_{k+m})^c} \Pro(N_a + W_a \leq N_b + W_b) \label{eq: ranking nonparametric upper bound expansion 1}
\end{align}
Fix $a \in \mathcal S_{k-m}$ and $b \in (\mathcal S_{k+m})^c$. We have 
\begin{align*}
	\E N_a - \E N_b = np(\tau_a - \tau_b) \geq np(\tau_{(k-m)} - \tau_{(k+m+1)}) \geq C\left(\sqrt{np\log n} + \frac{\log n}{\varepsilon}\right),
\end{align*}
and therefore
\begin{align}
	\Pro(N_a + W_a \leq N_b + W_b) &= \Pro(N_a - N_b \leq W_b - W_a) \notag\\
	&\leq \Pro(N_a - N_b - (\E N_a - \E N_b) \leq -C\sqrt{np\log n}) + \Pro(W_b-W_a \geq C\log n/\varepsilon) \label{eq: ranking nonparametric upper bound expansion 2}.
\end{align}

To bound the first term $\Pro(N_a - N_b - \E N_a - \E N_b \leq -C\sqrt{np\log n})$, we have
\begin{align*}
	N_a - N_b &= \1(Y_{ab} = 1) - \1(Y_{ba} = 1) + \sum_{j \neq a, b} \1(Y_{aj} = 1) - \1(Y_{bj} = 1) \\
	&:= d_0 + \sum_{j \neq a, b} d_j.
\end{align*}
As the $d_0$ and $d_j$'s are all independent and bounded by $-1$ and $1$, we bound the tail probability by Bernstein's inequality, as follows. The second moments of the $d_j$'s are bounded by
\begin{align*}
	&\E d_0^2 = p(\rho_{ab} + \rho_{ba}), \quad \E d_j^2 = p\rho_{aj}(1-p\rho_{bj}) + p\rho_{bj}(1-p\rho_{aj}) \leq p(\rho_{aj} + \rho_{bj}).
\end{align*}
We then have
\begin{align*}
	\E d_0^2 + \sum_{j \neq a, b} \E d_j^2 &= p \left(\sum_{j \neq a} \rho_{aj} + \sum_{j \neq b} \rho_{bj}\right)\\
	&\leq p(n\tau_a + n\tau_b) \leq 2np\tau_a - np(\tau_{(k)} - \tau_{(k+1)}) \leq 2np - C\left(\sqrt{np\log n} + \frac{\log n}{\varepsilon}\right).
\end{align*}
Bernstein's inequality now implies
\begin{align*}
	&\Pro(N_a - N_b - \E N_a - \E N_b \leq -C\sqrt{np\log n}) \\
	&\leq \exp\left(-\frac{C^2np\log n}{4np - C\left(\sqrt{np\log n} + \frac{\log n}{\varepsilon}\right) + \frac{2C}{3}\left(\sqrt{np\log n} + \frac{\log n}{\varepsilon}\right)}\right) \leq n^{-C^2}.
\end{align*}

Plugging into \eqref{eq: ranking nonparametric upper bound expansion 2}, we have
\begin{align*}
	\Pro(N_a + W_a \leq N_b + W_b) \leq n^{-C^2} + \exp\left(-(C/4)\log n\right) \leq n^{-C^2} + n^{-C/4}.
\end{align*}
The first inequality is true because $W_b$ and $-W_a$ are Laplace random variable with scale $2/\varepsilon$. 

Finally, by the union bound \eqref{eq: ranking nonparametric upper bound expansion 1}, we have $\Pro(\tilde {\mathcal S}_k \neq \mathcal S_k) \leq n^2(n^{-C^2} + n^{-C/4})$. Choosing a sufficiently large constant $C$ finishes the proof.

\subsection{Proof of Theorem \ref{thm: nonparametric ranking lower bound}}
\label{sec: proof of thm: nonparametric ranking lower bound}

\subsubsection{The main proof}
Consider the following Fano's inequality for $(\varepsilon, \delta)$-differential privacy, a variation of the DP Fano's inequality in \cite{acharya2021differentially}.

\begin{Theorem}{($(\varepsilon,\delta)$-DP-Fano's inequality.)} \label{thm: private fano}
	Suppose $l$ is the loss function, $\theta$ is the parameter of interest and $\hat \theta$ is the estimator. Let $\nu := \{p_1,\hdots,p_M\} \subseteq \mathcal{P}$ be distributions such that for all $i \neq j$
	\begin{itemize}
		\item[(a)] $l(\theta(p_i),\theta(p_j)) \geq \alpha$
		\item[(b)] $D_{KL}(p_i,p_j) \leq \beta$
		\item[(c)] there exists a coupling $(X,Y)$ between $p_i$ and $p_j$ such that $\E(d_{\text{Ham}}(X,Y)) \leq D$, then the $(\varepsilon, \delta)$-DP constrained minimax risk
		$R(\mathcal{P},l,\varepsilon,\delta) := \min_{\hat{\theta}:(\varepsilon,\delta) \, DP} \max_{\Pro \in \mathcal{P}} \E_{\Pro}(l(\hat{\theta}(X),\theta(\Pro)))
		$ is lower bounded by
		$$
		R(\mathcal{P},l,\varepsilon,\delta) \geq \max\left\{\frac{\alpha}{2}\left(1-\frac{\beta+\log2}{\log M}\right), 0.45 \alpha\left[ \min \left\{1,\frac{M-1}{e^{10\varepsilon D}}\right\}-10(M-1)D\delta\right]\right\}.
		$$
	\end{itemize}
\end{Theorem}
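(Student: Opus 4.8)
The right-hand side is the maximum of an information-theoretic term and a privacy term, so the plan is to lower bound the Bayes error of the $M$-ary testing problem attached to $\nu$ by each quantity separately and then translate back to estimation. First I would set up the standard reduction: put a uniform prior $V\sim\mathrm{Unif}([M])$ with $X\mid V=v\sim p_v$, and for any $(\varepsilon,\delta)$-DP estimator $\hat\theta$ let $\psi(X)$ be an index minimizing $l(\hat\theta(X),\theta(p_v))$ over $v$. By post-processing $\psi$ is $(\varepsilon,\delta)$-DP, by (a) and a triangle-type inequality for $l$ the event $\{\psi\ne V\}$ lies inside $\{l(\hat\theta,\theta(p_V))\ge\alpha/2\}$, and Markov's inequality gives $\bar P_e:=\Pro(\psi\ne V)\le\frac2\alpha\,\E\,l(\hat\theta,\theta(p_V))\le\frac2\alpha\sup_{\Pro\in\mathcal P}\E_\Pro l$; hence $R(\mathcal P,l,\varepsilon,\delta)\ge\frac\alpha2\inf_\psi\bar P_e(\psi)$, the infimum over $(\varepsilon,\delta)$-DP tests, and it suffices to bound $\bar P_e$ from below in two ways. (If $\hat\theta$ violates the claimed risk bound there is nothing to prove.)

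For the information-theoretic term I would invoke Fano's lemma: since $\psi$ is a (possibly randomized) function of $X$, $\bar P_e\ge 1-\frac{I(V;X)+\log2}{\log M}$, and the convexity bound $I(V;X)\le\frac1{M^2}\sum_{v,v'}\KL(p_v\|p_{v'})\le\max_{v\ne v'}\KL(p_v\|p_{v'})\le\beta$ from (b) closes this half, giving $R\ge\frac\alpha2\big(1-\frac{\beta+\log2}{\log M}\big)$.

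For the privacy term I would use (c) through a "random group privacy" estimate. Fixing $v\ne v'$ and letting $(X,Y)$ be the coupling of $p_v,p_{v'}$ with $\E\,d_{\mathrm{Ham}}(X,Y)\le D$, independent of the internal randomness of $\psi$, set $g_A(z):=\Pro_\psi(\psi(z)\in A)$ for an event $A$; on $\{d_{\mathrm{Ham}}(X,Y)\le 10D\}$ iterated group privacy along at most $10D$ single-record changes gives $g_A(X)\le e^{10\varepsilon D}g_A(Y)+\big(\sum_{0\le j<10D}e^{j\varepsilon}\big)\delta\le e^{10\varepsilon D}g_A(Y)+10D\,e^{10\varepsilon D}\delta$, while $\Pro(d_{\mathrm{Ham}}(X,Y)>10D)\le\frac1{10}$ by Markov, so integrating over the coupling yields $\Pro(\psi(X)\in A)\le e^{10\varepsilon D}\Pro(\psi(Y)\in A)+10D\,e^{10\varepsilon D}\delta+\frac1{10}$. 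Taking $A=\{\psi=v'\}$ gives $\Pro_{p_{v'}}(\psi=v')\le e^{10\varepsilon D}\Pro_{p_v}(\psi=v')+10D\,e^{10\varepsilon D}\delta+\frac1{10}$ for every ordered pair $v\ne v'$. Summing over all $M(M-1)$ ordered pairs, using $\sum_{v'}\Pro_{p_v}(\psi=v')\le 1$ so that $\sum_{v'\ne v}\Pro_{p_v}(\psi=v')=\Pro_{p_v}(\psi\ne v)$, and writing $S=\sum_v\Pro_{p_v}(\psi=v)$, the two double sums collapse to $(M-1)S$ and $M-S$; rearranging and using $\frac{M-1}{M-1+e^{10\varepsilon D}}\ge\frac12\min\{1,\frac{M-1}{e^{10\varepsilon D}}\}$ gives $\bar P_e=1-S/M\gtrsim\min\{1,\tfrac{M-1}{e^{10\varepsilon D}}\}-10(M-1)D\delta$ up to a universal constant, and feeding this into $R\ge\frac\alpha2\bar P_e$ produces the second term.

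The main obstacle is the constant bookkeeping in the privacy step: truncating the random Hamming distance at the right multiple of $D$, bounding the accumulated slack $\frac{e^{k\varepsilon}-1}{e^\varepsilon-1}$ so the error comes out as $10(M-1)D\delta$ without a stray $e^{10\varepsilon D}$ factor in the relevant regime, and carrying the aggregation over the $M$ hypotheses so the pairwise $e^{10\varepsilon D}$ comparisons interpolate into the stated $\min\{1,(M-1)/e^{10\varepsilon D}\}$ with the precise leading constant $0.45$. The reduction to testing and the classical Fano step are routine; the one caveat is to confirm that the loss $l$ in the intended applications obeys the relaxed triangle inequality used to define $\psi$, or else to invoke the form of Fano stated directly for indicator losses, which is what the top-$k$ recovery setting needs.
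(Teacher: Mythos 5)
Your proposal is correct and follows essentially the same route as the paper: the nearest-hypothesis reduction giving the factor $\alpha/2$, the classical Fano bound with $I(V;X)\le\beta$ for the first term, and for the second term the coupling truncated at $10D$ via Markov's inequality, iterated group privacy over at most $10D$ record changes, and pairwise aggregation over the $M$ hypotheses. Your summation over ordered pairs and solving for $S=\sum_v \Pro_{p_v}(\psi=v)$ is algebraically the same as the paper's two-stage summation over $j\neq i$ and then $i$, and the constant bookkeeping you flag (obtaining $0.45$ rather than the $0.225$ a literal tracking yields) is no tighter in the paper's own derivation, so it is not a substantive gap.
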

Theorem \ref{thm: private fano} is proved in Section \ref{sec: proof of thm: private fano}.

By Theorem \ref{thm: private fano}, it suffices to construct probability matrices $\bm \rho_1, \ldots, \bm \rho_M \in \widetilde \Theta(k, m, c)$ satisfying the following conditions.
\begin{itemize}
	\item For any $a, b \in [M]$ and $a \neq b$, $d_H(\mathcal S^a_j, \mathcal S^b_j) > 4m$ , where $\mathcal S^a_k$ denotes the top-$k$ index set implied by $\bm \rho_a$ for each $a \in [M]$.
	\item For any $a, b \in [M]$ and $a \neq b$, let $\bm X = \left(X_{ij}\right)_{1 \leq i < j \leq n}$ and $\bm Y = \left(Y_{ij}\right)_{1 \leq i < j \leq n}$ denote pairwise comparison outcomes drawn from the distributions implied by $\bm \rho_a$ and $\bm \rho_b$ respectively, and let $D = \sum_{1 \leq i < j \leq n} \operatorname{TV}(X_{ij}, Y_{ij})$. We would like
	\begin{align}\label{eq: condition for expected hamming distance}
		\frac{9}{20}\left(\min\left(1, \frac{M-1}{e^{10\varepsilon D}}\right) - 10\delta(M-1)D\right) > \frac{1}{10}.
	\end{align} 
\end{itemize}
It turns out that the choice of $\bm \rho$ for proving the non-private lower bound in \cite{shah2017simple} suits our purpose, which is described in detail in Section 5.3.2 of \cite{shah2017simple}. 

The assumed conditions $2m \leq (1 + \nu_2)^{-1}\min\{n^{1-\nu_1}, k, n-k\}$ in Theorem \ref{thm: nonparametric ranking lower bound} guarantees the existence of the construction by \cite{shah2017simple}, which involves $M = e^{\frac{9}{10}\nu_1\nu_2 m \log n}$ different matrices $\{\bm \rho\}_{a \in [M]}$. For each $a \in [M]$, the top-$k$ index set is given by $\mathcal S^a_k = [k - 2(1+\nu_2)m] \cup \mathcal B_a$, and each set in the collection $\{\mathcal B_a\}_{a \in [M]}$ satisfies 
\begin{align*}
	\mathcal B_a \subseteq \{n/2 + 1, \ldots, n\}, |\mathcal B_a| = 2(1+\nu_2)m, d_H(\mathcal B_a, \mathcal B_b) > 4m, \forall a \neq b. 
\end{align*} 
Now that the separation condition between $\bm \rho's$ are satisfied by construction, it remains to specify the elements of the $\rho$'s, namely the pairwise winning probabilities. For each $a \in [M]$, set
\begin{align*}
	(\bm \rho_a)_{ij} = \begin{cases}
		1/2 &\text{ if } i, j \in \mathcal S^a_k \text{ or } i, j \not \in \mathcal S^a_k,\\
		1/2 + \Delta &\text{ if } i \in \mathcal S^a_k \text{ and } j \not \in \mathcal S^a_k,\\
		1/2 - \Delta &\text{ if } i \not \in \mathcal S^a_k \text{ and } j \in \mathcal S^a_k.
	\end{cases}
\end{align*}
By construction, we have $\tau_{(k-m)} - \tau_{(k+m+1)} = \Delta$. The choice of $\Delta$, to be specified later, will ensure that the $\bm \rho$'s belong to $\widetilde \Theta(k, m, c)$ for appropriate $c$ and $D = \sum_{1 \leq i < j \leq n} \operatorname{TV}(X_{ij}, Y_{ij})$ satisfies the requisite condition \eqref{eq: condition for expected hamming distance}. To this end, we start by bounding $D$ in terms of $\Delta$. 

For any $a, b \in [M]$ and $a \neq b$, $\bm X = \left(X_{ij}\right)_{1 \leq i < j \leq n}$ and $\bm Y = \left(Y_{ij}\right)_{1 \leq i < j \leq n}$ denote pairwise comparison outcomes drawn from the distributions implied by $\bm \rho_a$ and $\bm \rho_b$ respectively. By construction of $\bm \rho_a$ and $\bm \rho_b$, $X_{ij}$ and $Y_{ij}$ are identically distributed unless $i \in \mathcal B_a \cup \mathcal B_b$ or $j \in \mathcal B_a \cup \mathcal B_b$.

It follows that
\begin{align*}
	D = \sum_{1 \leq i < j \leq n} \operatorname{TV}(X_{ij}, Y_{ij}) &\leq \sum_{i \in \mathcal B_a \cup \mathcal B_b} \operatorname{TV}(X_{ij}, Y_{ij}) + \sum_{j \in \mathcal B_a \cup \mathcal B_b} \operatorname{TV}(X_{ij}, Y_{ij}) \\
	&\leq 2n|\mathcal B_a \cup \mathcal B_b| \cdot 2p\Delta \leq 16(1+\nu_2)mnp\Delta.
\end{align*}
To satisfy condition \eqref{eq: condition for expected hamming distance}, recall $M = e^{\frac{9}{10}\nu_1\nu_2 m \log n}$ and $\delta \lesssim \left(m\log n \cdot n^{10m}/\varepsilon\right)^{-1}$. Setting $\Delta = c(\nu_1, \nu_2)\frac{\log n}{np\varepsilon}$ for some sufficiently small $c(\nu_1, \nu_2)$ ensures $e^{10\varepsilon D} > M - 1$ and $10\delta D(M-1) \lesssim 1$, so that \eqref{eq: condition for expected hamming distance} holds.

The choice of $\Delta = c(\nu_1, \nu_2)\frac{\log n}{np\varepsilon}$ indeed ensures 
$\bm \rho_1, \ldots, \bm \rho_M \in \widetilde \Theta(k, m, c)$ for $c \leq c(\nu_1, \nu_2)$. For the restriction to parametric model $\rho_{ij} = F(\theta^*_i - \theta^*_j)$, we show that for every $a \in [M]$, the aforementioned choice of $\left(\bm \rho_a\right)_{ij}$ can be realized by some appropriate choice of $\bth^*$. By (A0) we have $F(x) = 1-F(x)$ and $F(0) = 1/2$; it then suffices to let $\theta^*_i = \alpha$ for all $i \in \mathcal S^a_k$, and let $\theta^*_i = \beta$ for all $i \not \in \mathcal S^a_k$. The two values $\alpha > \beta$ should be such that $F(\alpha - \beta) = 1/2 + \Delta$; they are guaranteed to exist since $\Delta \asymp \frac{\log n}{np\varepsilon}$ is sufficiently small by assumption and $F$ is strictly increasing.

\subsubsection{Proof of Theorem \ref{thm: private fano}}\label{sec: proof of thm: private fano}

The first term in the lower bound follows from the non-private Fano's inequality. For an observation $X \in \mathcal{X}^n$, let
$$
\widehat p(X) := \arg \min_{ p \in \nu}  l(\theta(p), \widehat{\theta}(X))
$$ 
be the distribution in $\mathcal{P}$ closest in parameter value to an $(\varepsilon,\delta)$-DP estimator $\hat \theta(X)$. Therefore $\hat p(X)$ is also $(\varepsilon,\delta)$-DP. By the triangle inequality we have 
\begin{align*}
	l(\theta(\hat p), \theta(p)) &\leq l(\theta(\hat p),\hat \theta(X)) + l(\theta(p),\hat{\theta}(X))\\
	&\leq 2 l(\theta(p),\hat \theta(X)).
\end{align*}
Hence
\begin{align}
	\max_{p \in \mathcal{P}} \E_{X \sim p}[l(\hat \theta(X),\theta(p))] &\geq \max_{p \in \nu} \E_{X \sim p}[l(\hat \theta(X),\theta(p))]\nonumber\\
	&\geq \frac{1}{2} \max_{ p \in \nu}\E_{X \sim p}[l(\hat \theta(\hat p),\theta(p))]\nonumber\\
	&\geq \max_{p \in \nu} \frac{\alpha}{2} \Pro_{X \in p}(\hat p(X) \neq p)\nonumber\\
	&\geq \frac{\alpha}{2M} \sum_{p \in \nu} \Pro_{X \in p}(\hat p(X) \neq p). \label{eq: minimax-to-multiple-testing}
\end{align}
Let us define $\beta_i = \Pro_{X \in p_i}(\hat p(X) \neq p)$, for $p_i,p_j$ and $i \neq j$, let $X\sim p_i,Y\sim p_j$ be coupling in the theorem statement. By markov's inequality 
$$
\Pro(d_{\text{Ham}}(X,Y) > 10D) \leq \frac{1}{10}.
$$
Let us denote the set $W$ by
$$
W = \{(x,y)\,:\, d_{\text{Ham}}(X,Y) \leq 10D \}.
$$
We have that
\begin{align*}
	1-\beta_j &= \Pro(\hat p(Y) = p_j)  \\
	&= \int \Pro(\hat p(y) = p_j) d\Pro(x,y)\\\
	&\leq \int_W \Pro(\hat p(y) = p_j) d\Pro(x,y) + \int_{W^c} 1 \cdot d\Pro(x,y) \\
	&\leq \int_W \Pro(\hat p(y) = p_j) d\Pro(x,y) + 0.1.
\end{align*}
Hence we have 
\begin{equation}\label{eq: prob-under-y}
	\int_W \Pro(\hat p(y) = p_j) d\Pro(x,y)  \geq 0.9 - \beta_j.
\end{equation}
Next using the group privacy lemma we have for $(x,y) \in W$,
$$
\Pro(\hat p(y) = p_j) \leq e^{\varepsilon10D}\Pro( \hat p(x) = p_j ) + 10D\delta e^{\varepsilon10(D-1)}.
$$
which implies
\begin{equation}\label{eq: group-privacy-lower-bound}
	\Pro(\hat p(x) = p_j) \geq e^{-\varepsilon10D}\Pro( \hat p(y) = p_j ) - 10D\delta e^{-\varepsilon10}.
\end{equation}
It is easy to observe that
\begin{align*}
	\Pro(\hat p(X) = p_j) &\geq \int_W \Pro(\hat p(x) = p_j) d\Pro(x,y)\\
	&\geq \int_W e^{-\varepsilon10D}\Pro(\hat p(y) = p_j)d\Pro(x,y)   -10D\delta e^{-\varepsilon10} \int_W d\Pro(x,y)\quad (\text{using} ~\eqref{eq: group-privacy-lower-bound})\\
	&\geq  e^{-\varepsilon10D}(0.9-\beta_j) - 10D\delta e^{-10\varepsilon} \quad (\text{using} ~\eqref{eq: prob-under-y}),
\end{align*}
which holds for all $i\neq j$. We sum over all $j \neq i$ and $j \in[M]$ to obtain 
\begin{align*}
	\beta_i = \sum_{j\neq i} \Pro(\hat p(X)  = p_j)
	\geq \left(0.9(M-1) -\sum_{j\neq i}\beta_j \right)e^{-10\varepsilon D} - 10(M-1)\delta D e^{-10\varepsilon}.
\end{align*}
Summing over $ i \in [M]$ we have
\begin{align*}
	&\sum_{i\in [M]} \beta_i \geq \left(0.9M(M-1) -(M-1)\sum_{i \in [M]}\beta_j \right)e^{-10\varepsilon D} - 10M(M-1)\delta D e^{-10\varepsilon}.\\
	&\sum_{i\in [M]} \beta_i \left(1 + (M-1)e^{-10\varepsilon D}\right)\geq 0.9M(M-1) e^{-10\varepsilon D} - 10M(M-1)\delta D e^{-10\varepsilon}.\\
	& \sum_{i\in [M]} \beta_i \geq \frac{0.9M(M-1) e^{-10\varepsilon D} - 10M(M-1)\delta D e^{-10\varepsilon}}{1 + (M-1)e^{-10\varepsilon D}}.
\end{align*}
Hence we have that
\begin{align*}
	\sum_{i\in [M]} \beta_i &\geq \frac{0.9M(M-1) e^{-10\varepsilon D}}{1 + (M-1)e^{-10\varepsilon D}} - \frac{10M(M-1)\delta D e^{-10\varepsilon}}{1 + (M-1)e^{-10\varepsilon D}}\\
	&\geq \min\{0.9 M, 0.9M(M-1)e^{-10\varepsilon D}\} - 10M(M-1)D\delta.
\end{align*}
Next using~\eqref{eq: minimax-to-multiple-testing} we have the desired conclusion.

\section{Omitted Proofs in Section \ref{sec: individual-dp-ranking}}
\label{sec: omitted proofs in sec: individual-dp-ranking}

\subsection{Proof of Proposition \ref{prop: individual ranking MLE privacy}}

\begin{proof}[Proof of Proposition \ref{prop: individual ranking MLE privacy}]
	Let $q(k, l) \in \{(i, j): 1 \leq i < j \leq n\}$ denote the two items being compared in the $l$-th comparison of the $k$-th individual. Following a similar convention in the edge privacy case, $F_{q(k, l)}(\bth)$ denotes the parametric form of pairwise winning probability: if $q(k, l) =(i, j)$, we have $F_{q(k, l)}(\bth) = F(\theta^*_i - \theta^*_j)$. The negative log-likelihood is then given by, up to constants not depending on $\bth$, 
	\begin{align}\label{eq: individual dp ranking likelihood equation}
		\mathcal L(\bth; y) = \sum_{k=1}^m \sum_{l = 1}^L -y_{kl}\log F_{q(k, l)}(\bth) - (1-y_{kl})\log(1 - F_{q(k, l)}(\bth)).
	\end{align}

	Throughout the proof it is useful to calculate the gradient of $\mathcal L(\bth; y)$: for $q_{(k, l)} = (i, j)$, let $\bm d_{q(k, l)}$ be defined as $\bm d_{q(k, l)} = \bm e_i - \bm e_j$; the gradient is then given by 
	\begin{align}
		\nabla\mathcal L(\bth; y) &= \sum_{k=1}^m \sum_{l = 1}^L  \left( -y_{kl}\frac{F'_{q(k, l)}(\bth)}{F_{q(k, l)}(\bth)} -y_{lk}\frac{-F'_{q(k, l)}(\bth)}{1 - F_{q(k, l)}(\bth)}\right) \bm d_{q(k, l)} \label{eq: individual privacy likelihood gradient eq 1}\\
		&= \sum_{k=1}^m \sum_{l = 1}^L  (F_{q(k, l)}(\bth) - y_{kl})\frac{F'_{q(k, l)}(\bth)}{F_{q(k, l)}(\bth)(1 - F_{q(k, l)}(\bth))}\bm d_{q(k, l)}. \label{eq: individual privacy likelihood gradient eq 2}
	\end{align} 
	For fixed $y$, the distribution of $\widetilde\bth = \widetilde\bth(y)$ is defined by the equation $\nabla \mathcal R(\widetilde\bth; y) + \bm w = 0$. The solution is guaranteed to exist as the objective function $\mathcal R(\bth; y) + \bm w^\top \bth$ is strongly convex in $\bth$ for each $\bm w$.  Since $\bm w$ is a Laplace random vector, the density of $\widetilde\bth$ is given by
	\begin{align*}
		f_{\widetilde\bth}(\bm t) = (2\lambda)^{-n}\exp\left(-\frac{\|\nabla \mathcal R(\bm t; y)\|_1}{\lambda}\right) \left|\det \left(\frac{\partial \nabla \mathcal R(\bm t; y)}{\partial\bm t}\right)\right|^{-1}. 
	\end{align*}
	Consider a data set $y'$ adjacent to $y$, where they differ by the data of some individual $j$ in $y$ and another individual $j'$ in $y$. It follows that
	\begin{align*}
		\frac{f_{\widetilde\bth(y)}(\bm t)}{f_{\widetilde\bth(y')}(\bm t)} = \exp\left(\frac{\|\nabla \mathcal R(\bm t; y')\|_1 - \|\nabla \mathcal R(\bm t; y)\|_1 }{\lambda}\right) \left|\frac{\det \left(\frac{\partial \nabla \mathcal R(\bm t; y')}{\partial\bm t}\right)}{\det \left(\frac{\partial \nabla \mathcal R(\bm t; y)}{\partial\bm t}\right)}\right|.
	\end{align*}
	By \eqref{eq: individual privacy likelihood gradient eq 2} and condition (A1), we have $\|\nabla \mathcal R(\bm t; y')\|_1 - \|\nabla \mathcal R(\bm t; y)\|_1 \leq 4L \cdot \kappa_1$, so $\lambda \geq 8L \cdot \kappa_1/\varepsilon$ ensures 
	\begin{align*}
		\exp\left(\frac{\|\nabla \mathcal R(\bm t; y')\|_1 - \|\nabla \mathcal R(\bm t; y)\|_1 }{\lambda}\right) \leq \exp(\varepsilon/2).
	\end{align*}
	For the determinants, differentiating \eqref{eq: individual dp ranking likelihood equation} twice with respect to $\bth$ gives
	\begin{align*}
		&\frac{\partial \nabla \mathcal R(\bm t; y)}{\partial\bm t} \\
		&= \gamma \bm I + \sum_{k=1}^m \sum_{l=1}^L \left(y_{kl}\frac{\partial^2}{\partial t^2}(-\log F_{q(k, l)}(t))+ y_{lk}\frac{\partial^2}{\partial t^2}(-\log (1-F_{q(k, l)}(t)))\right)\bm d_{q(k, l)}\bm d_{q(k, l)}^\top.
	\end{align*}
	It follows that $\frac{\partial \nabla \mathcal R(\bm t; y')}{\partial\bm t}$ is a rank-$L$ perturbation of $\frac{\partial \nabla \mathcal R(\bm t; y)}{\partial\bm t}$: let $c_{kl}$ denote the coefficient of $\bm d_{q(k, l)}\bm d_{q(k, l)}^\top$ in the Hessian, we have
	\begin{align*}
		\frac{\partial \nabla \mathcal R(\bm t; y')}{\partial\bm t} &= \frac{\partial \nabla \mathcal R(\bm t; y)}{\partial\bm t} - \sum_{l=1}^L c_{jl} \bm d_{q(j, l)}\bm d_{q(j, l)}^\top +  \sum_{l=1}^L c_{j'l} \bm d_{q(j', l)}\bm d_{q(j', l)}^\top \\
		&= \gamma \bm I + \sum_{k \neq j, j'} c_{kl} \bm d_{q(k, l)}\bm d_{q(k, l)}^\top +  \sum_{l=1}^L c_{j'l} \bm d_{q(j', l)}\bm d_{q(j', l)}^\top.
	\end{align*}
	Applying repeatedly Cauchy's formula for rank-one perturbation $\det(\bm A + \bm v \bm w^\top) = \det(\bm A) (1 + \bm w^\top \bm A^{-1} \bm v)$ yields
	\begin{align*}
		\left|\frac{\det \left(\frac{\partial \nabla \mathcal R(\bm t; y')}{\partial\bm t}\right)}{\det \left(\frac{\partial \nabla \mathcal R(\bm t; y)}{\partial\bm t}\right)}\right| \leq 1 +  \frac{4L\kappa_2}{\lambda_{\min}(\gamma\bm I + \bm 
			\Sigma_{\mathcal G \cap \mathcal G'})} \leq 1 + \frac{4L\kappa_2}{\gamma} \leq e^{\varepsilon/2}.
	\end{align*}
	To conclude, we have shown that for any adjacent data sets $y, y'$, it holds that
	\begin{align*}
		\frac{f_{\widetilde\bth(y)}(\bm t)}{f_{\widetilde\bth(y')}(\bm t)} = \exp\left(\frac{\|\nabla \mathcal R(\bm t; y')\|_1 - \|\nabla \mathcal R(\bm t; y)\|_1 }{\lambda}\right) \left|\frac{\det \left(\frac{\partial \nabla \mathcal R(\bm t; y')}{\partial\bm t}\right)}{\det \left(\frac{\partial \nabla \mathcal R(\bm t; y)}{\partial\bm t}\right)}\right| \leq e^{\varepsilon}.
	\end{align*}
\end{proof}

\subsection{Proof of Theorem \ref{thm:idp-linf}}

\begin{Lemma}[degree concentration under \textit{with}-replacement sampling]
	\label{lem:degree}
	Let
	\[
	d_i \;=\;\sum_{j\ne i}M_{ij}, 
	\qquad
	S \;=\; \E[d_i]= \frac{2mL}{n},
	\qquad
	d_{\min}=\min_{i}d_i,\;d_{\max}=\max_{i}d_i .
	\]
	Assume
	\[
	mL \;\ge\; c\,n\log n
	\quad\text{with a numerical constant }c\ge 60 .
	\]
	Then
	\[
	\Pro\!\Bigl(
	\tfrac12 S \;\le\; d_{\min}\;\le\; d_{\max}\;\le\; \tfrac32 S
	\Bigr)
	\;\ge\; 1-O\!\bigl(n^{-8}\bigr).
	\]
\end{Lemma}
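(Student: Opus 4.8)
The plan is to identify each degree $d_i$ as an honest Binomial random variable, apply the multiplicative Chernoff bound to each coordinate, and close with a union bound over $i\in[n]$. No delicate argument is needed; the role of the constant $c\ge 60$ is simply to make the resulting deviation probabilities summable against the target $n^{-8}$.

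First I would describe the law of $d_i$. The dataset consists of $mL$ i.i.d.\ draws, each an unordered pair selected uniformly from the $\binom{n}{2}$ possibilities. Fix $i\in[n]$. The $t$-th draw is incident to $i$ exactly when it equals one of the $n-1$ pairs containing $i$, an event of probability $(n-1)/\binom{n}{2}=2/n$, and these events are independent across the $mL$ draws (this is precisely the content of \emph{with}-replacement sampling). Hence
\begin{equation*}
d_i \;=\; \sum_{t=1}^{mL} \mathbbm{1}\{\text{draw } t \text{ is incident to } i\} \;\sim\; \mathrm{Binomial}\!\Bigl(mL,\ \tfrac{2}{n}\Bigr),\qquad \E d_i = \tfrac{2mL}{n} = S.
\end{equation*}
The coordinates $d_1,\dots,d_n$ are not mutually independent (each draw increments two of them), but only marginal tail bounds are required for the union bound, so this is immaterial.

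Next I would invoke the standard multiplicative Chernoff inequalities for a Binomial sum with mean $S$: for the choice $\delta = 1/2$,
\begin{equation*}
\Pro\bigl(d_i \le \tfrac12 S\bigr) \le \exp\!\bigl(-S/8\bigr),\qquad \Pro\bigl(d_i \ge \tfrac32 S\bigr) \le \exp\!\bigl(-S/12\bigr).
\end{equation*}
Under the hypothesis $mL \ge c\,n\log n$ with $c\ge 60$ we have $S = 2mL/n \ge 120\log n$, so that $\exp(-S/12) \le n^{-10}$ and $\exp(-S/8) \le n^{-15}$. A union bound over the $n$ items and the two tail events then gives
\begin{equation*}
\Pro\Bigl(\exists\, i:\ d_i < \tfrac12 S \ \text{ or }\ d_i > \tfrac32 S\Bigr) \;\le\; n\bigl(n^{-15} + n^{-10}\bigr) \;=\; O(n^{-9}) \;=\; O(n^{-8}),
\end{equation*}
which is exactly the assertion that $\tfrac12 S \le d_{\min} \le d_{\max} \le \tfrac32 S$ with probability $1 - O(n^{-8})$.

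The only point requiring any care — and it is not really an obstacle — is justifying the exact Binomial law for $d_i$: had the $L$ pairs drawn by a single user been sampled \emph{without} replacement, $d_i$ would carry a mild hypergeometric-type dependence across the draws of that user, and one would instead appeal to a Chernoff bound for negatively associated variables. Since the lemma explicitly posits with-replacement sampling, the $mL$ draws are fully i.i.d.\ and the argument above is complete and routine.
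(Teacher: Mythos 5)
Your proposal is correct and follows essentially the same route as the paper: identify $d_i\sim\mathrm{Binom}(mL,2/n)$ with mean $S$, apply the multiplicative Chernoff bound at relative deviation $\tfrac12$ (the paper uses the two-sided form $2e^{-S/12}$, you use the two one-sided forms, an immaterial difference), and finish with a union bound over the $n$ items. Your added remarks on the dependence across coordinates and the role of with-replacement sampling are accurate but not needed beyond what the paper already does.
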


\begin{proof}
	\textbf{Step 0: distribution of a single degree.}
	Because every one of the \(mL\) draws chooses an unordered pair
	uniformly at random from the \(\binom{n}{2}\) possibilities,
	\[
	d_i \;\sim\;
	\mathrm{Binom}\!\Bigl(mL,\;p_i\Bigr),
	\qquad
	p_i \;=\; \frac{\#\text{pairs involving }i}{\binom{n}{2}}
	\;=\;\frac{n-1}{n(n-1)/2}
	\;=\;\frac{2}{n},
	\]
	and hence \(\E d_i = (mL)(2/n)=S\).
	
	\medskip
	\noindent
	\textbf{Step 1: Chernoff tail for one \(d_i\).}
	For a binomial \(X\sim\mathrm{Binom}(N,p)\) and any
	\(\delta\in(0,1)\),
	\[
	\Pro\!\bigl(|X-Np|>\delta Np\bigr)
	\;\le\; 2\exp\!\Bigl(-\tfrac{\delta^{2}}{3}Np\Bigr)
	\quad
	\text{(multiplicative Chernoff).}
	\]
	Take \(X=d_i,\;Np=S\), and choose \(\delta=\tfrac12\).  Then
	\[
	\Pro\!\bigl(|d_i-S|>\tfrac12 S\bigr)
	\;\le\;
	2\exp\!\Bigl(-\tfrac{1}{12}S\Bigr)
	\;=\;
	2\exp\!\Bigl(-\tfrac{mL}{6n}\Bigr).
	\]
	Under the assumption \(mL\ge 60n\log n\) the exponent is at most
	\(-10\log n\), so
	\(
	\Pro\bigl(|d_i-S|>\tfrac12 S\bigr)\le 2n^{-10}.
	\)
	
	\medskip
	\noindent
	\textbf{Step 2: union bound over all items.}
	Applying the tail bound to each of the \(n\) coordinates and using
	\(\sum_{i=1}^{n}2n^{-10}=2n^{-9}=O(n^{-8})\) yields
	\[
	\Pro\!\bigl(
	\exists i:\;|d_i-S|>\tfrac12 S
	\bigr)
	\;\le\;O\!\bigl(n^{-8}\bigr),
	\]
	completing the proof.
\end{proof}

\begin{Lemma}[gradient concentration at $\bth^{*}$]
	\label{lem:grad}
	Assume \textnormal{(A1)} and suppose the degree event
	$\mathcal A_{0}$ of Lemma~\ref{lem:degree} holds.  Set
	\[
	S=\frac{2mL}{n},
	\qquad
	C_{*}:=\sqrt{33}\approx 5.75 .
	\]
	If $mL\ge c\,n\log n$ for a sufficiently large absolute constant\/
	$c$, then
	\[
	\Pro\!\Bigl(
	\bigl\|\nabla\mathcal L(\bth^{*})\bigr\|_{2}
	\;>\; C_{*}\,\kappa_{1}\sqrt{nS\log n}
	\Bigr)
	\;\le\; O\!\bigl(n^{-10}\bigr).
	\]
\end{Lemma}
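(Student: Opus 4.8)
The plan is to follow the proof of Lemma~\ref{lm: gradient norm tail bound} from the edge DP setting, adjusted for the with-replacement sampling model. First I would condition on the pair assignments $\{q(k,l): k\in[m],\, l\in[L]\}$. Given these, the outcomes $\{y_{kl}\}$ are mutually independent with $\E[y_{kl}\mid q(k,l)] = F_{q(k,l)}(\bth^{*})$, so each summand in the gradient expression~\eqref{eq: individual privacy likelihood gradient eq 2} has conditional mean zero; by condition (A1), $|F_{q(k,l)}(\bth^{*}) - y_{kl}|\le 1$ and $\sup_x |F'(x)/(F(x)(1-F(x)))| < \kappa_1$, so each summand has absolute value at most $\kappa_1$.

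Next I would fix a coordinate $i\in[n]$. Reading off the $i$-th entry of~\eqref{eq: individual privacy likelihood gradient eq 2} gives
\[
[\nabla\mathcal L(\bth^{*})]_i \;=\; \sum_{(k,l):\, i\in q(k,l)} \pm\,\bigl(F_{q(k,l)}(\bth^{*}) - y_{kl}\bigr)\frac{F'_{q(k,l)}(\bth^{*})}{F_{q(k,l)}(\bth^{*})\bigl(1 - F_{q(k,l)}(\bth^{*})\bigr)},
\]
a sum of $d_i = \sum_{j\ne i} M_{ij}$ conditionally independent, mean-zero terms each of magnitude at most $\kappa_1$. Hoeffding's inequality, conditionally on the pair assignments, yields $\Pro\bigl(|[\nabla\mathcal L(\bth^{*})]_i| > t \mid \text{pairs}\bigr) \le 2\exp\bigl(-t^{2}/(2 d_i\kappa_1^{2})\bigr)$. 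On the degree event $\mathcal A_{0}$ of Lemma~\ref{lem:degree} we have $d_i \le \tfrac32 S$, so taking expectations over the pair assignments restricted to $\mathcal A_{0}$ gives $\Pro\bigl(\{|[\nabla\mathcal L(\bth^{*})]_i| > t\}\cap\mathcal A_{0}\bigr) \le 2\exp\bigl(-t^{2}/(3 S\kappa_1^{2})\bigr)$. Choosing $t = C_{*}\kappa_1\sqrt{S\log n}$ with $C_{*}=\sqrt{33}$ makes the exponent equal to $-11\log n$, so this probability is at most $2n^{-11}$.

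Finally I would union bound over the $n$ coordinates: on $\mathcal A_{0}$, with probability at least $1 - 2n^{-10}$ every coordinate of $\nabla\mathcal L(\bth^{*})$ is at most $C_{*}\kappa_1\sqrt{S\log n}$ in absolute value, hence $\|\nabla\mathcal L(\bth^{*})\|_{2} \le \sqrt{n}\,\max_{i}|[\nabla\mathcal L(\bth^{*})]_i| \le C_{*}\kappa_1\sqrt{nS\log n}$, which is the claimed $O(n^{-10})$ bound. The only subtle point is the order of conditioning — first reveal the pair assignments, then the Bernoulli outcomes — which is what makes the $d_i$ terms in a fixed coordinate genuinely independent even though $d_i$ is itself random; the dependence across coordinates created by the shared summands is harmless since each coordinate is controlled separately. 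The hypothesis $mL \gtrsim n\log n$ enters only through Lemma~\ref{lem:degree}, guaranteeing $d_i \le \tfrac32 S$ with the required probability.
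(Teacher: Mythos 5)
Your proof is correct and follows essentially the same architecture as the paper's: a coordinatewise tail bound on the score, a union bound over the $n$ coordinates, and $\|\nabla\mathcal L(\bth^*)\|_2\le\sqrt{n}\,\max_i|[\nabla\mathcal L(\bth^*)]_i|$. The only (minor) difference is how the single-coordinate tail is obtained -- you condition on the pair assignments and apply Hoeffding with the degree $d_i$ controlled through the event $\mathcal A_0$ (consistent with the lemma's hypothesis and with the edge-DP analogue, which also bounds the intersection with $\mathcal A_0$), whereas the paper works unconditionally with the $mL$ i.i.d.\ sparse score contributions and a variance-proxy sub-Gaussian bound of size $\kappa_1^2 S$; both give the claimed $O(n^{-10})$ bound, and your conditional-Hoeffding route is, if anything, the more airtight of the two.
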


\begin{proof}
	\textbf{Step 0: score as a sum of i.i.d.\ vectors.}
	Index the $mL$ comparisons by $\ell=1,\dots ,mL$.  Let
	\((I_\ell,J_\ell)\) be the unordered pair drawn at step~$\ell$ and let
	\(W_\ell\in\{0,1\}\) indicate whether the smaller-indexed item wins.
	The triples
	\(\bigl(I_\ell,J_\ell,W_\ell\bigr)\) are i.i.d., and
	\(\E[W_\ell\mid I_\ell,J_\ell]=F_{I_\ell J_\ell}\).
	
	Define
	\[
	\mathbf Z_\ell
	\;=\;
	\bigl(F_{I_\ell J_\ell}-W_\ell\bigr)\,
	\frac{F'_{I_\ell J_\ell}}{F_{I_\ell J_\ell}\bigl(1-F_{I_\ell J_\ell}\bigr)}
	\bigl(\mathbf e_{I_\ell}-\mathbf e_{J_\ell}\bigr),
	\qquad
	\ell=1,\dots,mL .
	\]
	Then \(\E\mathbf Z_\ell=\mathbf 0\) and
	\(\|\mathbf Z_\ell\|_{\infty}\le\kappa_{1}\) by (A1).  Moreover
	\(
	\nabla\mathcal L(\bth^{*})=\sum_{\ell=1}^{mL}\mathbf Z_\ell.
	\)
	
	\medskip
	\noindent
	\textbf{Step 1: sub-Gaussian parameter for one coordinate.}
	Fix \(i\in[n]\).  The $i$-th coordinate of \(\mathbf Z_\ell\) is non-zero
	iff \(i\) participates in comparison~$\ell$, an event of probability
	\(2/n\).  Therefore
	\[
	\sigma_i^{2}
	:=\sum_{\ell=1}^{mL}
	\Var\!\bigl([\mathbf Z_\ell]_{i}\bigr)
	\;\le\;
	(mL)\frac{2}{n}\,\kappa_{1}^{2}
	=\kappa_{1}^{2}S .
	\]
	Thus \([\nabla\mathcal L(\bth^{*})]_{i}\) is sub-Gaussian with
	proxy variance \(\kappa_{1}^{2}S\).
	
	\medskip
	\noindent
	\textbf{Step 2: tail bound for one coordinate.}
	For any \(t>0\),
	\[
	\Pro\bigl(|g_i|>t\bigr)
	\;\le\;
	2\exp\!\Bigl(
	-\frac{t^{2}}{2\kappa_{1}^{2}S}
	\Bigr),
	\qquad
	g_i:=\bigl[\nabla\mathcal L(\bth^{*})\bigr]_i .
	\]
	Choose \(t=C_{*}\kappa_{1}\sqrt{S\log n}\) with
	\(C_{*}=\sqrt{33}\).
	Then
	\(\Pro\bigl(|g_i|>t\bigr)\le 2n^{-C_{*}^{2}/2}=2n^{-16.5}\).
	
	\medskip
	\noindent
	\textbf{Step 3: union bound over $n$ coordinates.}
	Summing over \(i=1,\dots,n\) gives
	\[
	\Pro\!\Bigl(
	\|\nabla\mathcal L(\bth^{*})\|_{2}
	> C_{*}\kappa_{1}\sqrt{nS\log n}
	\Bigr)
	\;\le\;
	2n^{-15.5}
	\;=\;
	O\!\bigl(n^{-10}\bigr),
	\]
	which is the stated result.
\end{proof}

\begin{Lemma}[Laplace vector tail]
	\label{lem:Laplace}
	Let $\mathbf w\in\R^{n}$ have i.i.d.\ Laplace$(\lambda)$ entries with
	density $\tfrac1{2\lambda}e^{-|x|/\lambda}$.
	Then
	\[
	\Pro\!\Bigl(
	\|\mathbf w\|_{\infty}>9\lambda\log n
	\;\text{ or }\;
	|\mathbf1^{\!\top}\mathbf w|>8\lambda\sqrt{n\log n}
	\Bigr)
	\;\le\;O\!\bigl(n^{-8}\bigr).
	\]
\end{Lemma}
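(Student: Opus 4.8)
The plan is to treat the two failure events separately and bound each by $O(n^{-8})$, then combine with one more union bound; this follows the same lines as the proof of Lemma~\ref{lm: Laplace tail bound}.

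First I would handle the $\ell_\infty$ event. Since each coordinate $W_i$ is Laplace$(\lambda)$, we have the exact tail $\Pro(|W_i| > t) = e^{-t/\lambda}$ for every $t > 0$, so a union bound over $i \in [n]$ gives $\Pro(\|\mathbf w\|_\infty > t) \le n\,e^{-t/\lambda}$. Taking $t = 9\lambda\log n$ yields $\Pro(\|\mathbf w\|_\infty > 9\lambda\log n) \le n^{1-9} = n^{-8}$.

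Next I would handle the linear statistic $\mathbf 1^\top \mathbf w = \sum_{i=1}^n W_i$, a sum of $n$ independent, mean-zero, sub-exponential random variables (a Laplace$(\lambda)$ variable is sub-exponential with parameters of order $(\lambda,\lambda)$, e.g.\ $(2\lambda,4)$ in a standard normalization). Bernstein's inequality then gives, for $0 < t \lesssim n\lambda$, a bound of the form $\Pro(|\mathbf 1^\top \mathbf w| > t) \le 2\exp\!\bigl(-c\,t^2/(n\lambda^2)\bigr)$ for an absolute constant $c > 0$ (tracking the constants exactly as in Lemma~\ref{lm: Laplace tail bound} one may even take the exponent to be $t^2/(8n\lambda^2)$). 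Choosing $t = 8\lambda\sqrt{n\log n}$ is admissible since $8\sqrt{n\log n} = o(n)$ for large $n$, and it produces $\Pro(|\mathbf 1^\top \mathbf w| > 8\lambda\sqrt{n\log n}) \le 2\exp(-8\log n) = O(n^{-8})$.

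A final union bound over the two events gives the claimed $O(n^{-8})$ probability. There is no genuine obstacle here: the statement is essentially a copy of Lemma~\ref{lm: Laplace tail bound}, and the only points needing minor care are checking that $t = 8\lambda\sqrt{n\log n}$ lies in the sub-Gaussian regime of Bernstein's inequality and that the numerical constant produces the exponent $-8$ — both routine and independent of the scale $\lambda$.
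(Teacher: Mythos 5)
Your proposal is correct and follows essentially the same route as the paper's proof: the exact Laplace tail $\Pro(|W_i|>t)=e^{-t/\lambda}$ plus a union bound with $t=9\lambda\log n$ for the max, and Bernstein's inequality for sub-exponential sums (with the quadratic term dominating at $t=8\lambda\sqrt{n\log n}$) for the linear statistic. The regime check you flag is indeed the only point of care, and it holds for all large $n$ regardless of $\lambda$, exactly as in the paper.
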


\begin{proof}
	\emph{Max-norm}  For any Laplace r.v.
	$\Pro(|W|>t)=e^{-t/\lambda}$.
	Therefore
	\(\Pro(\|{\mathbf w}\|_{\infty}>t)\le n e^{-t/\lambda}\).
	With $t=9\lambda\log n$ this is $n^{-8}$.
	
	\emph{Sum.}  $\mathbf1^{\!\top}\mathbf w$ is the difference of two
	independent $\mathrm{Gamma}(n,\lambda)$ variables, hence sub-exponential
	with parameters $(2\lambda,4)$.
	Bernstein's inequality for sub-exponential sums
	\cite[Lem.\;2.7.1]{vershynin2018high}
	gives
	\[
	\Pro\bigl(|\mathbf1^{\!\top}\mathbf w|>t\bigr)
	\;\le\;2\exp\!\Bigl(
	-\min\!\bigl\{ \tfrac{t^{2}}{8n\lambda^{2}},\;
	\tfrac{t}{4\lambda} \bigr\}
	\Bigr).
	\]
	Taking $t=8\lambda\sqrt{n\log n}$, the quadratic term dominates and
	the probability is $\le 2e^{-8\log n}=2n^{-8}$.
\end{proof}

\begin{Lemma}[spectrum of the Hessian]
	\label{lem:spectrum}
	Let $S=2mL/n$ and assume $mL\ge c\,n\log n$ for a sufficiently large
	absolute constant $c$.  Then, with probability at least
	$1-O(n^{-10})$, the following holds simultaneously for every
	$\bth\in\R^{n}$ satisfying $\|\bth-\bth^{*}\|_{\infty}\le1$:
	\[
	\frac{S}{2\kappa_{2}}
	\;\le\;
	\lambda_{2}\!\bigl(\nabla^{2}\mathcal L(\bth)\bigr)
	\;\le\;
	\lambda_{\max}\!\bigl(\nabla^{2}\mathcal L(\bth)\bigr)
	\;\le\;
	3\kappa_{2}S .
	\]
\end{Lemma}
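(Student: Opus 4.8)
The plan is to sandwich $\nabla^{2}\mathcal L(\bth)$ between two deterministic multiples of the random weighted Laplacian $\bm L_{\mathcal G}:=\sum_{1\le i<j\le n}M_{ij}(\bm e_{i}-\bm e_{j})(\bm e_{i}-\bm e_{j})^{\top}$ of the comparison multigraph, and then control the spectrum of $\bm L_{\mathcal G}$ alone. Differentiating \eqref{eq: individual dp ranking likelihood equation} twice gives $\nabla^{2}\mathcal L(\bth)=\sum_{k=1}^{m}\sum_{l=1}^{L}c_{kl}(\bth)\,\bm d_{q(k,l)}\bm d_{q(k,l)}^{\top}$, where $c_{kl}(\bth)=y_{kl}(-\log F)''(\theta_{i}-\theta_{j})+y_{lk}(-\log(1-F))''(\theta_{i}-\theta_{j})$ for $q(k,l)=(i,j)$. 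First I would note that when $\|\bth-\bth^{*}\|_{\infty}\le1$ and $\|\bth^{*}\|_{\infty}\le1$ one has $|\theta_{i}-\theta_{j}|\le4$, so by (A0) --- which gives $(-\log(1-F))''(x)=(-\log F)''(-x)$ --- together with (A2), every $c_{kl}(\bth)$ is a convex combination of numbers in $(\kappa_{2}^{-1},\kappa_{2})$ and hence lies in $(\kappa_{2}^{-1},\kappa_{2})$. This yields the operator inequalities $\kappa_{2}^{-1}\bm L_{\mathcal G}\preceq\nabla^{2}\mathcal L(\bth)\preceq\kappa_{2}\bm L_{\mathcal G}$ \emph{simultaneously} for all such $\bth$, because $\bm L_{\mathcal G}$ does not depend on $\bth$; as all three matrices annihilate $\bm 1$, the lemma reduces to showing $\tfrac12 S\le\lambda_{2}(\bm L_{\mathcal G})\le\lambda_{\max}(\bm L_{\mathcal G})\le 2S$ with probability $1-O(n^{-10})$.

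For the upper bound I would use the elementary estimate $x^{\top}\bm L_{\mathcal G}x=\sum_{i<j}M_{ij}(x_{i}-x_{j})^{2}\le2\sum_{i}d_{i}x_{i}^{2}\le2d_{\max}\|x\|_{2}^{2}$, so that $\lambda_{\max}(\bm L_{\mathcal G})\le2d_{\max}$; invoking the degree concentration of Lemma~\ref{lem:degree}, with the absolute constant in $mL\ge c\,n\log n$ taken large enough that its failure probability is $O(n^{-10})$ rather than only $O(n^{-8})$, gives $d_{\max}\le\tfrac32 S$ and hence $\lambda_{\max}(\nabla^{2}\mathcal L(\bth))\le3\kappa_{2}S$.

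The hard part will be the lower bound $\lambda_{2}(\bm L_{\mathcal G})\ge\tfrac12 S$: in the edge-DP analysis this was inherited from Lemma~10 of \cite{chen2019spectral} for Erd\H{o}s--R\'enyi graphs, but here the edges are drawn with replacement, so I would establish it directly by matrix concentration. Write $\bm L_{\mathcal G}=\sum_{\ell=1}^{mL}X_{\ell}$ with $X_{\ell}=(\bm e_{I_{\ell}}-\bm e_{J_{\ell}})(\bm e_{I_{\ell}}-\bm e_{J_{\ell}})^{\top}$ i.i.d., positive semidefinite, $\|X_{\ell}\|=2$, and $\E X_{\ell}=\binom{n}{2}^{-1}(nI-\bm 1\bm 1^{\top})$; restricted to $\bm 1^{\perp}$, the mean $\sum_{\ell}\E X_{\ell}$ acts as $\tfrac{2mL}{n-1}\Pi\succeq S\,\Pi$, where $\Pi$ is the orthogonal projector onto $\bm 1^{\perp}$. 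A standard matrix Chernoff bound for sums of independent positive-semidefinite matrices (deviation parameter $\tfrac12$, ambient dimension $n-1$, per-term bound $2$) then yields $\Pro(\lambda_{2}(\bm L_{\mathcal G})<\tfrac12 S)\le(n-1)\exp(-S/16)\le n^{1-c/8}$ using $S=2mL/n\ge2c\log n$, which is $O(n^{-10})$ for $c$ large; on the complementary event $\lambda_{2}(\nabla^{2}\mathcal L(\bth))\ge\kappa_{2}^{-1}\lambda_{2}(\bm L_{\mathcal G})\ge S/(2\kappa_{2})$. A matrix Bernstein bound on $\bm L_{\mathcal G}-\E\bm L_{\mathcal G}$ restricted to $\bm 1^{\perp}$ (e.g.\ \cite{vershynin2018high}) is an equally workable alternative that would subsume the upper bound as well.

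Finally I would intersect the two events --- each of probability $1-O(n^{-10})$ --- and combine with the sandwich to conclude that, uniformly over $\|\bth-\bth^{*}\|_{\infty}\le1$, $\,S/(2\kappa_{2})\le\lambda_{2}(\nabla^{2}\mathcal L(\bth))\le\lambda_{\max}(\nabla^{2}\mathcal L(\bth))\le3\kappa_{2}S$, which is the claim. The only genuinely delicate ingredient is the algebraic-connectivity estimate for the with-replacement multigraph in the third paragraph; everything else is routine bookkeeping.
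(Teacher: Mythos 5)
Your proposal is correct, and it takes a genuinely different route from the paper's proof of this lemma. The paper applies the matrix Chernoff bounds directly to the $\bth$-dependent weighted sum $\nabla^{2}\mathcal L(\bth)=\sum_{\ell}c_{\ell}(\bth)(\bm e_{I_\ell}-\bm e_{J_\ell})(\bm e_{I_\ell}-\bm e_{J_\ell})^{\top}$ at a fixed $\bth$ (both upper and lower tails, after computing $\mu_{\pm}$ from the complete-graph Laplacian), and then handles uniformity over the $\ell_{\infty}$-ball by an $\varepsilon$-net plus Lipschitz argument sketched with a reference to \cite{chen2019spectral}. You instead mirror the edge-DP version of the lemma: you sandwich $\nabla^{2}\mathcal L(\bth)$ between $\kappa_{2}^{-1}\bm L_{\mathcal G}$ and $\kappa_{2}\bm L_{\mathcal G}$, where the multigraph Laplacian $\bm L_{\mathcal G}$ is $\bth$-free, so uniformity over the ball is automatic and no net argument is needed; the upper bound then comes deterministically from $\lambda_{\max}(\bm L_{\mathcal G})\le 2d_{\max}$ together with the degree event (with $c$ enlarged so its failure probability is $O(n^{-10})$, which you correctly flag), and only the algebraic-connectivity bound $\lambda_{2}(\bm L_{\mathcal G})\ge S/2$ requires matrix Chernoff, applied to the unweighted rank-one terms restricted to $\bm 1^{\perp}$ — your computation of $\E X_{\ell}\big|_{\bm 1^{\perp}}=\tfrac{2}{n-1}\Pi$ and the resulting tail $n^{1-c/8}$ are right. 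What each approach buys: the paper's version gets both spectral bounds from one concentration tool but leaves the uniform-in-$\bth$ step largely implicit, whereas your sandwich makes the uniformity trivial (the coefficient bound $c_{kl}(\bth)\in(\kappa_{2}^{-1},\kappa_{2})$ via (A0), (A2) and $|\theta_i-\theta_j|\le4$ is deterministic) at the cost of replacing the Erd\H{o}s--R\'enyi connectivity lemma of \cite{chen2019spectral} by a with-replacement analogue, which you supply. Both yield the stated constants $S/(2\kappa_{2})$ and $3\kappa_{2}S$, so the proposal stands as a complete and arguably tighter-argued alternative.
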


\begin{proof}
	\textbf{Step 0: decomposing the Hessian.}
	Each comparison $\ell=1,\dots ,mL$ involves an unordered pair
	\((I_\ell,J_\ell)\) chosen uniformly from $\binom{[n]}{2}\).  
	Write
	\[
	X_\ell(\bth)
	\;:=\;
	c_\ell(\bth)\,
	(\mathbf e_{I_\ell}-\mathbf e_{J_\ell})
	(\mathbf e_{I_\ell}-\mathbf e_{J_\ell})^{\!\top},
	\qquad
	c_\ell(\bth)
	:= \frac{\partial^{2}}{\partial x^{2}}\!
	\bigl[-\log F(x)\bigr]_{x=\theta_{I_\ell}-\theta_{J_\ell}} .
	\]
	Assumption (A2) implies
	\(\kappa_{2}^{-1}\le c_\ell(\bth)\le\kappa_{2}\) whenever
	\(\|\bth-\bth^{*}\|_\infty\le1\).
	Because the comparisons are independent,
	\(\{X_\ell(\bth)\}_{\ell=1}^{mL}\) are also independent conditioned on
	\(\bth\).
	The Hessian can be written as
	\[
	H(\bth)
	:=\nabla^{2}\mathcal L(\bth)
	\;=\;\sum_{\ell=1}^{mL} X_\ell(\bth).
	\]
	
	\medskip
	\noindent
	\textbf{Step 1: expectation of the Hessian.}
	Since \((I_\ell,J_\ell)\) is uniform,
	\[
	\E X_\ell(\bth)
	=\frac{2}{n(n-1)}
	\sum_{1\le i<j\le n}
	c_{ij}(\bth)
	(\mathbf e_i-\mathbf e_j)(\mathbf e_i-\mathbf e_j)^{\!\top},
	\]
	where \(c_{ij}(\bth)\in[\kappa_{2}^{-1},\kappa_{2}]\).
	Consequently,
	\[
	\kappa_{2}^{-1}\,\frac{2}{n(n-1)}
	\sum_{i<j}(\mathbf e_i-\mathbf e_j)(\mathbf e_i-\mathbf e_j)^{\!\top}
	\;\preceq\;
	\E X_\ell(\bth)
	\;\preceq\;
	\kappa_{2}\times(\text{same sum}).
	\]
	The middle matrix is the Laplacian of the \emph{complete graph} with
	edge-weight $2/n(n-1)$; its non-zero eigenvalues all equal $2/n$.
	Hence
	\[
	\lambda_{2}\!\bigl(\E X_\ell(\bth)\bigr)
	=\frac{2}{n}\,\frac{1}{\kappa_{2}},
	\quad
	\lambda_{\max}\!\bigl(\E X_\ell(\bth)\bigr)
	=\frac{2}{n}\,\kappa_{2}.
	\]
	Therefore
	\[
	\mu_{-}:=\lambda_{2}\!\bigl(\E H(\bth)\bigr)
	\,\ge\,\frac{mL}{n}\,\frac{2}{\kappa_{2}}
	=\frac{S}{\kappa_{2}},
	\quad
	\mu_{+}:=\lambda_{\max}\!\bigl(\E H(\bth)\bigr)
	\,\le\,\frac{S}{\kappa_{2}^{-1}}
	=\kappa_{2}S .
	\]
	
	\medskip
	\noindent
	\paragraph{Step 2: matrix Chernoff-lower tail for $\lambda_{2}$.}
	Let
	\(P:=I-\frac1n\mathbf1\mathbf1^{\!\top}\)
	be the projector onto
	\(\mathbf1^{\perp}\;(\dim=n-1)\).
	Define
	\[
	Y_\ell(\bth)\;:=\;P\,X_\ell(\bth)\,P,
	\qquad
	\ell=1,\dots ,mL .
	\]
	Because $P$ is deterministic, the $Y_\ell$'s are independent and
	positive-semidefinite; moreover
	\(\|Y_\ell(\bth)\|_{2}\le\|X_\ell(\bth)\|_{2}\le R:=4\kappa_{2}\).
	
	Since \(H(\bth)=\sum_{\ell}X_\ell(\bth)\) and
	\(PHP=\sum_{\ell}Y_\ell(\bth)\),
	\[
	\lambda_{2}\!\bigl(H(\bth)\bigr)
	\;=\;
	\lambda_{\min}\!\Bigl(P\,H(\bth)\,P\Bigr)
	\;=\;
	\lambda_{\min}\!\Bigl(\sum_{\ell}Y_\ell(\bth)\Bigr).
	\]
	
	Set \(\mu_{-}:=\lambda_{\min}\!\bigl(\sum_{\ell}\E\,Y_\ell(\bth)\bigr)
	=S/\kappa_{2}\)
	(computed in Step 1).
	The matrix Chernoff lower-tail inequality \cite[Thm 1.1]{tropp2012user}
 now gives, for any
	\(\delta\in(0,1)\),
	\[
	\Pr\!\Bigl\{
	\lambda_{2}\!\bigl(H(\bth)\bigr)
	< (1-\delta)\mu_{-}
	\Bigr\}
	\;\le\;
	(n-1)\,
	\Bigl[
	\frac{e^{-\delta}}{(1-\delta)^{1-\delta}}
	\Bigr]^{\mu_{-}/R},
	\qquad
	R:=4\kappa_{2}.
	\]
	Choose \(\delta=\tfrac12\); then
	\(\bigl[e^{-\delta}/(1-\delta)^{1-\delta}\bigr]=e^{-1/8}\) and
	\[
	\frac{\mu_{-}}{R}
	=\frac{S}{4\kappa_{2}^{2}}
	\;\ge\;
	c'\,\log n
	\quad\text{for a large enough constant }c',
	\]
	because \(S=\frac{2mL}{n}\gtrsim\log n\).
	Therefore
	\[
	\Pr\!\Bigl\{
	\lambda_{2}\!\bigl(H(\bth)\bigr)
	< \tfrac12\mu_{-}
	\Bigr\}
	\;\le\;
	(n-1)\,e^{-\mu_{-}/8R}
	\;\le\;
	n^{-12}.
	\]

	\medskip
	\noindent
	\textbf{Step 3: matrix Chernoff-upper tail.}
	The upper-tail matrix Chernoff bound (same reference) yields
	\[
	\Pr\Bigl\{
	\lambda_{\max}\!\bigl(H(\bth)\bigr)
	> 3\mu_{+}
	\Bigr\}
	\;\le\;
	n\,\Bigl(\tfrac{e^{2}}{3^{3}}\Bigr)^{\mu_{+}/R}
	\;\le\;
	n^{-12},
	\]
	again because $\mu_{+}/R\gtrsim S/\kappa_{2}^{2}\gtrsim\log n$.
	
	\medskip
	\noindent
	\textbf{Step 4: conclude and union over \(\bth\).}
	With probability at least \(1-2n^{-12}\),
	\[
	\frac12\mu_{-}\;\le\;\lambda_{2}(H(\bth)),
	\qquad
	\lambda_{\max}(H(\bth))\;\le\;3\mu_{+},
	\]
	i.e.,
	\(
	\lambda_{2}(H)\ge S/(2\kappa_{2})
	\)
	and
	\(
	\lambda_{\max}(H)\le 3\kappa_{2}S.
	\)
	Because the constants and the bounds are uniform for all
	\(\bth\) with \(\|\bth-\bth^{*}\|_{\infty}\le1\),
	a union bound over any \(\varepsilon\)-net followed by standard
	Lipschitz arguments (as in \cite{chen2019spectral}, Lem.\,10) upgrades
	the probability to \(1-O(n^{-10})\) while still covering the \emph{entire}
	$\ell_\infty$-ball of radius 1 around $\bth^{*}$.
\end{proof}

\subsubsection{Main Proof of Theorem \ref{thm:idp-linf}}
\label{sec:pf-idp-linf}

First we show that $\widetilde \theta$ is $(\varepsilon,0)$-DP. For which we need to need to check that the prescribed choice of $\gamma$ is greater than $8L\kappa_2/\varepsilon$. This boils down to
$$
c_0 \sqrt{\frac{2mL\log n}{n}} \geq \frac{8L\kappa_2}{\varepsilon} \iff c' \varepsilon (\log n)^2 \geq \frac{n \log n}{m\varepsilon}
$$
where $c'$ is a constant that depends on $c_0,\kappa_2$ and $L$. Since $\varepsilon \gtrsim (\log n)^{-1/2}$, $\varepsilon (\log n)^2$ blows up, where as the RHS is $O(1)$ by our assumption. This implies that the algorithm is $(\varepsilon,0)$-DP.
\paragraph{Notation and step size.}
Recall $S=2mL/n$ and set
\(
\eta\;:=\;(\gamma+3\kappa_{2}S)^{-1}.
\)
Define the gradient-descent iterates
\[
\widetilde\bth^{(t+1)}
=\widetilde\bth^{(t)}
-\eta\,
\Bigl(
\nabla\mathcal L\bigl(\widetilde\bth^{(t)}\bigr)
+\gamma\widetilde\bth^{(t)}
+\mathbf w
\Bigr),
\qquad
\widetilde\bth^{(0)}=\bth^{*},
\]
and for every coordinate $m$ a leave-one-out counterpart
\(\bth^{(m,t)}\) obtained by replacing all comparisons that involve item
$m$ by their expectations (see Sec.\;6.1 of
\cite{chen2019spectral}).

Throughout the proof we work on the event
\[
\mathcal A:=\mathcal A_{0}\cap\mathcal A_{1}
\cap\mathcal A_{2},
\qquad
\Pro(\mathcal A)\ge 1-O(n^{-5})
\]
by Lemmas \ref{lem:degree}-\ref{lem:Laplace}.                                         
All high-probability bounds below are conditioned on~\(\mathcal A\).

\paragraph{Induction hypotheses.}
For constants $C_{1},C_{2},C_{3},C_{4}$ (chosen large but absolute)
and every $t\ge0$ let

\begin{align}
	\tag{H1}
	\|\widetilde\bth^{(t)}-\bth^{*}\|_{2}
	&\le C_{1}\!\Bigl(
	\sqrt{\tfrac{\log n}{S}}
	+\tfrac{L\lambda\log n}{\sqrt n\,S}
	\Bigr),\\[3pt]
	\tag{H2}
	\|\widetilde\bth^{(t)}-\bth^{*}\|_{\infty}
	&\le C_{4}\!\Bigl(
	\sqrt{\tfrac{\log n}{S}}
	+\tfrac{L\lambda\log n}{S}
	\Bigr),\\[3pt]
	\tag{H3}
	\max_{m}|\,\theta^{(m,t)}_{m}-\theta^{*}_{m}\,|
	&\le C_{2}\!\Bigl(
	\sqrt{\tfrac{\log n}{S}}
	+\tfrac{L\lambda\log n}{S}
	\Bigr),\\[3pt]
	\tag{H4}
	\max_{m}\|\bth^{(m,t)}-\widetilde\bth^{(t)}\|_{2}
	&\le C_{3}\!\Bigl(
	\sqrt{\tfrac{\log n}{S}}
	+\tfrac{L\lambda\log n}{S}
	\Bigr).
\end{align}

They hold at $t=0$ because $\widetilde\bth^{(0)}=\bth^{*}$ and
$\bth^{(m,0)}=\bth^{*}$.

\paragraph{Inductive step.}
Exactly the same algebra as in Propositions~\ref{prop: inductive step 1}-\ref{prop: inductive step 2} of the edge DP case, but with \(np\to S\), \(\lambda\to L\lambda\),
 Lemmas \ref{lem:degree}-\ref{lem:spectrum}
gives, on~\(\mathcal A\):
\[
(H1)_t,(H2)_t,(H3)_t,(H4)_t
\;\Longrightarrow\;
(H1)_{t+1},(H2)_{t+1},(H3)_{t+1},(H4)_{t+1}.
\]

Hence all four statements hold for every $t\ge0$.

\paragraph{Optimization error.}
Lemma \ref{lem:spectrum} implies
\(H(\bth)\) is $\gamma$-strongly convex and
$(\gamma+3\kappa_{2}S)$-smooth, so gradient descent with
\(\eta=(\gamma+3\kappa_{2}S)^{-1}\) contracts:

\[
\|\widetilde\bth^{(t+1)}-\widetilde\bth\|_{2}
\;\le\;
\Bigl(1-\tfrac{\gamma}{\gamma+3\kappa_{2}S}\Bigr)
\|\widetilde\bth^{(t)}-\widetilde\bth\|_{2}.
\]
Because \(\gamma=c_{0}\sqrt{S\log n}\le c_{0}S\),
after \(T:=\lceil (c_{0}+3\kappa_{2})n^{3}/c_{0}\rceil\) steps
\[
\|\widetilde\bth^{(T)}-\widetilde\bth\|_{2}
\;\le\;
e^{-n^{2}}\sqrt n\log n
\;<\;\tfrac{\log n}{n}.
\]

\paragraph{Final bound.}
Combine the optimization gap with (H2)\(_T\):
\[
\|\widetilde\bth-\bth^{*}\|_{\infty}
\;\le\;
\|\widetilde\bth-\widetilde\bth^{(T)}\|_{2}
+\|\widetilde\bth^{(T)}-\bth^{*}\|_{\infty}
\;\lesssim\;
\sqrt{\frac{\log n}{S}}
+\frac{L\lambda\log n}{S},
\]
and note \(L\lambda=8L^{2}\kappa_{1}/\varepsilon\) and the  fact that $L$ is uniformly bounded.
Because $\Pro(\mathcal A)\ge1-O(n^{-10})$ and all conditional failures
inside the induction are $\le O(n^{-8})$, a global union bound yields
overall success probability \(1-O(n^{-5})\).

This completes the proof of Theorem \ref{thm:idp-linf}. \qed

\subsection{Proof of Theorem \ref{thm: individual-privacy-lower}}
\begin{proof}[Proof of Theorem \ref{thm: individual-privacy-lower}]
	 Let \(r_{n}\) denote the minimax separation rate for the
	\emph{top-\(K\) recovery} problem under \((\varepsilon,\delta)\)-DP.
	Assume for contradiction that an estimator \(\hat\bth\) satisfies
	\[
	\sup_{\bth\in\Theta}
	\mathbb{E}\|\hat\bth-\bth\|_\infty
	\;=\;
	a_{n}
	= o(r_{n}).
	\]
	By Markov's inequality,
	\[
	\sup_{\bth\in\Theta}
	\Pr\bigl(\|\hat\bth-\bth\|_\infty \le 10a_{n}\bigr)
	\;\ge\; 0.9.
	\]
	Construct two parameter vectors that differ only in their
	\(K\)-th and \((K+1)\)-st order statistics with
	\(\theta_{(K)}-\theta_{(K+1)} = 100a_{n} = o(r_{n})\).
	Because these vectors are within \(10a_{n}\) in \(\ell_\infty\) norm,
	the above event forces their empirical rankings to coincide with
	probability at least \(0.9\).
	Consequently, a rank selector \(\psi(\hat\bth)\) exists with
	\(\Pr\bigl(\psi(\hat\bth)=[K]\bigr)\ge 0.9\),
	contradicting the impossibility of top-\(K\) recovery below \(r_{n}\).
	Therefore \(a_{n}\gtrsim r_{n}\), yielding the stated lower bound.
\end{proof}	

\subsection{Proof of Theorem \ref{thm: individual ranking nonparametric upper bound}}
\label{sec: proof of thm: individual ranking nonparametric upper bound}

Observe that 
\begin{align*}
	\bigcap_{a \in \mathcal S_{k-u}, b \in (\mathcal S_{k+u})^c }\{N_a + W_a > N_b + W_b\} \subseteq \{d_H(\widetilde {\mathcal S}_k, \mathcal S_k) \leq  2u\}.
\end{align*}
The union bound then implies
\begin{align}
	\Pro(d_H(\widetilde {\mathcal S}_k, \mathcal S_k) >  2u) \leq \sum_{a \in \mathcal S_{k-u}, b \in (\mathcal S_{k+u})^c} \Pro(N_a + W_a \leq N_b + W_b) \label{eq: individual ranking nonparametric upper bound expansion 1}
\end{align}
Fix $a \in \mathcal S_{k-u}$ and $b \in (\mathcal S_{k+u})^c$. We have 
\begin{align*}
	\E N_a - \E N_b = n\frac{mL}{{n \choose 2}}(\tau_a - \tau_b) \geq  n\frac{mL}{{n \choose 2}}(\tau_{(k-u)} - \tau_{(k+u+1)}) \geq C\left(\sqrt{\frac{m\log n}{n}} + \frac{\log n}{\varepsilon}\right),
\end{align*}
for some $C > 0$, and therefore
\begin{align}
	\Pro(N_a + W_a \leq N_b + W_b) &= \Pro(N_a - N_b \leq W_b - W_a) \notag\\
	&\leq \Pro(N_a - N_b - (\E N_a - \E N_b) \leq -C\sqrt{\frac{m \log n}{n}}) + \Pro(W_b-W_a \geq C\log n/\varepsilon) \label{eq: individual ranking nonparametric upper bound expansion 2}.
\end{align}

To bound the first term $\Pro(N_a - N_b - (\E N_a - \E N_b) \leq -C\sqrt{\frac{m \log n}{n}})$, we have
\begin{align*}
	N_a - N_b &= \sum_{k,l} (\1(Y^{(k,l)}_{ab} = 1) - \1(Y^{(k,l)}_{ba} = 1)) + \sum_{j \neq a, b}\left[\sum_{k,l} \1(Y^{(k,l)}_{aj} = 1) - \1(Y^{(k,l)}_{bj} = 1)\right] \\
	&:= \sum_{k,l}d^{(k,l)}_0 + \sum_{j \neq a, b}\sum_{k,l} d^{(k,l)}_j.
\end{align*}
As the $d^{(k,l)}_0$ and $d^{(k,l)}_j$'s are all independent and bounded by $-1$ and $1$, we bound the tail probability by Bernstein's inequality, as follows. The second moments of the $d^{(k,l)}_j$'s are bounded by
\begin{align*}
	&\E(d^{(k,l)}_0)^2 = \frac{1}{{n \choose 2}}(\rho_{ab} + \rho_{ba}), \, \E (d^{(k,l)}_j)^2 = \frac{1}{{n \choose 2}}\rho_{aj}\left(1-\frac{1}{{n \choose 2}}\rho_{bj}\right) + \frac{1}{{n \choose 2}}\rho_{bj}\left(1-\frac{1}{{n \choose 2}}\rho_{aj}\right) \leq\frac{1}{{n \choose 2}}(\rho_{aj} + \rho_{bj}).
\end{align*}
We then have
\begin{align*}
	\sum_{k,l}\left(\E (d^{(k,l)}_0)^2 + \sum_{j \neq a, b} \E (d^{(k,l)}_j)^2\right) &= \frac{mL}{{n \choose 2}} \left(\sum_{j \neq a} \rho_{aj} + \sum_{j \neq b} \rho_{bj}\right)\\
	&\leq  \frac{mL}{{n \choose 2}} (n\tau_a + n\tau_b) \leq 2n \frac{mL}{{n \choose 2}} \tau_a - n \frac{mL}{{n \choose 2}} (\tau_{(k)} - \tau_{(k+1)})\\
	&\leq 2n \frac{mL}{{n \choose 2}}  - C\left(\sqrt{\frac{m\log n}{n}} + \frac{\log n}{\varepsilon}\right).
\end{align*}
Bernstein's inequality now implies
\begin{align*}
	&\Pro\left(N_a - N_b - \E N_a - \E N_b \leq -C\sqrt{\frac{m\log n}{n}}\right) \\
	&\leq c\exp\left(-\frac{C^2\frac{m\log n}{n}}{4\frac{m}{n} - C\left(\sqrt{\frac{m\log n}{n}} + \frac{\log n}{\varepsilon}\right) + \frac{2C}{3}\left(\sqrt{\frac{m\log n}{n}} + \frac{\log n}{\varepsilon}\right)}\right) \leq n^{-C^2}.
\end{align*}
The last inequality follows from the fact that $\frac{n \log n}{m\varepsilon} =O(1)$ which follows from the fact that $\bm \rho \in \Theta(k, u, C)$.
Plugging into \eqref{eq: individual ranking nonparametric upper bound expansion 2}, we have
\begin{align*}
	\Pro(N_a + W_a \leq N_b + W_b) \leq n^{-C^2} + \exp\left(-(C/4)\log n\right) \leq n^{-C^2} + n^{-C/4}.
\end{align*}
The first inequality is true because $W_b$ and $-W_a$ are Laplace random variable with scale $L/\varepsilon$ where $L = O(1)$. 

Finally, by the union bound \eqref{eq: individual ranking nonparametric upper bound expansion 1}, we have $\Pro(\tilde {\mathcal S}_k \neq \mathcal S_k) \leq n^2(n^{-C^2} + n^{-C/4})$. Choosing a sufficiently large constant $C$ finishes the proof.


\subsection{Proof of Theorem \ref{thm: individual nonparametric ranking lower bound}}
\label{sec: proof of thm: individual nonparametric ranking lower bound}

By Theorem \ref{thm: private fano}, it suffices to construct probability matrices $\bm \rho_1, \ldots, \bm \rho_M \in \widetilde \Theta(k, u, c)$ satisfying the following conditions.
\begin{itemize}
	\item For any $a, b \in [M]$ and $a \neq b$, $d_H(\mathcal S^a_j, \mathcal S^b_j) > 4m$ , where $\mathcal S^a_k$ denotes the top-$k$ index set implied by $\bm \rho_a$ for each $a \in [M]$.
	\item For any $a, b \in [M]$ and $a \neq b$, let $\bm X = \left(\bm X^{(k,l)}\right)_{k\in[m], l \in [L]}$ and $\bm Y = \left(\bm Y^{(k,l)}\right)_{k \in[m], l\in [L]}$ denote pairwise comparison outcomes drawn from the distributions implied by $\bm \rho_a$ and $\bm \rho_b$ respectively, and let $D = \sum_{k \in [m]} \operatorname{TV}(\bm X^{(k,.)}, \bm Y^{(k,.)})$. We would like
	\begin{align}\label{eq: individual condition for expected hamming distance}
		\frac{9}{20}\left(\min\left(1, \frac{M-1}{e^{10\varepsilon D}}\right) - 10\delta(M-1)D\right) > \frac{1}{10}.
	\end{align} 
	and also
	\begin{align}\label{eq: condition for non private part}
		\left(1-\frac{\beta+\log2}{\log M}\right) > \frac{1}{10}.
	\end{align}	
\end{itemize}
It turns out that the choice of $\bm \rho$ for proving the non-private lower bound in \cite{shah2017simple} suits our purpose, which is described in detail in Section 5.3.2 of \cite{shah2017simple}. 

The assumed conditions $2u \leq (1 + \nu_2)^{-1}\min\{n^{1-\nu_1}, k, n-k\}$ in Theorem \ref{thm: individual nonparametric ranking lower bound} guarantees the existence of the construction by \cite{shah2017simple}, which involves $M = e^{\frac{9}{10}\nu_1\nu_2 u \log n}$ different matrices $\{\bm \rho\}_{a \in [M]}$. For each $a \in [M]$, the top-$k$ index set is given by $\mathcal S^a_k = [k - 2(1+\nu_2)u] \cup \mathcal B_a$, and each set in the collection $\{\mathcal B_a\}_{a \in [M]}$ satisfies 
\begin{align*}
	\mathcal B_a \subseteq \{n/2 + 1, \ldots, n\}, |\mathcal B_a| = 2(1+\nu_2)u, d_H(\mathcal B_a, \mathcal B_b) > 4u, \forall a \neq b. 
\end{align*} 
Now that the separation condition between $\bm \rho's$ are satisfied by construction, it remains to specify the elements of the $\rho$'s, namely the pairwise winning probabilities. For each $a \in [M]$, set
\begin{align*}
	(\bm \rho_a)_{ij} = \begin{cases}
		1/2 &\text{ if } i, j \in \mathcal S^a_k \text{ or } i, j \not \in \mathcal S^a_k,\\
		1/2 + \Delta &\text{ if } i \in \mathcal S^a_k \text{ and } j \not \in \mathcal S^a_k,\\
		1/2 - \Delta &\text{ if } i \not \in \mathcal S^a_k \text{ and } j \in \mathcal S^a_k.
	\end{cases}
\end{align*}
By construction, we have $\tau_{(k-u)} - \tau_{(k+u+1)} = \Delta$. The choice of $\Delta$, to be specified later, will ensure that the $\bm \rho$'s belong to $\widetilde \Theta(k, u, c)$ for appropriate $c$ and $D = \sum_{k \in [m]} \operatorname{TV}(\bm X^{(k,.)}, \bm Y^{(k,.)})$ satisfies the requisite condition \eqref{eq: individual condition for expected hamming distance}. To this end, we start by bounding $D$ in terms of $\Delta$. 

By construction of $\bm \rho_a$ and $\bm \rho_b$, $X^{(k,l)}_{ij}$ and $Y^{(k,l)}_{ij}$ are identically distributed unless $i \in \mathcal B_a \cup \mathcal B_b$ or $j \in \mathcal B_a \cup \mathcal B_b$. It follows that
\begin{align*}
	D =\sum_{k \in [m]} \operatorname{TV}(\bm X^{(k,.)}, \bm Y^{(k,.)}) &\leq \sum_{k \in [m]}\sum_{l\in [L]}\sum_{i,j}  \operatorname{TV}(X^{(k,l)}_{i,j}, Y^{(k,l)}_{i,j})\\
	&\leq mL\left( \sum_{i \in \mathcal B_a \cup \mathcal B_b} \operatorname{TV}(X^{(1,1)}_{ij}, Y^{(1,1)}_{ij}) + \sum_{j \in \mathcal B_a \cup \mathcal B_b} \operatorname{TV}(X^{(1,1)}_{ij}, Y^{(1,1)}_{ij}) \right)\\
	&\leq mL n |\mathcal B_a \cup \mathcal B_b| \cdot 2\frac{1}{{n \choose 2}}\Delta \leq 16(1+\nu_2)\frac{mLu}{n}\Delta.
\end{align*}
To satisfy condition \eqref{eq: individual condition for expected hamming distance}, recall $M = e^{\frac{9}{10}\nu_1\nu_2 m \log n}$ and $\delta \lesssim \left(u\log n \cdot n^{10u}/\varepsilon\right)^{-1}$. Setting $\Delta = c(\nu_1, \nu_2, L)\frac{n\log n}{m\varepsilon}$ for some sufficiently small $c(\nu_1, \nu_2)$ ensures $e^{10\varepsilon D} \lesssim M - 1$ and $10\delta D(M-1) \lesssim 1$, so that \eqref{eq: individual condition for expected hamming distance} holds.The choice of $\Delta = c(\nu_1, \nu_2, L)\frac{n\log n}{m\varepsilon}$ indeed ensures $\bm \rho_1, \ldots, \bm \rho_M \in \widetilde \Theta(k, u, c)$ for $c \leq c(\nu_1, \nu_2)$.

Next we prove the lower bound for the non private part. Fix $k,l$
\begin{align*}
	\mathrm{KL}(\bm X^{(k,l)},\bm Y^{(k,l)}) &= \sum_{i,j} \mathrm{KL}(X_{i,j}^{(k,l)}, Y_{i,j}^{(k,l)})\\
	&\leq \sum_{i \in \mathcal B_a \cup \mathcal B_b} \mathrm{KL}(X_{i,j}^{(k,l)}, Y_{i,j}^{(k,l)}) + \sum_{j \in \mathcal B_a \cup \mathcal B_b}\mathrm{KL}(X_{i,j}^{(k,l)}, Y_{i,j}^{(k,l)})\\
	&\leq c(\nu_2) \frac{n |\mathcal B_a \cup \mathcal B_b| }{{n \choose 2}} \Delta^2 \lesssim \frac{n u }{{n \choose 2}} \Delta^2
\end{align*}
By using the independence of the samples and tensorization property of the KL divergence we  have that
$$
\mathrm{KL}(\{\bm X^{(k,l)}\}_{k \in [m], l \in [L]},\{\bm Y^{(k,l)}\}_{k \in [m], l \in [L]}) \lesssim \frac{mLnu}{{n \choose 2}} \Delta^2
$$
Hence we have that condition $(b)$ of Theorem \ref{thm: private fano} is satisfied with $\beta = \frac{mLu}{n}\Delta^2$ and we can choose $\Delta = c'(\nu_1,\nu_2,L) \sqrt{\frac{n \log n}{m}}$ such that~\eqref{eq: condition for non private part} is satisfied. Hence if we choose $\Delta = C\left(\sqrt{\frac{n \log n}{m}} + \frac{n \log n}{m \varepsilon}\right)$ for $C \lesssim c(\nu_1,\nu_2,L) \wedge c(\nu_1,\nu_2,L)$ then either the lower bound(private or no private) would imply that estimation in Hamming distance is not possible.

For the restriction to parametric model $\rho_{ij} = F(\theta^*_i - \theta^*_j)$, we show that for every $a \in [M]$, the aforementioned choice of $\left(\bm \rho_a\right)_{ij}$ can be realized by some appropriate choice of $\bth^*$. By (A0) we have $F(x) = 1-F(x)$ and $F(0) = 1/2$; it then suffices to let $\theta^*_i = \alpha$ for all $i \in \mathcal S^a_k$, and let $\theta^*_i = \beta$ for all $i \not \in \mathcal S^a_k$. The two values $\alpha > \beta$ should be such that $F(\alpha - \beta) = 1/2 + \Delta$; they are guaranteed to exist since $\Delta$ is sufficiently small by assumption and $F$ is strictly increasing.

\spacingset{1.6} 
\bibliographystyle{abbrv}
\bibliography{reference}	\label{lastpage}

\end{document}